\newtheorem{propo}{Proposition}[section]
\newtheorem{defi}[propo]{Definition}
\newtheorem{lemma}[propo]{Lemma}
\newtheorem{corol}[propo]{Corollary}
\newtheorem{theor}[propo]{Theorem}
\newtheorem{theorem}[propo]{Theorem}
\newtheorem{examp}[propo]{Example}
\newtheorem{remar}[propo]{Remark}
\newcommand{\Ker}{\operatorname{Ker}}
\newcommand{\Aut}{{\mathrm {Aut}}}
\newcommand{\Out}{{\mathrm {Out}}}
\newcommand{\Id}{{\mathrm {Id}}}
\newcommand{\Irr}{{\mathrm {Irr}}}
\newcommand{\Ind}{{\mathrm {Ind}}}
\newcommand{\diag}{{\mathrm {diag}}}
\newcommand{\Tr}{{\mathrm {Tr}}}
\newcommand{\Spec}{{\mathrm {Spec}}}
\newcommand{\Sym}{{\mathrm {Sym}}}
\newcommand{\CC}{{\mathbb C}}
\newcommand{\RR}{{\mathbb R}}
\newcommand{\QQ}{{\mathbb Q}}
\newcommand{\ZZ}{{\mathbb Z}}
\newcommand{\SSS}{{\sf S}}
\newcommand{\AAA}{{\sf A}}
\newcommand{\FF}{{\mathbb F}}
\newcommand{\GC}{\mathcal{G}}
\newcommand{\CL}{\mathcal{C}}
\newcommand{\HC}{\mathcal{H}}
\newcommand{\IC}{\mathcal{I}}
\newcommand{\TC}{\mathcal{T}}
\newcommand{\BC}{\mathcal{B}}
\newcommand{\NC}{\mathcal{N}}
\newcommand{\XC}{\mathcal{X}}
\newcommand{\YC}{\mathcal{Y}}
\newcommand{\eps}{\epsilon}
\newcommand{\lam}{\lambda}
\newcommand{\al}{\alpha}
\newcommand{\gam}{\gamma}
\newcommand{\la}{\langle}
\newcommand{\ra}{\rangle}
\newcommand{\DA}{d_{1}}
\newcommand{\DB}{d_{2}}
\newcommand{\DD}{d_{j}}
\newcommand{\SA}{S^{1}}
\newcommand{\age}{{\sf {age}}}
\newcommand{\ages}{{\sf {age}}^{*}}
\newcommand{\an}{{\boldsymbol{\alpha}}}
\newcommand{\bn}{{\boldsymbol{\beta}}}
\newcommand{\bg}{\bar{g}}
\newcommand{\sta}{(\star)}
\newcommand{\start}{(\star\star)}
\newcommand{\stc}{(\clubsuit)}
\newcommand{\diam}{(\spadesuit)}
\newcommand{\dl}{{\mathfrak d}}
\newcommand{\fl}{{\mathfrak f}}
\newcommand{\tbf}{{\bf t}}
\newcommand{\tn}{\hspace{0.5mm}^{t}\hspace*{-0.2mm}}
\newcommand{\ta}{\hspace{0.5mm}^{2}\hspace*{-0.2mm}}
\newcommand{\tb}{\hspace{0.5mm}^{3}\hspace*{-0.2mm}}
\def\skipa{\vspace{-1.5mm} & \vspace{-1.5mm} & \vspace{-1.5mm}\\}
\renewcommand{\mod}{\bmod \,}
\begin{document}
\title[A problem of Koll\'ar and Larsen on finite linear groups]
{A problem of Koll\'ar and Larsen on finite linear groups and  
crepant resolutions}
\author{Robert M. Guralnick}
\address{Department of Mathematics, University of Southern California,
Los Angeles, CA 90089-1113, USA}
\email{guralnic@math.usc.edu}
\author{Pham Huu Tiep}
\address{Department of Mathematics, University of Arizona, Tucson, AZ 85721-0089, USA}
\email{tiep@math.arizona.edu}
\date{May 20, 2009}

\keywords{Age, deviation, finite linear groups, complex reflection groups,
crepant resolutions}

\subjclass{20C15, 14E15}

\thanks{Part of this paper was written while the authors were participating
in the Representations of Finite Groups Program of the Mathematical Sciences Research
Institute (Berkeley, 2008), and the Algebraic Lie Theory Program of the Isaac Newton 
Institute for Mathematical Sciences (Cambridge, 2009). 
It is a pleasure to thank the MSRI and
the Newton Institute for their generous hospitality and support.} 

\thanks{The authors would like to thank J\'anos Koll\'ar for suggesting this problem to them, and for 
many insightful comments on the paper. They also thank Thomas Breuer for computing the character tables
of certain groups of extraspecial type for them, and Jason Fulman, Lennie Friedlander, Nick Katz, 
Klaus Lux, and Terence Tao for helpful remarks on various aspects of the paper.}

\thanks{The authors gratefully acknowledge the support of the NSF (grants
DMS-0653873 and DMS-0600967).}

\begin{abstract}
{The notion of $\age$ of elements of complex linear groups was introduced
by M. Reid and is of importance in algebraic geometry, in particular in
the study of crepant resolutions and of quotients of Calabi-Yau varieties.
In this paper, we solve a problem raised by J. Koll\'ar and M. Larsen on 
the structure of finite irreducible linear groups generated by elements of
$\age \leq 1$. More generally, we bound the dimension of finite irreducible 
linear groups generated by elements of bounded deviation. As a consequence of
our main results, we derive some properties of symmetric spaces
$GU_{d}(\CC)/G$ having shortest closed geodesics of bounded length, and 
of quotients $\CC^{d}/G$ having a crepant resolution.}
\end{abstract}

\maketitle

\section{Introduction}
Let $V = \CC^{d}$ be a $d$-dimensional complex space and let $G < GL(V)$ be a finite subgroup. 
A classical theme in group theory and representation theory, going back at least to work
of H. Blichfeldt on primitive linear groups, and work of G. C. Shephard and J. A. Todd \cite{ST} on 
complex reflection groups, is to characterize $G$ under various conditions that force
$G$ to contain non-identity elements which are ``close'' to the identity transformation on $V$. 
Recall that a {\it complex reflection group} (c.r.g. for short) is a subgroup of $GU(V)$ that is 
generated by a set of complex (pseudo)reflections. The
complex reflection groups can be arguably said to be one of the most ubiquitous objects in 
modern mathematics.   

Recently, motivated by potential applications in algebraic geometry, string theory, mirror symmetry, and
quantum cohomology, J. Koll\'ar and M. Larsen \cite{KL} have raised the problem of studying linear
groups containing elements of bounded (or small) {\it deviation}, where the deviation is defined 
in a certain way to measure the ``closeness'' of group elements to the identity transformation.  
It turns out to be most convenient to work with the following $L^{2}$-variant of the
Koll\'ar-Larsen deviation: $\DB(g)^{2} = 2(\dim(V)-|\Tr(g)|)$ for $g \in GL(V)$ 
(see \S2.2, in particular, Corollary \ref{basic2m} and Proposition \ref{arc}(iii), for various notions 
of deviation and their relationships). 
Henceforth we say that a subgroup $G \leq GL(V)$ 
{\it has property ${\mathcal P}$ up to scalars}, if there is a subgroup $H \leq GL(V)$ with
property ${\mathcal P}$ such that $Z(GL(V))G = Z(GL(V))H$.      

The first main result of the paper is the following theorem which bounds the dimension of
the representation in terms of the deviations of generators. 

\begin{theor}\label{main1}
{\sl Let $G < GL(V)$ be a finite irreducible subgroup. Assume that there is a constant
$C \geq 4$ such that, up to scalars, $G$ is generated by some elements $g_{i}$ with 
$\DB(g_{i})^{2} \leq C$, $1 \leq i \leq s$. Then one of the following holds.

{\rm (i)} $\dim(V) \leq \fl(C) := \max\{4C^{2}/63, 40C\}$.

{\rm (ii)} $Z(G) \times \AAA_{n} \leq G \leq (Z(G) \times \AAA_{n}) \cdot 2$ and $\dim(V) = n-1$, 
with $\AAA_{n}$ acting on $V$ as on its deleted natural permutation module.

{\rm (iii)} $G$ preserves a decomposition $V = V_{1} \oplus \ldots \oplus V_{m}$, with
$\dim(V_{i}) \leq C/4$ and $G$ inducing either $\SSS_{m}$ or $\AAA_{m}$ while permuting 
the $m$ subspaces $V_{1}, \ldots ,V_{m}$.}
\end{theor}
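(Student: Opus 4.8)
\emph{Setup and spectral preliminaries.}
The plan is to run the Aschbacher-type structure theory for the finite irreducible group $G<GL(V)$ and, on each branch, to exploit the spectral meaning of small deviation. First I would record the identity that underlies everything: writing $\zeta_1,\dots,\zeta_d$ ($d=\dim V$) for the eigenvalues of $g$ and $\mu=\Tr(g)/|\Tr(g)|$ for the phase of its trace, one has
\[
\DB(g)^{2}\;=\;2\bigl(d-|\Tr(g)|\bigr)\;=\;\min_{|\mu|=1}\sum_{i=1}^{d}|\zeta_i-\mu|^{2},
\]
so for a generator at most $\DB(g)^{2}/\varepsilon^{2}$ of its eigenvalues lie at distance $\geq\varepsilon$ from a common point of the unit circle. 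The one combinatorial consequence I would extract is: if $G$ preserves a decomposition $V=\bigoplus_{i=1}^{m}V_i$ with the $V_i$ permuted by $G$ (so $\dim V_i=d_1$ for all $i$), then an element moving $t$ of the $V_i$ satisfies $|\Tr(g)|\leq(m-t)d_1$, whence $t\leq\DB(g)^{2}/(2d_1)\leq C/(2d_1)$; the same estimate governs the permutation action of $G$ on a set of tensor factors. I would also quote from \S2 the comparisons between $\DB$, the Koll\'ar--Larsen deviation $\DA$, and the ``support'' of an element.

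\emph{Imprimitive and tensor branches.}
If $V|_{G}$ is imprimitive, choose a system $V=\bigoplus_{i=1}^{m}V_i$ with $d_1=\dim V_1$ minimal, so that the induced permutation group $\bar G\leq\SSS_m$ is primitive. If $d_1>C/4$, every generator moves fewer than two blocks, hence none, contradicting transitivity; so $d_1\leq C/4$. Thus $\bar G$ is primitive of degree $m$ and generated by permutations of support $\leq C/(2d_1)$, and by the classical lower bounds on the minimal degree of a primitive group not containing $\AAA_m$ (Jordan--Bochert, sharpened by Babai and by Liebeck--Saxl), either $\bar G\trianglerighteq\AAA_m$, which is conclusion (iii), or $m$, and hence $\dim V=md_1$, is bounded within $\fl(C)$, which is conclusion (i). For a tensor-decomposable module $V=A\otimes B$, writing a generator as $a\otimes b$ modulo scalars and using $|\Tr(a\otimes b)|=|\Tr(a)|\,|\Tr(b)|$ (a scalar-invariant quantity, so the deviation descends to the projective representations) gives $\DB(b)^{2}\leq\DB(g)^{2}/\dim A$ for $b$ viewed in $GL(B)$, and symmetrically; the image of $G$ in $PGL(B)$ is then irreducible yet generated by elements of deviation $\leq C/\dim A$, so by induction on dimension its degree is at most $\fl(C/\dim A)$, and $\dim V\leq\dim A\cdot\fl(C/\dim A)\leq\fl(C)$ --- the particular shape $\fl(C)=\max\{4C^{2}/63,\,40C\}$ is precisely what makes this propagation close up, and if the induction lands in case (ii) or (iii) for a factor one checks the complementary factor must be a scalar (returning us to (ii)) or $V$ is itself imprimitive. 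Tensor-induced modules $V=W^{\otimes m}$ are handled directly: some generator must move tensor factors, and the trace estimate above forces $m$ and $\dim W$ both bounded in terms of $C$.

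\emph{Primitive branch.}
Assume now $G$ is primitive on $V$, so $E:=F^{*}(G)$ is either of symplectic type modulo its (cyclic, scalar) centre, or $E=Z(G)S_1\cdots S_k$ with the $S_j$ isomorphic nonabelian simple and permuted by $G$. In the symplectic-type (``extraspecial normalizer'') case $\dim V=r^{n}$, the relevant characters are Gauss-sum-like, and explicit bounds on $|\Tr|$ over the conjugacy classes that can occur among low-deviation generators force $r^{n}\leq\fl(C)$. In the other case, the block-moving estimate applied to the permutation action on $\{S_1,\dots,S_k\}$ forces $k$ small, reducing essentially to $k=1$, and then one runs through the classification of finite simple groups. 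The decisive input is a uniform bound $\DB(g)^{2}\gtrsim\dim V$ for every nontrivial $g$ in every faithful projective irreducible representation of a quasi-simple group of Lie type: such representations have dimension a growing power of the field size, semisimple elements have widely spread spectra, and unipotent classes --- transvections included --- are controlled through the explicit (Weil, minimal, Steinberg, $\dots$) module structure, so $\dim V\leq\fl(C)$. The alternating groups are the genuine exception: $\AAA_n$ on its deleted natural permutation module $\CC^{n-1}$ is generated by $3$-cycles of bounded deviation, which is exactly conclusion (ii), while every other irreducible module of $\AAA_n$, $\SSS_n$ and their covers has dimension too large relative to the minimal deviation in a generating class. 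Sporadic groups and small-rank exceptional groups are a finite check.

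\emph{Main obstacle.}
The hard part is this last quasi-simple analysis. Establishing $\DB(g)^{2}\gtrsim\dim V$ uniformly across all families of simple groups of Lie type and across all their faithful projective complex representations --- not merely the minimal ones --- demands careful control of the eigenvalue distributions of both semisimple and unipotent elements, and this is exactly the point where one cannot replace $\DB$ by the cruder support, since elements of unbounded order can have strongly clustered eigenvalues. Cleanly isolating the single alternating-group exception and then verifying that each bound above fits inside $\fl(C)=\max\{4C^{2}/63,\,40C\}$ is the remaining, largely bookkeeping, task.
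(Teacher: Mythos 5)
Your overall architecture matches the paper's: decompose via imprimitivity, tensor structure, extraspecial normalizers, and the almost-quasi-simple case, with $\AAA_n$ on the deleted permutation module as the sole exceptional family, driven by character-ratio bounds. However, there are two genuine gaps in the execution, one of which touches exactly the point you flag as "the decisive input."

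First, in the almost-quasi-simple branch you invoke a uniform bound $\DB(g)^{2}\gtrsim\dim V$ for every noncentral $g$ in a faithful projective irreducible representation of a quasi-simple group. This is the Gluck--Magaard bound $|\chi(g)/\chi(1)|\leq 19/20$, but it applies only when $G$ \emph{itself} is quasi-simple. In the branch at hand $G$ is only almost quasi-simple, and a generator $g$ of small deviation may lie outside $L=G^{(\infty)}$; for such $g$ there is no a priori character-ratio bound. What is needed, and what you omit, is the commutator reduction (Lemma~\ref{red2} via Lemma~\ref{basic3}(ii)): from a non-scalar $g$ one produces $h=[g,u]\in L\setminus Z(L)$ with $\DB(h)^{2}\leq 4\DB(g)^{2}\leq 4C$, and then Gluck--Magaard applied to $h\in L$ gives $\chi(1)\leq 40C$. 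The factor $40=4\cdot 10$ in $\fl(C)$ is precisely the product of the commutator loss and the $19/20$ ratio, so this step is not optional. (Also note that the bound is not specific to Lie type; the paper's citation is uniform over quasi-simple groups, with $\AAA_n$ and $2\AAA_n$ handled by separate estimates.)

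Second, your treatment of the tensor-decomposable branch by inductive descent has a real gap. You bound $\DB(b)^{2}\leq\DB(g)^{2}/\dim A\leq C/\dim A$ and appeal to the theorem for the image in $PGL(B)$, but the theorem's hypothesis requires the deviation constant to be $\geq 4$, and $C/\dim A<4$ as soon as $\dim A>C/4$; your argument gives no way to rule this out (Theorem~\ref{main-b} would force a factor-level contradiction only under primitivity and tensor-indecomposability of that factor, which you have not arranged). The paper avoids recursion altogether: after writing $\chi=\al_1\otimes\cdots\otimes\al_n$ with each $\al_i$ primitive and tensor-indecomposable, it applies Theorem~\ref{main-b} directly to the smallest factor $\beta=\al_n$, getting $\beta(1)-|\beta(g)|\geq(3-\sqrt5)/2$ (and $\geq 2$ when $\beta(1)\geq 13$), which yields $\al(1)\leq C/2\delta$ and hence $\chi(1)<16C<\fl(C)$ with no induction. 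Your aside that if the factor lands in case (ii) or (iii) then ``one checks the complementary factor must be a scalar\ldots or $V$ is itself imprimitive'' is under-justified and is not how the argument closes.

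A minor slip in the imprimitive branch: you take $d_1=\dim V_1$ \emph{minimal} and assert that this makes the induced permutation group primitive. Primitivity of the block action corresponds to choosing $m$ \emph{minimal} (equivalently $d_1$ maximal over nontrivial block systems); the finest block system need not have primitive quotient. Your derived bound $d_1\leq C/4$ is valid for any $G$-invariant block system, so the argument survives once the choice is corrected, but as written the claim is false.
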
 

One certainly expects the upper bound $\dim(V) \leq \fl(C)$ in Theorem \ref{main1}(i) to have 
rather a theoretical than practical value. However, we notice that for $C$ big enough (say $C \geq 630$), 
this bound is already quite close to be optimal, cf. Example \ref{optm1}. In general, as pointed out 
to the authors by Koll\'ar, Theorem \ref{main1} should have interesting implications for 
differential geometry on symmetric spaces. Consider for instance locally symmetric spaces
that behave locally like $GU_{n}(\CC)$: they are of the form $GU_{n}(\CC)/G$ for a finite subgroup 
$G < GU_{n}(\CC)$. Then the shortest closed geodesics in $GU_{n}(\CC)/G$ have length  
$2\pi \cdot \min_{1 \neq g \in G}||g||$, where $||g||$ is as defined in Definition \ref{metric}. 
Here is one consequence of Theorem \ref{main1} in this context.

\begin{corol}\label{main1g}
{\sl Let $G < GU(V)$ be a finite irreducible, primitive, tensor indecomposable subgroup. Assume that
the shortest closed geodesics in $GU(V)/G$ have length $\leq L$. Then either one of the conclusions 
{\rm (i), (ii)} of Theorem {\rm \ref{main1}} holds for $G$ with $C := \max\{4, L^{2}\}$, or 
$\dim(V) \leq (L \cdot |Z(G)|/2\pi)^{2}$.}
\end{corol}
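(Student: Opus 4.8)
The plan is to translate the geodesic-length hypothesis into a deviation hypothesis on a generating set and then invoke Theorem \ref{main1}. First I would recall from Definition \ref{metric} that the length of the shortest closed geodesic in $GU(V)/G$ equals $2\pi \cdot \min_{1 \neq g \in G} \|g\|$, so the hypothesis ``shortest closed geodesic has length $\leq L$'' is exactly the assertion that there exists $1 \neq g_{0} \in G$ with $\|g_{0}\| \leq L/(2\pi)$. The key point is to relate $\|g_{0}\|$ to $\DB(g_{0})$: by the definition of $\|g\|$ as an arc-length/operator-type norm (Definition \ref{metric}) and the comparison results in \S2.2 (Proposition \ref{arc}(iii) in particular), one has an inequality of the shape $\DB(g) \leq c \cdot \|g\|$ for a small explicit constant $c$, or more precisely a bound converting the arc-length norm into $2(\dim V - |\Tr g|)$. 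I would extract from that comparison the precise numerical constant and deduce $\DB(g_{0})^{2} \leq (L/2\pi)^{2} \cdot (\text{const})$; the bookkeeping should yield $\DB(g_{0})^{2} \leq L^{2}$ after accounting for how $\|\cdot\|$ is normalized, so setting $C := \max\{4, L^{2}\}$ gives $\DB(g_{0})^{2} \leq C$ with $C \geq 4$ as required by Theorem \ref{main1}.

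Next comes the genuinely new content: a single element $g_{0}$ of small deviation need not generate $G$, so I cannot apply Theorem \ref{main1} to $\langle g_{0}\rangle$ directly. Instead I would consider the normal closure $N := \langle g_{0}^{G}\rangle \trianglelefteq G$. Every conjugate $h g_{0} h^{-1}$ has the same trace as $g_{0}$ (trace is a class function), hence $\DB(h g_{0} h^{-1})^{2} = \DB(g_{0})^{2} \leq C$, so $N$ is generated (up to scalars, trivially here) by elements of deviation-squared $\leq C$. Since $G$ is irreducible, primitive and tensor indecomposable, Clifford theory forces $N$ to act irreducibly on $V$: a non-central normal subgroup of a primitive, tensor-indecomposable irreducible group must itself be irreducible (otherwise $V$ restricted to $N$ decomposes into a system of imprimitivity or a tensor factorization for $G$, contradicting primitivity/tensor-indecomposability — and $N \neq Z(G)$ because $g_{0} \notin Z(G)$, as a central $g_{0}$ together with irreducibility of $G$ would make $g_{0}$ scalar, i.e. trivial ``up to scalars'', leaving nothing to prove).

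I would then apply Theorem \ref{main1} to the finite irreducible group $N < GL(V)$ with this constant $C$. Conclusions (i) and (ii) of Theorem \ref{main1} for $N$ are stated in terms of $\dim V$ and of $N$ itself; conclusion (ii) passes up to $G$ because $Z(G) \times \AAA_{n} \leq N \leq G$ forces the same sandwich for $G$ (with possibly a larger center, but the displayed bound $\dim V = n-1$ is unchanged), and conclusion (i) is purely about $\dim V$ and so is inherited. The remaining case is conclusion (iii) for $N$: $N$ preserves a decomposition $V = V_{1}\oplus\cdots\oplus V_{m}$ with $\dim V_{i} \leq C/4$ and $N$ induces $\SSS_{m}$ or $\AAA_{m}$ permuting the blocks. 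Here I would argue that primitivity of $G$ is the crucial constraint: $G$ normalizes $N$, hence permutes the homogeneous components of $V|_{N}$; but the block decomposition is $N$-invariant, and because $N$ induces a transitive (indeed $2$-transitive) group on the blocks, the blocks form a genuine system of imprimitivity stable under all of $G$ unless $m = 1$. Since $G$ is primitive, $m = 1$, so $V = V_{1}$ and $\dim V \leq C/4 \leq \max\{4,L^{2}\}/4$, which is absorbed into the bound $\dim V \leq (L|Z(G)|/2\pi)^{2}$ once we note $|Z(G)| \geq 1$ and $L \geq$ the relevant threshold — and in the degenerate small-$L$ range the term $(L|Z(G)|/2\pi)^{2}$ together with conclusion (i)'s bound $\fl(C)$ covers everything. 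The main obstacle is the last step: carefully ruling out case (iii) using primitivity and tensor-indecomposability, i.e. showing that the $N$-block structure cannot be promoted to a $G$-block structure without forcing $m=1$, and handling the interplay between the center $Z(G)$ (which inflates the final bound $(L|Z(G)|/2\pi)^2$) and the ``up to scalars'' normalization. I would also need to double-check the exact constant in the $\DB$-versus-$\|\cdot\|$ comparison so that $C = \max\{4, L^2\}$ is genuinely admissible rather than $C = \max\{4, cL^2\}$ for some $c>1$.
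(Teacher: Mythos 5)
Your overall scaffolding (pass from the geodesic-length hypothesis to a deviation bound via Corollary~\ref{basic2m}, form a normal subgroup $N$ generated by elements of small deviation, apply Theorem~\ref{main1} to $N$, transfer the conclusions to $G$) matches the paper's route. But there are two substantive gaps, both of which you explicitly flag as your ``main obstacle'' points, and both of which in fact go wrong.

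First, you dismiss the possibility that the small-norm element $g_{0}$ is central (``leaving nothing to prove''). This is the key case you dropped: the bound $\dim(V)\le (L\cdot|Z(G)|/2\pi)^{2}$ in the Corollary does not arise from case (iii) of Theorem~\ref{main1} at all. In the paper it comes precisely from the scenario where $N\le Z(G)$, i.e.\ where every generator with $2\pi\|g\|\le L$ is scalar: then some scalar $e^{2\pi ij/s}\cdot 1_{V}$ with $s=|Z(G)|$ has $\|g\|=\sqrt{\dim V}/s$ at best, which together with $2\pi\|g\|\le L$ gives exactly $\dim(V)\le(Ls/2\pi)^{2}$. Since you eliminated this case, you were then forced to try to extract the $(L|Z(G)|/2\pi)^{2}$ bound from case (iii), which does not work: from $\dim V\le C/4$ you cannot conclude $\dim V\le(L/2\pi)^{2}$, because $C/4=L^{2}/4>L^{2}/(4\pi^{2})=(L/2\pi)^{2}$, and there is no reason $|Z(G)|$ is large enough to absorb the factor of $\pi^{2}$.

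Second, your argument that case (iii) of Theorem~\ref{main1} applied to $N$ forces $m=1$ is incorrect. You assert that the decomposition $V=V_{1}\oplus\cdots\oplus V_{m}$ preserved by $N$ is automatically stable under $G$ because $G$ normalizes $N$. That is false as stated: conjugation by $g\in G$ sends this decomposition to another $N$-invariant decomposition $g(V_{1})\oplus\cdots\oplus g(V_{m})$, and there may be many such decompositions; nothing forces $g$ to preserve the given one (the $V_{i}$ need not be isotypic components of any canonical normal subgroup). Promoting the $N$-block structure to a $G$-block structure is exactly the hard content here, and the paper does not do it via primitivity at all; instead it rules case (iii) out by contradiction, via a substantial argument: show (using Lemma~\ref{maxc} and the Feit--Tits theorem) that the kernel $D$ of the $N$-action on the blocks is characteristic in $N$, hence normal in $G$, hence central by \cite[Lemma 2.5]{GT3}; then $V$ is an irreducible projective representation of $N/D\in\{\AAA_{m},\SSS_{m}\}$ of degree $me<m^{2}$ with $m>160$, and the classification of small-degree projective representations of $\SSS_{m}$ forces this to be the $(m-2,2)$ Specht module, whose restriction to $\AAA_{m}$ is primitive --- contradicting the imprimitivity of $N$ on $V$. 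So case (iii) simply never occurs (under the running assumption that conclusion (i) fails), rather than collapsing to $m=1$ and a small-dimensional bound. Your proposal needs both of these points repaired before it constitutes a proof.
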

    
The next result shows that noncentral elements $g$ of finite irreducible subgroups of $GL(V)$ usually
have deviation $\DB(g)^{2} \geq 4$, which implies that the condition $C \geq 4$ in Theorem 
\ref{main1} is natural.

\begin{theor}\label{main-b}
{\sl Let $G < GL(V)$ be a finite primitive, irreducible subgroup. Let $d := \dim(V) \geq 2$,  
$g \in G \setminus Z(G)$, and set $\Delta(g) := \dim(V)-|\Tr(g)|$. If $G$ is tensor induced, 
assume furthermore that $g$ acts nontrivially on the set of tensor factors of $V$.
Then one of the following statements hold.

{\rm (i)} $d = 2$ and $\Delta(g) \geq (3-\sqrt{5})/2$.

{\rm (ii)} $d = 3$ and $\Delta(g) \geq 3-\sqrt{3}$.

{\rm (iii)} $d = 4$ and $\Delta(g) \geq 4-2\sqrt{2}$.

{\rm (iv)} $d \geq 5$ and either $\Delta(g) \geq 8-4\sqrt{2}$, or $\Delta(g) = 2$ and $g$ is 
a scalar multiple of a reflection.

{\rm (v)} $V = A \otimes B$ is tensor decomposable as a $G$-module, $\dim(A) = 2$, 
$2 \leq \dim(B) \leq 6$, $g|_{B}$ is scalar, and $\Delta(g) \geq \dim(B) \cdot (3-\sqrt{5})/2$.}
\end{theor}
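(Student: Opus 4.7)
My plan is to use the Aschbacher-style structure theorem for finite primitive irreducible complex linear groups: either $V$ is tensor decomposable as a $G$-module, or tensor induced, or tensor indecomposable with $F^{*}(G) = Z(G) \cdot R$ where $R$ is either an extraspecial group of prime exponent or a quasi-simple group acting absolutely irreducibly. Combined with the Blichfeldt--Shephard--Todd classification of primitive subgroups in dimensions $2, 3, 4$, this yields a manageable case list, and in each case I will bound $|\Tr(g)|$ directly.

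The tensor induced case is the fastest. Write $V = W^{\otimes m}$ and $g = \sigma \cdot (h_{1}, \ldots, h_{m})$ with $\sigma \in \SSS_{m}$ nontrivial (by hypothesis) and $h_{i} \in GL(W)$. The standard formula expresses $\Tr(g)$ as a product of traces, one per cycle of $\sigma$, each factor of modulus at most $\dim(W)$. Since $\sigma$ has at most $m-1$ cycles,
$$|\Tr(g)| \leq \dim(W)^{m-1} = d/\dim(W) \leq d/2,$$
so $\Delta(g) \geq d/2$, which is more than enough in every dimension.

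For the tensor decomposable case $V = A \otimes B$, if $g$ preserves both factors then $g = a \otimes b$ and $\Tr(g) = \Tr(a) \Tr(b)$. When $g|_{B}$ is scalar and $g|_{A}$ is not, case (v) appears: the dimension-$2$ bound applied to the projective image of the stabilizer on $A$ gives $|\Tr(a)| \leq \dim(A) - (3-\sqrt{5})/2$ precisely when $\dim(A) = 2$, and the constraint $\dim(B) \leq 6$ emerges from the numerical fact that $\dim(B) \cdot (3-\sqrt{5})/2 < 8 - 4\sqrt{2}$ iff $\dim(B) \leq 6$, so larger $\dim(B)$ is already covered by (iv). When neither $g|_{A}$ nor $g|_{B}$ is scalar, inductive application of the theorem to both factors contributes a multiplicative slack of $(3-\sqrt{5})/2$ in each, easily reaching $8 - 4\sqrt{2}$ once $d \geq 5$. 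If $g$ swaps the two factors, we are in the tensor induced case already handled.

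In the tensor indecomposable primitive case, the extraspecial subcase is immediate: for $g \in E \setminus Z(E)$ the unique faithful irreducible character of $E$ vanishes on $g$ by Schur orthogonality, so $\Tr(g) = 0$ and $\Delta(g) = d$; for $g$ acting nontrivially on the symplectic space $E/Z(E)$, the character formulas for extraspecial-type central extensions express $|\Tr(g)|$ in terms of the fixed-point subspace of $g$ and readily give bounds well beyond what is needed. In the quasi-simple subcase I would invoke the classification of faithful low-dimensional complex representations of quasi-simple groups (Hiss--Malle, L\"ubeck, and Magaard--Malle--Tiep) together with inspection of the relevant character tables. For $d = 2, 3, 4$ one bypasses the generalized Fitting machinery and reads off the extremal groups from the classical lists: the binary icosahedral group $2.\AAA_{5}$ in dimension $2$ (the order-$10$ element realizes $(3-\sqrt{5})/2$), the Hessian reflection group in dimension $3$, and an appropriate primitive rank-$4$ complex reflection group in dimension $4$. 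The main obstacle I anticipate lies in the quasi-simple subcase for moderate dimensions $5 \leq d \leq 20$, where numerous small-rank sporadic examples (covers of alternating groups, $PSL_2$, low-rank classical groups, and several sporadic groups) must be checked individually against the threshold $8 - 4\sqrt{2}$, and where the reflection exception in (iv) has to be pinned down precisely rather than deduced from general structural bounds.
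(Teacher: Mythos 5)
Your reduction to the four structural cases (tensor induced, tensor decomposable, extraspecial, almost quasi-simple via the generalized Fitting subgroup) is exactly the route the paper takes (Proposition \ref{bound-r}, relying on \cite[Proposition 2.8]{GT3}), and your treatment of the first three cases is essentially identical: Lemma \ref{t-ind1} for the tensor induced bound $|\Tr(g)| \leq \dim(W)^{m-1}$, the $((3-\sqrt{5})/2)$-slack induction on tensor factors with the numerical threshold $\dim(B)\leq 6$ for case (v), and the extraspecial character-formula bound $|\Tr(g)|^2 = |C_{E/Z(E)}(g)|$ from \cite[Lemma 2.4]{GT1}. (One quibble: the extraspecial bounds are \emph{not} ``well beyond what is needed'' in dimensions $3$ and $4$ --- they are exactly the sharp constants $3-\sqrt3$ and $4-2\sqrt2$ appearing in the theorem, so the extremal example in dimension $4$ is the normalizer of $C_4 * 2^{1+4}_{+}$ rather than a primitive reflection group.)

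The genuine gap is in the almost quasi-simple case, which you correctly flag as the hard part but do not resolve. ``Invoke low-dimensional classifications and check tables'' is not self-sufficient, for two reasons. First, there is no a priori bound on $d$: faithful irreducibles of quasi-simple groups exist in every dimension, and you need a mechanism forcing the dimension to be small given $\Delta(g)$ small. The paper obtains this from two ingredients you do not mention: Lemma \ref{red2}, which passes from an arbitrary non-central $g\in G$ to a commutator $h=[g,u]\in L := G^{(\infty)}$ with $\Delta(h) \leq 4\Delta(g)$, together with the Gluck--Magaard character-ratio theorem $|\chi(h)/\chi(1)|\leq 19/20$ (Proposition \ref{ratio1}(i)), which then yields $\chi(1) \leq 364$ in the paper's running hypothesis --- considerably beyond your guessed ceiling of $d\leq 20$, and beyond the reach of a naïve table look-up for the classical families, where one instead needs explicit Weil-character estimates (Lemmas \ref{r-slu}, \ref{r-sp}). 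Second, the $19/20$ bound fails precisely for $\AAA_n$ and $2\AAA_n$ with $n\geq 10$, and it is exactly there that the ``scalar-multiple-of-a-reflection'' escape clause in (iv) lives (the $2$-cycle in the deleted permutation module). This family requires a separate argument via Rasala's degree bounds and the explicit character formulas of Lemma \ref{rhoS} (Proposition \ref{an}); your outline does not account for it, and without doing so one cannot pin down the reflection exception at all.
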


The notion of 
{\it age} of elements of complex linear groups, see Definition \ref{reid}, originates from the work of
M. Reid \cite{R1}, \cite{R2}, \cite{IR}. Its importance in algebraic geometry comes from the 
{\it Reid-Tai criterion} \cite{R1}: {\sl If the subgroup $G < GL_{d}(\CC)$ contains no complex reflections,
then $\CC^{d}/G$ is terminal, resp. canonical, if and only if $\age(g) > 1$, 
resp. $\age(g) \geq 1$ for every $1 \neq g \in G$} (see e.g. \cite{CK} for the definition of 
terminal and canonical singularities). This implies in particular the following   
result of \cite{IR}: 
{\sl If $G$ is a finite subgroup of $GL_{d}(\CC)$ and $f~:X \to \CC^{d}/G$ is a crepant resolution, 
then $G$ contains elements $g$ with $\age(g) \leq 1$}. Recall that, a resolution $f~:~X \to Y$ 
is said to be {\it crepant}, if $f^{*}K_{Y} = K_{X}$. Furthermore, in the profound programme of 
S. Mori to classify $3$-dimensional algebraic varieties and in mirror symmetry, the
singularities of type $\CC^{d}/G$ for some finite subgroup $G < GL_{d}(\CC)$ form a very good test 
class where many features of the general case can be tested in a computable setting. 
Recently, there has been a tremendous amount of research devoted to crepant resolutions. 
For instance, minimal models in Mori's programme   
utilize crepant maps. Crepant resolutions
of quotients $X/G$ of Calabi-Yau varieties $X$ are also used in works on mirror symmetry
(particularly as a way of obtaining {\it mirrors}). 
Physicists have long believed that string theories 
on a quotient space and on its crepant resolutions should be equivalent. Recent conjectures of Y. Ruan 
\cite{Ru}, and J. Bryan and T. Graber \cite{BG} state that if $f~:~X \to Y$ is a crepant resolution, then 
quantum cohomology of $X$ and of $Y$ are essentially the same. More recently, Koll\'ar and 
Larsen \cite{KL} studied quotients $X/G$ of a smooth projective Calabi-Yau variety $X$ by a 
finite group $G$ and showed in particular that the Kodaira dimension of $X/G$ is controlled by 
whether $Stab_{x}(G)$ contains non-trivial elements of age $< 1$ while acting on the tangent space
$T_{x}X$ for some $x \in X$. 
  
The next two theorems of the paper classify finite irreducible subgroups of $GL(V)$ that are 
generated by {\it junior} elements, that is, elements $g$ with $0 < \age(g) \leq 1$, when 
$\dim(V) > 8$.
 
\begin{theor}\label{main2}
{\sl Let $V = \CC^{d}$ with $d \geq 11$ and let $G < GL(V)$ be a finite irreducible subgroup. 
Assume that, up to scalars, $G$ is generated by its elements with age $\leq 1$.  
Then $G$ contains a complex bireflection of order $2$ or $3$, and one of the following statements 
holds. 

{\rm (i)} $Z(G) \times \AAA_{d+1} \leq G \leq (Z(G) \times \AAA_{d+1}) \cdot 2$, with $\AAA_{d+1}$ acting 
on $V$ as on its deleted natural permutation module.

{\rm (ii)} $G$ preserves a decomposition $V = V_{1} \oplus \ldots \oplus V_{d}$, with
$\dim(V_{i}) = 1$ and $G$ inducing either $\SSS_{d}$ or $\AAA_{d}$ while permuting 
the $d$ subspaces $V_{1}, \ldots ,V_{d}$.

{\rm (iii)} $2|d$, and $G = D:\SSS_{d/2} < GL_{2}(\CC) \wr \SSS_{d/2}$, a split extension of 
$D < GL_{2}(\CC)^{d/2}$ by $\SSS_{d/2}$. Furthermore, if $g \in G \setminus D$ has 
$\age(g) \leq 1$, then $g$ is a bireflection (and $\age(g) = 1$).}
\end{theor}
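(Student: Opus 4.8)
The plan is to reduce to Theorem~\ref{main1} and then sharpen its conclusions using the arithmetic peculiar to junior elements. First I would pass from age to deviation: if $g$ has finite order $n$ with eigenvalues $\zeta_{n}^{a_{1}},\dots,\zeta_{n}^{a_{d}}$, $0\le a_{j}<n$, and $\age(g)\le 1$, set $\theta_{j}=2\pi a_{j}/n\in[0,2\pi)$, so $\sum_{j}\theta_{j}=2\pi\,\age(g)\le 2\pi$ and, by an elementary optimization,
\[
d-|\Tr(g)|\ \le\ d-\Re\Tr(g)\ =\ \sum_{j}(1-\cos\theta_{j})\ \le\ \tfrac{9}{2},
\]
with equality only at $g\sim\diag(\omega,\omega,\omega,1,\dots,1)$, $\omega=e^{2\pi i/3}$. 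Hence $\DB(g)^{2}=2(d-|\Tr(g)|)\le 9$, and the hypotheses of Theorem~\ref{main1} hold with $C=9$; the same bookkeeping shows that a junior element has at most one eigenvalue of argument exceeding $\pi$, and that a junior element inducing a long cycle on a direct sum decomposition must have large age (used repeatedly below). Conclusion~(ii) of Theorem~\ref{main1} (with $n=d+1$) is exactly conclusion~(i) here, so we may assume that either $G$ is imprimitive, or $G$ is primitive and $11\le\dim(V)\le\fl(9)=360$.

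Assume first that $G$ is imprimitive, and fix a $G$-invariant decomposition $V=V_{1}\oplus\dots\oplus V_{m}$ ($m\ge 2$, blocks transitively permuted) with $e:=\dim(V_{i})$ minimal. If a junior element moves $\ell\ge 2$ blocks through a single $\ell$-cycle, computing its eigenvalues on the span of those blocks as $\ell$-th roots of the eigenvalues of the associated twist shows that the age contributed there is $\ge e(\ell-1)/2$; so $e\ge 3$ is impossible (it would force age $\ge 3/2$, while transitivity forces some junior generator to move blocks), whence $e\le 2$, and more precisely a junior generator induces on the blocks a product of at most two transpositions or a single $3$-cycle, the ``$\age=1$'' cases being forced to act as a $3$-cycle with trivial twist (eigenvalues $1,\omega,\omega^{2}$) or as $\diag(1,1,-1,-1)$ on two blocks and trivially elsewhere. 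If $G$ is monomial ($e=1$) and the resulting transitive subgroup of $\SSS_{d}$ is primitive, then, being generated by permutations of support $\le 4$, it is $\SSS_{d}$ or $\AAA_{d}$ (Jordan's theorem together with the classification of primitive groups of small minimal degree — here $d\ge 11$ is used), which is conclusion~(ii); otherwise, using that no block system can have blocks of dimension $\ge 3$, one checks that $G$ preserves a $2$-dimensional block system, its transitive image in $\SSS_{d/2}$ is generated by transpositions (single block-swaps) hence equals $\SSS_{d/2}$, and with $D:=G\cap GL_{2}(\CC)^{d/2}$ the junior block-swaps (acting as $\diag(1,1,-1,-1)$ and satisfying the Coxeter relations of $\SSS_{d/2}$) generate a complement to $D$ — this is conclusion~(iii), with its last assertion. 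In all these subcases $G$ contains a complex bireflection of order $2$ or $3$ (a block-swap, a junior generator inducing a $3$-cycle, or a commutator of two junior generators whose transpositions share a point).

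Now assume $G$ is primitive, so $11\le\dim(V)\le 360$; the target is conclusion~(i). By Aschbacher's theorem $G$ is tensor induced, tensor decomposable, normalizes a symplectic-type $r$-subgroup, or is almost quasi-simple. Tensor induced $V=W^{\otimes t}$ ($t\ge 2$): by the twist computation and $\dim(W)^{t}=d\ge 11$, a junior element acts trivially on the set of $t$ factors, contradicting transitivity there, so this case does not occur. Tensor decomposable $V=A\otimes B$, $2\le\dim A\le\dim B$: after discarding the $A\leftrightarrow B$ swap as above, each junior element is modulo scalars a pure tensor $g_{A}\otimes g_{B}$, and a per-eigenvalue age estimate (the eigenvalues of $g_{A}\otimes g_{B}$ split into $\dim B$ families, each a copy of the eigenvalue-spread of $g_{A}$) shows that if $g_{A}$ is nonscalar then its image in $PGL(A)$ has order $\ge\dim B$, and symmetrically for $g_{B}$; since $G$ is primitive the images $\bar G_{A}\le PGL(A)$, $\bar G_{B}\le PGL(B)$ are themselves primitive (otherwise $G$ would be reducible or imprimitive), so by the Jordan--Collins bound their element orders are bounded in terms of $\dim A,\dim B$ — for $\dim A=2$ this already forces $\dim B\le 5$ and $d\le 10$, a contradiction (Theorem~\ref{main-b}(v) gives this quickly), and in general it bounds $d$, reducing to a finite check disposed of by a sharper form of the age estimate. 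Symplectic-type normalizer, $\dim(V)=r^{n}$: this is a finite list of prime powers in $[11,360]$, and for each, in the given representation every noncentral element of the extraspecial (or symplectic-type) normal subgroup has $\age>1$, as does all but a controlled set of elements of the quotient ($Sp_{2n}(r)$ or $O^{\pm}_{2n}(r)$ and its covers, using the relevant character tables, including those computed by Breuer), so the junior elements cannot generate an irreducible group.

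Finally, the almost quasi-simple case: $E(G)=L$ is quasi-simple, $L/Z(L)=S$ is simple, and $V|_{L}$ is faithful irreducible with $11\le d\le 360$. Using the classification of quasi-simple groups having a faithful irreducible projective representation of dimension $\le 360$ (for groups of Lie type via Landazuri--Seitz, Hiss--Malle, L\"ubeck and related bounds, plus the alternating and sporadic groups), one runs through the finite list of pairs $(S,d)$: for every pair other than $S=\AAA_{d+1}$ on the deleted natural permutation module one shows $G$ cannot be generated by junior elements — either $L$ has no nontrivial junior element in this representation (semisimple elements in near-minimal representations of Lie-type groups have too-widely-spread eigenvalues, and a finite character-table computation settles the sporadic cases), or the only junior elements are complex (bi)reflections whose span, by the classification of finite complex reflection groups and bounds on subgroups generated by bireflections, cannot be $L$ acting irreducibly. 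In the surviving case $S=\AAA_{d+1}$, the $3$-cycles act on the deleted permutation module as complex bireflections of order $3$ and age $1$ and generate $\AAA_{d+1}$, so the hypothesis holds; since $\Out(\AAA_{d+1})$ is a $2$-group and the deleted permutation representation extends to $\SSS_{d+1}$, one obtains $Z(G)\times\AAA_{d+1}\le G\le(Z(G)\times\AAA_{d+1})\cdot 2$ — conclusion~(i), with the $3$-cycles providing the required bireflection. I expect the main obstacle to be this last case: marshalling the classification of low-dimensional representations of quasi-simple groups and, for every candidate other than the alternating one, ruling out junior generation, which for the Lie-type candidates requires controlling the spectra of semisimple and unipotent elements and, when (bi)reflections occur, invoking the classification of complex reflection groups together with effective bounds on bireflection-generated subgroups. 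A secondary difficulty is the tensor-decomposable case, where one must push the age estimate far enough to force the projection to $PGL(A)$ to have bounded element orders.
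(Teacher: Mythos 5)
Your overall strategy matches the paper's: reduce to deviation via $\DB(g)^{2}=2(d-|\Tr(g)|)$ and Theorem~\ref{main1}, then handle the imprimitive, tensor-induced, tensor-decomposable, extraspecial, and almost quasi-simple cases separately. Your sharper bound $\sum_j(1-\cos\theta_j)\le 9/2$ (giving $C=9$ instead of the paper's $(2.9)\pi$) is fine, and the cycle-twist computation giving the age bound $e(\ell-1)/2$ per $\ell$-cycle of $e$-dimensional blocks is correct and is essentially Lemma~\ref{impr2}/\ref{impr3}. However, there are two genuine gaps.

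First, and most seriously, your argument for the existence of a complex bireflection of order $2$ or $3$ in case (ii) is incomplete. The sketch ``a commutator of two junior generators whose transpositions share a point'' does yield a bireflection of order $3$ when $G$ contains elements inducing $(12)$ and $(23)$, but when $\pi(G)=\AAA_d$ there may be no such elements, and commutators of $3$-cycle-inducing elements with double-transposition-inducing elements leave a diagonal tail on the remaining blocks whose entries need not be $\pm1$; the resulting diagonal element, even after raising to powers, is a bireflection of \emph{some} order, not necessarily $2$ or $3$. The alternative ``a junior generator inducing a $3$-cycle'' is a bireflection only in $Z(GL(V))G$, not necessarily in $G$ itself. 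The paper's Lemma~\ref{impr4} resolves precisely this difficulty, and its proof uses the cohomology vanishing $H^{2}(\AAA_n,\HC)=0$ for $n\ge 10$ (Kleshchev--Premet) to split off a genuine copy of $\AAA_n$ inside $O_2(D)\cdot\AAA_n\le G$ and then construct an honest order-$3$ bireflection. This is a real ingredient your proposal does not replace, and indeed the restriction $d\ge 11$ in the theorem partly exists so that this cohomological vanishing is available.

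Second, your tensor-decomposable argument does not close. The observation that a nonscalar junior factor $g_A$ has projective order $\ge\dim B$ is correct, but bounding element orders in primitive subgroups of $PGL(A)$ via Jordan-type theorems only gives $d\le 10$ when $\dim A=2$; already for $\dim A=3$ primitive subgroups of $PGL_3(\CC)$ contain elements of order $7$, which would allow $d$ up to $21$, above the threshold. The paper's route is sharper: it uses Lemma~\ref{trivial}(iv) together with Theorem~\ref{min-age} (the lower bound $\ages\ge1/2$ for nonscalar elements in primitive, tensor-indecomposable modules of dimension $\ge 5$, which itself relies on the full Table~I classification) to get $\dim B\le 2$ for $\dim A\ge 5$ directly, whence $d\le 10$ in all subcases. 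Your ``sharper form of the age estimate'' is in fact Theorem~\ref{min-age}, and you would need to prove it to make your plan work.

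Finally, two smaller points: you take the finest $G$-invariant decomposition in the imprimitive case, whereas the paper takes the coarsest (to ensure $\pi(G)$ is primitive); your version requires an extra step to rule out imprimitive $\pi(G)$ with blocks of size $\ge 3$, which you gesture at but do not spell out. And in the extraspecial case the explicit character estimate $|\chi(g)|\le p^{m-1/2}$ already forces $\dim V\le 9$, so the finite check up to $360$ you describe is unnecessary.
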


\begin{theor}\label{main3}
{\sl Let $V = \CC^{d}$ with $d \geq 9$ and let $G < GL(V)$ be a finite irreducible subgroup. 
Assume that, up to scalars, $G$ is generated by its elements with age $\leq 1$, and that
$G$ contains a scalar multiple of a non-central element $g$ with $\age(g) < 1$. 
Then one of the following statements holds. 

{\rm (i)} One of the conclusions {\rm (i), (ii)} of Theorem {\rm \ref{main2}} holds, and 
$G$ contains a scalar multiple of a complex reflection.

{\rm (ii)} The conclusion {\rm (iii)} of Theorem {\rm \ref{main2}} holds, and, modulo scalars, 
$G$ cannot be generated by its elements of $\age < 1$.}
\end{theor}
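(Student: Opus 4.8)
The plan is to bootstrap Theorem \ref{main3} from Theorem \ref{main2} by refining, in each of the three cases produced there, the analysis of which elements have age \emph{strictly} less than $1$. Since the hypotheses here contain those of Theorem \ref{main2}, for $d\geq 11$ the group $G$ already falls into one of the cases (i), (ii), (iii) of that theorem, and $G$ contains a complex bireflection of order $2$ or $3$. For the two remaining dimensions $9\leq d\leq 10$ I would rerun the reduction underlying the proof of Theorem \ref{main2}: the handful of extra configurations that survive in these dimensions (essentially small quasisimple groups acting with junior elements) are eliminated by hand using the extra hypothesis that $G$ contains, up to scalars, a non-central element $g$ with $\age(g)<1$, since a direct inspection shows they contain no such $g$. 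So in all cases $d\geq 9$ one of (i), (ii), (iii) of Theorem \ref{main2} holds.

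Suppose first that conclusion (iii) of Theorem \ref{main2} holds, so $G\leq GL_2(\CC)\wr\SSS_{d/2}$ with $d/2\geq 5$. The key point is that every element of $G$ moving the $d/2$ two-dimensional blocks nontrivially has age $\geq 1$, and that this persists under multiplication by scalars: if such an element induces a cyclic permutation of $\ell\geq 2$ blocks, then on the corresponding $2\ell$-dimensional subspace its eigenvalues form two families of $\ell$-th roots of fixed values, and any $\ell$ equally spaced points on the unit circle contribute at least $(\ell-1)/2$ to the age however the whole element is rescaled; for $\ell=2$ the two families together already contribute $\geq 1$. Hence every element of $G$ of age $<1$ lies in $D$, so the subgroup generated modulo scalars by the elements of $G$ of age $<1$ lies in $Z(GL(V))\cdot D$, which preserves the block decomposition and is therefore reducible. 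As $G$ is irreducible, $G$ is not generated modulo scalars by its elements of age $<1$, which is conclusion (ii) of the theorem.

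Now suppose conclusion (i) or (ii) of Theorem \ref{main2} holds; I must exhibit a scalar multiple of a complex reflection in $G$. Choose $h\in G$ and a scalar $\lambda$ with $g:=\lambda^{-1}h$ non-central and $\age(g)<1$. In case (ii), $h$ and hence $g$ is a monomial matrix for the coordinate-line decomposition, whose permutation part has cycle lengths $\ell_1,\ldots,\ell_r$; since $\age(g)\geq\sum_i(\ell_i-1)/2$ for any monomial matrix, $\age(g)<1$ forces $\sum_i(\ell_i-1)\leq 1$, so the permutation part is a transposition or the identity. In case (i), $g$ is a scalar multiple of the image of some $\sigma\in\SSS_{d+1}$ under the deleted permutation representation, and a short case analysis of the cycle types of $\sigma$ (using $d\geq 9$) shows that no scalar multiple of that image has age $<1$ unless $\sigma$ is a transposition; so again the permutation part is a transposition or trivial. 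In case (i) a transposition acts on the deleted permutation module as a complex reflection (up to a scalar coming from a possible sign twist), so $h$ is a scalar multiple of a reflection and we are in conclusion (i). In case (ii): if the permutation part of $g$ is a transposition, then $\age(g)<1$ forces $g$ to act on all but the two transposed coordinates by eigenvalues very close to $1$, and combining this with the $2$-transitivity of the induced group ($\SSS_d$ or $\AAA_d$, as $d\geq 9$) on the coordinate lines and with the normal diagonal subgroup $A$ of $G$ one produces a complex reflection in $G$ up to scalars; if the permutation part is trivial, $g$ is a scalar multiple of a non-central diagonal element, and using that $A$ is a permutation-stable finite diagonal group whose coordinate characters are pairwise distinct by irreducibility — hence, modulo scalars, the diagonal part of one of the Shephard--Todd groups $G(m,p,d)$ — together with the bireflection supplied by Theorem \ref{main2}, one again produces a diagonal complex reflection in $G$ up to scalars. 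Either way we land in conclusion (i).

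The main obstacle is this last step in case (ii): Theorem \ref{main2} only supplies a \emph{bireflection}, and upgrading ``$G$ has a non-central element of age $<1$ up to scalars'' to ``$G$ has a complex reflection up to scalars'' requires pinning down the diagonal subgroup $A$ and deciding which imprimitive groups $G(m,p,d)$ (and their scalar extensions) can simultaneously be generated modulo scalars by age-$\leq 1$ elements and contain an age-$<1$ element. In particular the genuinely exceptional possibilities — such as those with an $\AAA_d$ action and $m\mid d$, which contain no complex reflection even up to scalars — must be shown to violate the age-$<1$ hypothesis, and this is where the real work lies. A secondary, more bookkeeping-type obstacle is the treatment of the dimensions $d=9,10$, which lie outside the range of Theorem \ref{main2}.
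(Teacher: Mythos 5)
The overall case split you propose — imprimitive, tensor decomposable/induced, extraspecial, almost quasi-simple, then read off the conclusion — is the same structure the paper uses, and your handling of cases (i) and (iii) of Theorem \ref{main2} (reading off a $2$-cycle reflection in the $\AAA_{d+1}$ case, and arguing via Lemma \ref{small1} that in the wreath-product case every nontrivially-permuting element has $\ages\geq 1$) is essentially correct. However, there is a genuine gap at precisely the point you flag: the imprimitive case with $1$-dimensional blocks. You write that one must ``produce a complex reflection in $G$ up to scalars'' from the diagonal subgroup and the bireflection supplied by Theorem \ref{main2}, and that ``this is where the real work lies'' — but you never carry out that work, and the step is not a routine consequence of what precedes it. In the paper this is a separate result, Theorem \ref{impr-a}(i), whose proof (parts 1), 3), 4)) requires a nontrivial structural analysis of the diagonal subgroup $D$: one forms the groups $A$, $B$, $C$ of first entries, entry-ratios, and balanced diagonals, shows $DZ_1 = CZ_1$ and that $|B|$ divides $n$, and then derives a contradiction from the assumed absence of a reflection by noting that any $h\in D$ with $\age(h)<1$ would force some $c\in C\cap SL(V)$ with $\age(c)=0$, i.e.\ $c$ scalar. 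None of this is sketched in your proposal, and the heuristic about ``eigenvalues very close to $1$'' plus $2$-transitivity does not obviously replace it: the obstruction is not about the permutation part of $g$ at all but about whether the \emph{diagonal} part can be forced to stabilize a reflection class. Your parenthetical invocation of the Shephard--Todd groups $G(m,p,d)$ is suggestive, but the paper's argument does not presuppose that $D$ is of that form; proving $D$ is (up to scalars) $G(m,p,d)$-like is itself part of the missing work.

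A secondary but real gap, which you also acknowledge, is the case $d=9,10$: Theorem \ref{main2} is stated only for $d\geq 11$, so you cannot literally bootstrap from it. The paper handles all $d\geq 9$ in one pass by not going through Theorem \ref{main2} at all: it uses Lemma \ref{tensor2} (tensor decomposable forces $d\leq 8$ given an element with $\ages<1$, versus $d\leq 10$ in Lemma \ref{tensor1}), Proposition \ref{t-ind2} (tensor induced forces $d\in\{4,8\}$), the extraspecial analysis of \S\ref{SS-ext2} (which shows for $d=9$ every non-central element has $\ages\geq 1$), and Theorem \ref{main-d}/Table I (for $d=10$ the only entry, $SU_5(2)$, has $\ages(g)=1$ only). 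Your proposal would need to reproduce these finer bounds for $d=9,10$ rather than merely ``rerunning the reduction.''
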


The bound $d \geq 9$ in Theorem \ref{main3} is best possible, cf. Remark \ref{r14}.
In the case $4 \leq \dim(V) \leq 10$ of Theorem \ref{main2}, the structure  
of the arising subgroups $G$ is described in Proposition \ref{main-s}. On the contrary,
from the group-theoretic viewpoint
there is not much to say about the dimensions $\leq 3$: if $1 \neq g \in SL_{d}(\CC)$
has finite order, then $\age(g) = 1$ if $d = 2$, and either $\age(g)$ or $\age(g^{-1}) = 1$ if 
$d = 3$.

A key ingredient in the proofs of Theorems \ref{main2} and \ref{main3} comes 
from Proposition \ref{arc} and its consequence Corollary \ref{bound2}, which
relate the $\age(g)$ to the $L^{2}$-deviation
$\DB(g)^{2}$ and thus allow us to invoke available results 
on character ratios for finite quasi-simple groups \cite{G}, \cite{GM}.    
Also, see Theorem \ref{min-age} for 
a lower bound on the age of any non-central element in finite linear groups. One
should compare the latter result
with the classical theorem of Blichfeldt stating that 
the shortest arc of $\SA$ which contains all eigenvalues of a non-central element in 
a finite primitive complex linear group has length at least $\pi/3$.  

In the case the finite subgroup $G < GL(V)$ fixes a non-degenerate symplectic
form on $V$, D. Kaledin \cite{Ka} and M. Verbitsky \cite{V} have shown that 
$V/G$ can have a crepant resolution only when $G$ is generated by complex 
bireflections. In general, however, it is not true that (non-central) elements
of $\age \leq 1$ are always complex bireflections (nor elements with fixed 
point subspace of codimension $2$). In this regard, one of the main
assertions of Theorem \ref{main2} is the existence of complex bireflections in
the groups $G$ satisfying the hypotheses of the theorem. If one
knows that $G$ is {\it generated} by complex bireflections
(or $G$ contains complex bireflections and is {\it quasiprimitive}), 
one can then appeal to available results on such groups, 
particularly \cite{HW}, \cite{Hu}, \cite{Wa} (see also \cite{Co}).   

Interestingly, it was shown by V. Kac and K. Watanabe \cite{KW}, and
independently by N. Gordeev \cite{Go1}, that if the ring $\Sym(V)^{G}$ of 
$G$-invariants is a complete intersection for a finite group $G < GL(V)$, 
then $G$ is generated by elements with fixed point subspace of codimension $2$.
The finite groups $G < GL(V)$ with $\Sym(V)^{G}$ being a complete intersection
have been classified by \cite{Go2} and \cite{N}.    
  
In a certain sense, Theorem \ref{main2} gives indications that crepant resolutions seem to occur
mostly in low dimensions. Indeed, let $f~:X \to \CC^{d}/G$ be a crepant resolution, and let 
$K$ be the normal subgroup of $G$ generated by all elements of $\age \leq 1$. Then by
Theorem \ref{main2}, for any irreducible summand $V$ of the $K$-module $\CC^{d}$, either 
$\dim(V) \leq 10$, or the action of $K$ on $V$ contains complex bireflections (of order $2$ or $3$), 
and so the quotient $V/K$ should behave reasonably well from the point of view of algebraic geometry. (See \cite{Ha} for the case of $\SSS_{n}$ acting on
the sum $\CC^{n} \oplus \CC^{n}$ of two copies of the natural permutation 
module.)
We formulate one consequence of our results in this regard:

\begin{corol}\label{main2cr}
{\sl Let $d \geq 11$ and let $G < GL_{d}(\CC)$ be a finite irreducible, primitive, tensor 
indecomposable subgroup. Assume that $\CC^{d}/G$ is not terminal (for instance, it has a 
crepant resolution). Then one of the following statements holds. 

{\rm (i)} $Z(G) \times \AAA_{d+1} \leq G \leq (Z(G) \times \AAA_{d+1}) \cdot 2$, 
with $\AAA_{d+1}$ acting on $\CC^{d}$ as on its deleted natural permutation module.

{\rm (ii)} All junior elements of $G$ are central, and $|Z(G)| \geq d$.}
\end{corol}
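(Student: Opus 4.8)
The plan is to deduce this corollary from Theorem \ref{main2}, using the Reid--Tai criterion to translate the geometric hypothesis into the group-theoretic one. First I would observe that if $\CC^d/G$ is not terminal, then by the Reid--Tai criterion there is a non-central element $g \in G$ with $\age(g) \le 1$ (if $G$ contained a non-trivial complex reflection we could pass to the reflection-free situation by a standard argument, or alternatively note that a complex reflection itself has small age and generates a genuinely non-terminal contribution — in any case $\CC^d/G$ non-terminal forces a junior element, up to the usual caveat that we may need to factor out the subgroup generated by reflections; here I would cite the precise statement of Reid--Tai as recalled in the introduction). Let $K \trianglelefteq G$ be the normal subgroup generated by all elements of $G$ of age $\le 1$; then $K \neq Z(G) \cap K$, i.e. $K$ is not central. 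Since $G$ is primitive and tensor indecomposable and irreducible on $V = \CC^d$, and $K$ is a non-central normal subgroup, Clifford theory together with primitivity forces $K$ to act homogeneously, and in fact (using that $G$ is tensor indecomposable, so $V$ does not tensor-factor through $V|_K$) the module $V|_K$ is irreducible, or at least $K$ acts with a single irreducible constituent up to which $G$ is built; I would want $V|_K$ irreducible to apply Theorem \ref{main2} directly.

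Granting that $V|_K$ is irreducible of dimension $d \ge 11$, the group $K$ is a finite irreducible subgroup of $GL_d(\CC)$ generated, up to scalars, by its elements of age $\le 1$, so Theorem \ref{main2} applies to $K$. This yields three cases. In case (i) of Theorem \ref{main2} we get $Z(K) \times \AAA_{d+1} \le K \le (Z(K) \times \AAA_{d+1})\cdot 2$ with $\AAA_{d+1}$ acting on the deleted permutation module; since $K \trianglelefteq G$ and $G$ normalizes this, and $\AAA_{d+1}$ is the unique such normal constituent, $G$ too satisfies $Z(G) \times \AAA_{d+1} \le G \le (Z(G)\times\AAA_{d+1})\cdot 2$ — this is conclusion (i) of the corollary. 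Cases (ii) and (iii) of Theorem \ref{main2} are ruled out by the \emph{tensor indecomposable} and \emph{primitive} hypotheses: case (ii) says $K$ preserves a direct sum decomposition into $d$ lines with $G$-action inducing $\SSS_d$ or $\AAA_d$, which makes $V$ imprimitive as a $G$-module (since $K \trianglelefteq G$, $G$ must permute the associated system of lines — one needs a short argument that the decomposition into eigenlines of a normal subgroup is $G$-stable, using that it is canonically attached to $K$), contradicting primitivity; case (iii) exhibits $V$ as $2$-dimensional-tensor-times-$(d/2)$-permutation, i.e. tensor decomposable or imprimitive for $G$, again a contradiction. So in the primitive tensor-indecomposable setting only conclusion (i) survives when junior elements are non-central.

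The remaining case is that \emph{all} junior elements of $G$ are central; this is conclusion (ii) of the corollary, except for the bound $|Z(G)| \ge d$. For that, recall that a central element $\zeta = \om \cdot \Id$ with $\om$ a primitive $m$-th root of unity, acting on $V = \CC^d$, has $\age(\zeta) = d \cdot \{k/m\}$-type sum; concretely, for the generator $\om$ of $Z(G)$ of order $m = |Z(G)|$, one computes $\age(\zeta^k) + \age(\zeta^{-k})$ and sees that the minimal positive age among non-trivial scalars is $d/m$ (attained at the scalar closest to $1$), since all $d$ eigenvalue-arguments equal $k/m$ and the age is $d \cdot (k/m)$ for $0 < k < m$ — wait, this must equal $\le 1$ for \emph{some} $k$ for $\CC^d/G$ to be non-terminal via a central junior element. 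Since by hypothesis there \emph{is} a non-central junior element up to scalars, or rather $\CC^d/G$ is non-terminal, some junior element exists in $G$ after the scalar-normalization in Theorem \ref{main2}; if all genuine junior elements of $G$ are central, then some central $\zeta^k \neq 1$ has $\age(\zeta^k) \le 1$, forcing $d \cdot (k/m) \le 1$ for some $1 \le k \le m-1$, whence $m \ge d$, i.e. $|Z(G)| \ge d$. The main obstacle I anticipate is the Clifford-theoretic step: rigorously showing that the normal subgroup $K$ generated by junior elements acts \emph{irreducibly} (not merely homogeneously) on $V$, so that Theorem \ref{main2} is applicable with $\dim V = d$ rather than a proper divisor — this requires carefully exploiting both primitivity and tensor indecomposability of $G$ to exclude the scenarios $V|_K = W^{\oplus e}$ with $\dim W < d$, and then transferring the conclusion from $K$ back to $G$. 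I would structure that step as a short lemma on its own.
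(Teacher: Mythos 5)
Your overall plan matches the paper's: invoke the Reid--Tai criterion to get a nontrivial junior element, form the normal subgroup $N$ of $G$ generated by all elements of age $\leq 1$, establish that $N$ is either central or irreducible on $V$, apply Theorem~\ref{main2} to $N$, and in the central case run the scalar-age computation $\age(e^{2\pi ij/s}\cdot 1_V) = dj/s \geq d/s$ to get $|Z(G)| \geq d$. The promotion of the $\AAA_{d+1}$-conclusion from $N$ to $G$ (via $N^{(\infty)} \lhd G$, Schur's Lemma giving $C_G(N^{(\infty)}) = Z(G)$, and $\Aut(\AAA_{d+1}) = \SSS_{d+1}$) is also as in the paper. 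However, two steps are genuine gaps.

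First, the dichotomy ``$N \leq Z(G)$ or $N$ is irreducible on $V$'' is not something elementary Clifford theory delivers: it is exactly \cite[Lemma~2.5]{GT3}, and it is the first place where all three hypotheses (irreducible, primitive, tensor indecomposable) on $G$ are used. You correctly flag this as the main obstacle, but you neither supply a proof nor a reference, so as written the argument does not start. (Clifford theory plus primitivity only gives that $V|_N$ is homogeneous; excluding $V|_N = W^{\oplus e}$ with $1 < \dim W < d$ is precisely where tensor indecomposability enters, and that is the content of the cited lemma.)

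Second, and more seriously, your mechanism for ruling out conclusions~(ii) and~(iii) of Theorem~\ref{main2} for $N$ is not correct. You propose that the direct-sum decomposition is ``canonically attached to $K$'' and so must be $G$-stable, forcing $G$ imprimitive. But the decomposition is the eigenline (or block) decomposition of the diagonal subgroup $D = \Ker(\pi) \lhd N$, which is \emph{not} canonically determined by $N$ alone and is not a priori normal in $G$. In fact the paper has to work to get $D \lhd G$: it characterizes $D$ as the unique maximal normal subgroup of $N$ whose composition factors all lie in a specified class $\CL$ of simple groups (cyclics, plus $\AAA_5$ when $\dim V_i = 2$), using Lemma~\ref{maxc} and a Feit--Tits-type composition-factor analysis borrowed from the proof of Corollary~\ref{main1g}. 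Once $D \lhd G$, a second application of \cite[Lemma~2.5]{GT3} (together with the fact that $D$ is reducible, being contained in a diagonal torus) forces $D \leq Z(G) \cap N = Z(N)$; so the decomposition becomes \emph{trivial} rather than $G$-stable, and the contradiction is of a completely different nature than the one you anticipated: in case~(ii), $V$ would be an irreducible projective representation of $N/D \in \{\AAA_d, \SSS_d\}$ of degree $d$, impossible for $d \geq 11$; in case~(iii), $N = \SSS_{d/2}\cdot Z(N)$ would act reducibly on $V$. The ``short argument'' you deferred is thus the bulk of this part of the proof and proceeds along different lines from the one you sketched.
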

   
Recall that $(G,V)$ is a {\it basic non-RT pair} if $G < GL(V)$ is a finite irreducible subgroup 
and $G = \langle g^{G} \rangle$ for every non-central element $g \in G$ with $\age(g) < 1$. 
This notion was first introduced in \cite{KL} and is of importance for the geometry of quotients 
of Calabi-Yau varieties. Our third main result is concerned with this notion and is in fact predicted
by results of \cite{KL}. 

\begin{theor}\label{main4}
{\sl Let $G < GL(V)$ be a finite irreducible subgroup. Assume that, $G$ contains  
non-central elements $g \in G$ with $\age(g) < 1$, and that $G = \langle g^{G} \rangle$ for any 
such an element. Assume in addition that $\dim(V) > 4$. Then, up to scalars, 
$G$ is a complex reflection group.}
\end{theor}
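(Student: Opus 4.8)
The plan is to deduce Theorem~\ref{main4} from the structural classification already obtained in Theorems~\ref{main2}, \ref{main3}, and Proposition~\ref{main-s}. The hypothesis says $(G,V)$ is a basic non-RT pair with $\dim(V) = d > 4$: $G$ contains non-central junior elements $g$ with $\age(g)<1$, and each such $g$ normally generates $G$. In particular $G$ is generated, up to scalars, by its elements of age $<1$, hence a fortiori by its elements of age $\leq 1$, so the hypotheses of Theorem~\ref{main3} are met whenever $d \geq 9$, and those of Proposition~\ref{main-s} (the $4 \leq d \leq 10$ case) otherwise. I would split the argument according to $d$.

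First, suppose $d \geq 9$. By Theorem~\ref{main3}, either case~(i) holds --- one of the conclusions \ref{main2}(i),(ii) holds and $G$ contains a scalar multiple of a complex reflection --- or case~(ii) holds, i.e.\ the conclusion \ref{main2}(iii) holds but $G$ cannot, modulo scalars, be generated by its elements of age $<1$. The latter directly contradicts the basic non-RT hypothesis (which forces $G$ to be generated up to scalars by such elements, since each normally generates $G$ and there is at least one). So we are in case~(i). Now I must upgrade ``$G$ contains a scalar multiple of a complex reflection'' to ``$G$ is a c.r.g.\ up to scalars.'' Here is where the normal-generation hypothesis does the real work: let $z r$ be the given element with $z \in Z(GL(V))$ and $r$ a complex reflection; I would show $zr$ is (a scalar times) a junior element with $\age < 1$ --- indeed a scalar multiple of a reflection has $\age$ equal to the single nonzero rotation number, which can be taken $<1$ after adjusting the scalar $z$ --- and therefore $G = \langle (zr)^G \rangle$. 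Modulo $Z := Z(GL(V)) \cap G$ (or working with $Z(GL(V))G$), the conjugates of $zr$ are again scalar multiples of reflections, so $G$ is generated by scalar multiples of reflections; in the $\AAA_{d+1}$-case \ref{main2}(i) one checks directly that the deleted permutation module representation of $\SSS_{d+1}$ is literally a c.r.g.\ (generated by transpositions, which are real reflections), and in the imprimitive case \ref{main2}(ii) the group $G \leq GL_1(\CC) \wr \SSS_d$ inducing $\SSS_d$ or $\AAA_d$ is handled using the Shephard--Todd classification of imprimitive c.r.g.'s $G(m,p,d)$, verifying that normal generation by a scalar-times-reflection forces $G$ modulo scalars to be one of these. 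Conversely in conclusion \ref{main2}(ii) with $G$ inducing only $\AAA_d$ one should double-check whether the pair is actually basic non-RT, or exclude it.

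Second, for $5 \leq d \leq 10$ I would invoke Proposition~\ref{main-s}, which lists the finite irreducible $G < GL(V)$ generated up to scalars by age-$\leq 1$ elements in that dimension range; for each entry on that list I check whether it can be a basic non-RT pair, and if so whether it is a c.r.g.\ up to scalars. Most entries are either c.r.g.'s, near-c.r.g.'s (like $Z(G)\times\AAA_{d+1}$), or low-dimensional exceptional groups (e.g.\ extraspecial-normalizer type or quasisimple type) where the junior-element structure is explicitly known; for the exceptional ones I expect to argue that either they contain no non-central $g$ with $\age(g)<1$ (so the hypothesis is vacuous and they are excluded), or the normal-generation condition fails, or they are genuinely c.r.g.'s. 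This is a finite bookkeeping task relying on the explicit data behind Proposition~\ref{main-s} and the character-ratio/age estimates of Proposition~\ref{arc} and Corollary~\ref{bound2}.

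The main obstacle is the second step together with the c.r.g.\ identification inside case~\ref{main2}(i)--(ii): turning the existence of \emph{one} scalar-reflection into the statement that $G$ is, up to scalars, \emph{generated by} reflections, and then matching the resulting group against the Shephard--Todd list. The subtlety is the interplay between scalars and the age function --- a scalar multiple of a reflection need not itself be a reflection, and its age depends on the chosen scalar representative --- so one must be careful that ``basic non-RT'' (phrased with $\age < 1$) really is strong enough to force normal generation by honest reflections modulo $Z(GL(V))$. I would isolate this as a lemma: if a finite irreducible $G < GL(V)$, $\dim V > 4$, contains a non-central scalar multiple of a complex reflection and is normally generated by each of its non-central age-$<1$ elements, and $G$ satisfies one of \ref{main2}(i),(ii), then $G$ is a c.r.g.\ up to scalars. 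The remaining dimensions $d = 5,\dots,8$ not covered by Theorem~\ref{main3} are the ones most likely to hide a genuinely exceptional counterexample, so those rows of Proposition~\ref{main-s} deserve the most careful checking.
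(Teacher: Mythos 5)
Your route and the paper's are built on the same underlying ingredients but are packaged quite differently. The paper's actual proof of Theorem~\ref{main4} never mentions Theorem~\ref{main3} or Proposition~\ref{main-s}: it splits directly by module structure. If $G$ is imprimitive, it cites Lemma~\ref{impr3} together with Theorem~\ref{impr-a}(ii); if primitive, Corollary~\ref{tensor4} rules out tensor decomposability, Proposition~\ref{t-ind2} rules out tensor induction, and \S\S\ref{SS-ext2} rules out the extraspecial case, leaving the almost quasi-simple case where Theorem~\ref{main-d} (Table~I) lists the possibilities and each is inspected to be a Weyl group (types $A_d$, $E_6$, $E_7$, $E_8$, or the $PSU_4(3).2_2$ and $SU_4(2)$ reflection groups). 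Your detour through Theorem~\ref{main3} for $d \geq 9$ and Proposition~\ref{main-s} for $5 \leq d \leq 10$ is a legitimate alternative entry point, and your dismissal of Theorem~\ref{main3}(ii) via the normal-generation hypothesis is correct. But the detour does not avoid any of the hard work, and several gaps remain unfilled.

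There is a concrete error in the key step you flag as ``where the normal-generation hypothesis does the real work.'' You write that a scalar multiple of a reflection has age equal to the single nonzero rotation number, ``which can be taken $<1$ after adjusting the scalar $z$,'' and then invoke $G = \langle (zr)^G \rangle$. But the hypothesis of Theorem~\ref{main4} concerns elements \emph{of $G$} with $\age < 1$; after you adjust the scalar you are generally looking at an element of $Z(GL(V))G$, not of $G$, and the normal-generation assumption does not apply to it. (The actual element $zr \in G$ can perfectly well have $\age(zr) \geq 1$ --- indeed for $d$ large it usually does.) The correct argument runs in the other direction: start from an actual non-central $g_0 \in G$ with $\age(g_0) < 1$ (which exists by hypothesis), and \emph{classify} $g_0$. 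In the $\AAA_{d+1}$-case (\ref{main2}(i)) this classification is Proposition~\ref{an}: the only non-central elements with $\ages < 1$ are scalar multiples of $2$-cycles, so $g_0$ projects outside $\AAA_{d+1}$, forcing $G$ to induce $\SSS_{d+1}$, which is the Weyl group of type $A_d$. In the monomial case (\ref{main2}(ii)) the classification is precisely Theorem~\ref{impr-a}(ii) --- which proves exactly your proposed ``lemma'' and identifies $ZG$ with some $ZG(m,1,n)$. You do not cite Theorem~\ref{impr-a}; the ``verify using Shephard--Todd'' step you sketch is a nontrivial argument and is the content of that theorem, so either cite it or reproduce its proof.

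For $5 \leq d \leq 8$ the proposal leaves the entire verification as ``bookkeeping.'' That verification is not small: it consists of running Corollary~\ref{tensor4}, Proposition~\ref{t-ind2}, the case analysis of \S\S\ref{SS-ext2}, Theorem~\ref{impr-a}(ii), and a row-by-row inspection of Table~I --- in other words, essentially the paper's proof rebuilt from scratch, applied to the cases Proposition~\ref{main-s} lists. Since you already need these results for the $d \geq 9$ case once you unwind Theorem~\ref{main3}, the honest bottom line is that the paper's direct structural decomposition is the shorter path; the dimension split adds a layer without saving any of the underlying arguments.
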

 
Theorem \ref{main4} is not valid if $\dim(V) = 4$. Examples of $4$-dimensional
basic non-RT pairs which are not projectively equivalent to a c.r.g. are given in \cite{KL};
see also Examples \ref{newRT1} and \ref{newRT2}. One should also compare Theorem \ref{main4}
with the classical result that {\sl $\CC^{d}/G$ is smooth if and only the finite subgroup 
$G < GL_{d}(\CC)$ is a complex reflection group} (see e.g. \cite[Theorem V.5.4]{B}).

There should be similar results for representations in positive 
characteristic (where we consider the eigenvalues of semisimple
elements), and similar algebro-geometric applications.
There are results which indicate that if $G < GL(V)$ with $V$
finite dimensional over an algebraically closed field, then
$k[V]^{G}$ being a polynomial ring, resp. a complete intersection, 
implies that $G$ is generated by elements trivial on a subspace of codimension 
$1$, resp. on a subspace of codimension at most $2$, cf.
for instance \cite{KM}, \cite{KW}, \cite{S}. 
Such groups have been classified (see \cite{GS} for
the last statement and references -- also in \cite{GS} finite and
algebraic groups generated
by symplectic reflections in all characteristics were classified).
The authors have recently obtained some results on the values of Brauer characters which
should be relevant.

\section{Preliminaries}
Let $V = \CC^{n}$ be endowed with standard Hermitian form $(\cdot,\cdot)$; write 
$||v|| = \sqrt{(v,v)}$ for any $v \in V$. Also let $\SA := \{ \lam \in \CC \mid |\lam| = 1\}$
and let $\BC(V)$ be the collection of all orthonormal bases of $V$. 

\subsection{Age}
\begin{defi}\label{reid} {\rm \cite{IR}, \cite{R2}.}
{\em Let $g \in GL(V)$ be conjugate to 
$\diag\left(e^{2\pi ir_{1}}, \ldots ,e^{2\pi ir_{n}}\right)$,
where $0 \leq r_{j} < 1$. Then $\age(g) = \sum^{n}_{j=1}r_{j}$.}
\end{defi}

Classical examples of non-scalar elements with age $<1$ are: {\it reflections}, resp. 
{\it complex reflections} (or {\it pseudoreflections}), {\it bireflections},  and 
{\it complex bireflections}. These cases correspond to 
$(r_{1}, \ldots ,r_{n}) = (1/2, 0, \ldots, 0)$, $(0 < r_{1} < 1, 0, \ldots, 0)$,
$(1/2, 1/2, 0, \ldots, 0)$, and $(0 < r_{1} < 1, 1-r_{1}, 0, \ldots, 0)$, respectively.
(Note that all complex bireflections considered in this paper have determinant $1$.) 
  
To deal with scalar multiples of linear tranformations, it is also convenient to define
$$\ages(g) = \inf_{\lam \in \SA}\age(\lam g)$$
for any (diagonalizable) $g \in GU(V)$. 
    
First we record the following observations, which we usually apply to linear transformations 
of finite order (as elements of $GL(V)$).

\begin{lemma}\label{trivial}
{\sl The following statements hold for any $g \in GU(V)$.

{\rm (i)} $\age(g)$ and $\ages(g)$ are well-defined, and constant on the $GU(V)$-conjugacy class of 
$g$.

{\rm (ii)} There is some $\mu \in \SA$ (of finite order, if $|g|$ is finite) 
such that $\ages(g) = \age(\mu g)$. In particular, 
$g$ is scalar if and only if $\ages(g) = 0$. 

{\rm (iii)} If $U \subseteq V$ is a $g$-invariant subspace then 
$$\age(g|_{U}) \leq \age(g) = \age(g|_{U}) + \age(g|_{V/U}).$$ 

{\rm (iv)} If $h \in GU(W)$, then 
$$\ages\left(\diag(g,h)\right) \geq \ages(g)+\ages(h),~~~
  \ages(g \otimes h) \geq \dim(W) \cdot \ages(g).$$

{\rm (v)} If $h \in GU(V)$ and $gh = hg$, then 
$$\age(gh) \leq \age(g)+\age(h),~~~~
  \ages(gh) \leq \ages(g) + \ages(h).$$}
\end{lemma}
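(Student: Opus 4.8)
The plan is to reduce every assertion to the single elementary fact that, for a diagonalizable $g \in GU(V)$, the number $\age(g)$ depends only on the multiset of eigenvalues of $g$, namely via $\sum_j r_j$ when each eigenvalue is written uniquely as $e^{2\pi i r_j}$ with $0 \leq r_j < 1$. Part (i) is then immediate: the eigenvalue multiset, and hence the $r_j$, are intrinsic to $g$ and unchanged under $GU(V)$-conjugation, so $\age$ is a well-defined class function; since $\lam g$ has eigenvalues equal to $\lam$ times those of $g$, the set $\{\age(\lam g) : \lam \in \SA\}$ is likewise conjugation-invariant, and so is its infimum $\ages(g)$.

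For (ii) I would parametrize $\lam = e^{2\pi i t}$ and note that $\age(\lam g) = \sum_j \{r_j + t\}$, where $\{x\}$ denotes the fractional part of $x$. As a function of $t$ this is right-continuous, piecewise linear with slope $\dim(V)$ on each linear piece, and drops by $1$ precisely when some $r_j + t$ crosses an integer; hence its infimum over $t \in [0,1)$ is attained, at some $t$ with $t \equiv -r_j \pmod 1$ for an index $j$ --- that is, at a scalar $\mu$ for which $\mu g$ has $1$ among its eigenvalues. When $|g|$ is finite the $r_j$ lie in $\tfrac{1}{|g|}\ZZ$, so this $\mu$ may be taken to be a $|g|$-th root of unity, whence $\mu g$ has finite order. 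Finally $\ages(g) = 0$ forces $\age(\mu g) = 0$ for the minimizing $\mu$, i.e.\ every eigenvalue of $\mu g$ equals $1$, i.e.\ $g = \mu^{-1}\Id$ is scalar; the converse is obvious.

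Parts (iii)--(v) are then bookkeeping with fractional parts. For (iii), unitarity of $g$ makes $U^{\perp}$ a $g$-invariant complement with $V/U \cong U^{\perp}$ as $g$-modules, so the eigenvalue multiset of $g$ on $V$ is the disjoint union of those on $U$ and on $V/U$; adding the $r_j$ yields $\age(g) = \age(g|_U) + \age(g|_{V/U})$, and the inequality $\age(g|_U) \leq \age(g)$ follows from $\age(g|_{V/U}) \geq 0$. For $\diag(g,h)$ in (iv), $\age(\lam\,\diag(g,h)) = \age(\lam g) + \age(\lam h)$ for every $\lam$, and the infimum of a sum dominates the sum of the infima. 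For $g \otimes h$, write the eigenvalues of $\mu(g\otimes h)$ as $e^{2\pi i(a_i + b_j + c)}$ with $\mu = e^{2\pi i c}$, and group $\sum_{i,j}\{a_i + b_j + c\}$ by the index $j$: each inner sum equals $\age(\nu_j g)$ for the scalar $\nu_j = e^{2\pi i(b_j+c)}$, hence is $\geq \ages(g)$, and summing over the $\dim(W)$ values of $j$ and then taking the infimum over $c$ gives the claim. For (v), commuting unitary operators are simultaneously unitarily diagonalizable, so I pick an orthonormal basis $(v_k)$ with $g v_k = e^{2\pi i a_k} v_k$ and $h v_k = e^{2\pi i b_k} v_k$, $a_k, b_k \in [0,1)$; then $\age(gh) = \sum_k\{a_k + b_k\} \leq \sum_k(a_k + b_k) = \age(g) + \age(h)$ because $\{x\} \leq x$ for $x \geq 0$. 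For the $\ages$ inequality, choose by (ii) scalars $\mu_1, \mu_2$ with $\ages(g) = \age(\mu_1 g)$ and $\ages(h) = \age(\mu_2 h)$; since $\mu_1 g$ and $\mu_2 h$ commute, $\ages(gh) = \ages\bigl((\mu_1 g)(\mu_2 h)\bigr) \leq \age\bigl((\mu_1 g)(\mu_2 h)\bigr) \leq \age(\mu_1 g) + \age(\mu_2 h) = \ages(g) + \ages(h)$.

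There is no deep obstacle here: the only point demanding a little care is the attainment claim in (ii), where one must remember that $t \mapsto \{t\}$ is merely right-continuous and so argue via the piecewise-linear structure rather than a naive compactness appeal. Everything else is routine manipulation of the arguments $r_j$ of the eigenvalues.
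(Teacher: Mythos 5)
Your proof is correct and follows essentially the same route as the paper: reduce everything to the fractional parts of the eigenvalue arguments, analyze the sawtooth structure of $t \mapsto \age(e^{2\pi it}g)$ for part (ii), split eigenvalues by invariant subspaces for (iii), group the tensor-product eigenvalues by the second factor for (iv), and simultaneously diagonalize for (v). The paper's treatment of (ii) phrases the same observation as ``$f$ is decreasing on each interval $(r_{k-1},r_k]$'' rather than in terms of a piecewise-linear sawtooth, but these are the same argument, and your caution about right-continuity matches what the paper is implicitly relying on.
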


\begin{proof}
(i) and (iii) are obvious. 

(ii) Let $e^{2\pi ir_{1}}, \ldots ,e^{2\pi ir_{m}}$,
where $0 \leq r_{1} < r_{2} < \ldots < r_{m} < 1$, be the distinct eigenvalues of $g$. Consider
the function $f(t) := \age(e^{-2\pi i t} \cdot g)$ on the interval $(0,1]$. Note that $f$ is 
decreasing on each of the intervals $(0,r_{1}]$, $(r_{1},r_{2}]$, $\ldots$, 
$(r_{m-1},r_{m}]$, $(r_{m},1]$. It follows that $\ages(g) = \inf_{t \in (0,1]}f(t)$ is attained
as the value of $f$ at one of the points $t = r_{1}, r_{2}, \ldots ,r_{m},1$. Thus 
we can take $\mu^{-1}$ to be either $1$ or one of the eigenvalues of $g$, and so it has 
finite order in $\SA$ if $|g|$ is finite. (Also notice that
if $m \geq 2$, then $\ages(g) \geq \min\{r_{2}-r_{1},1-(r_{2}-r_{1})\}$.)    

(iv) Without loss we may assume that 
$h = \diag(s_{1}, \ldots ,s_{m})$ with $s_{j} \in \SA$, and consider any $\lam \in \SA$. Then 
by (iii) we have 
$$\age\left(\lam \cdot \diag(g,h)\right) = \age(\lam g) + \age(\lam h) \geq \ages(g) + \ages(h),$$ 
$$\age(\lam g \otimes h) = \age\left(\diag(\lam s_{1}g, \ldots ,\lam s_{m}g)\right) =
  \sum^{m}_{j=1}\age(\lam s_{j} g) \geq m\cdot \ages(g).$$ 

(v) Without loss we may assume that 
$$g = \diag(e^{2\pi ir_{1}}, \ldots ,e^{2\pi ir_{m}}), ~~~~h = \diag(e^{2\pi is_{1}}, \ldots ,e^{2\pi is_{m}}),$$ 
with $0 \leq r_{j}, s_{j} < 1$. 
Then $\age(gh) \leq \sum^{m}_{j=1}(r_{j}+s_{j}) = \age(g) + \age(h)$. Next, by (ii) 
there are $\al,\beta \in \SA$ such that $\ages(g) = \age(\al g)$ and $\ages(h) = \age(\beta h)$. 
Now 
$$\ages(gh) \leq \age(\al\beta gh) = \age(\al g \cdot \beta h) \leq \age(\al g) + \age(\beta h) 
  = \ages(g) + \ages(h).$$ 
\end{proof}

In fact, by the Chen-Ruan inequality \cite{CR}, Lemma \ref{trivial}(v) also holds 
without the condition $gh = hg$. Even more, the following inequality holds, where 
$V^{X}$ denotes the common fixed point subspace for any subset $X \leq GL(V)$.

\begin{theorem}\label{cr-age} {\rm \cite{CR}}
{\sl {\rm (i)} If $x, y \in GU(V)$, then 
$$\age(x) + \age(y)  - \age(xy) + \dim V^{x,y} - \dim V^{xy} \geq 0.$$

{\rm (ii)} If $x, y, z \in GU(V)$ and $xyz = 1$, then 
$$\age(x) + \age(y)  + \age(z) \geq \dim(V) - \dim V^{x,y,z}.$$}
\end{theorem}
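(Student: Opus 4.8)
The plan is to reduce both statements to the Chen--Ruan cohomology comparison between the orbifold $[V/G]$ and a crepant resolution, or more directly to the orbifold stringy invariants, for the cyclic (resp.\ two-generated) subgroup generated by $x,y$. For part (i), I would fix $H := \langle x, y\rangle \leq GU(V)$ and decompose $V$ as an $H$-module into isotypic (indeed irreducible) components. Since $\age$, $\dim V^{x}$, etc.\ are all additive over direct sums of $H$-submodules, it suffices to prove the inequality on each $H$-irreducible constituent $W$; here $\dim W^{x,y} \in \{0,\dim W\}$, so the statement splits into the trivial case ($W$ trivial, everything zero) and the case where $H$ acts with no nonzero fixed vectors on $W$, where one must show $\age(x|_W) + \age(y|_W) - \age(xy|_W) + \dim W^x - \dim W^{xy} \geq 0$.

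For the genuinely nontrivial estimate I would invoke the Chen--Ruan/Reid--Tai philosophy: the quantity $\age(x)+\age(y)-\age(xy)+\dim V^{x,y}-\dim V^{xy}$ is (up to the sign conventions in \cite{CR}) the degree shift measuring the obstruction for the pair $(x,y)$ to contribute to a given graded piece of orbifold cohomology, and its nonnegativity is exactly the statement that Chen--Ruan orbifold cohomology is nonnegatively graded and behaves well under the product $H^{*}_{CR}([V/H]) \otimes H^{*}_{CR}([V/H]) \to H^{*}_{CR}([V/H])$ induced by the obvious three-point functions. Concretely, I would write $x = \diag(e^{2\pi i a_1},\dots)$, $y = \diag(\dots)$ simultaneously only on a common $\langle x\rangle$-eigenspace refinement — but since $x$ and $y$ need not commute this diagonalization fails globally, which is precisely why one cannot argue as in Lemma \ref{trivial}(v) and must appeal to \cite{CR}. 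So rather than reprove \cite{CR} I would cite it: (i) is a direct transcription of the two-point (degree-shift) inequality underlying the associativity of the Chen--Ruan product, applied to the global quotient orbifold $[V/\langle x,y\rangle]$.

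Part (ii) then follows from part (i) by a short manipulation. Given $xyz = 1$, set $z = (xy)^{-1}$, so $\age(z) = \age((xy)^{-1}) = \dim V^{xy'} \cdots$ — more precisely, for any $g \in GU(V)$ of finite order one has $\age(g) + \age(g^{-1}) = \dim(V) - \dim V^{g}$ (the eigenvalue $1$ contributes $0$ to both, and each nontrivial eigenvalue pair $e^{2\pi i r}, e^{2\pi i(1-r)}$ contributes $r + (1-r) = 1$). Hence $\age(z) = \age(xy) + \dim(V) - \dim V^{xy}$. Substituting into (i), and noting $\dim V^{x,y} = \dim V^{x,y,z}$ since $z \in \langle x,y\rangle$, gives
$$\age(x) + \age(y) + \age(z) = \bigl(\age(x)+\age(y)-\age(xy)+\dim V^{x,y}-\dim V^{xy}\bigr) + \dim(V) - \dim V^{x,y,z} \geq \dim(V) - \dim V^{x,y,z},$$
which is exactly (ii). The main obstacle is genuinely the input (i): without the Chen--Ruan machinery there is no elementary diagonalization argument because $x$ and $y$ do not commute, so the crux is to correctly match the sign and normalization conventions of \cite{CR} (degree shifting numbers $\iota_{(g)}$ versus $\age$, and the orbifold Poincar\'e duality pairing that produces the $\dim V^{x,y} - \dim V^{xy}$ correction terms) to the form stated here. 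Once that dictionary is set up, everything else — the reduction to $H$-irreducibles and the passage from (i) to (ii) — is routine.
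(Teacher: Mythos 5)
Your proposal is \emph{not} what the paper does, though it is not wrong. The paper explicitly says ``We will give an elementary proof of this result'' --- the whole point of including a proof is to bypass the Chen--Ruan cohomology machinery, whereas you propose to cite \cite{CR} for the core inequality. As a citation to prior work that is fine, but it offers nothing new. Also, a small slip: in your inline formula you wrote $\age(z) = \age(xy) + \dim(V) - \dim V^{xy}$; it should be $\age(z) = \dim(V) - \dim V^{xy} - \age(xy)$ (the displayed equation that follows has the sign right, so this is only a typo).

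Here is what the paper's argument actually does, and where you stop short. The paper first observes (as you do) that (i) and (ii) are equivalent via $\age(z) = \dim(V) - \age(xy) - \dim V^{xy}$. To prove (i) elementarily, the paper diagonalizes $x$ and writes it as a product of $m \leq n$ \emph{commuting complex reflections}, then inducts on $m$. This is the idea you are missing: your reduction to $H$-irreducibles (with $H = \langle x,y\rangle$) is correct and is also used by the paper, but it only appears inside the base case $m=1$ where $x$ is a single complex reflection. In that base case, with $H$ acting irreducibly, Lemma \ref{irred} forces the eigenvalue lists of $y$ and $xy$ to be disjoint with no repeated entries, and then Lemma \ref{rigid} (Beukers--Heckman eigenvalue interlacing, for which the paper supplies its own elementary continuity/deformation proof) yields the two cases (a) and (b), giving $\age(x)+\age(y) = \age(xy)$ or $\age(xy)+1$ respectively; a determinant/trivial-eigenvalue analysis then finishes the base case. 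For the induction step one writes $x = sx'$ with $s$ a complex reflection, applies the base case to $(s, x'y)$, the induction hypothesis to $(x', y)$, and combines them using $V^{s,x'y} \cap V^{x',y} = V^{s,x',y} \subseteq V^{x,y}$ and $V^{s,x'y}, V^{x',y} \subseteq V^{x'y}$.

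So the situations where $x$ and $y$ do not commute are tamed precisely by first reducing $x$ to a single complex reflection, at which point one has the interlacing lemma available. You correctly identified that Lemma \ref{trivial}(v) does not apply, but concluded from this that the cohomological input is unavoidable; the paper shows it is not, and the missing ingredient in your proposal is the reduction to the complex-reflection case plus the interlacing theorem.
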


This theorem follows from the existence of a cohomology theory developed in
\cite{CR}, see also \cite{Hep}. We will give an elementary proof of this result.

First we set up some notation. For $V = \CC^{n}$ and $g \in GU(V)$, write 
$[g]=(v_{1}, \ldots, v_{n})$ where $0 \leq v_{1} \leq \ldots \leq v_{n} < 1$ and the eigenvalues 
of $g$ are $e^{2 \pi iv_{j}}$, $1 \leq j \leq n$.

\begin{lemma}\label{irred} 
{\sl Let $\dim(V) > 1$ and $x,y \in GU(V)$, where $x$ is a complex reflection
with $[x]=(r, 0, \ldots,0)$, $0 < r < 1$, $[y]=(a_{1}, \ldots, a_{n})$
and $[xy]=(b_{1}, \ldots, b_{n})$.  Let $H = \langle x, y \rangle$.
Then the following conditions are equivalent:

{\rm (i)} $H$ acts irreducibly.

{\rm (ii)} $x$ and $y$ have no common eigenvector.

{\rm (iii)} The collection $\{a_{1}, \ldots, a_{n}, b_{1}, \ldots, b_{n}\}$ consists
of $2n$ distinct elements.}
\end{lemma}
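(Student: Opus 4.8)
The plan is to fix an orthonormal basis $e_{1}, \ldots, e_{n}$ of $V$ in which $x = \diag(\zeta, 1, \ldots, 1)$ with $\zeta = e^{2\pi i r} \neq 1$, and to write $W := \langle e_{2}, \ldots, e_{n}\rangle$ for the hyperplane of fixed points of $x$; the crucial structural feature of $x$ is that $\mathrm{Im}(x - 1) = \CC e_{1}$ is one-dimensional. The equivalence (i) $\Leftrightarrow$ (ii) is then the soft part: a common eigenvector of $x$ and $y$ spans an $H$-invariant line, which is proper and nonzero since $\dim(V) > 1$, so (i) implies (ii); conversely, if $H$ is reducible one picks a proper nonzero $H$-invariant subspace $U$, notes that $U^{\perp}$ is $H$-invariant as well (because $x, y \in GU(V)$ are unitary), observes that the one-dimensional space $\mathrm{Im}(x-1)$ can lie in at most one of $U, U^{\perp}$ and hence $x$ acts trivially on the other — say on $U^{\perp}$ — and finally takes an eigenvector of the unitary operator $y|_{U^{\perp}}$; since $U^{\perp} \subseteq W$, this vector is also fixed by $x$, contradicting (ii).

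Next I would prove (iii) $\Rightarrow$ (ii) by contraposition. Given a common eigenvector $v$ with $yv = \mu v$, the $x$-eigenvalue of $v$ is either $1$ or $\zeta$. If it is $1$, then $v \in W$, so $xv = v$ and $(xy)v = \mu v$; thus $\mu$ occurs among the eigenvalues of both $y$ and $xy$, i.e. some $a_{i}$ equals some $b_{j}$, and the $2n$ numbers in (iii) are not distinct. If it is $\zeta$, then $v$ spans $\CC e_{1}$, so $y$ preserves $\CC e_{1}$ and hence also $W$, and $xy$ agrees with $y$ on $W$; therefore the multisets of eigenvalue-angles of $y$ and of $xy$ share the $n-1$ angles coming from $y|_{W}$, and since $n \geq 2$ the collection in (iii) has fewer than $2n$ distinct members. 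Either way (iii) fails.

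The main work is (ii) $\Rightarrow$ (iii), again by contraposition: assuming the $2n$ numbers are not all distinct, I construct a common eigenvector of $x$ and $y$. If two of the $a_{i}$ coincide, the corresponding eigenspace $E$ of $y$ has $\dim E \geq 2$, so it meets the hyperplane $W$ nontrivially, and any $0 \neq v \in E \cap W$ is fixed by $x$ and an eigenvector of $y$. The case of two coinciding $b_{j}$ is symmetric: for the relevant eigenspace $F$ of $xy$ and $0 \neq v \in F \cap W$ one has $xv = v$ and $yv = x^{-1}(xy)v = \nu v$. The remaining — and genuinely delicate — case is that all $a_{i}$ are distinct and all $b_{j}$ are distinct, but $a_{i} = b_{j}$ for some pair, so that $\mu := e^{2\pi i a_{i}}$ is a \emph{simple} eigenvalue of both $y$ and $xy$, with eigenlines $\CC u$ and $\CC u'$; if either line lies in $W$ we finish as above, so one may assume $u = \alpha e_{1} + w$ and $u' = \beta e_{1} + w'$ with $\alpha, \beta \neq 0$ and $w, w' \in W$. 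Here I would use unitarity: from $(xy)u' = \mu u'$ one gets $yu' = \mu x^{-1}u' = \mu(\zeta^{-1}\beta e_{1} + w')$, hence $(y - \mu)u' = \mu\beta(\zeta^{-1} - 1)e_{1}$, a \emph{nonzero} multiple of $e_{1}$; subtracting $(\beta/\alpha)u \in \ker(y - \mu)$ to clear the $e_{1}$-component yields a nonzero $w'' := u' - (\beta/\alpha)u \in W$ with $(y - \mu)w'' = \gamma e_{1}$ and $\gamma \neq 0$. But then, since $y$ is unitary and $w'' \perp e_{1}$, $\|w''\|^{2} = \|yw''\|^{2} = \|\mu w'' + \gamma e_{1}\|^{2} = \|w''\|^{2} + |\gamma|^{2}$, forcing $\gamma = 0$ — a contradiction. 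So this last case never occurs, which completes the contraposition.

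The one step I expect to require care is precisely this last case of (ii) $\Rightarrow$ (iii): unlike all the others, there is no eigenspace of dimension $\geq 2$ to intersect with the fixed hyperplane $W$, so the implication ``$y$ and $xy$ share an eigenvalue'' $\Longrightarrow$ ``$x$ and $y$ share an eigenvector'' must be forced by the metric structure — the unitarity of $y$ combined with the rank-one nature of $x - 1$ — rather than by a dimension count. Everything else in the argument uses only that $x, y$ are unitary and that $x$ is a complex reflection.
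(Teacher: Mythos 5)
Your proof is correct and follows essentially the same route as the paper's: the soft implications are handled by the rank-one structure of $x - 1$ and the observation that a repeated eigenvalue forces an eigenspace to meet the fixed hyperplane, and the genuinely hard step (ii)$\Rightarrow$(iii) in the case of a shared simple eigenvalue is exactly the paper's norm computation — your $w'' = u' - (\beta/\alpha)u$ is, up to a scalar, the paper's $v - w$ after normalizing the $e_1$-components, and the conclusion $\gamma = 0$ is the same appeal to unitarity. The only cosmetic difference is that you close the cycle as (i)$\Leftrightarrow$(ii), (ii)$\Leftrightarrow$(iii) rather than (iii)$\Rightarrow$(i)$\Rightarrow$(ii)$\Rightarrow$(iii).
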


\begin{proof}   
If $H$ acts reducibly, then $xy = y$ on some nontrivial
$H$-invariant space, whence $a_{j} = b_{k}$ for some $j,k$.  Thus (iii)
implies (i), and certainly (i) implies (ii).   

Now assume (ii); in particular, neither $y$ nor $xy$ has an eigenvector
on $V^{x} = u^{\perp}$ (for some $0 \neq u \in V$). Note that $xu = e^{2\pi ir} u$.
If $a_{i}=a_{j}$ for $i < j$, then $y$ has a two dimensional 
eigenspace which therefore intersects $V^{x}$ nontrivially, a contradiction.
Similarly, we see that $b_{i} \neq b_{j}$.
Suppose now that both $xy$ and $y$ have a common eigenvalue $\beta$. 
In this case, again by (ii) we can find $v,w \in u^{\perp}$ such that $y(u+v) = \beta(u+v)$ and 
$xy(u+w) = \beta(u+w)$; in particular,
$y(u+w) = e^{-2\pi ir}\beta u + \beta w$. Thus 
$y(v-w) = \beta(1-e^{-2\pi ir})u + \beta(v-w)$. Note that $|\beta| = 1$ and 
$||y(v-w)|| = ||v-w||$ as $y \in GU(V)$. It follows that  
$e^{2\pi ir}=1$, a contradiction.
\end{proof} 

The key to Theorem \ref{cr-age} is the following beautiful result 
\cite[Cor. 4.7]{BH} on eigenvalue interlacing, see also \cite{MOW}.

\begin{lemma}\label{rigid} {\rm \cite{BH}}  
{\sl Let $x,y \in GU(V)$, where $x$ is a complex reflection
with $[x]=(r, 0, \ldots,0)$, $0 < r< 1$, $[y]=(a_{1}, \ldots, a_{n})$
and $[xy]=(b_{1}, \ldots, b_{n})$.   Assume that $a_{j} < a_{j+1}$
and $b_{j} < b_{j+1}$ for $1 \leq j < n$.   Assume also that
$a_{j} \neq b_{k}$ for any $j,k$.  Then one of the following holds:

{\rm (a)} $a_{1} <  b_{1} <  \ldots < a_{n} <  b_{n}$; or
 
{\rm (b)}  $b_{1} <  a_{1} < \ldots < b_{n} <  a_{n}$.}
\end{lemma}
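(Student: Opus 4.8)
The plan is to convert the interlacing assertion into the analysis of a single real secular equation. Write $\rho=e^{2\pi ir}$ and $\alpha_j=e^{2\pi ia_j}$, and pick a unit vector $u$ spanning the non-trivial eigenspace of $x$, so that $x=I+(\rho-1)uu^{*}$ and hence $xy=y+(\rho-1)uw^{*}$ with $w=y^{*}u$. Since $y$ is unitary with the $n$ distinct eigenvalues $\alpha_j$, fix an orthonormal eigenbasis $\xi_1,\dots,\xi_n$ and put $c_j=|(\xi_j,u)|^{2}\ge 0$, so $\sum_j c_j=1$. If some $c_j=0$ then $\xi_j\in u^{\perp}=V^{x}$, whence $xy\xi_j=\alpha_j\xi_j$ and $a_j=b_k$ for some $k$, against the hypothesis; so in fact $c_j>0$ for every $j$.

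First I would compare the characteristic polynomials $f(z)=\det(zI-y)$ and $g(z)=\det(zI-xy)$. The matrix determinant lemma gives $g(z)/f(z)=1-(\rho-1)\,w^{*}(zI-y)^{-1}u$, and using $w=y^{-1}u$, $y^{-*}=y$ and $y(zI-y)^{-1}=-I+z(zI-y)^{-1}$ this collapses to
\[
 \frac{g(z)}{f(z)}=\rho-(\rho-1)\,z\,m(z),\qquad
 m(z):=u^{*}(zI-y)^{-1}u=\sum_{j=1}^{n}\frac{c_j}{z-\alpha_j}.
\]
Because $xy$ is unitary, every root of $g$ lies on $\SA$, and (again since $a_j\ne b_k$) no root of $g$ is an $\alpha_j$; hence the $b_k$ are exactly the $t\in\RR/\ZZ$ with $z\,m(z)=\rho/(\rho-1)$ for $z=e^{2\pi it}$. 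The point of unitarity now surfaces: one computes $\rho/(\rho-1)=\tfrac12-\tfrac{i}{2}\cot(\pi r)$ and, for $z=e^{2\pi it}$,
\[
 z\,m(z)=\sum_{j=1}^{n}\frac{c_j}{1-e^{-2\pi i(t-a_j)}}=\tfrac12-\tfrac{i}{2}\,\Phi(t),\qquad
 \Phi(t):=\sum_{j=1}^{n}c_j\cot\!\bigl(\pi(t-a_j)\bigr),
\]
so the real parts agree identically and the equation for the $b_k$ reduces to the single real equation $\Phi(t)=\cot(\pi r)$.

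The endgame is then an elementary calculus argument about $\Phi$. It is $1$-periodic with a simple pole at each $a_j$ (here $c_j>0$ is used), and $\Phi'(t)=-\pi\sum_j c_j\csc^{2}(\pi(t-a_j))<0$; hence on each of the $n$ open arcs bounded by consecutive poles $\Phi$ decreases strictly and continuously from $+\infty$ to $-\infty$. Therefore $\Phi(t)=\cot(\pi r)$ has exactly one solution in each arc $(a_j,a_{j+1})$ (indices mod $n$, $a_{n+1}:=a_1+1$), i.e.\ exactly one $b_k$ strictly between each pair of cyclically consecutive $a_j$'s; since $\deg g=n$ this exhausts $b_1,\dots,b_n$. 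This is precisely the strict interlacing of $\{a_j\}$ and $\{b_k\}$ around the circle, and alternatives (a), (b) just record whether the unique $b$ in the wrap-around arc $(a_n,a_1+1)$ lies in $(a_n,1)$ or in $(0,a_1)$. The only genuine computation here is the identity $g/f=\rho-(\rho-1)z\,m(z)$ together with the simplification of $z\,m(z)$ on $\SA$; I expect the main obstacle to be carrying the unitarity bookkeeping (the identifications $w=y^{-1}u$ and $y^{-*}=y$) carefully enough to see that $\mathrm{Re}\,z\,m(z)\equiv\tfrac12$, after which the whole statement rests on the monotonicity of a positive combination of cotangents.
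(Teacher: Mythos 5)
Your proof is correct, and it takes a genuinely different route from the paper's. The paper argues by deformation: it interpolates $x$ through the family $x(t)$ with $x(t)u = e^{2\pi it}u$, uses Lemma \ref{irred} to show that $w(t)=x(t)y$ and $w(t')$ share no eigenvalue when $t\not\equiv t'\ (\mathrm{mod}\ 1)$, and then tracks the continuous eigenvalue branches $b_j(t)$ to conclude that each is trapped between consecutive $a_i$'s. You instead derive a secular equation from the rank-one multiplicative perturbation: writing $x=I+(\rho-1)uu^{*}$ and invoking the matrix determinant lemma gives $g(z)/f(z)=\rho-(\rho-1)\,z\,m(z)$ with $m(z)=\sum_j c_j/(z-\alpha_j)$, and the key observation that unitarity forces $\mathrm{Re}\,z\,m(z)\equiv\frac12$ on $\SA$ reduces the condition $g=0$ to the real equation $\Phi(t)=\cot(\pi r)$ for $\Phi(t)=\sum_j c_j\cot\bigl(\pi(t-a_j)\bigr)$. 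Since $c_j>0$ (which, as you note, is exactly the no-common-eigenvector hypothesis — i.e., what Lemma \ref{irred} encodes), $\Phi$ is strictly decreasing from $+\infty$ to $-\infty$ on each of the $n$ open arcs between consecutive $a_j$, so there is exactly one $b_k$ in each arc, and degree count forces this to be all of them. Your computations check out: the identity $w^{*}(zI-y)^{-1}u = z\,m(z)-1$, the evaluation $\rho/(\rho-1)=\tfrac12-\tfrac{i}{2}\cot(\pi r)$, and the reduction of $z\,m(z)$ to $\tfrac12-\tfrac{i}{2}\Phi(t)$ are all correct. This is the unitary-group analogue of the classical secular-equation proof of Cauchy interlacing for rank-one Hermitian perturbations; it is more explicit and self-contained than the paper's argument and makes the positivity $c_j>0$ the visibly operative hypothesis, whereas the paper's deformation argument trades the determinant-lemma bookkeeping for a more delicate continuity/branch-tracking analysis.
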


Note that in either case $|\age(xy) - \age(y)| < 1$ and so by considering determinants
if (a) holds above, then 
$\age(x) + \age(y) = \age(xy)$ while if (b) holds, then
$\age(x)  + \age(y) = \age(xy) + 1$.  In any case, 
$\age(x)  + \age(y) \geq \age(xy)$.  

We now sketch an elementary proof of Lemma \ref{rigid}.

\begin{proof}
1) Clearly, we may assume $\dim(V) > 1$.
Let $V^{x} = u^{\perp}$ for some $0 \neq u \in V$. By Lemma \ref{irred}, $y$ cannot have any 
eigenvector in $u^{\perp}$ or $\la u \ra_{\CC}$. For $t \in \RR \setminus \ZZ$, 
let $x(t)$ be the complex reflection with $u^{\perp}$ as its reflecting hyperplane and 
$x(t)u = e^{2 \pi it}u$. Also set $x(t) = 1_{V}$ if $t \in \ZZ$. 

Now let $w(t)=x(t)y$ for $t \in \RR$.  Note that by construction, for any $t,t' \in \RR$
with $t-t' \notin \ZZ$, $w(t)$ and $w(t')$ cannot have any common eigenvector.
(Otherwise $x(t-t')$ and $y$ have a common eigenvector $v$. This $v$ must be either in 
$u^{\perp}$ or $\la u \ra_{\CC}$, contrary to the aforementioned property of $y$.)
It then follows by Lemma \ref{irred} that, when $0 \leq t < t' < 1$, all the $n$ eigenvalues 
of $w(t)$ are distinct, and $w(t)$ and $w(t')$ have no common eigenvalue.    

2) Define $a_{kn+i} = a_{i}+k$ for $k \in \ZZ$. Also, let $[w(r)] = (b_{1}(r), \ldots, b_{n}(r))$ 
and consider any $1 \leq j \leq n$. By the conclusion of 1), 
$a_{i} < b_{j}(r) < a_{i+1}$ for some $i \in \ZZ$. Note that the spectrum of
$w(t)$ depends continuously on $t \in \RR$. Hence, for $t$ in some small neighborhood of 
$r$, the $j^{\mathrm {th}}$-entry $b_{j}(t)$ of $[w(t)]$ satisfies 
$a_{i} < b_{j}(t) < a_{i+1}$. Let 
$$X := \{s \mid r \leq s < 1,~a_{i} < b_{j}(t) < a_{i+1} \mbox{ for all } t \in [r,s]\}.$$
We claim that $X = [r,1)$. Indeed, let $f := \sup X \leq 1$ and assume $f < 1$. Then there is
a sequence $\{s_{n}\} \subseteq X$ such that $\lim_{n \to \infty}s_{n} = f$. The spectrum
continuity implies that $a_{i} \leq b_{j}(f) \leq a_{i+1}$. Since $0 < f < 1$, we must have
$a_{i} < b_{j}(f) < a_{i+1}$. It is now easy to check that there is some $\epsilon > 0$ such
that $f+\epsilon \in X$, a contradiction. Thus $f = 1$, which in turn implies that 
$X = [r,1)$. Similarly, 
$$\{s \mid 0 \leq s < r,~\forall t \in [s,r],~a_{i} < b_{j}(t) < a_{i+1}\} = (0,r].$$
We have shown that 
\begin{equation}\label{int}
  a_{i} < b_{j}(t) < a_{i+1} \mbox{ for all }t \in (0,1).
\end{equation}

3) Replacing $(x,y)$ by $(x^{-1},xy)$ if necessary, we may assume that $a_{1} < b_{1}$. Note 
that $b_{1}(0) = a_{1}$. Hence $b_{1}(t)$ is close to $a_{1}$ when $t \in (0,1)$ is small 
enough and so (\ref{int}) implies that $a_{1} < b_{1}(t) < a_{2}$ for all $t \in (0,1)$;
in particular, $b_{1} = b_{1}(r) < a_{2}$. Also, 
$a_{1} \leq b_{1}(1) = \lim_{t \to 1}b_{1}(t) \leq a_{2}$. Since $b_{1}(1)$ is some $a_{k}$,
we get $b_{1}(1) \in \{a_{1},a_{2}\}$. Moreover, if $b_{1}(1) = a_{1} = b_{1}(0)$, then
the continuity of $b_{1}(t)$ on $[0,1]$ implies that some $w(t)$, $w(t')$ with
$0 < t < t' < 1$ have a common eigenvalue, contrary to the conclusion of 1). So 
$b_{1}(1) = a_{2}$.  

4) Next, $b_{2}(0) = a_{2}$. If $b_{2} = b_{2}(r) > a_{2}$, then, as above, (\ref{int}) implies
that $a_{2} < b_{2}(t) < a_{3}$ for all $t \in (0,1)$. Assume the contrary: $b_{2} < a_{2}$.
Again by (\ref{int}) we must now have $a_{1} < b_{2}(t) < a_{2}$ for all $t \in (0,1)$. Arguing 
as in 3) we get $b_{2}(1) \in \{a_{1},a_{2}\}$ and $b_{2}(1) \neq b_{2}(0) = a_{2}$, i.e.
$b_{2}(1) = a_{1}$. On the other hand, $b_{2}(t) \geq b_{1}(t)$ for all $t \in [0,1]$, 
whence $b_{2}(1) \geq b_{1}(1) = a_{2}$, a contradiction. We have shown 
that $a_{2} < b_{2}(t) < a_{3}$ for all $t \in (0,1)$.
Continuing in the same fashion, we get 
$a_{j} < b_{j}(t) < a_{j+1}$ for all $j$ and $t \in (0,1)$.
\end{proof}

{\bf Proof of Theorem \ref{cr-age}.} Notice that $\age(z) = \dim(V) - \age(xy) - \dim V^{xy}$, 
so (i) and (ii) are equivalent. Next, diagonalize $x$ and then write $x$ as a product 
of $m \leq n$ commuting complex reflections. 
To prove statement (i), we proceed by induction on $m$.
First assume that $x$ is a complex reflection. 
Let $H = \langle x, y \rangle$. If $H$ does not act irreducibly, the result follows
by induction on $\dim V$ (by writing $V=W \perp W^{\perp}$ where $W$ is $H$-invariant).
So assume this is not the case; in particular, $V^{x,y} = 0$. We need to prove that:
$\age(x) + \age(y) \geq \age(xy) +  \dim V^{xy}$. By Lemma \ref{irred}, 
no eigenspace of $y$ or $xy$ has dimension more than $1$, and $xy$ and $y$ have no common 
eigenvalues. By Lemma \ref{rigid} and the remarks before its proof,  
$\age(x) + \age(y) \geq \age(xy)$. 
If $xy$ has no trivial eigenvalue we are done.  So we may assume
that $xy$ does have exactly one trivial eigenvalue, whence $y$ has no trivial
eigenvalue.  Thus the case (b) of Lemma \ref{rigid} holds and so  
$\age(x) + \age(y) = \age(xy) + 1$ as desired.

For the induction step, write $x=sx'$ where $s$ is a complex reflection
and $x'$ is a product of $m-1$ complex reflections than $x$ 
and $\age(x) = \age(s) + \age(x')$.
By the complex reflection case, 
$$\age(s) + \age(x'y) + \dim V^{s,x'y} \geq \age(xy) + \dim V^{xy}.$$
By induction, 
$$\age(x') + \age(y) + \dim V^{x',y} \geq \age(x'y) + \dim V^{x'y}.$$ 
Note that $V^{s,x'y} \cap V^{x',y} = V^{s,x',y} \subseteq V^{x,y}$ and 
$V^{s,x'y}, V^{x',y} \subseteq V^{x'y}$, whence 
$$\dim V^{x'y} + \dim V^{x,y} \geq \dim V^{s,x'y} + \dim V^{x',y}.$$
The last three relations on dimensions readily imply
$$\age(x) + \age(y) = \age(s) + \age(x') + \age(y) 
  \geq \age(xy) + \dim V^{xy} - \dim V^{x,y}.\hfill \Box$$

\subsection{The set-up $\sta$}
We are interested in finite subgroups of $GL(V)$ that contain nontrivial elements
of age $<1$, resp. $\leq 1$. Of course it would be very difficult to classify these
groups without extra assumptions on them.

\begin{lemma}\label{red1}
{\sl Let $W$ be a finite dimensional vector space over $\CC$ and let $G < GL(W)$ be a 
finite subgroup containing a nontrivial element $g$ with $\age(g)<1$, resp. $\age(g) \leq 1$. Then 
there is a normal subgroup $K \lhd G$ and a nonzero $K$-invariant subspace $V$ of 
$W$ such that all the following conditions hold:

{\rm (i)} $K$ acts irreducibly on $V$;

{\rm (ii)} $g \in K$ and $0 < \age(g|_{V}) < 1$, resp. $0 < \age(g|_{V}) \leq 1$;

{\rm (iii)} $K$ is generated by the set of its elements whose restrictions to $V$ have 
age $<1$, resp. $\leq 1$.\\
In fact, if $0 \neq U \subseteq W$ is any $K$-submodule, then $K$ is generated by 
the set of its elements whose restrictions to $U$ have age $<1$, resp. $\leq 1$.}
\end{lemma}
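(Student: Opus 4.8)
The plan is to first pin down the normal subgroup $K$ explicitly, because the final assertion concerns that particular $K$ and not an arbitrary normal subgroup satisfying (i)--(iii). I would take $K$ to be the subgroup of $G$ generated by all $h \in G$ with $\age(h|_{W}) < 1$ (resp. $\age(h|_{W}) \leq 1$). Embedding the finite group $G$ into $GU(W)$ and using Lemma \ref{trivial}(i), the number $\age(h|_{W})$ is constant on $G$-conjugacy classes, so this generating set is conjugation-invariant and $K \lhd G$. By hypothesis $g \in K$, and $\age(g|_{W}) > 0$ since $g \neq 1$ has finite order, hence is diagonalizable with at least one eigenvalue $e^{2\pi i r}$, $0 < r < 1$.

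Next I would produce $V$ and verify (i)--(iii), which also sets up the final claim. By complete reducibility, write $W = \bigoplus_{j} V_{j}$ with each $V_{j}$ an irreducible $K$-module. Since age is additive over direct sums (Lemma \ref{trivial}(iii)), $\sum_{j} \age(g|_{V_{j}}) = \age(g|_{W}) \in (0,1)$, so every $\age(g|_{V_{j}}) < 1$ and at least one is $> 0$; taking $V := V_{j}$ for such a $j$ gives (i) and (ii). For (iii): each generator $h$ of $K$ satisfies $\age(h|_{W}) < 1$, the subspace $V$ is $h$-invariant, so Lemma \ref{trivial}(iii) gives $\age(h|_{V}) \leq \age(h|_{W}) < 1$; hence the defining generators of $K$ all lie in $\{\, h \in K : \age(h|_{V}) < 1 \,\}$, and $K$, being generated by a subset of this set, is generated by the whole set. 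The case with $\leq 1$ is identical.

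For the final assertion, the same mechanism applies verbatim with $V$ replaced by an arbitrary nonzero $K$-submodule $U \subseteq W$: such a $U$ is invariant under every generator $h$ of $K$, so Lemma \ref{trivial}(iii) again yields $\age(h|_{U}) \leq \age(h|_{W}) < 1$ (resp. $\leq 1$); therefore $K$ is generated by a subset of $\{\, h \in K : \age(h|_{U}) < 1 \,\}$ (resp. $\leq 1$), hence by the full set of its elements whose restriction to $U$ has age $< 1$ (resp. $\leq 1$). I do not expect a genuine obstacle here: the entire content is the monotonicity of age under restriction to an invariant subspace (Lemma \ref{trivial}(iii)), together with the bookkeeping observation that $K$ must be defined through the condition on $W$ rather than on $V$ --- had one instead defined $K$ via the condition on $V$, passing to a different submodule $U$ would not be automatic. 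The only mild point of care is the strict inequality $\age(g|_{V}) > 0$, which uses nothing beyond $g \neq 1$.
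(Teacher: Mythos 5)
Your proof is correct and follows the same route as the paper: define $K$ as the normal subgroup generated by the nontrivial elements of age $<1$ (resp.\ $\leq 1$) acting on $W$, decompose $W$ into irreducible $K$-modules, use additivity of age under direct sums to select a suitable $V_j$, and then use monotonicity of age under restriction to an invariant subspace to deduce (iii) and the final claim. The extra justifications you supply (normality via conjugation invariance of age, and $\age(g|_W)>0$ since $g\neq 1$) are the right ones and are simply left implicit in the paper.
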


\begin{proof}
Let $\XC$ denote the set of all nontrivial elements of $G$ that have age $<1$, resp. $\leq 1$, 
and define $K = \langle \XC \rangle$. Then $K \lhd G$ and $K \ni g$. 
Decompose $W$ into a direct sum $\oplus^{s}_{i=1}V_{i}$ of irreducible $K$-submodules. 
We may assume by Lemma \ref{trivial} that $0 < \age(g|_{V_{1}}) < 1$, resp. $0 < \age(g|_{V_{1}}) \leq 1$. 
Let $\YC := \{h \in K \mid \age(h|_{V_{1}}) < 1, \mbox{ resp. }\leq 1\}$. Observe that 
$\XC \subseteq \YC$, whence $K = \langle \YC \rangle$. Thus $V := V_{1}$ satisfies (i) -- (iii).

Next, let $0 \neq U \subseteq W$ be any $K$-submodule and let 
$\YC' := \{h \in K \mid \age(h|_{U}) < 1, \mbox{ resp. }\leq 1\}$. Then again 
$\XC \subseteq \YC'$ and so $K = \langle \YC' \rangle$.  
\end{proof}  

Lemma \ref{red1} shows that it is natural to restrict our attention to the following 
set-up, which is slightly more general than the one considered in \cite{KL}:

\vspace{3mm}
$\sta~:~$\begin{tabular}{l}$G$ is a finite irreducible subgroup of $\GC = GL(V)$ and 
        $Z(\GC)G=\langle \XC \rangle$,\\
where 
$\XC:=  \{g \in Z(\GC)G \mid 0 < \age(g) < 1, \mbox{ resp. }0 < \age(g)\leq 1\}$. \end{tabular}
 
\vspace{3mm}
The condition $\sta$ means that, up to scalars, the finite irreducible subgroup $G < GL(V)$ is
generated by some nontrivial elements with age $< 1$, resp. $\leq 1$. In fact we can 
even assume that these generators are non-scalar:

\begin{remar}
{{\sl Assume $G$ satisfies $\sta$ and $\dim(V) > 1$. Then $\XC^{*} \neq \emptyset$ and 
$Z(\GC)G = Z(\GC)\langle \XC^{*}\rangle$, where
$\XC^{*}:=  \{g \in G \mid 0 < \ages(g) < 1, \mbox{ resp. }0 < \ages(g)\leq 1\}$.} 
{\rm Indeed, if $\XC^{*} = \emptyset$ then all $h \in \XC$ are scalar and so
is $G$, contradicting the condition $\dim(V) > 1$. Next, any $x \in G$ can be written as 
$g_{1} \ldots g_{m}$ with $g_{i} = \al_{i}h_{i} \in \XC$, $\al_{i} \in \SA$,
$h_{i} \in G$, and $h_{i} \in \XC^{*}$ precisely when $i \in J$ for some subset 
$J \subseteq \{1,2, \ldots m\}$. Then $x = \lam \prod_{i \in J} h_{i} \in Z(\GC) \langle \XC^{*} \rangle$
for $\lam = \prod^{m}_{i=1}\al_{i} \cdot \prod_{j \notin J}h_{j}$.}}
\end{remar}  

\subsection{Deviations}
A natural invariant metric on $GU(V)$ is defined as follows:

\begin{defi}\label{metric}
{\em Let $T \in GL(V)$ be conjugate to 
$\diag\left(e^{2\pi ir_{1}}, \ldots ,e^{2\pi ir_{n}}\right)$,
where $0 \leq r_{j} < 1$. Then $||T|| = (\sum^{n}_{j=1}\min\{r_{j},1-r_{j}\}^{2})^{1/2}$.}
\end{defi}

For our purposes it is more convenient to work with the following:

\begin{defi}\label{dj}
{\em Let $j$ be any positive number and let $T \in GU(V)$ be any unitary linear operator.
Then 
$$\DD(T) = \inf_{\lam \in \SA,~B \in \BC(V)}
  \left(\sum_{b \in B}||T(b)-\lam b||^{j}\right)^{1/j}.$$}
\end{defi}

This definition is a slight generalization of \cite[Definition 27]{KL} (where one 
takes $\lam = 1$ instead of the infimum over all $\lam \in \SA$). First we list some basic 
properties of $\DD(T)$.

\begin{lemma}\label{basic1}
{\sl Let $A, T \in GU(V)$ and $\al \in \SA$. Then the following hold:

{\rm (i)} $\DD(T) = \DD(\al T)$;

{\rm (ii)} $\DD(T) = \DD(ATA^{-1})$;

{\rm (iii)} $\DD(T) = \DD(T^{-1})$.}
\end{lemma}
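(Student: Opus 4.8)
The statement to prove is Lemma~\ref{basic1}, asserting invariance of $\DD(T)$ under multiplication by unimodular scalars, conjugation, and inversion.

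\medskip

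\textbf{Plan of proof.} The three assertions are of the same elementary flavour: each amounts to exhibiting a bijection on the index set over which the infimum in Definition~\ref{dj} is taken, under which the summand $\sum_{b\in B}\|T(b)-\lam b\|^{j}$ is preserved. First I would prove (i): for fixed $\al\in\SA$, the map $\lam\mapsto\al^{-1}\lam$ is a bijection of $\SA$ onto itself, and $\|(\al T)(b)-\lam b\| = |\al|\cdot\|T(b)-\al^{-1}\lam b\| = \|T(b)-(\al^{-1}\lam)b\|$ since $|\al|=1$. Hence the two infima range over exactly the same multisets of values, and $\DD(\al T)=\DD(T)$. For (ii): since $A\in GU(V)$ is unitary, $B\mapsto A(B):=\{A(b)\mid b\in B\}$ is a bijection of $\BC(V)$ onto itself (an orthonormal basis is carried to an orthonormal basis). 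For $b\in B$ put $b'=A(b)$; then $\|(ATA^{-1})(b')-\lam b'\| = \|A(T(b)-\lam b)\| = \|T(b)-\lam b\|$, again using that $A$ preserves the Hermitian norm. Summing over $b\in B$ and taking the infimum over $\lam$ and over $B$ (equivalently over $A(B)$) gives $\DD(ATA^{-1})=\DD(T)$.

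\medskip

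For (iii) I would pair up the data differently. Given $\lam\in\SA$ and $B\in\BC(V)$, set $C:=T(B)=\{T(b)\mid b\in B\}$, which is again an orthonormal basis since $T$ is unitary, and set $\mu:=\lam^{-1}\in\SA$. Writing $c=T(b)$, we have $T^{-1}(c)=b$ and
\[
\|T^{-1}(c)-\mu c\| \;=\; \|b-\lam^{-1}T(b)\| \;=\; |{-\lam^{-1}}|\cdot\|{-\lam} b + T(b)\| \;=\; \|T(b)-\lam b\|,
\]
using $|\lam|=1$. As $(\lam,B)$ runs over all admissible pairs for $T$, the pair $(\mu,C)=(\lam^{-1},T(B))$ runs over all admissible pairs for $T^{-1}$ (the correspondence is a bijection, with inverse $(\mu,C)\mapsto(\mu^{-1},T^{-1}(C))$), and the corresponding sums agree term by term. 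Therefore the two infima coincide and $\DD(T)=\DD(T^{-1})$.

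\medskip

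\textbf{Main obstacle.} There is essentially no obstacle here: every step reduces to the single fact that elements of $\SA$ and unitary operators preserve $\|\cdot\|$, together with the observation that the relevant maps are bijections of $\SA$ and of $\BC(V)$. The only point requiring a word of care is that the infimum in Definition~\ref{dj} is a genuine infimum over a set that is not obviously compact in the $\lam$-variable when phrased naively, but since we only ever reparametrise by bijections of that same set, the value of the infimum is manifestly unchanged and no compactness or attainment argument is needed. I would present all three parts compactly as above, flagging the shared mechanism once rather than repeating it.
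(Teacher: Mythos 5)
Your proof is correct and rests on the same key identities as the paper's, just packaged differently: where the paper argues $\DD(\cdot)\geq\DD(T)$ for the transformed operator and then applies the same inequality to the inverse transformation to get the reverse bound, you exhibit a bijection of the parameter set $\SA\times\BC(V)$ (or of one coordinate) under which the summands are preserved, getting equality of the infima in one step. The underlying computations using $|\al|=1$ and unitarity of $A$, $T$ are identical in both.
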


\begin{proof}
(i) Clearly $\al T \in GU(V)$.  Consider any $\lam \in \SA$ and $B \in \BC(V)$. Then 
$$\sum_{b \in B}||\al T(b)-\lam b||^{j} = \sum_{b \in B}||T(b)-\al^{-1}\lam b||^{j}
  \geq \DD(T)^{j}.$$
Taking infimum over all $\lam \in \SA$ and $B \in \BC(V)$ we get 
$\DD(\al T) \geq \DD(T)$. Applying this inequality to $S := \al T$ and $\al^{-1}$ we 
obtain
$\DD(T) = \DD(\al^{-1} S) \geq \DD(S) = \DD(\al T)$, and the claim follows. 

\smallskip
(ii) Consider any $\lam \in \SA$ and $B \in \BC(V)$. Then $A^{-1}(B) \in \BC(V)$, and 
$$\sum_{b \in B}||ATA^{-1}(b)-\lam b||^{j} = 
  \sum_{c = A^{-1}b \in A^{-1}(B)}||A(T(c)-\lam c)||^{j}
  = \sum_{c \in A^{-1}(B)}||T(c)-\lam c||^{j} 
  \geq \DD(T)^{j}.$$
Taking infimum over all $\lam \in \SA$ and $B \in \BC(V)$ we get 
$\DD(ATA^{-1}) \geq \DD(T)$. Applying this inequality to $S := ATA^{-1}$ and $A^{-1}$ we 
obtain
$\DD(T) = \DD(A^{-1}S(A^{-1})^{-1}) \geq \DD(S) = \DD(ATA^{-1})$, and the claim follows. 

\smallskip
(iii) Consider any $\lam \in \SA$ and $B \in \BC(V)$. Then  
$$\sum_{b \in B}||T^{-1}(b)-\lam b||^{j} = 
  \sum_{b \in B}||\lam^{-1}T(T^{-1}(b)-\lam b)||^{j}
  = \sum_{b \in B}||T(b)-\lam^{-1}b||^{j} 
  \geq \DD(T)^{j}.$$
Taking infimum over all $\lam \in \SA$ and $B \in \BC(V)$ we get 
$\DD(T^{-1}) \geq \DD(T)$. Applying this inequality to $S := T^{-1}$ we 
get
$\DD(T) = \DD(S^{-1}) \geq \DD(S) = \DD(T^{-1})$, and so the claim follows.
\end{proof}

Most of the time we will work with $\DD(T)$ where $j = 1$ or $2$. 

\begin{lemma}\label{basic2}
{\sl For $T \in GU(V)$ the following hold:

{\rm (i)} $\DB(T) \leq \DA(T) \leq \sqrt{\dim(V)} \cdot \DB(T)$;

{\rm (ii)} $\DA(T) \geq \dim(V) - |\Tr(T)|$;

{\rm (iii)} $\DB(T)^{2} = 2(\dim(V) - |\Tr(T)|)$. Moreover, for any $B \in \BC(V)$ we have
$$\DB(T) = \inf_{\lam \in \SA}
  \left(\sum_{b \in B}||T(b)-\lam b||^{2}\right)^{1/2}.$$}
\end{lemma}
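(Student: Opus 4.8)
The plan is to handle the three parts in the order (iii), (i), (ii), since the exact formula in (iii) is the cleanest and clarifies the other two. Fix an orthonormal basis $B \in \BC(V)$ and a scalar $\lam \in \SA$. Expanding $\|T(b)-\lam b\|^{2}$ via the Hermitian form and using that $\|T(b)\| = \|b\| = 1$ (as $T \in GU(V)$) together with $|\lam| = 1$, one gets $\|T(b)-\lam b\|^{2} = 2 - 2\Re\!\big(\bar\lam\,(T(b),b)\big)$. Summing over $b \in B$ and recalling that $\sum_{b \in B}(T(b),b) = \Tr(T)$ for \emph{every} orthonormal basis, this yields $\sum_{b \in B}\|T(b)-\lam b\|^{2} = 2\dim(V) - 2\Re\!\big(\bar\lam\,\Tr(T)\big)$. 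The right-hand side no longer involves $B$, and $\Re(\bar\lam\,\Tr(T)) \le |\Tr(T)|$ with equality attained by taking $\lam$ to be the unit argument of $\Tr(T)$ (any $\lam$ if $\Tr(T)=0$). Hence the infimum over $\lam$ alone, for each fixed $B$, already equals $2(\dim(V)-|\Tr(T)|)$; this proves both the displayed formula for $\DB(T)^{2}$ and the ``moreover'' clause.

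For (i) I would apply, for each fixed pair $(\lam,B)$, the standard comparison of $\ell^{1}$ and $\ell^{2}$ norms to the vector $\big(\|T(b)-\lam b\|\big)_{b \in B} \in \RR^{\dim(V)}$: namely $\big(\sum_{b}\|T(b)-\lam b\|^{2}\big)^{1/2} \le \sum_{b}\|T(b)-\lam b\| \le \sqrt{\dim(V)}\,\big(\sum_{b}\|T(b)-\lam b\|^{2}\big)^{1/2}$. Taking $\inf_{\lam \in \SA,\,B \in \BC(V)}$ of each of the three expressions gives $\DB(T) \le \DA(T)$ from the left inequality and $\DA(T) \le \sqrt{\dim(V)}\,\DB(T)$ from the right one.

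For (ii), fix $\lam$ and $B$. Since $\|b\|=1$, Cauchy--Schwarz gives $\|T(b)-\lam b\| \ge |(T(b)-\lam b,\,b)| = |(T(b),b)-\lam|$, so by the triangle inequality $\sum_{b}\|T(b)-\lam b\| \ge \sum_{b}|(T(b),b)-\lam| \ge \big|\sum_{b}\big((T(b),b)-\lam\big)\big| = |\Tr(T) - \dim(V)\,\lam| \ge \dim(V)|\lam| - |\Tr(T)| = \dim(V) - |\Tr(T)|$. Taking the infimum over $\lam$ and $B$ gives $\DA(T) \ge \dim(V)-|\Tr(T)|$. Note that (ii) does \emph{not} follow from (i) and (iii), because with $u := \dim(V)-|\Tr(T)|$ the inequality $\sqrt{2u} \ge u$ fails once $u > 2$; so this direct $\ell^{1}$ estimate is genuinely needed.

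There is no real obstacle here: everything is elementary linear algebra. The only points requiring care are the bookkeeping with the Hermitian-form conventions in the expansion used for (iii), and keeping the order of quantifiers straight when passing to infima (one always fixes $(\lam,B)$, proves a pointwise inequality, and then takes infima of both sides). The one genuinely useful by-product, which streamlines later uses of $\DB$, is that $\sum_{b \in B}\|T(b)-\lam b\|^{2}$ does not depend on the orthonormal basis $B$, so $\DB(T)$ may be computed with any prescribed $B$ — precisely the content of the ``moreover'' statement.
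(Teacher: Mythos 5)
Your proof is correct, and for part (iii) it is genuinely cleaner than the paper's. The paper computes $\sum_{b}\|T(b)-\lam b\|^{2}=\Tr(\tn\bar{X}X)$ with $X=I-\lam^{-1}A$, then unitarily diagonalizes $A=\tn\bar{C}EC$ to reduce to $\Tr(\tn\bar{D}D)=\sum_{l}|1-\alpha_{l}|^{2}=2n-2\Re\!\left(\sum_{l}\alpha_{l}\right)$, and finally chooses $\lam_{0}=e^{i\theta}$ with $\Tr(T)=re^{i\theta}$ to achieve the infimum. You short-circuit all of this: since $T\in GU(V)$ gives $\|T(b)\|=\|b\|=1$ directly, the identity $\|T(b)-\lam b\|^{2}=2-2\Re\!\left(\bar\lam(T(b),b)\right)$ and the basis-invariance of $\sum_{b}(T(b),b)=\Tr(T)$ yield $2\dim(V)-2\Re(\bar\lam\Tr T)$ with no matrix manipulation, after which the $\lam$-optimization is immediate. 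You are using the same key invariance (trace is basis-independent) but deploying it one step earlier, which avoids the diagonalization entirely and makes the ``moreover'' clause transparent.

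Part (i) is identical to the paper's. For (ii), the paper bounds $\left(\sum_{k}|a_{kl}-\lam\delta_{kl}|^{2}\right)^{1/2}\geq|a_{ll}-\lam|\geq\Re(1-\lam^{-1}a_{ll})$ and sums; you use Cauchy--Schwarz against $b$ to get $\|T(b)-\lam b\|\geq|(T(b),b)-\lam|$ and then $\sum_{b}|(T(b),b)-\lam|\geq\bigl|\Tr(T)-\dim(V)\,\lam\bigr|\geq\dim(V)-|\Tr(T)|$. These are equivalent bookkeeping but yours reads a bit more cleanly. Your closing remark that (ii) is not a consequence of (i)+(iii) once $\dim(V)-|\Tr(T)|>2$ is a correct and worthwhile observation, explaining why (ii) is stated separately.
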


\begin{proof} 
(i) For any $\lam \in \SA$ and $B \in \BC(V)$ we have
$$\DB(T) \leq (\sum_{b \in B}||T(b)-\lam b||^{2})^{1/2} \leq \sum_{b \in B}||T(b)-\lam b||.$$ 
Taking infimum over all $\lam,B$ we get $\DB(T) \leq \DA(T)$. Next, again for any 
$\lam \in \SA$ and $B \in \BC(V)$ by the Cauchy-Schwarz inequality we have 
$$\DA(T) \leq \sum_{b \in B}||T(b)-\lam b|| \leq \sqrt{\dim(V)} \cdot 
  (\sum_{b \in B}||T(b)-\lam b||^{2})^{1/2}.$$ 
Taking infimum over all $\lam,B$ we get $\DA(T) \leq \sqrt{\dim(V)}\cdot \DB(T)$.

\smallskip
(ii) Consider any $\lam \in \SA$ and $B \in \BC(V)$. Let $(a_{ij})_{1 \leq i,j \leq n}$
be the matrix of $T$ in the basis $B$. Observe that 
$$\sum_{b \in B}||T(b)-\lam b|| = 
  \sum^{n}_{l=1} \left(\sum^{n}_{k=1}|a_{kl}-\lam\delta_{k,l}|^{2}\right)^{1/2} \geq 
  \sum^{n}_{l=1}|\lam^{-1}a_{ll}-1| \geq \sum^{n}_{l=1}\Re(1-\lam^{-1}a_{ll})$$
$$ = n-\Re(\lam^{-1}\sum^{n}_{l=1}a_{ll}) \geq n-|\lam^{-1}\sum^{n}_{l=1}a_{ll}| = 
\dim(V)-|\Tr(T)|.$$
Taking infimum over all $\lam,B$ we arrive at the claim.

\smallskip
(iii) Consider an arbitrary $B \in \BC(V)$ and let 
$A := (a_{ij})_{1 \leq i,j \leq n}$ be the matrix of $T$ in the basis $B$. 
For any $\lam \in \SA$ we have 
$$\sum_{b \in B}||T(b)-\lam b||^{2} = 
  \sum^{n}_{l=1} \sum^{n}_{k=1}|\delta_{k,l}-\lam^{-1}a_{kl}|^{2} = 
  \sum_{1 \leq k,l \leq n}X_{kl}\bar{X}_{kl} = \Tr(\tn \bar{X} \cdot X),$$
where $X := (\delta_{k,l}-\lam^{-1}a_{kl})_{1 \leq k,l \leq n} = I_{n}-\lam^{-1}A$. 
Since $T \in GU(V)$, there is a matrix $C$ with $\tn \bar{C} \cdot C = I_{n}$ and a 
diagonal matrix $E = \diag(\eps_{1}, \ldots \eps_{n})$ with $|\eps_{k}| = 1$ such that 
$A = \tn \bar{C} EC$. Then $X = \tn \bar{C}DC$ for 
$D := I_{n}-\lam^{-1}E = \diag(1-\al_{1}, \ldots ,1-\al_{n})$ with $\al_{i} := \lam^{-1}\eps_{i}$
(all of modulus $1$). It follows that
\begin{equation}\label{d21}
  \begin{array}{l}\sum_{b \in B}||T(b)-\lam b||^{2}  = 
  \Tr(\tn \bar{X} \cdot X) = \Tr(\tn \bar{C}\tn \bar{D}C \cdot \tn \bar{C} DC) = 
  \Tr(\tn \bar{D}D)\\
  \\ 
   = \sum^{n}_{l=1}|1-\al_{l}|^{2} 
    = \sum^{n}_{l=1}\left(1+|\al_{l}|^{2}-2\Re(\al_{l})\right) 
    = 2n-2\Re(\sum^{n}_{l=1}\al_{l}).\end{array}
\end{equation} 
In particular,
\begin{equation}\label{d22}
  \sum_{b \in B}||T(b)-\lam b||^{2} \geq 2n-2|\sum^{n}_{l=1}\al_{l}| = 
    2n-2|\sum^{n}_{l=1}\eps_{l}| = 2(n-|\Tr(T)|).
\end{equation}
Taking infimum over all $\lam,B$ we obtain $\DB(T)^{2} \geq 2(n-|\Tr(T)|)$. 

Now, in the above computation we choose $\lam = \lam_{0} := e^{i\theta}$, where 
$\Tr(T) = re^{i\theta}$ and $|\Tr(T)| = r \geq 0$. Then 
$$\sum^{n}_{l=1}\al_{l} = \lam_{0}^{-1}\sum^{n}_{l=1}\eps_{l} = \lam_{0}^{-1}\Tr(T) = 
  e^{-i\theta}re^{i\theta} = r = |\Tr(T)|.$$
Then (\ref{d21}) implies that 
$$\DB(T)^{2} \leq \sum_{b \in B}||T(b)-\lam_{0}b||^{2} = 2n-2\Re(\sum^{n}_{l=1}\al_{l}) = 
  2(n-|\Tr(T)|) \leq \DB(T)^{2}$$
Together with (\ref{d22}), this last inequality chain yields that 
$$\DB(T)^{2} = 2(n-|\Tr(T)|) = \sum_{b \in B}||T(b)-\lam_{0}b||^{2} = 
  \inf_{\lam \in \SA}\sum_{b \in B}||T(b)-\lam b||^{2}.$$
\end{proof}

The relationship between $||T||$ and $\DB(T)$ can be described as follows:

\begin{corol}\label{basic2m}
{\sl For $T \in GU(V)$ one has $4 \cdot \inf_{\lam \in \SA}||\lam T|| < \DB(T) \leq 2\pi \cdot ||T||$.}
\end{corol}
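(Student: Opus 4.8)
The plan is to reduce everything to a single eigenvalue computation. Write $T$ in a unitary eigenbasis with eigenvalues $e^{2\pi i r_j}$, $0 \le r_j < 1$, so that $|\Tr(T)| = |\sum_j e^{2\pi i r_j}|$. By Lemma \ref{basic2}(iii), $\DB(T)^2 = 2\sum_j (1 - \cos 2\pi r_j) + 2(\sum_j \cos 2\pi r_j - \Re(e^{-i\theta}\Tr(T)))$ after the optimal phase choice; more simply, I would just use the clean formula $\DB(\lambda T)^2 = 2n - 2|\Tr(T)|$ valid for all $\lambda \in \SA$ (Lemma \ref{basic1}(i) plus \ref{basic2}(iii)), together with $2n - 2|\Tr(T)| = 2\sum_j(1-\cos 2\pi s_j)$ where $e^{2\pi i s_j}$ are the eigenvalues of $\lambda_0 T$ for the phase-optimal $\lambda_0$. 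Meanwhile $||\lambda T||^2 = \sum_j \min\{t_j, 1-t_j\}^2$ where $e^{2\pi i t_j}$ are the eigenvalues of $\lambda T$. So both quantities are sums over eigenvalue-arguments of a function of that argument, and the corollary will follow from a pointwise (per-eigenvalue) comparison of the two functions, after matching up the right choices of $\lambda$ on each side.

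The key calculus input is the two-sided bound, for $x \in [-1/2, 1/2]$ (thinking of $x$ as the "signed distance to $1$" of an eigenvalue, i.e. $\min\{t,1-t\}$ up to sign),
\[
16 x^2 < 2(1 - \cos 2\pi x) \le (2\pi)^2 x^2 .
\]
The right-hand inequality is the elementary $1 - \cos\phi \le \phi^2/2$. The left-hand inequality is the statement that $\frac{1-\cos 2\pi x}{x^2} > 8$ on $(-1/2,1/2)\setminus\{0\}$; since $1-\cos 2\pi x = 2\sin^2(\pi x)$, this is $\frac{\sin \pi x}{x} > 2\sqrt{2}\,\pi$... wait, it is $2\sin^2(\pi x)/x^2 > 16$, i.e. $|\sin(\pi x)/(\pi x)| > 2\sqrt2/\pi \approx 0.9$ on that interval, which holds with room to spare (the minimum of $\sin(\pi x)/(\pi x)$ on $[-1/2,1/2]$ is $2/\pi \approx 0.6366$ at the endpoints — so in fact one should be careful: $16x^2$ at $x = 1/2$ is $4$, while $2(1-\cos\pi) = 4$, so the left inequality degenerates to equality at the endpoints and that is exactly why the "$<$" in the statement is strict and why $4$ rather than, say, $2\pi$ is the constant). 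So the clean statement to prove is: $16 x^2 \le 2(1-\cos 2\pi x) \le (2\pi)^2 x^2$ for all real $x$ with $|x|\le 1/2$, with the left inequality strict unless $|x| \in \{0, 1/2\}$; summing over $j$ then gives $16\,||\lambda T||^2 \le \DB(\lambda T)^2 \le (2\pi)^2 ||\lambda T||^2$ for every $\lambda \in \SA$.

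From there I would take infima appropriately. Taking $\inf$ over $\lambda$ on both sides of $16\,||\lambda T||^2 \le \DB(\lambda T)^2$, and using $\DB(\lambda T) = \DB(T)$ (Lemma \ref{basic1}(i)), gives $16 \inf_\lambda ||\lambda T||^2 \le \DB(T)^2$, i.e. $4 \inf_\lambda ||\lambda T|| \le \DB(T)$. For the upper bound, pick $\lambda$ attaining (or approaching) $\inf_\lambda \DB(\lambda T) = \DB(T)$... no — rather, just use that for the specific $\lambda = 1$ we get $\DB(T)^2 = \DB(1\cdot T)^2 \le (2\pi)^2 ||T||^2$, hence $\DB(T) \le 2\pi\,||T||$. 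That already gives the right-hand inequality with $\lambda = 1$; no infimum is needed there. For the strictness of the left inequality: $\DB(T)^2 = 16 \inf_\lambda||\lambda T||^2$ would force, for the optimal $\lambda$, that every eigenvalue-argument of $\lambda T$ lies in $\{0, 1/2\}$, i.e. $\lambda T$ has all eigenvalues $\pm 1$; but then one checks $\inf_{\mu}||\mu T||$ is strictly smaller (rotating by a small $\mu$ near a primitive root reduces the $\min\{t,1-t\}^2$ sum when not all eigenvalues are already $+1$; and if $\lambda T = I$ then $T$ is scalar and both sides vanish, where "$<$" should be read as "$0 < 0$" failing — so one must either exclude scalars or note the statement is really for non-scalar $T$). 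I would handle this by arguing: if $T$ is non-scalar the strict inequality holds by the endpoint analysis above, and if $T$ is scalar both $\inf_\lambda ||\lambda T||$ and $\DB(T)$ are $0$ so the claimed strict inequality must be interpreted (or the hypothesis $T$ non-scalar is implicit).

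The main obstacle is the strictness of the left inequality at the degenerate endpoint $x = 1/2$, where $16x^2 = 2(1-\cos 2\pi x)$ exactly: one cannot get strictness pointwise, so the strict inequality $4\inf_\lambda||\lambda T|| < \DB(T)$ must come from a global argument showing the optimal $\lambda$ cannot put *all* eigenvalues of $\lambda T$ simultaneously at the endpoint while also being the minimizer of $||\lambda T||$ — equivalently, that $T = \pm I$ up to the scalar $\lambda$ is the only equality case and that case is excluded (or trivial). Everything else is a routine convexity/monotonicity exercise on $[0,1/2]$.
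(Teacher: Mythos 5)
Your approach is the same as the paper's: put $T$ in a unitary eigenbasis, apply the elementary bound $16a^2 \le |e^{2\pi i a} - 1|^2 \le (2\pi a)^2$ for $|a| \le 1/2$ to each eigenvalue argument, and sum. However, the intermediate chain you wrote, that $16||\lambda T||^2 \le \DB(\lambda T)^2 \le (2\pi)^2 ||\lambda T||^2$ for every $\lambda \in \SA$, is false in its left half. Summing $16x_j^2 \le 2(1-\cos 2\pi x_j)$ over the eigenvalue arguments $x_j \in [-1/2,1/2]$ of $\lambda T$ produces on the right the quantity $\sum_j |e^{2\pi i x_j}-1|^2 = 2(\dim V - \Re\Tr(\lambda T))$, which is $\ge \DB(T)^2 = 2(\dim V - |\Tr T|)$ with equality only at the phase-optimal $\lambda$; it is not equal to $\DB(\lambda T)^2$ in general. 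A concrete failure: $T = \diag(1,-1,-1)$ with $\lambda = 1$ has $16||T||^2 = 8 > 4 = \DB(T)^2$. What you can correctly conclude for every $\lambda$, and what the paper actually uses, is $4||\lambda T|| \le \bigl(\sum_b||\lambda T(b)-b||^2\bigr)^{1/2} \le 2\pi||\lambda T||$; the middle quantity has infimum $\DB(T)$ over $\lambda$ by Lemma \ref{basic2}(iii), so the lower bound follows by taking infima and the upper bound already at $\lambda = 1$. Equivalently, restrict your pointwise comparison to the $\lambda_0$ where your identity $\DB(T)^2 = 2\sum_j(1-\cos 2\pi s_j)$ actually holds, obtaining $16||\lambda_0 T||^2 \le \DB(T)^2$ and hence $16\inf_\lambda||\lambda T||^2 \le \DB(T)^2$. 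Your remark about ``matching up the right choices of $\lambda$'' anticipates exactly this repair; the displayed chain overstates it.

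Your discussion of strictness goes beyond the paper, whose proof as written yields only the nonstrict $\le$. You are right that the stated strict $<$ fails for scalar $T$, where both sides vanish. For non-scalar $T$ your idea is the correct one: equality would force all eigenvalues of $\lambda_0 T$ to be $\pm 1$ with both signs present (say $p$ copies of $+1$ and $q \ge 1$ of $-1$, $p+q = \dim V = n$), and then a short computation gives $\inf_\lambda||\lambda T||^2 = pq/(4n) < q/4 = ||\lambda_0 T||^2$, so $\lambda_0$ cannot simultaneously be a minimizer of $||\lambda T||$ and the chain cannot be tight.
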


\begin{proof}
We may assume that $T = \diag\left(e^{2\pi ia_{1}}, \ldots ,e^{2\pi ia_{n}}\right)$ in some 
basis $B \in \BC(V)$,
where $-1/2 \leq a_{j} < 1/2$; in particular, $||T|| = (\sum^{n}_{j=1}a_{j}^{2})^{1/2}$. 
It is easy 
to check that the function $(1-\cos(x))/x^{2}$ is decreasing on $(0,\pi]$, whence 
$2/\pi^{2} \leq (1-\cos(x))/x^{2} \leq 1/2$ for $-\pi \leq x \leq \pi$. Taking 
$x = 2\pi a_{j}$, we get $4/\pi^{2} < |e^{2\pi ia_{j}}-1|^{2}/4\pi^{2}a_{j}^{2} \leq 1$, whence 
$4||T|| \leq (\sum_{b \in B}||T(b)-b||^{j})^{1/2} \leq 2\pi||T||$. Now the statement follows
by applying this inequality to $\lam T$ for all $\lam \in \SA$ and using Lemma \ref{basic2}(iii).
\end{proof}

\begin{lemma}\label{basic3}
{\sl Let $T_{1}, \ldots, T_{k} \in GU(V)$. Then the following hold:

{\rm (i)} $\DB(T_{1}T_{2} \ldots T_{k})^{2} \leq k \cdot \sum^{k}_{i=1}\DB(T_{i})^{2}$;

{\rm (ii)} $\DB(T_{1}T_{2}T_{1}^{-1}T_{2}^{-1})^{2} \leq 4 \cdot 
  \min\{\DB(T_{1})^{2},\DB(T_{2})^{2}\}$.}
\end{lemma}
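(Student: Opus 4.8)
The plan is to establish (i) by a telescoping identity combined with the Cauchy--Schwarz inequality, and then to deduce (ii) from (i) together with Lemma~\ref{basic1}.

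For (i): fix an orthonormal basis $B \in \BC(V)$ and scalars $\lam_{1}, \ldots, \lam_{k} \in \SA$, to be specified later. Set $S_{i} := T_{i}T_{i+1}\cdots T_{k}$ for $1 \leq i \leq k$, with $S_{k+1} := \id$, and $\mu_{i} := \lam_{i}\lam_{i+1}\cdots \lam_{k}$, with $\mu_{k+1} := 1$. From the identity $S_{i} - \mu_{i} = (T_{i} - \lam_{i})S_{i+1} + \lam_{i}(S_{i+1} - \mu_{i+1})$ one obtains by downward induction the telescoped form
\[
S_{1} - \mu_{1} = \sum_{i=1}^{k}\lam_{1}\lam_{2}\cdots \lam_{i-1}(T_{i} - \lam_{i})S_{i+1}.
\]
Applying both sides to a vector $b \in B$, using $|\lam_{1}\cdots \lam_{i-1}| = 1$, the triangle inequality, and then the power-mean (Cauchy--Schwarz) inequality for a sum of $k$ terms, gives
\[
\|S_{1}(b) - \mu_{1}b\|^{2} \leq k\sum_{i=1}^{k}\|(T_{i} - \lam_{i})S_{i+1}(b)\|^{2}.
\]
Now sum over $b \in B$. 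For each fixed $i$ the set $S_{i+1}(B)$ is again an orthonormal basis of $V$, since $S_{i+1} \in GU(V)$, so $\sum_{b \in B}\|(T_{i}-\lam_{i})S_{i+1}(b)\|^{2} = \sum_{c \in S_{i+1}(B)}\|T_{i}(c) - \lam_{i}c\|^{2}$. As $T_{i}$ appears in exactly one summand, we are free to choose $\lam_{i}$ so as to minimize this quantity; by Lemma~\ref{basic2}(iii) its minimum equals $\DB(T_{i})^{2}$. Since $\mu_{1} \in \SA$, Lemma~\ref{basic2}(iii) also gives $\DB(S_{1})^{2} \leq \sum_{b \in B}\|S_{1}(b) - \mu_{1}b\|^{2}$, and combining the last three displays yields $\DB(T_{1}\cdots T_{k})^{2} = \DB(S_{1})^{2} \leq k\sum_{i=1}^{k}\DB(T_{i})^{2}$, which is (i).

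For (ii): by Lemma~\ref{basic1}(ii),(iii) we have $\DB(T_{2}T_{1}^{-1}T_{2}^{-1}) = \DB(T_{1}^{-1}) = \DB(T_{1})$, so applying (i) with $k = 2$ to the product $T_{1}\cdot(T_{2}T_{1}^{-1}T_{2}^{-1})$ gives $\DB(T_{1}T_{2}T_{1}^{-1}T_{2}^{-1})^{2} \leq 2\bigl(\DB(T_{1})^{2} + \DB(T_{1})^{2}\bigr) = 4\DB(T_{1})^{2}$. Since $(T_{1}T_{2}T_{1}^{-1}T_{2}^{-1})^{-1} = T_{2}T_{1}T_{2}^{-1}T_{1}^{-1}$ has the same deviation by Lemma~\ref{basic1}(iii), the same argument with $T_{1}$ and $T_{2}$ interchanged gives the bound $4\DB(T_{2})^{2}$ as well, proving (ii).

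The only point requiring care is that the scalars $\lam_{i}$ must be chosen \emph{independently} to realize $\DB(T_{i})^{2}$ in each summand simultaneously; this is legitimate precisely because each $T_{i}$ (and hence the relevant $\lam_{i}$) occurs in just one term of the telescoped sum, and the basis invariance in Lemma~\ref{basic2}(iii) makes the differently conjugated bases $S_{i+1}(B)$ harmless. No deeper obstacle arises.
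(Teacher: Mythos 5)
Your proof is correct and takes essentially the same approach as the paper: a telescoping identity for $T_1\cdots T_k(b)-\lambda_1\cdots\lambda_k b$, the Cauchy--Schwarz inequality $\|\sum v_i\|^2\leq k\sum\|v_i\|^2$, and independent minimization over the $\lambda_i$ via Lemma~\ref{basic2}(iii), followed by a decomposition of the commutator in part (ii). The only cosmetic difference is that you telescope by peeling $T_i$ off the left (so each $T_i$ is evaluated on the transported basis $S_{i+1}(B)$), whereas the paper peels from the right so that each $T_i$ is evaluated on $B$ itself; both are valid thanks to the basis-invariance in Lemma~\ref{basic2}(iii).
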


\begin{proof}
(i) For any $\lam_{i} \in \SA$, $B \in \BC(V)$, and $b \in B$ we have
$$T_{1}T_{2} \ldots T_{k}(b)-\lam_{1}\lam_{2} \ldots \lam_{k}b = 
  T_{1} \ldots T_{k-1}(T_{k}b-\lam_{1}b) + \lam_{1}T_{1} \ldots T_{k-2}(T_{k-1}b-\lam_{2}b) + $$
$$ + \lam_{1}\lam_{2}T_{1} \ldots T_{k-3}(T_{k-2}b-\lam_{3}b) + \ldots + 
   \lam_{1} \ldots \lam_{k-1}(T_{1}b-\lam_{k}b).$$ 
By the Cauchy-Schwarz inequality, $||\sum^{k}_{i=1}v_{i}||^{2} \leq k\sum^{k}_{i=1}||v_{i}||^{2}$
for any $v_{1}, \ldots, v_{k} \in V$.  
Since $|\lam_{i}|=1$ and $T_{i}$ is unitary for all $i$, it now follows that 
$$||T_{1}T_{2} \ldots T_{k}(b)-\lam_{1} \ldots \lam_{k}b||^{2} \leq 
  k\cdot \sum^{k}_{i=1}||T_{i}b-\lam_{i}b||^{2},$$
But $\lam_{1} \ldots \lam_{k} \in \SA$, hence  
$\DB(T_{1} \ldots T_{k})^{2} \leq k \cdot \sum^{k}_{i=1}\sum_{b \in B}||T_{i}b-\lam_{i}b||^{2}$. 
Taking infimum over all $\lam_{i} \in \SA$ and applying Lemma \ref{basic2}(iii),
we obtain 
$\DB(T_{1} \ldots T_{k})^{2} \leq k \cdot \sum^{k}_{i=1}\DB(T_{i})^{2}$. 

\smallskip
(ii) By (i) applied to $S := (T_{1}T_{2}T_{1}^{-1}) \cdot T_{2}^{-1}$ and by 
Lemma \ref{basic1},
$$\DB(S)^{2} \leq 2(\DB(T_{1}T_{2}T_{1}^{-1})^{2} + \DB(T_{2}^{-1})^{2}) = 4\DB(T_{2})^{2}.$$
Breaking up $S = T_{1} \cdot T_{2}T_{1}^{-1}T_{2}^{-1}$ and arguing similarly, we get
$\DB(S)^{2} \leq 4\DB(T_{1})^{2}$. 
\end{proof}

Lemma \ref{basic2}(iii) and Lemma \ref{basic3} yield the following inequalities which
we believe to be new and nontrivial.

\begin{corol}\label{basic23}
{\sl Let $\chi$ be any complex character of any finite group $G$ and let 
$g_{1}, \ldots ,g_{k} \in G$. Then 

{\rm (i)} $(k^{2}-1)\chi(1)-k\sum^{k}_{i=1}|\chi(g_{i})|+|\chi(\prod^{k}_{i=1}g_{i})| \geq 0$;

{\rm (ii)} $3\chi(1)-4|\chi(g_{i})|+|\chi([g_{1},g_{2}])| \geq 0$ for $i = 1,2$.
\hfill $\Box$}
\end{corol}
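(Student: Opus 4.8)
The plan is to realize $\chi$ by a \emph{unitary} representation and then feed the deviation inequalities of Lemma \ref{basic3} through the trace formula of Lemma \ref{basic2}(iii). Since $G$ is finite, I would fix a complex representation $\rho : G \to GL(V)$ affording $\chi$ (if $\chi = \sum_i n_i\chi_i$ with $\chi_i$ irreducible and $n_i \in \ZZ_{\geq 0}$, take the corresponding direct sum of representations), and, averaging a positive-definite Hermitian form over $G$, arrange that $\rho(G) \leq GU(V)$. Then $\dim(V) = \chi(1)$ and $\Tr(\rho(g)) = \chi(g)$ for every $g \in G$, so Lemma \ref{basic2}(iii) supplies the key identity $\DB(\rho(g))^2 = 2\bigl(\chi(1) - |\chi(g)|\bigr)$; note that this quantity is insensitive to the choice of orthonormal basis and of scalars by Lemma \ref{basic1}, which is exactly what makes the substitution legitimate.

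For (i), I would apply Lemma \ref{basic3}(i) to $T_i := \rho(g_i)$ for $1 \leq i \leq k$, giving
\[
  \DB\bigl(\rho(g_1 g_2 \cdots g_k)\bigr)^2 = \DB(T_1 T_2 \cdots T_k)^2 \leq k\sum_{i=1}^{k}\DB(T_i)^2 .
\]
Substituting $\DB(\rho(g))^2 = 2(\chi(1)-|\chi(g)|)$ on both sides and cancelling the factor $2$ yields
\[
  \chi(1) - \bigl|\chi(g_1 \cdots g_k)\bigr| \;\leq\; k\sum_{i=1}^{k}\bigl(\chi(1)-|\chi(g_i)|\bigr) \;=\; k^{2}\chi(1) - k\sum_{i=1}^{k}|\chi(g_i)| ,
\]
and rearranging this is precisely the claimed inequality $(k^{2}-1)\chi(1) - k\sum_{i=1}^{k}|\chi(g_i)| + |\chi(\prod_{i=1}^{k}g_i)| \geq 0$.

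For (ii), I would instead use Lemma \ref{basic3}(ii) with $T_1 = \rho(g_1)$ and $T_2 = \rho(g_2)$; since $\rho([g_1,g_2]) = T_1 T_2 T_1^{-1} T_2^{-1}$, it gives $\DB(\rho([g_1,g_2]))^2 \leq 4\min\{\DB(T_1)^2,\DB(T_2)^2\} \leq 4\,\DB(T_i)^2$ for each $i \in \{1,2\}$. Translating via Lemma \ref{basic2}(iii) and simplifying produces $\chi(1) - |\chi([g_1,g_2])| \leq 4\bigl(\chi(1)-|\chi(g_i)|\bigr)$, i.e.\ $3\chi(1) - 4|\chi(g_i)| + |\chi([g_1,g_2])| \geq 0$, as desired.

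I do not anticipate a genuine obstacle: the whole content is already packaged in Lemmas \ref{basic2} and \ref{basic3}, and the argument is a mechanical translation. The one point deserving a sentence of care is the reduction to the unitary case — $\DB$ is only defined on $GU(V)$, so one must first pass to a $G$-invariant inner product before evaluating $\DB$ at $\rho(g)$ — together with the trivial observation that when $\chi$ is the zero class function all three terms vanish, so that no positivity assumption on $\chi(1)$ is needed.
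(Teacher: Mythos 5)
Your proof is correct and is precisely the argument the paper intends: the corollary is stated as an immediate consequence of Lemma \ref{basic2}(iii) and Lemma \ref{basic3}, and you have carried out that translation (after passing to a $G$-invariant Hermitian form so that $\rho(G) \leq GU(V)$ and $\DB$ is defined). The algebraic rearrangements in both (i) and (ii) check out.
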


Now we can prove an upper bound that links the dimension, covering number, and deviation
together.

\begin{lemma}\label{bound1}
{\sl Let $V = \CC^{n}$ with $n > 1$, $G < GL(V)$ a finite irreducible subgroup, and let $g \in G$. 
Assume that any element of $G/Z(G)$ is a product, of length at most $\bn$, of conjugates of 
$\bg = gZ(G)$. Then $\dim(V) \leq (\bn\DB(g))^{2}/2$.}
\end{lemma}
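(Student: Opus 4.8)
The plan is to exhibit a single element $h\in G$ whose trace vanishes, and then to estimate its deviation $\DB(h)$ in two opposite directions. As a preliminary, since $G$ is finite we may replace it by a $GL(V)$-conjugate so that $G<GU(V)$; this changes neither the irreducibility of $G$, nor any trace, nor the hypothesis that every element of $G/Z(G)$ is a short product of conjugates of $\bg=gZ(G)$, and it places us in the setting in which $\DB$ is defined. Now, because $G$ acts irreducibly on $V=\CC^{n}$, the map $\chi\colon x\mapsto\Tr(x)$ is an irreducible character of $G$ of degree $n>1$, so by Burnside's classical theorem on the vanishing of nonlinear irreducible characters there is some $h\in G$ with $\Tr(h)=0$; in particular $h$ is not scalar, so $h\notin Z(G)$.

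Next I would bound $\DB(h)$ from above using the hypothesis. By assumption $hZ(G)$ is a product of $m\le\bn$ conjugates of $\bg$ in $G/Z(G)$; lifting this relation to $G$, and using that $Z(G)$ consists of scalars by Schur's lemma (as $G$ is irreducible), we may write $h=\lam\cdot x_{1}gx_{1}^{-1}\cdots x_{m}gx_{m}^{-1}$ with $\lam\in\SA$ and $x_{i}\in G$. Since $\DB$ is unchanged by multiplication by a scalar and by conjugation (Lemma \ref{basic1}(i),(ii)), and $\DB(T_{1}\cdots T_{m})^{2}\le m\sum_{i=1}^{m}\DB(T_{i})^{2}$ (Lemma \ref{basic3}(i)), applying these to the displayed expression gives $\DB(h)^{2}\le m^{2}\DB(g)^{2}\le\bn^{2}\DB(g)^{2}$, where $1\le m\le\bn$ (note $m\ge 1$ as $h$ is non-scalar).

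Finally, Lemma \ref{basic2}(iii) supplies the opposite estimate $\DB(h)^{2}=2(\dim(V)-|\Tr(h)|)=2\dim(V)$, because $\Tr(h)=0$. Combining the two displays yields $2\dim(V)\le\bn^{2}\DB(g)^{2}$, which is exactly the asserted bound $\dim(V)\le(\bn\DB(g))^{2}/2$.

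The only non-elementary ingredient is Burnside's vanishing theorem, and it is essential for the sharp constant: the cruder route using $\sum_{x\in G}|\Tr(x)|^{2}=|G|$ only produces an element with $|\Tr(x)|<1$, and therefore loses an additive constant in the final inequality. The step demanding the most care in the write-up is the lifting: one must check that the central scalar $\lam$ and the conjugating elements $x_{i}$ can be absorbed cleanly through the scalar- and conjugation-invariance of $\DB$, so that the length bound $m\le\bn$ translates directly into $\DB(h)^{2}\le\bn^{2}\DB(g)^{2}$.
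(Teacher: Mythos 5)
Your proof is correct and follows the paper's argument step for step: pass to $GU(V)$ by Weyl's trick, use Burnside's vanishing theorem to produce $h$ with $\Tr(h)=0$ (hence $\DB(h)^2=2n$ by Lemma \ref{basic2}(iii)), and bound $\DB(h)^2\le\bn^2\DB(g)^2$ via the product decomposition modulo $Z(G)$ together with the scalar-, conjugation-invariance of $\DB$ and Lemma \ref{basic3}(i). The additional remarks you make about the sharpness of the constant and the care needed in the lifting step are accurate, but the underlying argument is the one in the paper.
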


\begin{proof}
By Weyl's unitarian trick we can equip $V$ with a $G$-invariant Hermitian form and assume
$G < GU(V)$. Consider any element $h \in G \setminus Z(G)$. Then $h = g_{1}g_{2} \ldots g_{k}z$ 
with $g_{i} \in g^{G}$, $z \in Z(G)$, and $k \leq \bn$. By Schur's Lemma,
$z$ is scalar, hence $\DB(h) = \DB(h')$ for $h' := g_{1} \ldots g_{k}$ by Lemma \ref{basic1}(i).
Next, by Lemma \ref{basic1}(ii) and Lemma \ref{basic3}(i), 
$\DB(h')^{2} \leq k\sum^{k}_{i=1}\DB(g_{i})^{2} = k^{2}\DB(g)^{2}$. It follows that 
$\DB(h)^{2} \leq (\bn\DB(g))^{2}$.

Now by Burnside's theorem on zeros we can choose $h$ such that 
$\Tr(h) = 0$. By Lemma \ref{basic2}(iii), $\DB(h)^{2} = 2n$ and so
$2n \leq (\bn\DB(g))^{2}$.    
\end{proof}

Recall that a finite group $G$ is {\it almost quasi-simple}, if 
$S \lhd G/Z(G) \leq \Aut(S)$ for some finite non-abelian simple group $S$.
For any such an $S$ and any $x \in S$, let $\an(x)$ 
be the minimal number of $\Aut(S)$-conjugates of $x$ which generate the subgroup $\la S,x \ra$.
A sharp upper bound on $\an(x)$ for $1 \neq x \in \Aut(S)$ has been obtained in \cite{GS}.
We will need the following result of \cite{GT2} that uses $\an(x)$ to bound the 
dimension of eigenspaces:   

\begin{lemma}\label{fix} {\rm \cite[Lemma 3.2]{GT2}}
{\sl Let $G$ be a finite almost quasi-simple group acting faithfully and irreducibly on a 
finite dimensional vector space $V$ over a field $\FF$, and let $g \in G \setminus Z(G)$. Then 
the dimension of any eigenspace of $g$ on $V$ is at most 
$\dim(V)-\dim(V)/\an(gZ(G))$. 
\hfill $\Box$}
\end{lemma}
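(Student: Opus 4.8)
The plan is to argue by contradiction, using a dimension count to produce a common eigenvector for a small generating set, and then exploiting the normality of the quasi-simple layer of $G$. Write $n := \dim(V)$, let $S$ be the simple group with $S \lhd G/Z(G) \leq \Aut(S)$, and let $\tilde{S} \lhd G$ be the full preimage of $S$; since $\tilde{S}/(\tilde{S} \cap Z(G)) \cong S$, the group $\tilde{S}$ is a central extension of $S$ and is in particular non-abelian. By Schur's Lemma $Z(G)$ consists of scalar matrices. Put $m := \an(gZ(G))$ and suppose, for contradiction, that some eigenspace $E$ of $g$ on $V$ satisfies $e := \dim(E) > n - n/m$, equivalently $me > (m-1)n$.

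By the definition of $\an$ one can find $m$ conjugates $g_{1}, \ldots, g_{m}$ of $g$ such that the images $g_{i}Z(G)$ generate a subgroup of $G/Z(G)$ containing $S$, and --- this is the delicate point, addressed in the last paragraph --- such that each $g_{i}$ is $GL(V)$-conjugate to a scalar multiple of $g$, so that $g_{i}$ carries an eigenspace $E_{i}$ with $\dim(E_{i}) = e$. Equivalently $\tilde{S} \leq HZ(G)$, where $H := \langle g_{1}, \ldots, g_{m}\rangle$. From $me > (m-1)n$ we obtain
$$\dim(E_{1} \cap \cdots \cap E_{m}) \geq \sum_{i=1}^{m}\dim(E_{i}) - (m-1)n = me - (m-1)n > 0.$$
Thus $W := E_{1} \cap \cdots \cap E_{m}$ is a nonzero subspace on which each $g_{i}$ acts as a scalar; hence $H$, and therefore $HZ(G)$ and its subgroup $\tilde{S}$, act on $W$ by scalars.

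It remains to propagate this using $\tilde{S} \lhd G$. Say $\tilde{S}$ acts on $W$ through the one-dimensional character $\chi$. Then the full isotypic subspace $W_{\chi} := \{\, v \in V \mid sv = \chi(s)v \text{ for all } s \in \tilde{S} \,\}$ contains $W$, so it is nonzero, and for $x \in G$ the space $xW_{\chi}$ is the $\chi^{x}$-isotypic subspace, where $\chi^{x}(s) := \chi(x^{-1}sx)$ is again a one-dimensional character of $\tilde{S}$ (here we use $x^{-1}sx \in \tilde{S}$). Isotypic subspaces for distinct linear characters are linearly independent, so $\sum_{x \in G}W_{\chi^{x}}$ is a direct sum, it is $G$-invariant, and it is nonzero; by irreducibility it equals $V$. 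Consequently $\tilde{S}$ acts on $V = \bigoplus_{x}W_{\chi^{x}}$ through one-dimensional characters, so $[\tilde{S}, \tilde{S}]$ acts trivially on $V$; as $G$ acts faithfully, $\tilde{S}$ is abelian, contradicting that $\tilde{S}$ is non-abelian. This contradiction establishes the asserted bound.

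The step that requires genuine care, and the one I expect to be the main obstacle, is producing the generators $g_{1}, \ldots, g_{m}$ in the second paragraph: one needs $m = \an(gZ(G))$ of them to be $GL(V)$-conjugate to scalar multiples of $g$ (so the dimension count applies) while still generating, modulo $Z(G)$, a subgroup containing $S$. Conjugation by $\Inn(S) \leq G/Z(G)$ already realizes $G$-conjugates of $g$ inside $G$, so the real issue is that the outer automorphisms occurring in an optimal $\Aut(S)$-conjugate generating set of $\langle S, gZ(G)\rangle$ might a priori change the eigenvalue multiplicities of $g$ on $V$; one must check that they can be absorbed into $G$ (or re-run the argument with $\an$ replaced by its $G$-conjugacy analogue and verify that the two agree for the almost quasi-simple groups in question). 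This is where the precise set-up of $\an$ in \cite{GT2} and standard facts about fusion of conjugacy classes in almost simple groups come in.
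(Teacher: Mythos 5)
The paper gives no proof of this lemma: it is quoted as \cite[Lemma 3.2]{GT2} with a closing square, so the comparison is against the argument in that reference. Your proof takes the natural route and is, I believe, essentially the one used there: a dimension count to produce a nonzero common eigenspace $W$ for a short generating set, and then the observation that a normal subgroup acting through an abelian quotient on a nonzero $G$-invariant subspace forces a contradiction with faithfulness. That core is sound, and your handling of $W$ being preserved by each $g_i$ and by $Z(G)$ is correct.

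The one substantive gap is exactly the point you flag in your last paragraph, and it is not merely a formality. With the definition as worded in this paper, $\an(\bar g)$ counts $\Aut(S)$-conjugates of $\bar g := gZ(G)$. Such a conjugate $\sigma\bar g\sigma^{-1}$ lies in $\langle S,\bar g\rangle\leq G/Z(G)$, but it need not be $G/Z(G)$-conjugate to $\bar g$ (for $S=\AAA_6$ the exotic outer automorphism fuses the two classes of order-three elements, and two elements drawn from the fused classes already generate $\AAA_6$, whereas three $\AAA_6$-conjugate $3$-cycles are required). A preimage in $G$ of such an element may therefore have different eigenvalue multiplicities on $V$ than $g$, and the dimension count does not apply to it. Your two proposed remedies do not work in general: outer automorphisms of $S$ need not be realizable in $G$, and the two generation numbers need not agree. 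What does close the argument is the definition that actually appears in \cite{GS} and is used in \cite{GT2}: $\an(\bar g)$ is the minimal number of $\langle S,\bar g\rangle$-conjugates of $\bar g$ generating $\langle S,\bar g\rangle$. Since $\langle S,\bar g\rangle\leq G/Z(G)$, each conjugating element lifts to $G$, so the $g_1,\dots,g_m$ can be taken to be honest $G$-conjugates of $g$; they all carry an $e$-dimensional eigenspace, the dimension count applies, and the proof closes. You should simply assert that version of $\an$ at the outset rather than attempt to absorb outer automorphisms.

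Two smaller remarks. Over a general field $\FF$, Schur's Lemma only gives that $\End_G(V)$ is a division ring, so $Z(G)$ need not act by $\FF$-scalars and your phrase ``$\tilde S$ acts on $W$ by scalars'' needs a word. But $Z(G)$ commutes with each $g_i$, hence preserves each $E_i$ and so $W$, and the image of $HZ(G)$ in $GL(W)$ is abelian (scalars from $H$ commuting with the abelian image of $Z(G)$). That is all you need: $[\tilde S,\tilde S]$ is nontrivial (it surjects onto $S$), is normal in $G$, and acts trivially on $W\neq 0$, so $V^{[\tilde S,\tilde S]}$ is a nonzero $G$-submodule, hence all of $V$, contradicting faithfulness. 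This is also slightly shorter than the isotypic-decomposition paragraph, though that paragraph is itself correct.
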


Next we prove key inequalities which relate the age of any element $g \in GU(V)$ to its deviations.

\begin{propo}\label{arc}
{\sl Let $g \in GU(V)$ and let $X$ be a non-empty subset of eigenvalues of $g$. Let $m \geq 1$ 
be such that any $\lam \in X$ occurs as an eigenvalue of $g$ on $V$ with multiplicity at least $m$.
Also assume that the shortest arc of $\SA$ that contains $X$ has length $\geq \delta >0$. Then

{\rm (i)} $2\pi \cdot \age(g) - \DA(g) \geq m(\delta-2\sin(\delta/2))$; in particular,
$\DA(g) \leq 2\pi\cdot \age(g)$.

{\rm (ii)} $4\pi \cdot \age(g) - \DB(g)^{2} \geq 2m(\delta-1+\cos(\delta))$.

{\rm (iii)} $\DB(g)^{2} \leq (2.9)\pi\cdot \age(g)$. In fact, if 
$\delta \geq \pi-\sin^{-1}(0.725)$ then 
$$(2.9)\pi\cdot\age(g)-\DB(g)^{2} \geq m\left\{(1.45)\delta - 2(1-\cos(\delta))\right\}.$$}
\end{propo}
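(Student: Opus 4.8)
The plan is to deduce all three statements from a single scalar estimate obtained by testing the deviations against the eigenbasis of $g$ together with the trivial scalar $\lambda=1$. Write the eigenvalues of the unitary operator $g$ as $e^{2\pi ir_{j}}$ with $0\le r_{j}<1$, so $\age(g)=\sum_{j}r_{j}$, and set $\theta_{j}:=2\pi r_{j}\in[0,2\pi)$. Taking $B$ to be an orthonormal eigenbasis of $g$ and $\lambda=1$ in Definition \ref{dj} gives $\DA(g)\le\sum_{j}|e^{2\pi ir_{j}}-1|=\sum_{j}2\sin(\pi r_{j})$, while Lemma \ref{basic2}(iii) gives $\DB(g)^{2}\le\sum_{j}|e^{2\pi ir_{j}}-1|^{2}=\sum_{j}2\bigl(1-\cos(2\pi r_{j})\bigr)$. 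Substituting, the three quantities to be bounded below become
$$2\pi\cdot\age(g)-\DA(g)\ \ge\ \sum_{j}\bigl(\theta_{j}-2\sin(\theta_{j}/2)\bigr),$$
$$4\pi\cdot\age(g)-\DB(g)^{2}\ \ge\ 2\sum_{j}\bigl(\theta_{j}-1+\cos\theta_{j}\bigr),$$
$$(2.9)\pi\cdot\age(g)-\DB(g)^{2}\ \ge\ \sum_{j}\bigl((1.45)\theta_{j}-2(1-\cos\theta_{j})\bigr).$$
So it suffices to prove, for the relevant member $\phi$ of the family $\phi_{1}(\theta)=\theta-2\sin(\theta/2)$, $\phi_{2}(\theta)=\theta-1+\cos\theta$, $\phi_{3}(\theta)=(1.45)\theta-2(1-\cos\theta)$, that $\sum_{j}\phi(\theta_{j})\ge m\cdot\phi(\delta)$. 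The two opening assertions $\DA(g)\le 2\pi\age(g)$ and $\DB(g)^{2}\le(2.9)\pi\age(g)$ are the weaker statements that $\phi_{1}\ge 0$ and $\phi_{3}\ge 0$ on $[0,2\pi)$ (for $\phi_{1}$ this is just $\sin x\le x$).

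There are two ingredients. The analytic one is that each $\phi_{i}$ is nonnegative on $[0,2\pi]$: for $\phi_{1},\phi_{2}$ because $\phi_{i}(0)=0$ and $\phi_{1}'(\theta)=1-\cos(\theta/2)\ge 0$, $\phi_{2}'(\theta)=1-\sin\theta\ge 0$, so both are in fact nondecreasing on $[0,2\pi]$; for $\phi_{3}$ because $\phi_{3}(0)=0$, the only interior critical point of $\phi_{3}$ that is a local minimum is $\theta^{*}=\pi-\sin^{-1}(0.725)$, and a direct computation gives $\phi_{3}(\theta^{*})>0$ (this is precisely where the constant $2.9$ is forced, and the bound is essentially sharp there). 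Moreover $\phi_{3}$ is nondecreasing on $[\theta^{*},2\pi]$, since there $\sin\theta\le 0.725$ and hence $\phi_{3}'\ge 0$. The combinatorial ingredient is the elementary observation that
$$M:=\max\{\theta_{j}\ :\ e^{2\pi ir_{j}}\in X\}\ \ge\ \delta .$$
Indeed, every eigenvalue in $X$ has argument (taken in $[0,2\pi)$) lying in $[0,M]$, so $X$ is contained in the closed arc of $\SA$ from $1$ to $e^{iM}$, which has length $M$; hence the shortest arc of $\SA$ containing $X$ has length $\le M$, and by hypothesis that length is $\ge\delta$.

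Now fix an eigenvalue $\nu_{0}\in X$ whose argument equals $M$; by hypothesis its multiplicity is $\ge m$. Since $\phi_{i}\ge 0$ on $[0,2\pi]$, the sum $\sum_{j}\phi_{i}(\theta_{j})$ is at least the contribution of the $\nu_{0}$-eigenspace alone, which is $\ge m\,\phi_{i}(M)$; and since $\phi_{i}$ is nondecreasing on the range we need — all of $[0,2\pi]$ for $i=1,2$, and $[\theta^{*},2\pi]\ni M$ for $i=3$ (here one uses the hypothesis $\delta\ge\theta^{*}$ together with $M\ge\delta$) — we get $\phi_{i}(M)\ge\phi_{i}(\delta)$. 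Combining, $\sum_{j}\phi_{i}(\theta_{j})\ge m\,\phi_{i}(\delta)$, which unwinds to the three displayed inequalities: $\delta-2\sin(\delta/2)$ for (i), $2(\delta-1+\cos\delta)$ for (ii), and $(1.45)\delta-2(1-\cos\delta)$ for (iii).

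I do not expect a genuine obstacle. The only point that needs care is the verification that $\phi_{3}\ge 0$ on $[0,2\pi]$, i.e. that $2.9$ is an admissible constant: the minimum of $\phi_{3}$ on $[0,2\pi]$ is $0$ (attained at $\theta=0$) and is only a small positive number at the interior critical point $\theta^{*}$, so this part of the argument is quantitatively tight and the numerical value $\pi-\sin^{-1}(0.725)$ in the refined bound of (iii) is exactly the breakpoint beyond which $\phi_{3}$ becomes monotone.
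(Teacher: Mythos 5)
Your proposal is correct and follows essentially the same route as the paper: you bound $\DA(g)$ and $\DB(g)^2$ from above via the eigenbasis (with $\lambda=1$), introduce exactly the same auxiliary functions (the paper's $f$, $h$, $t$ in the variable $r_j$ are your $\phi_1,\phi_2,\phi_3$ in $\theta_j=2\pi r_j$), prove nonnegativity and the relevant monotonicity with the same breakpoint $\theta^*=\pi-\sin^{-1}(0.725)$, and use the same combinatorial observation that some eigenvalue in $X$ has argument at least $\delta$ with multiplicity at least $m$. The only cosmetic difference is that you single out the eigenvalue of maximal argument $M$ while the paper simply exhibits one with argument $\geq\delta$; this changes nothing.
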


\begin{proof}
Let $g$ be represented by $\diag(\eps_{1}, \ldots ,\eps_{n})$ in a basis $B_{0} \in \BC(V)$,
where $\eps_{j} = e^{2\pi ir_{j}}$, $0 \leq r_{j} < 1$; in particular,
$\age(g) = \sum^{n}_{j=1}r_{j}$.  

(i) Consider the function $f(x) := 2\pi x-|e^{2\pi ix}-1| = 2\pi x - 2\sin(\pi x)$ on 
$[0,\infty)$. Since $f'(x) = 2\pi-2\pi\cos(\pi x) \geq 0$, $f$ is increasing; also, $f(x) \geq 0$.
Taking $\lam = 1$ and $B = B_{0}$ in the proof of Lemma \ref{basic2}(ii), we see that 
$\DA(g) \leq \sum^{n}_{j=1}|\eps_{j}-1|$. Hence 
$$2\pi\cdot\age(g)-\DA(g) \geq \sum^{n}_{j=1}f(r_{j}) \geq \sum_{j~:~\eps_{j} \in X}f(r_{j}).$$  
Suppose that $0 \leq r_{j} \leq \delta'/2\pi < \delta/2\pi$ for all $\eps_{j} \in X$. Then 
$X$ is contained in the arc (from $1$ to $e^{i\delta'}$) of length $\delta' < \delta$, 
contrary to the assumption. So without loss we may assume that $\eps_{1} \in X$ and 
$r_{1} \geq \delta/2\pi$. Since $\eps_{1}$ occurs as an eigenvalue of $g$ with multiplicity
$\geq m$, we get 
$$2\pi\cdot\age(g)-\DA(g) \geq  \sum_{j~:~\eps_{j} \in X}f(r_{j}) \geq mf(r_{1}) 
  \geq mf(\delta/2\pi) = m(\delta-2\sin(\delta/2)).$$ 

\smallskip
(ii) Consider the function 
$h(x) := 4\pi x-|e^{2\pi ix}-1|^{2} = 4\pi x - 2(1-\cos(2\pi x))$ on 
$[0,\infty)$. Since $h'(x) = 4\pi(1-\sin(2\pi x)) \geq 0$, $h$ is increasing, whence 
$h(x) \geq h(0) = 0$.
Taking $\lam = 1$ and $B = B_{0}$ in the proof of Lemma \ref{basic2}(iii), we see that 
$\DB(g)^{2} \leq \sum^{n}_{j=1}|\eps_{j}-1|^{2}$. Hence 
$$4\pi\cdot\age(g)-\DB(g)^{2} \geq \sum^{n}_{j=1}h(r_{j}) 
  \geq \sum_{j~:~\eps_{j} \in X}h(r_{j}).$$  
As in (i), we may assume without loss that $\eps_{1} \in X$ and 
$r_{1} \geq \delta/2\pi$. Since $\eps_{1}$ occurs as an eigenvalue of $g$ with multiplicity
$\geq m$, we get 
$$4\pi\cdot\age(g)-\DB(g)^{2} \geq  \sum_{j~:~\eps_{j} \in X}h(r_{j}) \geq mh(r_{1}) 
  \geq mh(\delta/2\pi) = 2m(\delta-1+\cos(\delta)).$$ 

\smallskip
(iii) Consider the function 
$t(x) := (1.45)x-|e^{ix}-1|^{2} = (1.45)x - 2(1-\cos(x))$ on 
$[0,2\pi]$. Since $t'(x) = 1.45-2\sin(x)$, $t$ is increasing on 
$[0,\theta] \cup [\pi-\theta,2\pi]$ and descreasing on $[\theta,\pi-\theta]$, 
where $\theta := \sin^{-1}(0.725)$. Now $t(0) = 0$ and $t(\pi-\theta) > 0.0018$, and so
$t(x) \geq 0$ on $[0,2\pi]$. As above, 
$\DB(g)^{2} \leq \sum^{n}_{j=1}|\eps_{j}-1|^{2}$, hence 
$$(2.9)\pi\cdot\age(g)-\DB(g)^{2} \geq \sum^{n}_{j=1}t(2\pi r_{j}) \geq 0.$$
Next suppose that $\delta \geq \pi-\theta$. As in (i), we may assume without loss that 
$\eps_{1} \in X$ and $r_{1} \geq \delta/2\pi$. Since $\eps_{1}$ occurs as an eigenvalue of $g$ 
with multiplicity $\geq m$, $t(x) \geq 0$ and $t$ is increasing on $[\pi-\theta,2\pi]$, we
see that $(2.9)\pi\cdot\age(g)-\DB(g)^{2} \geq m \cdot t(2\pi r_{1}) \geq m \cdot t(\delta)$.
\end{proof}

Proposition \ref{arc} yields the following immediate consequence:

\begin{corol}\label{bound2}
{\sl Let $G < GL(V)$ be a finite subgroup. Assume $g \in G$ is such that $\ages(g) \leq 1$. 
Then $\DA(g) \leq 2\pi$ and $\DB(g)^{2} \leq (2.9)\pi < 9.111$. Furthermore,
$\dim(V)-|\Tr(g)| \leq (1.45)\pi < 4.556$. In fact, 
$$\dim(V)-|\Tr(g)| < \left\{ \begin{array}{ll}  
  4.278, & \mbox{if }\delta \geq \pi,\\
  3.632, & \mbox{if }\delta \geq 6\pi/5,\\
  3.019, & \mbox{if }\delta \geq 4\pi/3,\\
  2.676, & \mbox{if }\delta \geq 7\pi/5,\\
  2.139, & \mbox{if }\delta \geq 3\pi/2,\end{array} \right.$$
where $\delta$ is the length of the shortest arc of $\SA$ that contains all eigenvalues of $g$.}
\end{corol}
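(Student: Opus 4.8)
The plan is to simply feed the hypotheses into Proposition \ref{arc} with the coarsest possible choice of data. Since nothing is assumed about eigenvalue multiplicities, I would take $m = 1$ throughout; and since $\ages(g) \le 1$ means $\age(\mu g) \le 1$ for some $\mu \in \SA$ (Lemma \ref{trivial}(ii)), while $\DA$, $\DB$ and $\dim(V)-|\Tr(\cdot)|$ are all invariant under scaling by $\SA$ (Lemma \ref{basic1}(i) and Lemma \ref{basic2}(ii),(iii)), I may harmlessly replace $g$ by $\mu g$ and assume outright that $\age(g) \le 1$. Then Proposition \ref{arc}(i) with $\delta$ arbitrarily small (or directly from the ``in particular'' clause) gives $\DA(g) \le 2\pi\cdot\age(g) \le 2\pi$, and Proposition \ref{arc}(iii) gives $\DB(g)^2 \le (2.9)\pi\cdot\age(g) \le (2.9)\pi < 9.111$. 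By Lemma \ref{basic2}(iii), $\DB(g)^2 = 2(\dim(V)-|\Tr(g)|)$, so $\dim(V)-|\Tr(g)| \le (1.45)\pi < 4.556$. This disposes of the first three assertions with essentially no work.

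For the refined table, the input is the second half of Proposition \ref{arc}(iii): if $\delta \ge \pi - \sin^{-1}(0.725) = \pi - \theta$, then
$$(2.9)\pi\cdot\age(g) - \DB(g)^2 \ge m\{(1.45)\delta - 2(1-\cos\delta)\} = t(\delta),$$
with $m=1$ and $t(x) = (1.45)x - 2(1-\cos x)$ as in the proof of that proposition. Combining with $\age(g)\le 1$, this yields
$$\dim(V)-|\Tr(g)| = \tfrac12\DB(g)^2 \le \tfrac12\bigl((2.9)\pi - t(\delta)\bigr) = (1.45)(\pi-\delta) + (1-\cos\delta).$$
One then needs $\pi - \theta < \pi$, i.e. $\theta > 0$, which holds since $\sin^{-1}(0.725)>0$, so the bound is available for every value of $\delta$ listed in the table. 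It remains only to evaluate the right-hand side $\phi(\delta) := (1.45)(\pi-\delta) + (1-\cos\delta)$ at $\delta = \pi,\ 6\pi/5,\ 4\pi/3,\ 7\pi/5,\ 3\pi/2$; since $\phi$ is decreasing on $[\pi-\theta, 2\pi]$ (its derivative is $-1.45 + \sin\delta \le -1.45 + 0.725 < 0$ there, as $|\sin\delta|\le\sin\theta = 0.725$ once $\delta\ge\pi-\theta$... more simply, $\sin\delta<0$ for $\delta\in(\pi,2\pi)$), the stated numerical value at each listed threshold is in fact an upper bound for all larger $\delta$ as well, which is exactly the form of the claim. A direct numerical check gives $\phi(\pi) = 2 < 4.278$, $\phi(6\pi/5) < 3.632$, $\phi(4\pi/3) < 3.019$, $\phi(7\pi/5) < 2.676$, $\phi(3\pi/2) = (1.45)\cdot(-\pi/2)+1 < 2.139$.

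Honestly there is no real obstacle here: the corollary is a packaging of Proposition \ref{arc} together with Lemma \ref{basic2}(iii), and the only thing to watch is the bookkeeping of the $\SA$-scaling invariance so that the hypothesis $\ages(g)\le 1$ (rather than $\age(g)\le 1$) can be used, plus the routine verification of five decimal inequalities. If anything is mildly delicate it is confirming monotonicity of $t$ (equivalently $\phi$) on the relevant range so that ``$\delta \ge$ threshold'' genuinely implies the tabulated bound rather than merely the bound at the threshold — but this is already contained in the analysis of $t$ carried out inside the proof of Proposition \ref{arc}(iii).
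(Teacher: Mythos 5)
Your approach is the right one — it is exactly what the paper does: invoke Proposition \ref{arc}(i), (iii) with $X$ the full spectrum (so $m = 1$), use the $\SA$-scaling invariance of $\DA$, $\DB^2$, $\dim V - |\Tr(\cdot)|$ to reduce $\ages(g)\le 1$ to $\age(g)\le 1$, and then use Lemma \ref{basic2}(iii). The first three bounds are handled correctly. However, there is an algebraic slip in the derivation of the closed form for the refined bound, and it does real damage.

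You write
$$\tfrac12\bigl((2.9)\pi - t(\delta)\bigr) = (1.45)(\pi-\delta) + (1-\cos\delta),$$
but with $t(\delta) = (1.45)\delta - 2(1-\cos\delta)$ the left side is
$$(1.45)\pi - \tfrac12(1.45)\delta + (1-\cos\delta) = (1.45)\pi - (0.725)\delta + (1-\cos\delta),$$
whereas your right side has coefficient $-1.45$ on $\delta$: you forgot to halve the $(1.45)\delta$ term. This is not a harmless typo. Your $\phi(\delta)=(1.45)(\pi-\delta)+(1-\cos\delta)$ is not an upper bound for $\dim V - |\Tr(g)|$ at all; indeed $\phi(3\pi/2)\approx -1.28<0$, which cannot bound the nonnegative quantity $\dim V - |\Tr(g)|$ unless that case is vacuous, and nothing in your argument rules it out. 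The fact that all of your computed values are ``$<$'' the tabulated ones is therefore not a confirmation — it is a symptom that you are subtracting twice as much as you should. The correct expression $\psi(\delta) = (1.45)\pi - (0.725)\delta + (1-\cos\delta)$ evaluates to approximately $4.277$, $3.631$, $3.018$, $2.675$, $2.139$ at $\delta = \pi$, $6\pi/5$, $4\pi/3$, $7\pi/5$, $3\pi/2$ respectively — matching the table entries to the stated precision, rather than landing well below them — and the monotonicity check should be $\psi'(\delta) = -0.725 + \sin\delta \le 0$ on $[\pi-\sin^{-1}(0.725), 2\pi]$, which holds since $\sin\delta \le \sin(\sin^{-1}(0.725)) = 0.725$ on that interval. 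With these corrections your argument becomes the paper's proof.
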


\begin{proof}
We apply Lemma \ref{basic2}(iii), and Proposition \ref{arc}(i), (iii), with
$X = \Spec(\lam g,V)$ for any $\lam \in \SA$. Then the claims follow by taking infimum 
over all $\lam \in \SA$.
\end{proof}

Taking $g = \diag(1,1, \ldots ,1,-1,-1) \in GL_{n}(\CC)$ with $n \geq 4$, we see that 
$\age(g) = 1$ and $\DB(g)^{2} = 8$. In fact, the the complex reflection 
$g = \diag(e^{2\pi i/3},1, \ldots, 1) \in GL_{n}(\CC)$ has 
$\age(g) = 1/3$ and $\DB(g)^{2} = 2(n-\sqrt{n^{2}-3n+3})$. Hence, when $n \to \infty$, 
$\DB(g)^{2} \to 3$, yielding $\DB(g)^{2}/\age(g) \to 9$. Thus the constant 
$(2.9)\pi \approx 9.111$ in Proposition \ref{arc}(iii) and Corollary 
\ref{bound2} is quite good.  

\begin{lemma}\label{red2}
{\sl Let $G < GL(V)$ be irreducible, primitive, and tensor indecomposable 
on $V$, with a normal subgroup $L \lhd G$ such that $L' \not\leq Z(G)$.

{\rm (i)} For any non-scalar $g \in G$, there exists $h \in L \setminus Z(L)$ such that 
$\DB(h)^{2} \leq 4\DB(g)^{2}$. 

{\rm (ii)} Assume furthermore that $\dim(V) > 1$ and that $G$ satisfies the set-up $\sta$.   
Then there exists $h \in L \setminus Z(L)$ such that $\DB(h)^{2} < 36.444$.}
\end{lemma}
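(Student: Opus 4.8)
The plan is to establish (i) and then read off (ii) in one line. For (i) I would produce the desired $h$ as a commutator. After replacing the Hermitian form on $V$ by a $G$-invariant one (Weyl's unitarian trick), so that $G\leq GU(V)$, fix $\ell\in L$ and set $h=[g,\ell]=g\ell g^{-1}\ell^{-1}$. Since $L\lhd G$ we have $g\ell g^{-1}\in L$, hence $h\in L$, and Lemma \ref{basic3}(ii) (applied to $T_{1}=g$, $T_{2}=\ell$) gives $\DB(h)^{2}\leq 4\DB(g)^{2}$. So everything reduces to choosing $\ell$ with $h\notin Z(L)$. To deal with $Z(L)$, I would observe that $Z(L)$ is characteristic in $L\lhd G$, hence an abelian normal subgroup of $G$, hence acts by scalars on $V$ by primitivity of $G$; in particular $Z(L)\leq Z(G)$, so for an element of $L$ the conditions ``lies outside $Z(L)$'' and ``is non-scalar'' coincide. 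Thus it suffices to find $\ell\in L$ with $[g,\ell]$ non-scalar.

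Suppose, for contradiction, that $[g,\ell]$ is scalar for every $\ell\in L$, say $g\ell g^{-1}=c(\ell)\ell$ with $c(\ell)\in\SA$. A direct check (pushing scalars past group elements) shows that $c\colon L\to\SA$ is a homomorphism, so $c$ is trivial on $L'$; equivalently $g$ centralises $N:=L'$. Now $N\lhd G$, and by the hypothesis $L'\not\leq Z(G)$ the subgroup $N$ is non-scalar. I would then analyse $V|_{N}$. If $V|_{N}$ is not homogeneous, its homogeneous components form a nontrivial system of imprimitivity, contradicting primitivity of $G$; so $V|_{N}$ is homogeneous, and $V\cong W\otimes U$ where $W$ is the irreducible $N$-constituent (with $\dim W\geq 2$, as $N$ is non-scalar) and $U=\Hom_{N}(W,V)$. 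If $\dim U\geq 2$, then $G$ preserves this tensor decomposition (both factors carrying a projective $G$-action), so $V$ is tensor decomposable, contrary to hypothesis; and if $\dim U=1$, then $V|_{N}$ is irreducible, so $C_{GL(V)}(N)$ consists of scalars by Schur's lemma, forcing $g\in C_{G}(N)\leq Z(G)$, i.e.\ $g$ scalar --- a contradiction. Hence some $\ell\in L$ has $[g,\ell]$ non-scalar, and $h=[g,\ell]$ proves (i).

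For (ii): since $\dim(V)>1$ and $G$ satisfies $\sta$, the Remark following $\sta$ provides a non-scalar element $g\in G$ with $0<\ages(g)\leq 1$. Then Corollary \ref{bound2} gives $\DB(g)^{2}<9.111$, and applying part (i) produces $h\in L\setminus Z(L)$ with $\DB(h)^{2}\leq 4\DB(g)^{2}<36.444$.

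The one delicate step is ruling out, in (i), the case where $g$ centralises $L$ modulo scalars. This is exactly where the two remaining hypotheses on $G$ are used: primitivity to force $V|_{L'}$ to be homogeneous, and tensor-indecomposability to exclude the factorisation $W\otimes U$ with both factors of dimension $\geq 2$. Everything else is a short combination of Lemma \ref{basic3}(ii), Schur's lemma, and Corollary \ref{bound2}.
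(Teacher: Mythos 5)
Your proof is correct, and its overall skeleton is the same as the paper's: produce $h$ as a commutator $[g,\ell]$, control $\DB(h)^{2}$ via Lemma~\ref{basic3}(ii), and show by contradiction that some $\ell$ yields a non-central $h$, with the contradiction coming from $g$ centralising $L'$ together with Schur's lemma. Part (ii) is then identical to the paper's.

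You do deviate in two internal steps, both in a way that makes the argument more self-contained. First, where the paper proves that $g$ centralises $L'$ by applying a Hall--Witt-type identity $[[u,v],g]=\bigl([[v,g],u]\cdot[[g,u],v]\bigr)^{-1}$ to the assumption $[g,L]\subseteq Z(L)$, you instead note that $\ell\mapsto c(\ell)$ defined by $g\ell g^{-1}=c(\ell)\ell$ is a homomorphism $L\to\SA$ and hence vanishes on $L'$ --- a cleaner route to the same conclusion. Second, the paper invokes \cite[Lemma 2.5]{GT3} as a black box to assert that $L'$ acts irreducibly on $V$; you instead derive what you need directly from the hypotheses on $G$: $Z(L)$ is an abelian normal subgroup of a primitive group so consists of scalars (hence $Z(L)\leq Z(G)$), $V|_{L'}$ is homogeneous by primitivity, and the resulting tensor factorisation $V\cong W\otimes U$ is ruled out by tensor-indecomposability unless $V|_{L'}$ is irreducible, in which case Schur forces $g$ scalar. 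This is essentially a reproof of the relevant content of the cited lemma, and it makes explicit exactly where primitivity and tensor-indecomposability enter --- the paper leaves that implicit inside the citation. Both versions are sound; yours trades brevity for transparency.
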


\begin{proof}
(i) By \cite[Lemma 2.5]{GT3}, $L'$ is irreducible on $V$. We claim that there exists 
$u \in L$ such that $[g,u] \notin Z(L)$. Assume the contrary: 
$[g,u] \in C_{G}(L)$ for any $u \in L$. Then for any $u,v \in L$ we have 
$[[u,v],g]= ([[v,g],u] \cdot [[g,u],v])^{-1} = 1$, whence $[g,L'] = 1$. By Schur's Lemma,
the irreducibility of $L'$ on $V$ now implies that $g$ is scalar, a contradiction.
Now we define $h = [g,u] \in L \setminus Z(L)$, and we are done by Lemma \ref{basic3}(ii).

(ii) If every $g \in \XC$ acts scalarly on $V$, then so does $G$. But in this case 
$\dim(V) = 1$, a contradiction. Hence at least one $g \in \XC$ is non-scalar, and has
age $\leq 1$. Now the claim follows from (i) and Corollary \ref{bound2}.  
\end{proof}

\subsection{Elements of small order}
To estimate the age of elements of small order, we will need the following two statements.

\begin{lemma}\label{small1}
{\sl Assume $g \in GU(V)$ is conjugate to  
$$\diag(\al_{1}, -\al_{1}, \al_{2}, -\al_{2}, \ldots ,\al_{m},-\al_{m},\beta_{1}, \ldots ,\beta_{s}).$$ 
Then $\ages(g) \geq m/2$. Moreover, if $\ages(g) = m/2$ then $g$
has exactly two distinct eigenvalues.}
\end{lemma}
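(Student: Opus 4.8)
The plan is to work with the explicit diagonal form.  Since $\ages(g)=\inf_{\lam\in\SA}\age(\lam g)$, I would fix an arbitrary $\lam\in\SA$ and bound $\age(\lam g)$ from below.  Writing $\lam\al_k=e^{2\pi i t_k}$ with $0\le t_k<1$, the pair $\{\lam\al_k,-\lam\al_k\}$ contributes $e^{2\pi i t_k}$ and $-e^{2\pi i t_k}=e^{2\pi i(t_k+1/2)}$ to the spectrum of $\lam g$.  The key elementary fact is that for any real $t$, the two numbers $\{t\bmod 1,\ (t+1/2)\bmod 1\}$ have sum equal to either $1/2$ (if both lie in $[0,1/2)$ or one of them is exactly in $[1/2,1)$ in the right way) — more precisely, exactly one of $t\bmod 1$ and $(t+1/2)\bmod 1$ lies in $[0,1/2)$ and the other in $[1/2,1)$, and their fractional parts sum to $1/2$ if the smaller is $\ge 0$... let me state it cleanly: for any $t\in\RR$, $(t\bmod 1)+((t+1/2)\bmod 1)\in\{1/2,\,3/2\}$, and it equals $1/2$ precisely when $t\bmod 1\in[0,1/2)$.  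Hence each of the $m$ pairs $\{\al_k,-\al_k\}$ contributes at least $1/2$ to $\age(\lam g)$, and dropping the (nonnegative) contributions of the $\beta_j$'s gives $\age(\lam g)\ge m/2$.  Taking the infimum over $\lam$ yields $\ages(g)\ge m/2$.

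For the equality clause, suppose $\ages(g)=m/2$.  Then there is some $\lam$ (of finite order, by Lemma \ref{trivial}(ii)) with $\age(\lam g)=m/2$.  For this $\lam$ every pair $\{\lam\al_k,-\lam\al_k\}$ must contribute exactly $1/2$ (it contributes at least $1/2$, and the total is $m/2$), and each $\beta_j$ must contribute $0$, i.e. $\lam\beta_j=1$ for all $j$, so $\beta_j=\lam^{-1}$ for every $j$.  Moreover, contributing exactly $1/2$ for the $k$-th pair forces, by the dichotomy above, $t_k:=(\arg(\lam\al_k)/2\pi\bmod 1)\in[0,1/2)$, so the two eigenvalues of that pair are $e^{2\pi i t_k}$ and $e^{2\pi i(t_k+1/2)}$ with $0\le t_k<1/2$.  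I still need to pin down that all the $\al_k$ coincide; here I would use that there is freedom in the choice of $\lam$: the same equality $\age(\lam g)=m/2$ must persist, and examining the function $f(t)=\age(e^{-2\pi i t}g)$ near the infimum (cf. the proof of Lemma \ref{trivial}(ii)) shows the minimum is attained on a set of $\lam$'s, and the only way the sum can stay locked at $m/2$ as $\lam$ varies slightly is for all the $t_k$ to be equal (otherwise rotating $\lam$ would push one pair's contribution above $1/2$ while another's cannot compensate, since once a $t_k$ exits $[0,1/2)$ its pair jumps to contributing $3/2$ — no, rather $1/2$ again after wraparound; the real constraint is that the $\beta_j=\lam^{-1}$ condition already rigidifies $\lam$, and then re-optimizing forces the $\al_k$ to align).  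Let me instead argue directly: with $\beta_j=\lam^{-1}$ fixed, consider replacing $\lam$ by $\lam e^{-2\pi i s}$ for small $s>0$; the $\beta_j$-contribution jumps from $0$ to $ms$ (there are $s\ge 1$... wait, $s$ copies $\beta_1,\dots,\beta_s$), contributing $s\cdot(1-s')$ type terms — the point is that the total age as a function of the rotation parameter is piecewise linear with the infimum $m/2$ attained, and a short analysis of where the breakpoints are (the arguments of the $\pm\al_k$ and of the $\beta_j$) forces, for the infimum to be exactly $m/2$ rather than strictly smaller, that all $\al_k$ are equal to a common value $\al$ and that value satisfies $\al^2 = -\beta_1^{-2}$... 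I will organize this as: the two distinct eigenvalues are then $\al$ and $-\al$ where $\al=\lam^{-1}e^{2\pi i t}$, $t\in[0,1/2)$ common to all pairs, and $\beta_j=\lam^{-1}$; for these to give age exactly $m/2$ and not less under further rotation, one checks $\lam^{-1}\in\{\al,-\al\}$, i.e. $t\in\{0,1/2\}$, so $t=0$, forcing $\al=\lam^{-1}=\beta_j$ and hence $g$ has exactly the two eigenvalues $\beta_1$ and $-\beta_1$.

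The main obstacle I anticipate is precisely this equality case: the inequality $\ages(g)\ge m/2$ is a one-line consequence of the modular identity on fractional parts, but ruling out configurations like distinct $\al_k$'s, or $\al_k$'s not equal to the $\beta_j$'s, requires carefully exploiting the \emph{infimum} over $\lam$ — a single good $\lam$ gives the bound, but characterizing equality means showing that \emph{no} $\lam$ can do better, which pins down the relative positions of all eigenvalues.  I would handle this by the piecewise-linear/breakpoint analysis of $s\mapsto\age(e^{-2\pi i s}\lam_0 g)$ as in Lemma \ref{trivial}(ii), noting its slope changes by a nonnegative integer amount at each breakpoint (an eigenvalue argument crossing $1$), so that the infimum $m/2$ being attained at $s=0$ while all pairs contribute exactly $1/2$ and all $\beta_j$ exactly $0$ forces the breakpoints to coincide; this collapses the spectrum to two eigenvalues $\{\beta_1,-\beta_1\}$ with $m$ copies each among the $\pm\al_k$ and the remaining copies among the $\beta_j$.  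I would keep the routine trigonometric bookkeeping to a minimum and present only the structural steps.
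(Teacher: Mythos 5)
Your inequality argument is fine, but it rests on a misstated "elementary fact" that then sabotages your equality case. You claim that $(t\bmod 1)+\bigl((t+\tfrac12)\bmod 1\bigr)\in\{\tfrac12,\tfrac32\}$ and equals $\tfrac12$ precisely when $t\bmod 1\in[0,\tfrac12)$. This is false: on $[0,\tfrac12)$ the sum is $2t+\tfrac12$, on $[\tfrac12,1)$ it is $2t-\tfrac12$, so the sum ranges over the whole interval $[\tfrac12,\tfrac32)$, and it equals $\tfrac12$ precisely when $t\in\{0,\tfrac12\}$, not for an interval of $t$'s. The lower bound $\ages(g)\geq m/2$ survives because all you really use is "sum $\geq 1/2$", which is correct. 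But the equality analysis does not survive: you conclude only that each $t_k\in[0,\tfrac12)$, which leaves the $\al_k$'s free to be distinct, and then you are forced into a long, hedged, and ultimately unconvincing excursion (varying $\lam$, breakpoint/piecewise-linearity analysis, overloaded use of the symbol $s$). That secondary machinery is both unnecessary and not carried through rigorously; as written it does not constitute a proof of the equality clause.

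The correct, and much shorter, observation — which is exactly what the paper does — is that the contribution of the pair $(\mu\al_i,-\mu\al_i)$ to $\age(\mu g)$ equals $\tfrac12$ \emph{precisely} when $\mu\al_i=\pm1$, i.e.\ $\al_i=\pm\mu^{-1}$, and the contribution of $\mu\beta_j$ equals $0$ precisely when $\beta_j=\mu^{-1}$. If $\age(\mu g)=m/2$ for the minimizing $\mu$ supplied by Lemma~\ref{trivial}(ii), each pair must contribute exactly $\tfrac12$ and each $\beta_j$ exactly $0$; hence every eigenvalue of $g$ lies in $\{\mu^{-1},-\mu^{-1}\}$, and since $m\geq1$ both values occur, so $g$ has exactly two distinct eigenvalues. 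Once you fix the arithmetic fact to say that the pair-sum equals $\tfrac12$ only at $t\in\{0,\tfrac12\}$, the whole equality case collapses to two lines and the breakpoint analysis can be deleted.
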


\begin{proof}
Suppose $\ages(g) \leq m/2$. Then $\age(\mu g) \leq m/2$ for some $\mu \in \SA$ by Lemma \ref{trivial}(ii). 
Note that the contribution of the pair $(\mu\al_{i},-\mu\al_{i})$ to $\age(g)$ is at least $1/2$, and it 
equals $1/2$ precisely when $\al_{i} = \pm \mu^{-1}$. Next, the contribution of $\mu\beta_{j}$ to $\age(g)$ is 
at least $0$, and it equals $0$ precisely when $\beta_{j} = \mu^{-1}$. Hence the statements follow.  
\end{proof}

\begin{lemma}\label{small2}
{\sl Let $g \in GU(V)$ be a non-scalar element of age $\leq 1$, $\dim(V) \geq 4$, 
and let $\lam g$ have order $1 < m \leq 5$ for some $\lam \in \SA$. Then there is some 
$\mu \in \SA$ such that either $\mu g$ is a complex reflection, or one of the following statements holds for a suitable choice of $i = \sqrt{-1}$. 

{\rm (i)} $m = 2$, and $g$ is a bireflection.

{\rm (ii)} $m = 3$, and one of the following holds, where $\omega = e^{2\pi i/3}$.

\hspace{5mm}{\rm (a)} $\mu g$ is conjugate to $\diag(\omega,\omega,1, \ldots ,1)$.

\hspace{5mm}{\rm (b)} $\age(g) = 1$, and $g$ is conjugate to $\diag(\omega,\omega^{2},1, \ldots ,1)$ or 
$\diag(\omega,\omega,\omega,1, \ldots ,1)$.  

{\rm (iii)} $m = 4$, and one of the following holds.

\hspace{5mm}{\rm (a)} $\mu g$ is conjugate to one of the elements 
$$\diag(i,i,1, \ldots ,1), ~~~\diag(i,i,i,1, \ldots ,1), 
  ~~~\diag(i,-1,1, \ldots ,1)$$

\hspace{5mm}{\rm (b)} $\age(g) = 1$, and $g$ is conjugate to one of the elements 
$$\diag(i,-i,1, \ldots ,1), ~~~\diag(-1,-1,1, \ldots ,1), 
  ~~~\diag(i,i,-1,1, \ldots ,1),~~~\diag(i,i,i,i,1, \ldots ,1)$$

{\rm (iv)} $m = 5$, and one of the following holds, where $\eps = e^{2\pi i/5}$.

\hspace{5mm}{\rm (a)} $\mu g$ is conjugate to one of the elements 
$$\begin{array}{c}\diag(\eps,\eps,1, \ldots ,1), ~~~\diag(\eps^{2},\eps^{2},1, \ldots ,1),\\ 
  \diag(\eps,\eps,\eps,1, \ldots ,1), ~~~\diag(\eps,\eps,\eps,\eps,1, \ldots ,1),\\
  \diag(\eps,\eps^{2},1, \ldots ,1), ~~~\diag(\eps,\eps^{3},1, \ldots ,1), 
  ~~~\diag(\eps,\eps,\eps^{2},1, \ldots ,1).\end{array}$$

\hspace{5mm}{\rm (b)} $\age(g) = 1$, and $g$ is conjugate to one of the elements 
$$\begin{array}{c}\diag(\eps,\eps^{4},1, \ldots ,1), ~~~\diag(\eps^{2},\eps^{3},1, \ldots ,1), 
  ~~~\diag(\eps,\eps^{2},\eps^{2},1, \ldots ,1),\\ 
  \diag(\eps,\eps,\eps^{3},1, \ldots ,1), ~~~\diag(\eps,\eps,\eps,\eps^{2},1, \ldots ,1), 
  ~~~\diag(\eps,\eps,\eps,\eps,\eps,1, \ldots ,1).\end{array}$$}
\end{lemma}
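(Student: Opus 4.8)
The plan is to multiply $g$ by a scalar so as to land on an ``age-minimal'' representative all of whose eigenvalues are $m$-th roots of unity, to note that such a representative has a large fixed subspace, and then to run a short finite enumeration for each of the four values $m=2,3,4,5$. First I would normalize. Since $\lam g$ has order $m$, every eigenvalue of $g$ lies in $\lam^{-1}\langle\zeta\rangle$, where $\zeta:=e^{2\pi i/m}$. Applying Lemma \ref{trivial}(ii) to $h:=\lam g$ gives $\ages(g)=\ages(h)=\min_{0\le k<m}\age(\zeta^{-k}h)$, attained at some $h_{0}:=\zeta^{-k_{0}}h$; moreover $\ages(g)\le\age(g)\le 1$. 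Set $\mu_{0}:=\lam\zeta^{-k_{0}}$, so that $g_{0}:=\mu_{0}g=h_{0}$ is non-scalar, has all eigenvalues in $\langle\zeta\rangle$, and satisfies $\age(g_{0})=\ages(g)\le 1$. Next, $g_{0}$ must have $1$ as an eigenvalue: otherwise every eigenvalue contributes at least $1/m$ to $\age(g_{0})$, forcing $\dim(V)\le m\cdot\age(g_{0})\le m$, and checking the very few such configurations with $\dim(V)\ge 4$ and $g_{0}$ non-scalar, one finds that none of them is age-minimal (its $\zeta^{-1}$-twist has strictly smaller age). So let $a_{0}\ge 1$ be the multiplicity of the eigenvalue $1$ and let $V'$ be the sum of the remaining eigenspaces, of dimension $k$.

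Then $\age(g_{0})=\age(g_{0}|_{V'})$ and, each nontrivial $m$-th root of unity contributing at least $1/m$ per dimension, $k\le m\cdot\age(g_{0})\le m\le 5$. Thus $g_{0}|_{V'}$ is a diagonal matrix of size $k\le m$ with entries $\zeta^{j_{1}},\ldots,\zeta^{j_{k}}$, $1\le j_{i}\le m-1$, subject to $\sum_{i}j_{i}/m=\age(g_{0})\le 1$, i.e. $\sum_{i}j_{i}\le m$. For each $m\in\{2,3,4,5\}$ this is a short finite list of multisets $\{j_{1},\ldots,j_{k}\}$, which I would enumerate directly; in doing so one also compares $\age(g_{0})$ with the ages of the other twists $\zeta^{-k}g_{0}$ in order to discard the configurations that are not age-minimal (this, together with $\dim(V)\ge 4$, is where the small-dimensional coincidences get eliminated). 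The outcome is: for $m=2$ one has $k=1$, so $g_{0}$ is a complex reflection, or $k=2$ and $g_{0}=\diag(-1,-1,1,\ldots,1)$; for $m=3,4,5$ one recovers exactly the matrices listed in (ii), (iii), (iv), the ``(a)'' entries being those with $\age(g_{0})<1$ and the ``(b)'' entries those with $\age(g_{0})=1$.

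It remains to account for the scalar. In the complex-reflection case and in the ``(a)'' cases take $\mu:=\mu_{0}$, so that $\mu g=g_{0}$ is conjugate to the displayed matrix, as required. In the ``(b)'' cases $\ages(g)=\age(g_{0})=1$, hence $\age(g)=1$ too, so $g$ itself is age-minimal; repeating the argument with $g$ in place of $g_{0}$ (now the eigenvalue arguments of $g$ need not be multiples of $1/m$, but the equality $\age(g)=1$ pins them down) shows that $g$ is already conjugate to one of the listed matrices of age $1$. Likewise, in the case $m=2$, $k=2$, the bound $\age(g)\le 1$ forces the original $g$ to be the bireflection $\diag(-1,-1,1,\ldots,1)$ up to conjugacy, which is (i).

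The main obstacle is the bookkeeping in the enumeration step together with this last ``descent'': one must ensure that $\age(g)=1$ really forces the original $g$, and not merely some scalar multiple of it, into the listed form, and that the finitely many low-dimensional degeneracies — a putative list entry that is in fact projectively a reflection, or a ``$\diag(\zeta,\ldots,\zeta,1,\ldots,1)$'' that is actually scalar — are correctly absorbed into the ``complex reflection'' alternative. This is precisely where the hypothesis $\dim(V)\ge 4$ enters.
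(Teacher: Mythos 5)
Your proof is correct and arrives at essentially the same enumeration as the paper, but by a more circuitous route. The paper parametrizes $g$ directly, with no normalization: since $\lam g$ has order $m$, the eigenvalue arguments of $g$ in $[0,1)$ all lie in $t+\frac{1}{m}\ZZ$ for a unique $t\in[0,1/m)$; letting $a_{k}$ be the multiplicity of $t+k/m$, the hypothesis $\age(g)\le 1$ reads $(\dim V)t+\frac{1}{m}\sum_{k}ka_{k}\le 1$, hence $\sum_{k}ka_{k}\le m$, and the short enumeration follows at once. This parametrization also makes the (a)/(b) split mechanical: if $\sum_{k}ka_{k}<m$, one twists by $\mu=e^{-2\pi it}$ and lands on an (a)-entry or a reflection; if $\sum_{k}ka_{k}=m$, then $t=0$ is forced, so $g$ itself already has all eigenvalues in $\langle\zeta\rangle$ and $\age(g)=1$, landing in (b). Your normalization to the age-minimizing $g_{0}=\mu_{0}g$ is perfectly workable, but it costs you two extra steps that the direct route avoids: showing $g_{0}$ fixes a vector (your $\zeta^{-1}$-twist observation alone already does this, in any dimension, so the passage through $\dim V\le m$ is superfluous), and the ``descent'' to recover $g$ from $g_{0}$ in the (b) cases. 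The descent is the one spot where your justification is loose: ``the equality $\age(g)=1$ pins them down'' overstates what $\age(g)=1$ alone gives; what actually pins the eigenvalues down is the conjunction $\age(g)=\ages(g)=1$ --- which you do have, having noted that $g$ is then age-minimal --- and this, by the very argument you ran for $g_{0}$, forces $g$ to fix a vector, hence to have all eigenvalues in $\langle\zeta\rangle$, after which the enumeration applies to $g$ itself and yields the (b) list.
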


\begin{proof}
The proofs of all these statements are similar, and we display it for (iv). By the assumption,
there is some $t \in [0,1/5)$, and integers $a,b,c,d,e \geq 0$ such that $a+b+c+d+e = \dim(V) \geq 4$ and  
$$1 \geq \age(g) = at + b(t+1/5) + c(t+2/5) + d(t+3/5) + e(t+4/5);$$
in particular, $b + 2c + 3d +4e \leq 5$. Now the statement (iv) follows by an exhaustive enumeration.  
\end{proof}
 
\subsection{Character ratios}
We will need the following result of Gluck and Magaard, cf. \cite{G} and \cite{GM}.

\begin{propo}\label{ratio1}
{\sl Let $G$ be a finite group, $\chi \in \Irr(G)$ be of degree $> 1$, and 
$g \in G \setminus Z(G)$.

{\rm (i) \cite[Theorem 2.4]{GM}} Assume $G$ is a finite quasi-simple group, not 
$\AAA_{n}$ nor $2\AAA_{n}$ with $n \geq 10$. Then $|\chi(g)/\chi(1)| \leq 19/20$. 

{\rm (ii) \cite[Theorem 1.6]{GM}} Let $G = \SSS_{n}$ or $\AAA_{n}$ with $n \geq 5$, and let
$c(g)$ be the number of cycles of the permutation $g$. Then 
$|\chi(g)/\chi(1)| \leq 1/2 + c(g)/2n$.
\hfill $\Box$}
\end{propo}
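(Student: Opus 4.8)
The plan is to prove the two parts by entirely different methods, both following Gluck and Magaard. Part (ii), the sharp bound for $\SSS_n$ and $\AAA_n$, is essentially self-contained combinatorics and is the natural place to start; part (i) is a uniform estimate over all quasi-simple groups and ultimately rests on the classification of finite simple groups. Throughout I identify $\Irr(\SSS_n)$ with partitions $\lambda\vdash n$, write $f^\lambda=\chi_\lambda(1)$, and let $f(g)$, $c(g)$ denote the number of fixed points, resp. cycles, of a permutation $g$.

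For (ii) with $G=\SSS_n$ I would induct on $n-f(g)$, having first checked the finitely many cases with $n$ small (say $n\le 6$) directly. If $g$ has a fixed point, regard $g$ as an element of $\SSS_{n-1}$; the branching rule gives $\chi_\lambda|_{\SSS_{n-1}}=\bigoplus_{\mu\lessdot\lambda}\chi_\mu$, hence $\chi_\lambda(g)=\sum_\mu\chi_\mu(g)$ and $\chi_\lambda(1)=\sum_\mu\chi_\mu(1)$. As an element of $\SSS_{n-1}$ our $g$ has $c(g)-1$ cycles, so the inductive hypothesis bounds $|\chi_\mu(g)|$ by $\left(\tfrac12+\tfrac{c(g)-1}{2(n-1)}\right)\chi_\mu(1)$; summing over $\mu$ and using the elementary inequality $\tfrac{c(g)-1}{n-1}\le\tfrac{c(g)}{n}$ (equivalent to $c(g)\le n$) yields the claim. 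The genuinely harder case is $g$ fixed-point-free: here I would apply the Murnaghan--Nakayama rule to a cycle of $g$, and in the extremal subcase where $g$ is a product of $n/2$ transpositions invoke the $2$-core / $2$-quotient description: $\chi_\lambda(g)$ vanishes unless $\lambda$ has empty $2$-core, in which case $\chi_\lambda(g)=\pm\binom{c}{r}f^{\lambda^{(0)}}f^{\lambda^{(1)}}$ with $(\lambda^{(0)},\lambda^{(1)})$ the $2$-quotient, $r=|\lambda^{(0)}|$, $n=2c$. Splitting the hook product of $\lambda$ into its even- and odd-length parts (there are exactly $c$ of each for empty $2$-core) reduces the desired estimate to the hook-length inequality $\prod(\text{odd hooks of }\lambda)\le\tfrac34\,(n-1)!!$, valid for every $\lambda$ that is not a single row or column; the extremal shape $\lambda=(n)$ gives ratio exactly $1$ but is excluded since $\chi$ has degree $>1$, and one checks all other shapes do strictly better by a monotonicity argument. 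Longer cycles are handled in the same spirit. Finally $G=\AAA_n$ follows from the $\SSS_n$ case by Clifford theory: an irreducible of $\AAA_n$ is a constituent of some $\chi_\lambda|_{\AAA_n}$, which is either irreducible or a sum of two $\AAA_n$-conjugate halves, and in either case the character value and degree on $\AAA_n$ are controlled by those on $\SSS_n$, with a short separate analysis of the split case $\lambda=\lambda'$.

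For (i), I would use the classification: $S:=G/Z(G)$ is a finite non-abelian simple group, so $G$ is a quotient of the universal cover of $S$. If $S$ is sporadic, or $S=\AAA_n$ with $n\le 9$ (the groups $\AAA_n$, $2\AAA_n$ with $n\ge 10$ being deliberately excluded, since for those the ratio does approach $1$ and is instead governed by (ii)), then $|\chi(g)/\chi(1)|\le 19/20$ is a finite verification against the known character tables. The bulk of the work is $S$ of Lie type, where one combines Gluck's sharper character-value estimates from \cite{G} (obtained via Deligne--Lusztig theory: bounding $\chi(g)$ for semisimple $g$ through the Deligne--Lusztig virtual characters, and for general $g$ through Jordan decomposition and induction on the centralizer $C_G(s)$) with a direct treatment of the few ``extremal'' classes---transvections, long-root elements, and the like---on which the ratio is largest; taking the maximum over these uniform and finite contributions produces the explicit constant $19/20$.

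The main obstacle in both parts is precisely this extremal regime: in (ii) the fixed-point-free base case, i.e.\ the quantitative hook-length inequality where one must show that every shape other than a row or column is bounded away from the trivial value; and in (i) the production of a \emph{uniform} sub-$1$ bound across all infinite families of Lie type and all of their non-central elements. Both are carried out in detail in \cite{GM} (building on \cite{G}), which we cite for the complete proof.
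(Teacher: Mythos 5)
The statement you are asked to prove is not actually proved in the paper: Proposition \ref{ratio1} is quoted with a $\Box$ immediately following because the authors cite it verbatim from Gluck--Magaard, [GM, Theorem 2.4] for (i) and [GM, Theorem 1.6] for (ii). So there is no argument in the paper to compare against; the honest answer here is simply to cite [GM], which you do in your final sentence. Your reconstruction of the strategy used in [GM] is plausible and captures the right ideas (branching plus Murnaghan--Nakayama and $2$-core/$2$-quotient combinatorics for $\SSS_n$, $\AAA_n$; classification plus Deligne--Lusztig bounds from [G] plus finite checks for the quasi-simple case), but since the paper treats this as a black box, reproducing a sketch of the external proof is more than the task requires.

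One small internal glitch in your sketch of (ii): you say you would ``induct on $n-f(g)$,'' but the branching step (remove a fixed point, pass from $\SSS_n$ to $\SSS_{n-1}$) decreases $n$ and $f(g)$ by one each, so $n-f(g)$ is \emph{invariant} under that reduction. What you are really doing is a downward induction on $f(g)$ (equivalently, on $n$ with the support of $g$ held fixed), aiming to reach the fixed-point-free base case; you should say that instead. Also, the elementary inequality you need after summing the branching rule is $\tfrac12+\tfrac{c(g)-1}{2(n-1)} \le \tfrac12+\tfrac{c(g)}{2n}$, which as you note reduces to $c(g)\le n$ --- fine, but note that this direction of monotonicity is exactly why the fixed-point-free case is the extremal one and must be handled on its own; the induction genuinely does nothing there, which is why the 2-quotient/hook-length analysis is unavoidable. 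These are cosmetic points; the substantive content of both parts lives in [GM], which you correctly cite.
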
  

Next we address the character ratios for spin representations of $2\AAA_{n}$ and $2\SSS_{n}$.

\begin{lemma}\label{ratio2}
{\sl Let $G = 2\SSS_{n}$ or $2\AAA_{n}$ with $n \geq 6$, $\chi \in \Irr(G)$ a faithful 
character of $G$, and let $g \in G \setminus Z(G)$. Then $|\chi(g)/\chi(1)| \leq 7/8$.}
\end{lemma}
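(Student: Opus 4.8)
The plan is to bound the character ratio $|\chi(g)/\chi(1)|$ for a faithful (spin) character $\chi$ of $G = 2\SSS_n$ or $2\AAA_n$, $n \geq 6$, using the explicit theory of projective characters of symmetric and alternating groups (Schur, Morris, Nazarov, Stembridge). The key structural facts are: (a) the faithful irreducible characters of $2\SSS_n$ are indexed by partitions $\lambda \vdash n$ into distinct parts, with degree essentially $2^{\lfloor (n - \ell(\lambda))/2 \rfloor}$ times a hook-type quotient $g_\lambda = n!/\prod \lambda_i$ or half of it; and (b) a faithful character vanishes on any preimage $g$ of a permutation whose cycle type is \emph{not} a partition into distinct odd parts (and on the "non-split" classes in the alternating case). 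So I only need to control $|\chi(g)/\chi(1)|$ when $\bar g := gZ(G)$ has cycle type $\mu = (\mu_1 > \mu_2 > \dots > \mu_r)$ with all $\mu_i$ odd.

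**Key steps.** First I would reduce to the case where $\bar g$ is a product of distinct odd cycles (all fixed points of $\bar g$ counted among the parts $\mu_i = 1$ appearing at most once), since $\chi(g) = 0$ otherwise and the bound is trivial. Second, I would invoke the Morris/Nazarov formula for the value of a basic spin character $\langle \lambda \rangle$ on such a class: it is (up to a power of $2$ and a sign) a sum over ways of "removing" the cycles $\mu_i$ as ribbons, bounded in absolute value by a product $\prod_i c(\mu_i)$ of small combinatorial factors. Third — the workhorse step — for each odd $\mu_i \geq 3$ the contribution to $|\chi(g)/\chi(1)|$ is a factor strictly less than $1$ (one gains at least a factor like $\mu_i/(\text{something} \cdot 2^{(\mu_i-1)/2})$ or similar after dividing by the degree), and for $\mu_i = 1$ one gains nothing; since $g \notin Z(G)$ forces $\bar g \neq 1$, there is at least one cycle of length $\geq 2$, hence (being odd) of length $\geq 3$, giving the uniform bound $7/8$. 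Fourth, I would handle the two extra subtleties: the characters $\chi$ that are "associate pairs" splitting upon restriction to $2\AAA_n$ (there the ratio can only improve, or one uses $\chi(g) = \pm\chi'(g)$ plus a known second-largest-eigenvalue estimate), and the small cases $n = 6, 7$ where $n!/\prod\mu_i$ has few options — these I would just check by hand or cite from the known character tables.

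**Main obstacle.** The hard part will be making the per-cycle estimate in step three genuinely uniform and sharp enough to land at $7/8$ rather than something weaker: the worst case is a single $3$-cycle (the rest fixed points), i.e. $\mu = (3, 1^{n-3})$, where one must verify $|\chi(g)| \leq (7/8)\chi(1)$ across \emph{all} distinct-part partitions $\lambda \vdash n$, including the ones of small degree where cancellation is least favorable. I expect this to come down to a clean inequality on the ratio of a "$\mu$-deleted" hook product to the full hook product $g_\lambda$, combined with the exact power-of-$2$ bookkeeping; the constant $7/8$ (rather than, say, $19/20$ as for the non-spin quasi-simple groups of Proposition~\ref{ratio1}(i)) is presumably forced by exactly this $3$-cycle configuration, so the proof must extract that constant and no better. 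A secondary but routine nuisance is keeping straight the four cases $2\SSS_n$ vs.\ $2\AAA_n$ and split vs.\ non-split classes; I would organize this by first proving everything for $2\SSS_n$ and then deducing $2\AAA_n$ by restriction, noting that restriction either preserves the character (ratio unchanged) or halves the degree while the value on a split class is also governed by the same ribbon formula.
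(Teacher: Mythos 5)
Your proposal takes a genuinely different route from the paper, and the difference is instructive. You propose to attack $|\chi(g)/\chi(1)|$ head-on via the explicit combinatorics of spin characters (Schur/Morris/Nazarov ribbon formulas), whereas the paper sidesteps the spin combinatorics entirely. The paper's argument: since $g\notin Z(G)=C_G(G')$ and $G'=2\AAA_n$ is generated modulo $Z$ by preimages of $3$-cycles, there is a preimage $t$ of a $3$-cycle with $h:=[g,t]\notin Z(G)$; then $\bar h$ is a product of two $3$-cycles, hence (after conjugating) $h$ lies in a natural $K\cong 2\AAA_6$, and $\chi|_K$ is a sum of faithful irreducibles of $K$. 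A finite check in the ATLAS gives $|\chi(h)|\le\chi(1)/2$, so $\DB(h)^2\ge\chi(1)$; combining with the commutator inequality $\DB([g,t])^2\le 4\DB(g)^2$ (Lemma \ref{basic3}(ii)) and the translation $\DB(g)^2=2(\chi(1)-|\chi(g)|)$ yields $|\chi(g)|\le(7/8)\chi(1)$. This is exactly the ``deviation'' machinery the paper has set up, and it reduces the whole problem to one look-up in a finite table.

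Two substantive issues with your plan, beyond the extra work it would require. First, the vanishing set you state is off: a spin character $\langle\lambda\rangle$ of $\tilde\SSS_n$ vanishes on elements whose cycle type is not a partition into odd parts (not ``distinct odd parts''), together with an exceptional nonzero value on the class of cycle type $\lambda$ itself in the self-associate case; repeated odd cycle lengths are allowed and must be handled. Second, and more importantly, your remark that the constant $7/8$ ``is presumably forced by exactly this $3$-cycle configuration, so the proof must extract that constant and no better'' is a misreading of where the constant comes from. In the paper, $7/8$ is an artifact of the chain $4\DB(g)^2\ge\DB(h)^2\ge\chi(1)$, not a sharp estimate for spin characters. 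In fact for the basic spin character on a $3$-cycle the ratio is of the order $1/2$, so the true sup is likely much better than $7/8$; aiming your combinatorial estimate at $7/8$ as if it were tight would send you hunting for a non-existent extremal case. Your approach could in principle give a cleaner (probably better) constant for the spin case, but it is a substantial project; the paper's reduction to $2\AAA_6$ is the cheap route that suffices for the application.
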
  

\begin{proof}
Since $g \notin Z(G) = C_{G}(G')$ and since $G' = 2\AAA_{n}$ is generated by 
commutators $[x,y]$, with $x, y$ being inverse images of all $3$-cycles, there exists an inverse 
image $t$ of a $3$-cycle such that $h := [g,t] \notin Z(G)$. Observe that 
$h = gtg^{-1} \cdot t^{-1}$ projects onto the product of two $3$-cycles. It follows
that (a $G$-conjugate of) $h$ is contained in a natural subgroup $K \cong 2\AAA_{6}$ of $G$.
(See \cite[Lemma 2.5]{GM} for a similar argument.) 
Clearly, $h \notin Z(K)$ since $[g,t] \notin Z(G)$. Also, the restriction $\chi|_{K}$ is a 
sum of faithful irreducible characters of $K$. Inspecting \cite{Atlas}, one can check that
$|\chi(h)| \leq \chi(1)/2$, and so $\DB(h)^{2} \geq \chi(1)$. It now follows from Lemma 
\ref{basic3}(ii) that $\DB(g)^{2} \geq \chi(1)/4$, whence $|\chi(g)/\chi(1)| \leq 7/8$ by 
Lemma \ref{basic2} (iii).     
\end{proof}

\subsection{Tensor decomposable and tensor induced modules}
First we recall a well-known remark:

\begin{lemma}\label{tensor0}
{\sl Let $G$ be a finite irreducible subgroup of $GL(W)$. Assume that there is a tensor decomposition
$W = U \otimes V$ such that $G < GL(U) \otimes GL(V)$. Then there is a finite central
extension $1 \to Z \to \tilde{G} \to G \to 1$ of $G$ and irreducible representations 
$\Phi~:~\tilde{G} \to GL(U)$ and $\Psi~:~\tilde{G} \to GL(V)$ such that
$g = \Phi(\tilde{g}) \otimes \Psi(\tilde{g})$ for any $g = \tilde{g}Z \in G$.}
\end{lemma}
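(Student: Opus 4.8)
The plan is to produce the central extension $\tilde G$ as the fiber product (pullback) of the two projections that come from the containment $G < GL(U)\otimes GL(V)$. First I would set up the relevant groups. Write $H := GL(U)\otimes GL(V) \subseteq GL(W)$ for the image of the natural map $GL(U)\times GL(V)\to GL(W)$, $(A,B)\mapsto A\otimes B$; this map is surjective onto $H$ with kernel $\{(\mu I_U, \mu^{-1}I_V)\mid \mu\in\CC^\times\}\cong\CC^\times$, so there is an exact sequence $1 \to \CC^\times \to GL(U)\times GL(V) \xrightarrow{\pi} H \to 1$. Now define $\tilde G := \pi^{-1}(G)$, the preimage of $G$ inside $GL(U)\times GL(V)$, and let $Z := \ker(\pi|_{\tilde G}) = \CC^\times$. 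Restricting $\pi$ gives a central extension $1 \to Z \to \tilde G \to G \to 1$ (central because $Z$ is central already in $GL(U)\times GL(V)$). The two coordinate projections $\tilde G \to GL(U)$ and $\tilde G \to GL(V)$ are the desired $\Phi$ and $\Psi$: by construction, for $\tilde g\in\tilde G$ with image $g\in G$ we have $g = \pi(\tilde g) = \Phi(\tilde g)\otimes\Psi(\tilde g)$, which is exactly the asserted factorization.

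Next I would check the two remaining assertions: that $\tilde G$ can be taken to be a \emph{finite} central extension, and that $\Phi,\Psi$ are \emph{irreducible}. For finiteness, the issue is that $Z=\CC^\times$ is not finite, so one must cut $\tilde G$ down. Since $G$ is finite, $\tilde G/Z\cong G$ is finite, so $\tilde G$ is generated by $Z$ together with finitely many elements $\tilde g_1,\dots,\tilde g_r$ mapping onto generators of $G$; replacing each $\tilde g_j$ by $\zeta_j \tilde g_j$ for a suitable root of unity $\zeta_j$ (possible because $Z$ is divisible and $G$ is finite) and passing to the subgroup $G^\flat := \langle \tilde g_1,\dots,\tilde g_r\rangle$, one gets a finite subgroup of $GL(U)\times GL(V)$ still surjecting onto $G$ with central kernel $G^\flat\cap Z$, which is finite; this works because some power of each $\tilde g_j$ lies in $Z$, and one arranges that power to be $1$. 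Alternatively, and perhaps more cleanly, pick any finite-order lift of each generator and let $\tilde G$ be the (finite) subgroup they generate — the kernel is automatically a finite central subgroup since it is a subgroup of the finite group $\tilde G$ mapping to $1$ in $G$. Either way relabel the resulting finite group as $\tilde G$ and keep $\Phi,\Psi$ as the restrictions of the coordinate projections.

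For irreducibility of $\Phi$ and $\Psi$: suppose $0\neq U_0\subsetneq U$ were $\Phi(\tilde G)$-invariant. Then $U_0\otimes V$ would be invariant under $\Phi(\tilde g)\otimes\Psi(\tilde g) = g$ for all $g\in G$, contradicting irreducibility of $G$ on $W=U\otimes V$; the same argument with $V$ handles $\Psi$. (Here I use that a subspace of the form $U_0\otimes V$ is a proper nonzero subspace of $U\otimes V$ when $U_0$ is proper nonzero and $V\neq 0$.) I do not expect any real obstacle in this argument; the only point requiring slight care is the reduction to a finite $\tilde G$, since the ``obvious'' pullback is only an extension by $\CC^\times$, and one should say a word about why finitely many finite-order lifts suffice to surject onto $G$ — this follows because for each $g\in G$ of order $n$, any lift $\tilde g$ satisfies $\tilde g^{\,n}\in Z$, and after multiplying $\tilde g$ by an $n$-th root of $\tilde g^{\,-n}\in Z=\CC^\times$ we get a genuine order-$n$ lift. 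That is the one place where some (entirely routine) bookkeeping is needed; everything else is formal.
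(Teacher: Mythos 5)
Your construction of $\tilde G$ as the preimage $\pi^{-1}(G)$ inside $GL(U)\times GL(V)$ of the surjection $\pi\colon GL(U)\times GL(V)\to GL(U)\otimes GL(V)$, followed by cutting down to a subgroup generated by finite-order lifts, is a genuinely different route from the paper's. The paper first extracts a $2$-cocycle $\lambda(g,h)$ measuring the failure of $g\mapsto A(g)$ to be a homomorphism and then invokes, as a black box, the standard fact that a projective representation of a finite group lifts to a linear representation of a finite central extension (Schur-multiplier theory). You instead build the central extension by hand. Both routes are sound; your irreducibility check (an invariant $0\neq U_0\subsetneq U$ would give the $G$-invariant subspace $U_0\otimes V$) is correct and is exactly what the paper would also say.

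There is, however, one genuine gap: you assert that $G^{\flat}=\langle\tilde g_1,\dots,\tilde g_r\rangle$, generated by finite-order lifts of generators of $G$, is finite. You write ``(finite)'' as if this were automatic and then deduce finiteness of $G^{\flat}\cap Z$ from it, but that is circular: a group generated by finitely many elements of finite order need not be finite, and nothing in the construction directly forces $G^{\flat}\cap Z\subseteq\CC^{\times}$ to be torsion. The missing step is Schur's theorem: $G^{\flat}\cap Z$ is central of index $\leq|G|$ in $G^{\flat}$, so $[G^{\flat},G^{\flat}]$ is finite; the abelianization of $G^{\flat}$ is then a finitely generated abelian group generated by torsion elements (the images of the $\tilde g_j$), hence finite; therefore $G^{\flat}$ is finite and $G^{\flat}\cap Z$ is a finite central kernel. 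With that observation inserted your proof is complete. (The same content is implicit in the paper's appeal to the lifting theorem, which ultimately rests on finiteness of the Schur multiplier of a finite group.)
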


\begin{proof}
First we observe that if $a \otimes b = c \otimes d$ for some $a,c\in GL(U)$ and 
$b,d \in GL(V)$, then there is some $\gam \in \CC^{\times}$ such that 
$a = \gam c$ and $b = \gam^{-1}d$. Now, by the hypothesis, there are 
maps $A~:~G \to GL(U)$ and $B~:~G \to GL(V)$ such that 
$g = A(g) \otimes B(g)$ for any $g \in G$. If $h \in G$, then
$$A(gh) \otimes B(gh) = gh = (A(g) \otimes B(g)) \cdot (A(h) \otimes B(h)) = 
  (A(g) \cdot A(h)) \otimes (B(g) \cdot B(h)).$$
By our observation, we see that $A(gh) = \lam(g,h)A(g)A(h)$ for some $2$-cocycle
$\lam~:~G \times G \to \CC^{\times}$, and so $A$ is a projective (irreducible) 
representation of $G$. Thus $A$ lifts to a linear representation 
$\Phi~:~\tilde{G} \to GL(U)$ of a finite central extension $\tilde{G}$ of $G$:
$A(g) = \al(\tilde{g})\Phi(\tilde{g})$, where $\al~:~\tilde{G} \to \CC^{\times}$ and
$g = \tilde{g}Z$. Now it is easy to check that the map $\Psi~:~\tilde{G} \to GL(V)$, defined  
by $\Psi(\tilde{g}) = \al(\tilde{g})B(g)$ for $g = \tilde{g}Z$, is a group 
homomorphism, and $g = \Phi(\tilde{g}) \otimes \Psi(\tilde{g})$.  
\end{proof}

Lemma \ref{tensor0} shows that if a finite irreducible subgroup $G$ of $GL(V)$ preserves 
a tensor decomposition of $V$, then we may (and will) view $V$ as the tensor product of 
two modules for some central extension $\tilde{G}$ of $G$, and then replace $G$ by
$\tilde{G}$.

Let $V = \CC^{d}$ be a $G$-module, which is tensor induced. This means that there is a tensor decomposition
$V = V_{1}^{\otimes m}$ such that (the action of) $G$ (on $V$) is contained in 
$GL(V_{1})^{\otimes m}:\SSS_{m}$, with $\SSS_{m}$ naturally permuting the $m$ tensor factors
of $V$. (Note that we do not claim that $G \leq H^{\otimes m}:\SSS_{m}$ for some finite subgroup 
$H \in GL(V_{1})$.) 
   
\begin{lemma}\label{t-ind1}
{\sl Under the above assumptions, assume $G$ is finite and $g \in G$ projects onto $h \in \SSS_{m}$, a 
product of $s$ disjoint cycles. Then $|\Tr(g)| \leq \dim(V_{1})^{s}$.}
\end{lemma}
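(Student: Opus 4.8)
The plan is to split $V=V_1^{\otimes m}$ into a $g$-stable tensor product indexed by the cycles of $h$, so that $\Tr(g)$ becomes a product of $s$ traces (one per cycle), and then to bound each such trace by $\dim(V_1)$.

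Write $n:=\dim(V_1)$, fix a basis of $V_1\cong\CC^n$, and write $g=(A_1\otimes\cdots\otimes A_m)P_\sigma$ with $\sigma:=h\in\SSS_m$, $A_i\in GL(V_1)$, and $P_\sigma$ the operator permuting the $m$ tensor slots. Let $c_1,\dots,c_s$ be the cycles of $\sigma$, regarded as a partition of $\{1,\dots,m\}$ (fixed points counted as $1$-cycles, which is the reading of $s$ consistent with the asserted bound), and put $W_t:=\bigotimes_{i\in c_t}V_1$, so that $V=W_1\otimes\cdots\otimes W_s$. Since $\sigma$ preserves each $c_t$ setwise, both $P_\sigma$ and $A_1\otimes\cdots\otimes A_m$ preserve this decomposition; hence $g=g_1\otimes\cdots\otimes g_s$ with $g_t\in GL(W_t)$, and $\Tr(g)=\prod_{t=1}^s\Tr(g_t)$. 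It therefore suffices to prove $|\Tr(g_t)|\le n$ for every $t$.

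Now fix $t$ and set $c:=c_t$, $\ell:=|c|$, and let $A_c\in GL(V_1)$ be the product of the matrices $A_i$ ($i\in c$) taken in the cyclic order prescribed by $c$. A direct count of matrix entries shows $\Tr(g_t)=\Tr(A_c)$. Moreover $g_t^{\,\ell}$ has trivial permutation part, so $g_t^{\,\ell}=D_1\otimes\cdots\otimes D_\ell$, where each $D_k\in GL(V_1)$ is a cyclic rotation of the word $A_c$ and is therefore conjugate to $A_c$; in particular every $D_k$ has the same spectrum $\{\nu_1,\dots,\nu_n\}$ as $A_c$. Since $G$ is finite, $g$ has finite order $N$; comparing the two tensor factorisations of the identity $g^N=1_V$ (using the observation in the proof of Lemma \ref{tensor0}) forces $g_t^{\,N}=\lambda_t\cdot 1_{W_t}$ for scalars $\lambda_t$ with $\prod_{t=1}^s\lambda_t=1$. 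Raising $g_t^{\,\ell}$ to the $N$-th power and applying the same uniqueness together with the conjugacy of the $D_k$, we obtain $D_k^{\,N}=\mu_t\cdot 1_{V_1}$ for a common scalar $\mu_t$ with $\mu_t^{\,\ell}=\lambda_t^{\,\ell}$, whence $\nu_j^{\,N}=\mu_t$ and $|\nu_j|=|\lambda_t|^{1/N}$ for all $j$. Consequently $|\Tr(g_t)|=|\Tr(A_c)|\le\sum_{j=1}^n|\nu_j|=n\,|\lambda_t|^{1/N}$, and multiplying over $t$ gives
$$|\Tr(g)|=\prod_{t=1}^s|\Tr(g_t)|\ \le\ n^s\Big(\prod_{t=1}^s|\lambda_t|\Big)^{\!1/N}=n^s=\dim(V_1)^s.$$

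The entry-level computations ($\Tr(g_t)=\Tr(A_c)$, and the description of $g_t^{\,\ell}$ as a tensor product of conjugates of $A_c$) are routine. The one step that genuinely needs care is the normalisation: the cycle-operators $g_t$ need not be unitary — already when $\dim(V_1)=1$ one can have $g=1_V$ with the $A_i$ non-unit scalars — so one cannot simply invoke that their eigenvalues lie on $\SA$. What rescues the argument is that the relevant ``radii'' $|\lambda_t|^{1/N}$ are tied together by the relation $\prod_t\lambda_t=1$, and it is precisely this that makes the product over the $s$ cycles collapse to $n^s$.
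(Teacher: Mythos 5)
Your proof is correct and takes essentially the same approach as the paper: decompose $V$ by the cycles of $h$ into a tensor product preserved by $g$, compute the trace of each cycle-factor as the trace of the ordered product of the $A_i$'s in that cycle, and use finiteness of $g$ to control the moduli of the eigenvalues. The only difference is cosmetic: the paper rescales the tensor factors (via the observation at the start of Lemma~\ref{tensor0}'s proof) and inducts on $s$ to reduce to the single-cycle case $s=1$, where each eigenvalue of the cyclic product is an $N$-th root of unity; you instead keep all $s$ cycle-factors simultaneously and track the scalars $\lambda_t$ with $\prod_t\lambda_t=1$ that arise because the individual $g_t$ need not have finite order, letting the product over $t$ collapse to $n^s$ at the end. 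Both versions rest on the same key facts (uniqueness of tensor factorization up to scalars, conjugacy of the cyclic rotations of the word, and $|\Tr|\le\sum|\text{eigenvalues}|$), and your normalization bookkeeping is carried out correctly.
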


\begin{proof}
First we observe that if $y = a \otimes b$ has finite order for $a\in GL(U)$ and 
$b \in GL(V)$, then there is some $\delta \in \CC^{\times}$ such that both 
$c: = \delta^{-1}a$ and $d := \delta b$ have finite order, and $y = c \otimes d$. (Indeed,
$I = y^{N} = a^{N} \otimes b^{N}$, where we use $I$ to denote any identity matrix. 
So by the first sentence of the proof of Lemma \ref{tensor0},
$a^{N} = \gam I$ and $b^{N} = \gam^{-1}I$ for some $\gam \in \CC^{\times}$ and $0 < N \in \ZZ$. 
Now choose $\delta$ to be an $N^{\mathrm {th}}$-root of $\gam$.) 

In the case $s> 1$, conjugating $g$ with a suitable element in $\SSS_{m}$ we may assume that  
$g$ preserves a tensor decomposition of $V$. Using the 
above observation and proceeding by induction on $s$, we may assume that $s=1$ and 
$h = (1,2, \ldots ,m)$.
Now $g = hb$ with $b = B_{1} \otimes B_{2} \otimes \ldots \otimes B_{m}$ and 
$B_{i} \in GL(V_{1})$. Then one can check (see also \cite{GI}) that 
$\Tr(g) = \Tr(B_{1}B_{2} \ldots B_{m})$. Since $G$ and $\SSS_{m}$ are finite,
there is some integer $N > 1$ such that $g^{N} = h^{N} = \Id$. Since  
$I = h^{N} = g^{N} = h^{N}b^{h^{N-1}}b^{h^{N-2}} \ldots b^{h}b$ (where $b^{x} := x^{-1}bx$), 
we have 
$$I = g^{N} = (B_{2}B_{3} \ldots B_{m}B_{1})^{N/m} \otimes 
      (B_{3}B_{4}\ldots B_{m}B_{1}B_{2})^{N/m} \otimes 
      \ldots \otimes (B_{1}B_{2} \ldots B_{m})^{N/m}.$$
Pick an arbitrary eigenvalue $\lam$ of $v:=B_{1}B_{2} \ldots B_{m}$. Note that all 
the matrices $B_{2}B_{3} \ldots B_{m_{1}}B_{1}$, $B_{3}B_{4} \ldots B_{m_{1}}B_{1}B_{2}$, $\ldots$ are conjugate to 
$v$. Hence $\underbrace{\lam^{N/m} \ldots \lam^{N/m}}_{m} = \lam^{N}$ is an eigenvalue of
$g^{N} = I$. We have shown that each eigenvalue of 
$v$ is an $N^{\mathrm {th}}$-root of unity and so it has absolute value $1$.
Hence $|\Tr(g)| = |\Tr(v)|$ is at most the size of 
$v$, which is $\dim(V_{1})$.   
\end{proof}

We will also need the following technical statement:

\begin{lemma}\label{maxc}
{\sl Let $\CL$ be a collection of finite simple groups and let $G$ be any finite group. Then 
$G$ has a unique normal subgroup $R$ such that 

{\rm (i)} every composition factor of $R$ belongs to $\CL$; and

{\rm (ii)} If $N \lhd G$ and every composition factor of $N$ belongs to $\CL$, then $N \leq R$.\\
Furthermore, $R$ is a characteristic subgroup of $G$.}
\end{lemma}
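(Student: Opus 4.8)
The plan is to construct $R$ as the subgroup generated by all normal subgroups of $G$ whose composition factors all lie in $\CL$, and then verify the two listed properties together with characteristicity. First I would introduce the set $\mathcal{N}$ of all normal subgroups $N \lhd G$ such that every composition factor of $N$ belongs to $\CL$; this set is nonempty since the trivial subgroup belongs to it. Then I would define $R := \langle N \mid N \in \mathcal{N} \rangle$, which is normal in $G$ since it is generated by normal subgroups. Property (ii) is then immediate: any $N \lhd G$ all of whose composition factors lie in $\CL$ is one of the generating subgroups, hence $N \leq R$.

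The main content is verifying property (i): every composition factor of $R$ lies in $\CL$. The key step here is to show that the class of normal subgroups of $G$ with all composition factors in $\CL$ is closed under products, i.e. if $M, N \in \mathcal{N}$ then $MN \in \mathcal{N}$. This follows from the standard isomorphism $MN/N \cong M/(M \cap N)$: the composition factors of $MN$ are among those of $N$ together with those of $M/(M\cap N)$, and the latter are a subset of the composition factors of $M$ (since $M \cap N \lhd M$). Hence every composition factor of $MN$ lies in $\CL$. By induction, any finite product $N_1 N_2 \cdots N_k$ of members of $\mathcal{N}$ again lies in $\mathcal{N}$. Since $G$ is finite, $R$ is actually equal to such a finite product (take $N_1, \dots, N_k$ to be all the members of $\mathcal{N}$, or note that the ascending chain of partial products stabilizes), so $R \in \mathcal{N}$, giving (i). Uniqueness of $R$ is then forced by (i) and (ii): if $R'$ also satisfies both, then $R' \leq R$ by (ii) applied to $R$, and $R \leq R'$ by (ii) applied to $R'$, so $R = R'$.

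Finally, for characteristicity: let $\sigma \in \Aut(G)$. Then $\sigma(R) \lhd G$, and $\sigma$ induces an isomorphism $R \to \sigma(R)$, so $\sigma(R)$ has the same composition factors as $R$, all of which lie in $\CL$. Thus $\sigma(R) \in \mathcal{N}$, whence $\sigma(R) \leq R$ by property (ii). Applying the same argument to $\sigma^{-1}$ gives $\sigma^{-1}(R) \leq R$, i.e. $R \leq \sigma(R)$, so $\sigma(R) = R$. Hence $R$ is characteristic in $G$.

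I do not expect any serious obstacle here; the only point requiring a moment of care is the closure of $\mathcal{N}$ under products, which rests on the elementary observation that composition factors of $MN$ are controlled by those of $M$ and $N$ via the second isomorphism theorem. Everything else is bookkeeping with the finiteness of $G$.
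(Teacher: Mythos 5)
Your proof is correct and follows essentially the same route as the paper: define $R$ as the product of all normal subgroups with composition factors in $\CL$, use the second isomorphism theorem $MN/N \cong M/(M\cap N)$ to show this class is closed under products, and deduce characteristicity from (ii) applied to $\sigma(R)$. The only cosmetic difference is that you apply (ii) to both $\sigma$ and $\sigma^{-1}$ to get equality, whereas the paper infers $\varphi(R)=R$ from $\varphi(R)\leq R$ directly (which also works by finiteness); both are fine.
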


\begin{proof}
Let $\XC$ be the collection of all normal subgroups $N \lhd G$ with the property that all 
composition factors of $N$ belong to $\CL$. For any $M, N \in \XC$,  
$MN \lhd G$, and every composition factor of $MN$ also belongs to $\CL$ since 
$MN/N \cong M/(M \cap N)$, whence $MN \in \XC$. Now the subgroup $R = \prod_{N \in \XC}N$ clearly 
satisfies (i) and (ii). Let $\varphi \in \Aut(G)$. Then $\varphi(R) \lhd G$ and 
$\varphi(R) \in \XC$ since $\varphi(R) \cong R$. By (ii), $\varphi(R) = R$.    
\end{proof}

\section{Proof of Theorem \ref{main-b}}

\subsection{Reduction to the almost quasi-simple case}

\begin{propo}\label{bound-r}
{\sl It suffices to prove Theorem \ref{main-b} for the case $G$ is an almost quasi-simple 
group which is irreducible, primitive, tensor indecomposable, and not tensor induced on $V$.}
\end{propo}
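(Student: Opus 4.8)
The plan is to run the finite primitive irreducible group $G$ through the Aschbacher-type dichotomy and to verify the conclusion of Theorem \ref{main-b} by hand in every case except the almost quasi-simple, tensor indecomposable, non-tensor-induced one; so I would induct on $d := \dim(V)$, assume $G \le GU(V)$ by Weyl's unitarian trick, write $\Delta = \Delta(g) = d - |\Tr(g)|$, and use throughout that $|\Tr(T)| \le \dim(W)$ for $T \in GU(W)$, with equality only for scalars. The first step is the tensor induced case: here $V = V_1^{\otimes m}$ with $m \ge 2$ and $G \le GL(V_1)\wr\SSS_m$, and by hypothesis $g$ projects onto a nontrivial permutation of the $m$ tensor factors, hence one with at most $m-1$ cycles, so Lemma \ref{t-ind1} gives $|\Tr(g)| \le \dim(V_1)^{m-1} \le \dim(V_1)^m/2 = d/2$ and $\Delta \ge d/2$. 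As $d = \dim(V_1)^m$ lies in $\{4, 8, 9, 16, \dots\}$, either $d = 4$ and $\Delta \ge 2 > 4 - 2\sqrt{2}$ (conclusion (iii)), or $d \ge 8$ and $\Delta \ge 4 > 8 - 4\sqrt{2}$ (conclusion (iv)); so henceforth $G$ is not tensor induced.

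Next I would handle the tensor decomposable case: $V = A \otimes B$ with $2 \le \dim A = a \le b = \dim B$ and $G$ preserving the factorization (it cannot interchange $A$ and $B$, or it would be tensor induced). By Lemma \ref{tensor0} there is a central extension $\tilde G$ of $G$ and irreducible $\Phi : \tilde G \to GL(A)$ and $\Psi : \tilde G \to GL(B)$ with $g = \Phi(\tilde g)\otimes\Psi(\tilde g)$, hence $\Tr(g) = \Tr\Phi(\tilde g)\cdot\Tr\Psi(\tilde g)$; moreover primitivity and irreducibility of $G$ force $\Phi(\tilde G)$, $\Psi(\tilde G)$ to be primitive and irreducible on $A$, $B$ respectively (a $\Phi$-stable decomposition of $A$ would give a reducible or imprimitive decomposition of $V$). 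Since $g$ is non-scalar, at least one of $\Phi(\tilde g)$, $\Psi(\tilde g)$ is non-scalar, and applying Theorem \ref{main-b} in the smaller dimensions $a$, $b$ bounds the corresponding trace strictly below $a$, resp. $b$, by a definite amount; a short case analysis then shows $\Delta$ is forced into conclusion (iii) or (iv) \emph{unless} $\dim A = 2$ and $g$ is scalar on $B$, in which case $\Delta = b\cdot\Delta_A(\Phi(\tilde g)) \ge b\cdot(3-\sqrt5)/2$ by the two-dimensional case. As $b \ge 7$ then gives $\Delta > 8 - 4\sqrt2$ and hence (iv), the only surviving possibility is $2 \le b \le 6$, which is conclusion (v). So we may also assume $G$ is tensor indecomposable.

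Finally, with $G$ primitive, irreducible, tensor indecomposable and not tensor induced, I would split on whether $G$ has a component. If it has none, then $F^*(G) = F(G)$ is nilpotent, and the classical structure theory of primitive complex linear groups (the Fitting subgroup decomposing projectively as a tensor product over its Sylow subgroups, collapsed by tensor indecomposability to a single prime $r$) shows $G/Z(G)$ normalizes a normal $r$-subgroup $\bar E$ of symplectic type with $V$ its unique faithful irreducible module, $d = r^m$, and $\bar E/Z(\bar E) \cong \FF_r^{2m}$. For $g \in G\setminus Z(G)$: if $g$ centralizes $\bar E/Z(\bar E)$ then up to a scalar $g$ is a non-central element of $\bar E$, whose character on $V$ vanishes, so $\Delta = d$ and the pertinent one of (i)--(iv) is clear; if $g$ acts nontrivially on $\bar E/Z(\bar E)$ then the Weil-type character formula gives $|\Tr(g)|^2 = |C_{\bar E/Z(\bar E)}(g)| \le r^{2m-1}$, whence $\Delta \ge r^m(1 - r^{-1/2})$, and minimizing over admissible $(r,m)$ with $r^m \ge 5$ (the minimum occurring at $r=2$, $m=3$) gives $\Delta \ge 8 - 4\sqrt2$, while $d = 2, 3, 4$ give $\Delta \ge 2-\sqrt2$, $3-\sqrt3$, $4-2\sqrt2$ --- exactly (i)--(iv). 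If instead $G$ has a component $L$, then tensor indecomposability and the absence of tensor-induced structure force $L$ to be the unique component, so $L \lhd G$, and tensor indecomposability forces $C_G(L)$ to be scalar, i.e.\ $C_G(L) = Z(G)$; then $L/Z(L) \lhd G/Z(G) \le \Aut(L/Z(L))$, so $G$ is almost quasi-simple. This is the only remaining case.

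The step I expect to be the main obstacle is the tensor decomposable one: I need to run the inductive hypothesis on both factors while tracking the exact constants so that the \emph{only} configuration not already covered by (iii) or (iv) is conclusion (v), with its sharp range $2 \le \dim B \le 6$ and sharp bound $\dim(B)(3-\sqrt5)/2$ --- and this in turn requires first verifying that the induced actions on $A$ and $B$ really are primitive and irreducible, so the inductive hypothesis applies. The extraspecial-normalizer subcase is also somewhat delicate, since the estimate there is attained (at $d = 8$, $r = 2$, $m = 3$, giving exactly $8 - 4\sqrt2$), so the Weil-character bound must be used in its sharp form.
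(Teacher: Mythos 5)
The overall architecture of your argument — tensor induced, tensor decomposable, extraspecial, then conclude almost quasi-simple — matches the paper, and your tensor-induced and extraspecial steps are essentially identical to the paper's (both use Lemma \ref{t-ind1} and the Weil-character bound $|\chi(g)|^2 \le |C_{E/Z(E)}(g)|$, respectively). But there is a genuine gap in your tensor decomposable step, and it is precisely the one you flag as the delicate part.

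You write $V = A \otimes B$ with $2 \le a = \dim A \le b = \dim B$ and claim that "a short case analysis shows $\Delta$ is forced into conclusion (iii) or (iv) \emph{unless} $\dim A = 2$ and $g$ is scalar on $B$." This is false as stated, because your two-factor labeling need not be the one under which conclusion (v) is visible. Concretely, take $V = A \otimes B_1 \otimes B_2$ with $\dim A = 3$, $\dim B_1 = \dim B_2 = 2$ (so $d = 12$), and $g = g_A \otimes g_{B_1} \otimes g_{B_2}$ with $g_A$ and $g_{B_2}$ scalar and $g_{B_1}$ non-scalar. Writing this in your form $V = A \otimes B$ with $B := B_1 \otimes B_2$ gives $a = 3$, $b = 4$, $g$ scalar on $A$ and non-scalar on $B$ — so your dichotomy puts it in the "$(iii)$ or $(iv)$" branch. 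But $\Delta(g) = 6\,\Delta_{B_1}(g)$ can be as small as $6 \cdot (3-\sqrt5)/2 = 3(3-\sqrt5) \approx 2.29$, which is strictly less than $8 - 4\sqrt2 \approx 2.34$; conclusion (iv) fails (and (iii) fails since $d \ne 4$). The correct conclusion is (v), but for the relabeled decomposition $V = B_1 \otimes (A \otimes B_2)$ with the dimension-$2$ factor $B_1$ carrying the non-scalar action and $\dim(A \otimes B_2) = 6$. Your argument as sketched never performs this regrouping, and moreover when you apply the inductive hypothesis to $B$ you cannot use the crude lower bound $\Delta_B \ge 4 - 2\sqrt2$, since the inductive conclusion for $B$ may itself be (v), for which $\Delta_B$ can be as small as $3 - \sqrt5$.

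The paper sidesteps this entirely by first refining $V = V_1 \otimes \dots \otimes V_m$ into \emph{tensor-indecomposable} factors. Then for each $V_j$ with $\dim V_j \ge 3$ and $g|_{V_j}$ non-scalar, conclusion (v) cannot occur (it requires a tensor decomposition), so the clean bound $\alpha_j(1) - |\alpha_j(g)| \ge 4 - \sqrt 8$ holds, giving $\Delta(g) \ge 8 - 4\sqrt2$ outright. This reduces to the case that every factor where $g$ is non-scalar has dimension $2$; at that point a bounded list of shapes $(d,m)$ remains, and the paper either absorbs the scalar factors into the "$B$" of conclusion (v) or, when two dimension-$2$ factors carry non-scalar action, uses $\alpha_1(1)\alpha_2(1) - |\alpha_1(g)\alpha_2(g)| > 4 - \sqrt 8$. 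If you want your two-factor bookkeeping to survive, you must either (a) allow relabeling of the decomposition after seeing which factors $g$ is scalar on, or better, (b) pass to the full decomposition into tensor-indecomposable pieces before running the induction, exactly as the paper does.

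Two smaller remarks. First, the paper doesn't carry out the Aschbacher-type reduction from scratch as you do with $F^*(G)$ — it simply cites \cite[Proposition 2.8]{GT3}, so your argument is more self-contained there but otherwise equivalent. Second, your $|C_{\bar E/Z(\bar E)}(g)| \le r^{2m-1}$ bound is fine, but strictly speaking one must also allow the case $\Tr(g) = 0$ (the reference the paper cites gives "either $\chi(g) = 0$ or $|\chi(g)|^2 = |C_{E/Z(E)}(g)|$"), which only makes $\Delta$ larger, so this is harmless.
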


\begin{proof}
Let $\chi$ denote the character of $G$ afforded by $V$.

(i) First we consider the case $G$ is tensor induced on $V$: $V = V_{1} \otimes V_{2} \otimes \ldots \otimes V_{m}$, 
with $\dim(V_{i}) = a > 1$ and $G$ permutes the $m$ tensor factors $V_{1}, \ldots ,V_{m}$ (transitively). By assumption,
$g$ acts nontrivially on the set $\{V_{1}, \ldots ,V_{m}\}$. Hence, $|\chi(g)| \leq a^{m-1} \leq d/2$ by Lemma \ref{t-ind1}.
Now if $d = a^{m} \geq 8$ then $\Delta(g) \geq d/2 \geq 4$. On the other hand, if $d = a^{m} < 8$, then 
$d = 4$ and $\Delta(g) \geq d/2 = 2$.

(ii) Now assume that we are in the extraspecial case (i.e. the case (iii) of  \cite[Proposition 2.8]{GT3}). 
Then $d = p^{m}$ for some prime $p$ and some integer $m \geq 2$. By \cite[Lemma 2.4]{GT1}, 
$|\chi(g)| \leq p^{m-1/2} \leq d/\sqrt{2}$. In particular, if $d \geq 8$, then 
$\Delta(g) \geq d(1-1/\sqrt{2}) \geq 8-4\sqrt{2}$. If $d = 5$ or $7$, then 
$\Delta(g) \geq d-\sqrt{d} \geq 5-\sqrt{5} > 8-4\sqrt{2}$. If $d = 4$, then 
$\Delta(g) \geq 4(1-1/\sqrt{2}) = 4-2\sqrt{2}$. If $d = 3$, then 
$\Delta(g) \geq 3-\sqrt{3}$, and if $d = 2$, then 
$\Delta(g) \geq 2-\sqrt{2} > (3-\sqrt{5})/2$.

(iii) Next we consider the tensor decomposable case: $V = V_{1} \otimes \ldots \otimes V_{m}$, where
$G$ is tensor indecomposable and primitive on $V$, $\dim(V_{i}) \geq 2$, and $m \geq 2$.
By \cite[Proposition 2.8]{GT3} and by the hypothesis, we may assume that Theorem \ref{main-b} holds for 
$g$ acting on $V_{i}$ as long as $g|_{V_{i}}$ is not scalar. Let $\al_{i}$ be the character afforded by $V_{i}$. 

Suppose that there is some $j$ such that $\dim(V_{j}) \geq 3$ and $g|_{V_{j}}$ is non-scalar. Then
$\al_{j}(1)-|\al_{j}(g)| \geq 4-\sqrt{8}$ by Theorem \ref{main-b} applied to $(G,g,V_{j})$. Hence
$$\chi(1)-|\chi(g)| \geq \chi(1)-\frac{\chi(1)}{\al_{j}(1)} |\al_{j}(g)| =  
  \frac{\chi(1)}{\al_{j}(1)}(\al_{j}(1)-|\al_{j}(g)|) \geq 2(4-\sqrt{8}) = 8-4\sqrt{2},$$
as required. So we may assume that $\dim(V_{i}) = 2$ whenever $g|_{V_{i}}$ is non-scalar. But $g$ is non-scalar,
so without loss we may suppose that $\dim(V_{1}) = 2$ and $g|_{V_{1}}$ is non-scalar. By Theorem 
\ref{main-b} applied to $(G,g,V_{1})$ we have $\al_{1}(1)-|\al_{1}(g)| \geq (3-\sqrt{5})/2$. Arguing as above, 
we obtain that 
$$\Delta(g) = \chi(1)-|\chi(g)| \geq \frac{\chi(1)}{\al_{1}(1)}(\al_{1}(1)-|\al_{1}(g)|) 
  \geq d(3-\sqrt{5})/4.$$ 
If $d \geq 13$ in addition, then in fact $d \geq 14$ and $\Delta(g) \geq 7(3-\sqrt{5})/2 > 8-4\sqrt{2}$.
If $d = 6$ or $10$, then $m = 2$, $\dim(V_{2}) = 3$ or $5$, and so $g|_{V_{2}}$ is scalar, whence we arrive at the 
conclusion (v) of Theorem \ref{main-b}. The same holds if $m = 2$ and $d \in \{8,12\}$. 
We also arrive at the same conclusion when $d = 4$, as otherwise
$\Delta(g) \geq 4 - ((1+\sqrt{5})/2)^{2} > 4-\sqrt{8}$. Finally, consider the case where $d \in \{8,12\}$ but 
$m > 2$; that is, $m = 3$. Then we may assume that $\dim(V_{2}) = 2$ and $g|_{V_{2}}$ is not scalar (as otherwise
Theorem \ref{main-b}(v) holds). As in the case $d = 4$, we get 
$\al_{1}(1)\al_{2}(1)-|\al_{1}(g)\al_{2}(g)| > 4-\sqrt{8}$, whence 
$$\Delta(g) = \chi(1)-|\chi(g)| \geq \frac{\chi(1)}{\al_{1}(1)\al_{2}(1)}(\al_{1}(1)\al_{2}(1)-|\al_{1}(g)\al_{2}(g)|) 
  > 8-4\sqrt{2}.$$
We are done by \cite[Proposition 2.8]{GT3}.   
\end{proof}

Throughout the rest of this section we will assume that $G$ is an almost quasi-simple group. In fact, we will
prove more than we need for the proof of Theorem \ref{main-b}: we will describe all triples $(G,V,g)$, where 

$\stc:~$\begin{tabular}{l}$G < GL(V)$ is an almost quasi-simple, irreducible, primitive,\\ 
       tensor indecomposable subgroup, $g \in G \setminus Z(G)$, and \\ 
       either $0 < \ages(g) \leq 1$, or $\Delta(g) := \dim(V)-|\Tr(g)| \leq 8-4\sqrt{2}$.\end{tabular}
 
As usual, we denote by $\chi$ the character of $G$ afforded by $V$, $L := G^{(\infty)}$, $S := L/Z(L)$. The 
set-up $\stc$ implies that $\chi|_{L}$ is irreducible, and that 
\begin{equation}\label{forg}
  \Delta(g) = \chi(1)-|\chi(g)| < 4.556
\end{equation} 
by Corollary \ref{bound2}.

\subsection{Alternating groups}
First we dispose of the case $S = \AAA_{n}$ with $n \geq 8$.
For $1 \leq k \leq n-1$, let $R_{n}(k)$ denote the set of partitions $\lam \vdash n$, where either
$\lam$ or the conjugate partition $\lam^{*}$ has the form $(n-k,\mu)$ for some $\mu \vdash k$. 
We will need the following statement which follows from the main result of \cite{Ra}:

\begin{lemma}\label{ras}
{\sl Let $\rho = \rho^{\lam} \in \Irr(\SSS_{n})$ be labeled by the partition $\lam \vdash n$.

{\rm (i)} If $n \geq 15$, then either $\rho(1) \geq n(n-1)(n-5)/6$, or $\lam \in \cup^{2}_{k=1}R_{n}(k)$.

{\rm (ii)} If $n \geq 22$, then
either $\rho(1) \geq n(n-1)(n-2)(n-7)/24$, or $\lam \in \cup^{3}_{k=1}R_{n}(k)$.
\hfill $\Box$}
\end{lemma}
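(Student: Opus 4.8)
The plan is to obtain Lemma~\ref{ras} directly from Rasala's explicit determination in~\cite{Ra} of the smallest degrees of the irreducible characters of $\SSS_{n}$. For $n$ above an explicit bound, Rasala describes the first several terms $D_{1}(n) < D_{2}(n) < \dots$ of the strictly increasing list of distinct values of $\rho(1)$, $\rho \in \Irr(\SSS_{n})$, together with the full set of partitions $\lambda \vdash n$ labelling each $D_{j}(n)$. Because $\rho^{\lambda^{*}} = \rho^{\lambda} \otimes \mathrm{sgn}$ has the same degree as $\rho^{\lambda}$, each of these label-sets is stable under conjugation $\lambda \mapsto \lambda^{*}$; this is precisely the symmetry encoded in the definition of $R_{n}(k)$, and it makes both parts of the lemma insensitive to replacing $\lambda$ by $\lambda^{*}$.

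First I would record the initial segment of Rasala's list, checking the degree values against the hook-length formula (for two-row shapes one has $\dim\rho^{(n-k,k)} = \binom{n}{k}-\binom{n}{k-1}$). One obtains: $D_{1}=1$, labelled by $R_{n}(0)=\{(n),(1^{n})\}$; $D_{2}=n-1$, labelled by $R_{n}(1)=\{(n-1,1),(2,1^{n-2})\}$; $D_{3}=n(n-3)/2$ and $D_{4}=(n-1)(n-2)/2$, whose labels together are the four elements of $R_{n}(2)$; then $D_{5}=n(n-1)(n-5)/6$, labelled exactly by $(n-3,3)$ and $(2^{3},1^{n-6})$; next $D_{6}=(n-1)(n-2)(n-3)/6$ and $D_{7}=n(n-2)(n-4)/3$, whose labels are the remaining four elements of $R_{n}(3)$, namely $(n-3,1^{3})$, $(n-3,2,1)$ and their conjugates; and $D_{8}=n(n-1)(n-2)(n-7)/24$, labelled by $(n-4,4)$ and $(2^{4},1^{n-8})$. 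For part~(i): if $\rho^{\lambda}(1) < D_{5}(n) = n(n-1)(n-5)/6$ then $\lambda$ labels one of $D_{1},\dots,D_{4}$, so $\lambda\in\bigcup_{k=0}^{2}R_{n}(k)$, and if moreover $\rho^{\lambda}(1)>1$ then $\lambda\in R_{n}(1)\cup R_{n}(2)$; otherwise $\rho^{\lambda}(1)\geq n(n-1)(n-5)/6$. For part~(ii): if $\rho^{\lambda}(1) < D_{8}(n) = n(n-1)(n-2)(n-7)/24$ then $\lambda$ labels one of $D_{1},\dots,D_{7}$, hence $\lambda\in\bigcup_{k=0}^{3}R_{n}(k)$; otherwise $\rho^{\lambda}(1)\geq n(n-1)(n-2)(n-7)/24$.

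The only real work is to check that the thresholds $n\geq 15$ and $n\geq 22$ suffice for the portion of Rasala's list used above to be in its generic configuration: that the displayed orderings among the $D_{j}(n)$ actually hold there (these reduce to a handful of quadratic inequalities in $n$, for instance $D_{3}(n)<D_{4}(n)$, $D_{5}(n)<D_{6}(n)$, and $\dim\rho^{(n-3,2,1)}<\dim\rho^{(n-4,4)}$, the last being $n^{2}-16n+39>0$), and, more importantly, that no sporadic low-degree irreducible labelled by a partition lying outside $\bigcup_{k}R_{n}(k)$ slips below the relevant cutoff. The latter is exactly what Rasala's theorem guarantees once $n$ is at least its explicit bound, so the numbers $15$ and $22$ are chosen to dominate that bound together with the finitely many elementary comparisons. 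I expect this reconciliation of thresholds to be the only point that needs attention, and it is entirely routine.
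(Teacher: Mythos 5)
Your proposal is essentially the paper's own argument: the authors cite Rasala's classification of the low degrees of $\Irr(\SSS_n)$ and close the statement with a $\Box$, so the ``proof'' in the paper is precisely the translation you carry out, namely that the eight smallest distinct degrees $D_1 < \dots < D_8$ are realized exactly by the partitions in $\bigcup_{k \leq 3} R_n(k)$ together with $(n-4,4)$ and its conjugate, with $D_5 = n(n-1)(n-5)/6$ and $D_8 = n(n-1)(n-2)(n-7)/24$. Your degree formulas and the ordering inequalities (notably $D_4 < D_5$ for $n \geq 8$ and $D_7 < D_8$ for $n \geq 14$) are all correct, and invoking Rasala to rule out any sporadic low-degree partition outside these families is exactly what the authors intend.

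One wrinkle worth flagging: as stated, the lemma's alternative ``$\lam \in \bigcup_{k=1}^{2} R_n(k)$'' (respectively $k \leq 3$) omits the trivial pair $R_n(0) = \{(n),(1^n)\}$, whose degree $1$ violates the lower bound. You noticed this and patched it with the hypothesis $\rho(1) > 1$. That is the right reading; the paper only ever applies the lemma to characters of quasi-simple groups, where the degree is automatically $>1$, so the omission is harmless in context, but your explicit caveat is correct and makes the statement literally true.
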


We will now estimate $\rho^{\lam}(g)$. Let $\tbf$ denote the transposition $(1,2) \in \SSS_{n}$.

\begin{lemma}\label{rhoS}
{\sl Let $1+\al(g)$ denote the number of fixed points of the permutation $g \in \SSS_{n}$,
and let $n \geq 9$. Then 
$$\rho^{\lam}(g) = \left\{ \begin{array}{ll}
  (\al(g)^{2}-\al(g^{2}))/2, & \lam = (n-2,1^{2}),\\
  (\al(g)^{2}+\al(g^{2}))/2 -\al(g) -1, & \lam = (n-2,2),\\
  (\al(g)^{3}-3\al(g)\al(g^{2})+2\al(g^{3}))/6, & \lam = (n-3,1^{3}),\\
  (\al(g)^{3}-\al(g^{3}))/3 - \al(g)^{2} +1, & \lam = (n-3,2,1),\\
  (\al(g)^{3}+3\al(g)\al(g^{2})+2\al(g^{3}))/6 - \al(g)^{2} -\al(g), & \lam = (n-3,3). \end{array} \right.$$
In particular, if $g \neq 1$ then $|\rho^{\lam}(g)| \leq \rho^{\lam}(\tbf)$ for any of the 
above $\lam$.}
\end{lemma}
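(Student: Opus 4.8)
The plan is to derive the five character formulas from the decomposition of small Schur functors of the reflection representation $W := V_{(n-1,1)}$, and then to prove the inequality by rewriting everything in terms of cycle data and running an elementary optimization.

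\emph{Step 1: the formulas.} Recall that $\chi^{(n-1,1)}(h) = \alpha(h)$ for every $h \in \SSS_n$, where $1+\alpha(h)$ is the number of fixed points of $h$; applying this to powers of $g$, the eigenvalues $x_1,\dots,x_{n-1}$ of $g$ on $W$ satisfy $p_k(x) = \sum_i x_i^k = \alpha(g^k)$. On the other hand the constituents of the small Schur functors of $W$ are classical --- they follow from Pieri's rule, using $\Lambda^k(\CC^n) = \Ind_{\SSS_{n-k}\times\SSS_k}^{\SSS_n}(1\boxtimes\mathrm{sgn})$ and that $\Sym^k(\CC^n)$ is the permutation module on $k$-element multisets: for $n \geq 9$ (the stable range) one has $\Lambda^2 W = V_{(n-2,1^2)}$, $\Lambda^3 W = V_{(n-3,1^3)}$, $\Sym^2 W = V_{(n)} \oplus V_{(n-1,1)} \oplus V_{(n-2,2)}$, $\Sym^3 W = V_{(n)} \oplus 2V_{(n-1,1)} \oplus V_{(n-2,2)} \oplus V_{(n-2,1^2)} \oplus V_{(n-3,3)}$, and $\mathbb{S}^{(2,1)}W = V_{(n-1,1)} \oplus V_{(n-2,2)} \oplus V_{(n-2,1^2)} \oplus V_{(n-3,2,1)}$, where $\mathbb{S}^{(2,1)}$ is the Schur functor of the hook $(2,1)$. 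Taking characters of these functors (so $e_2(x),e_3(x)$ for the wedge powers, $h_2(x),h_3(x)$ for the symmetric powers, and $s_{(2,1)}(x) = (p_1^3 - p_3)/3$ for the last), expressing $e_k,h_k$ in power sums by Newton's identities, and subtracting the lower constituents --- whose characters $1,\alpha(g),\chi^{(n-2,2)},\chi^{(n-2,1^2)}$ are already known, the last two being read off in the same way from $\Lambda^2 W$ and $\Sym^2 W$ --- yields the five displayed formulas after routine simplification. Equivalently, from $\sum_k \chi(\Lambda^k W)(g)\,t^k = (1+t)^{c_1-1}\prod_{d\geq 2}(1+(-1)^{d+1}t^d)^{c_d}$ and the analogous product $(1-t)^{-(c_1-1)}\prod_{d\geq 2}(1-t^d)^{-c_d}$ for the symmetric powers (where $c_d = c_d(g)$ is the number of $d$-cycles of $g$), one obtains the clean ``cycle-count'' versions, e.g. $\rho^{(n-2,1^2)}(g) = \binom{c_1-1}{2} - c_2$ and $\rho^{(n-3,1^3)}(g) = \binom{c_1-1}{3} - (c_1-1)c_2 + c_3$; these are the form in which the inequality is easiest to handle.

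\emph{Step 2: the inequality.} For $g = \tbf$ a transposition one has $c_1 = n-2$, $c_2 = 1$, $c_j = 0$ for $j \geq 3$, hence $\alpha(\tbf) = n-3$, $\alpha(\tbf^2) = n-1$, $\alpha(\tbf^3) = n-3$, and the formulas give explicit values such as $\rho^{(n-2,2)}(\tbf) = \binom{n-3}{2}$, $\rho^{(n-3,1^3)}(\tbf) = \binom{n-3}{3} - (n-3)$, $\rho^{(n-3,3)}(\tbf) = \binom{n-3}{3}$, all positive for $n \geq 9$. For an arbitrary $g \neq 1$ one has $c_1 \leq n-2$, with equality only for a transposition, together with $\sum_{d\geq 2} d\,c_d(g) \leq n - c_1$. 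Substituting $\alpha(g^k) = \sum_{d\mid k} d\,c_d(g) - 1$ makes each $\rho^\lambda(g)$ an explicit polynomial in $c_1,c_2,c_3$ whose dominant part is a polynomial in $c_1$ alone plus terms linear in $c_2,c_3$ with coefficients controlled by the constraint. One then checks $|\rho^\lambda(g)| \leq \rho^\lambda(\tbf)$ by a case split on whether $\rho^\lambda(g)$ is near its maximum or its minimum, treating the small values $c_1 \in \{0,1\}$ separately: the upper bounds reduce to elementary inequalities among binomial coefficients (either convexity in $c_1$ with maximum at $c_1 = n-2$, or the type $\binom{n-3}{k} - \binom{n-4}{k} = \binom{n-4}{k-1} \geq (\text{quadratic in }n)$ after using $c_1 \leq n-3$), which hold precisely for $n \geq 9$; the lower bounds are far weaker, being immediate from $\rho^\lambda(g) \geq -O(n)$.

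\emph{Main obstacle.} The hard part is Step 2: although each individual estimate is elementary, there are two inequalities to verify per partition, the relevant polynomials in $c_1,c_2,c_3$ are not monotone, and the degree-$3$ partitions force one to optimize an expression like $(c_1-1)c_2 + c_3$ against the single constraint $2c_2 + 3c_3 \leq n - c_1$ rather than against $c_2$ and $c_3$ separately --- getting a bound sharp enough to stay below $\rho^\lambda(\tbf)$ is exactly what pins down the hypothesis $n \geq 9$ (the analogue fails for $n = 8$). By contrast, the Schur-functor decompositions used in Step 1 are routine Pieri/Littlewood--Richardson bookkeeping.
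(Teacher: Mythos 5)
Your Step 1 is essentially the route the paper takes: both derive the five character formulas by expanding small tensor constructions of the reflection representation $W = V_{(n-1,1)}$ (you use $\Lambda^2 W$, $\Lambda^3 W$, $\Sym^2 W$, $\Sym^3 W$, $\mathbb{S}^{(2,1)}W$; the paper uses $\Lambda^2 \al$, $\Lambda^3 \al$, $\Sym^2 \al$, $\al\otimes\Lambda^2\al$, and $\al\otimes\rho^{(n-2,2)}$) and then subtract off the lower constituents via Pieri/Littlewood--Richardson. Your stable decompositions are all correct, so that part is sound and equivalent in content to what the paper does.

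The genuine gap is Step 2, which is the actual content of the final assertion, and which you yourself flag as ``the hard part'' without carrying it out. Two issues with the sketched strategy. First, ``convexity in $c_1$ with maximum at $c_1 = n-2$'' cannot stand alone for the degree-$3$ partitions because the cross-terms couple $c_1$ with $c_2, c_3$ in a non-monotone way: e.g.\ $\rho^{(n-3,3)}(g) = \binom{c_1-1}{3} + (c_1-1)(c_2-1) + c_3$, whose $(c_1-1)(c_2-1)$ term can be large and negative when $c_2 = 0$ and large and positive when $c_2$ is large but $c_1$ is small, so the max over the constraint polytope $\sum_d d\,c_d = n$, $c_1 < n$ is not obviously at $c_1 = n-2$ without some auxiliary argument. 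What the paper actually does is a concrete local-move argument: show that $\rho^{\lambda}(g)$ strictly increases under explicit replacements of cycle types (a $j$-cycle $\to 1^{j-3}3^1$ or $1^{j-2}2^1$, a pair of $2$-cycles $\to 1^2\,2^0$, etc.), so that the maximum along any chain terminates at a transposition or a $3$-cycle; this is cleaner than a direct polytope optimization because each move touches only one coordinate at a time. Second, your lower-bound claim ``$\rho^\lambda(g) \geq -O(n)$'' is not correct for all five $\lambda$: for $\lambda = (n-3,1^3)$, taking $g$ with $\al(g) \approx \sqrt{n}$ and a single long even cycle gives $6\rho^\lambda(g) = \al(g)^3 - 3\al(g)\al(g^2) + 2\al(g^3) \approx -2n^{3/2}$, so the true order is $-O(n^{3/2})$. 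The conclusion $|\rho^\lambda(g)| \leq \rho^\lambda(\tbf) \sim n^3/6$ still holds, but the asserted $O(n)$ bound is wrong and would have to be repaired. In short: the formula derivation is fine, but the inequality --- which is where the hypothesis $n \geq 9$ actually gets used and where all the case analysis lives --- needs to be proved, and the strategy as stated has both a conceptual gap (coupling of $c_i$) and a quantitative error.
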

  
\begin{proof}
It is well known that $\Sym^{2}(\al) = \rho^{(n-2,2)} + \rho^{(n-1,1)} + \rho^{(n)}$, 
$\wedge^{2}(\al) = \rho^{(n-2,1^{2})}$, and $\wedge^{3}(\al) = \rho^{(n-3,1^{3})}$, cf. \cite{FH} for instance.
Using the Littlewood-Richardson rule, one can see that 
$$\left( \Ind^{\SSS_{n}}_{\SSS_{n-1}}(\rho^{(n-1)})\right) \otimes \wedge^{2}(\al) = 
  \Ind^{\SSS_{n}}_{\SSS_{n-1}}\left((\rho^{(n-2,1^{2})})|_{\SSS_{n-1}}\right) = $$
$$= \Ind^{\SSS_{n}}_{\SSS_{n-1}}\left( \rho^{(n-3,1^{2})} + \rho^{(n-2,1)} \right) = 
  \rho^{(n-3,1^{3})} + \rho^{(n-3,2,1)} + 2\rho^{(n-2,1^{2})} + \rho^{(n-2,2)} + \rho^{(n-1,1)}$$
and so
$$\al \otimes \wedge^{2}(\al) = 
  \rho^{(n-3,1^{3})} + \rho^{(n-3,2,1)} + \rho^{(n-2,1^{2})} + \rho^{(n-2,2)} + \rho^{(n-1,1)}.$$  
Similarly,  
$$\al \otimes \rho^{(n-2,2)} = 
  \rho^{(n-3,3)} + \rho^{(n-3,2,1)} + \rho^{(n-2,1^{2})} + \rho^{(n-2,2)} + \rho^{(n-1,1)}.$$ 
It now follows that
$$\begin{array}{lll}
  \rho^{(n-3,2,1)} & = & \al \otimes \wedge^{2}(\al) -\wedge^{3}(\al) - \al \otimes \al + 1,\\
  \rho^{(n-3,3)} & = & \al \otimes \rho^{(n-2,2)} - \al \otimes \wedge^{2}(\al) + \wedge^{3}(\al),\end{array}$$ 
and we arrive at the above formulae for $\rho^{\lam}(g)$. 

Next assume that $g$ has exactly $k_{i}$ cycles of length $i$, $i = 1,2,\ldots$ in its decomposition 
into disjoint cycles. We will write $g = (1^{k_{1}}2^{k_{2}} \ldots )$ in this case. 
Then $\al(g) = k_{1}-1$, $\al(g^{2}) = k_{1}+2k_{2}-1$, and $\al(g^{3}) = k_{1}+3k_{3}-1$.
Let $\rho = \rho^{\lam}$ for short. We also assume that
$g \neq 1$ nor $g$ is a $2$-cycle; in particular, $-1 \leq \al(g) \leq n-4$ and
$-1 \leq \al(g^{2}) \leq n-1$.

Consider the case $\lam = (n-2,1^{2})$. Then $\rho(\tbf) = (n^{2}-7n+10)/2$, and 
$$1-n \leq -\al(g^{2}) \leq 2\rho(g) = \al(g)^{2}-\al(g^{2}) \leq (n-4)^{2} + 1 \leq 
  2\rho(\tbf).$$ 

Next assume that $\lam = (n-2,2)$. Then $\rho(\tbf) = (n^{2}-7n+12)/2$. Furthermore, 
$$-1-2n \leq 2\rho(g) = \al(g)(\al(g)-2)+\al(g^{2})-2 \leq (n-4)(n-6) + n-3 \leq 
  2\rho(\tbf).$$

Now we consider the case $\lam = (n-3,3)$. Then $\rho(\tbf) = (n-3)(n-4)(n-5)/6$, and 
$$\rho(g) = (k_{1}-1)(k_{1}-2)(k_{1}-3)/6 + (k_{1}-1)(k_{2}-1) + k_{3}.$$ 
The desired estimate is clear if $k_{1} = 0$. Assume that $k_{1}, k_{2} \geq 1$, in particular
$\rho(g) \geq 0$. Since $\rho(g)$ is increasing when we replace $(1^{k_{1}}2^{k_{2}}3^{k_{3}})$ by 
$(1^{k_{1}+k_{3}}2^{k_{2}+k_{3}})$, we may assume that $k_{3} = 0$. Also, since $\rho(g)$ is 
increasing when we replace $(1^{k_{1}}2^{k_{2}})$ by $(1^{k_{1}+2}2^{k_{2}-1})$ for 
$k_{2} \geq 2$, we may assume that $k_{2} = 1$.
It follows that $\rho(g)$ is maximized when $g$ is a $2$-cycle. Finally, let $k_{2} = 0$. 
Again the desired estimate is clear if $1 \leq k_{1} \leq 5$, so we may assume $k_{1} \geq 6$ 
and $k_{j} \geq 1$ for some $j \geq 3$; in particular, $\rho(g) \geq 0$. Notice that $\rho(g)$ 
increases when we replace 
a $j$-cycle by $(1^{j-3}3^{1})$ for $j \geq 4$, and when we replace $(1^{k_{1}}3^{k_{3}})$ by 
$(1^{k_{1}+3}3^{k_{2}-1})$ for $k_{3} \geq 2$. Hence 
$\rho(g) \leq \rho(3\mbox{-cycle}) \leq \rho(\tbf)$.
  
Next assume that $\lam = (n-3,1^{3})$. Then $\rho(\tbf) = (n-2)(n-3)(n-7)/6$. The desired estimate
is clear if $\al(g) \leq 1$ or if $n = 9$. On the other hand, if $2 \leq \al(g) \leq n-5$ and $n \geq 10$,
then 
$$6|\rho(g)| = |\al(g)^{3}-3\al(g)\al(g^{2})+2\al(g^{3})| \leq (n-5)^{3} + 3(n-5) + 2(n-1) \leq 6\rho(\tbf).$$
Also, if $\al(g) = n-4$ and $n \geq 10$, then 
$6\rho(g) = (n-4)^{2}(n-7) + 2(n-1) < 6\rho(\tbf)$.

Finally, we consider the case $\lam = (n-3,2,1)$. Then $\rho(\tbf) = (n-2)(n-4)(n-6)/3$, and 
$$3\rho(g) = (k_{1}-1)^{3}-3(k_{1}-1)^{2}-(k_{1}-1)-3(k_{3}-1).$$ The desired estimate 
is clear if $k_{1} \leq 4$, so we may assume $k_{1} \geq 5$. Observe that $\rho(g)$ increases when we 
replace a $j$-cycle by $(1^{j-2}2^{1})$ for $j \geq 4$, or if we replace a $3$-cycle by 
$(1^{1}2^{1})$ for $k_{3} \geq 1$, or if we replace a $2$-cycle by $(1^{2})$ for $k_{2} \geq 2$. It now 
readily follows that $|\rho(g)| \leq \rho(\tbf)$. 
\end{proof}

\begin{propo}\label{an}
{\sl Let $G$ be as in $\stc$ and $S = \AAA_{n}$ for some $n \geq 8$. Then $\chi(1) = n-1$, $L = \AAA_{n}$,
and $L$ acts on $V$ as on its deleted natural permutation module. Moreover, one of the following holds.

{\rm (i)} $\ages(g) = 1/2$, $\Delta(g) = 2$, and a scalar multiple of $g$ is a $2$-cycle, acting on $V$ as 
a reflection.

{\rm (ii)} $\ages(g) = 1$, $\Delta(g) = 3$ or $4$, and a scalar multiple of $g$ is a $3$-cycle, or a 
double transposition, both acting on $V$ as a (complex) bireflection.}
\end{propo}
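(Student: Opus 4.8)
The strategy is to first pin down the representation $V$ as the deleted natural permutation module, and only then analyze the possible elements $g$. For the first step I would invoke Corollary \ref{bound2}: the set-up $\stc$ forces $\chi(1)-|\chi(g)| < 4.556$ for our element $g$, and more importantly the same holds for \emph{every} noncentral element of $G$ lying in the appropriate class, since $\ages$ (or $\Delta$) is bounded. I want to show $\chi(1)=n-1$. Suppose not. Since $S=\AAA_n$ with $n\geq 8$, the faithful irreducible characters of quasi-simple groups with $S=\AAA_n$ are either those of $\SSS_n$ or $\AAA_n$ (giving $\chi(1)\geq n-1$ with equality only for the deleted permutation module), or spin characters of $2\AAA_n$ or $2\SSS_n$ (which have much larger degree). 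The plan is to combine the degree lower bounds of Lemma \ref{ras} with a character-ratio bound: for $G=\SSS_n$ or $\AAA_n$, Proposition \ref{ratio1}(ii) gives $|\chi(g)/\chi(1)|\leq 1/2 + c(g)/2n$, and for the spin case Lemma \ref{ratio2} gives $|\chi(g)/\chi(1)|\leq 7/8$. I would want to pick $g$ of large support (small $c(g)$), available because $G=\langle g^G\rangle Z(G)$-type hypotheses force noncentral elements to exist; then $\Delta(g) = \chi(1)(1-|\chi(g)/\chi(1)|)$ is bounded below by a constant times $\chi(1)$, which exceeds $4.556$ once $\chi(1)$ is not tiny — a contradiction unless $\chi(1)=n-1$ (or $n$ is small, handled separately using the Atlas or direct computation for $n=8,9,10$, say, though here $n\geq 8$ and I expect the bounds already to work, perhaps needing Lemma \ref{rhoS} for the borderline degree-$\binom{n-1}{2}$-type characters).

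Once $\chi(1)=n-1$ is established, $L=\AAA_n$ acting on $V$ as its deleted natural permutation module, so $V\oplus\CC$ is the permutation module $\CC^n$. For any $g\in G$ projecting to a permutation $h\in\SSS_n$ with cycle type $(1^{k_1}2^{k_2}\cdots)$, the trace of $g$ on $\CC^n$ is $\pm k_1$ (up to the scalar from the order-2 extension) and hence $|\Tr(g|_V)| = |k_1 \mp 1|$ or similar, so $\Delta(g) = (n-1) - |k_1-1|$ roughly; more carefully, a scalar multiple of $g$ acts as the permutation matrix of $h$ on $\CC^n$, which on $V$ has eigenvalues = (eigenvalues of $h$ on $\CC^n$) minus one copy of $1$. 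So the nontrivial eigenvalues of $g$ (up to scalar) are exactly the $d$-th roots of unity coming from the cycles of $h$, with the trivial eigenvalue $1$ having multiplicity $k_1-1$. Then $\ages(g)$, resp. $\Delta(g)$, is computed directly from the cycle type: $\Delta(g) = n - 1 - (k_1 - 1) = n - k_1 = $ number of points moved by $h$, when the scalar is trivial; similar for the nontrivial coset of the extension. The condition $\Delta(g)\leq 8-4\sqrt 2 < 2.35$, resp. $\ages(g)\leq 1$, then severely restricts $h$: the support of $h$ (number of moved points) must be small.

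The final step is the enumeration. For $g$ with $\ages(g)\leq 1$ and trivial scalar: $h$ is either a transposition (support $2$, $\Delta=2$, $\age=1/2$, acting on $V$ as a reflection) or a $3$-cycle or double transposition (support $3$ or $4$, $\Delta = 3$ or $4$, $\age=1$, acting as a bireflection — for the $3$-cycle eigenvalues $\omega,\omega^2$ plus trivial, for the double transposition $-1,-1$ plus trivial). Anything with larger support has $\age > 1$. For the nontrivial coset of $(Z(G)\times\AAA_n)\cdot 2$, i.e. $g$ projecting into $\SSS_n\setminus\AAA_n$, one checks analogously using the identification with scalar multiples, accounting for the possible scalar in $Z(G)$: I would use Lemma \ref{small2} or a direct computation to see that such $g$ with $\ages(g)\leq 1$ again yields only the same short list, always a scalar multiple of a transposition, $3$-cycle, or double transposition. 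Grouping these gives exactly conclusions (i) and (ii).

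\textbf{Main obstacle.} The hardest part is the first step — ruling out $\chi(1)>n-1$ — because the character-ratio bounds of Proposition \ref{ratio1} and the degree bounds of Lemma \ref{ras} only become decisive once one knows that some noncentral element of $G$ (in the relevant generating class) has reasonably large support, and one must also handle the "near-minimal" degree characters like $\rho^{(n-2,2)}$, $\rho^{(n-3,2,1)}$, etc., where $\Delta(g)$ could \emph{a priori} stay bounded: that is precisely why Lemma \ref{rhoS} is needed, to show that for those $\lambda$ there is no noncentral $g$ with $\chi(1)-|\chi(g)|$ small (since $|\rho^\lambda(g)|\leq\rho^\lambda(\tbf)$ means $\Delta(g)\geq\rho^\lambda(1)-\rho^\lambda(\tbf)$, which grows like $n^2$ or faster). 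The spin case needs the separate bound of Lemma \ref{ratio2}. The enumeration in the last step, by contrast, is routine once the module is identified.
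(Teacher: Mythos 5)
Your proposal matches the paper's argument in its main lines: first rule out every representation other than the deleted natural permutation module, using the universal degree bound on $\chi(1)$ coming from Corollary \ref{bound2} together with the character-ratio inequalities (Proposition \ref{ratio1}(ii) for $\SSS_n/\AAA_n$, Lemma \ref{ratio2} for the spin case), reinforced by Lemma \ref{ras} and Lemma \ref{rhoS} to kill the "near-minimal" degree partitions in $R_n(2)$; then enumerate the permutations $h$ with at most four moved points. One point in your sketch needs tightening, though it does not derail the plan: the set-up $\stc$ concerns a \emph{single given} noncentral element $g$, so you cannot "pick $g$ of large support" — the bound $\Delta(g)<4.556$ holds only for the given $g$, not for every noncentral element. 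The paper (and, implicitly, the successful version of your plan) instead applies Proposition \ref{ratio1}(ii) with the universal bound $c(g)\leq n-1$, giving $|\chi(g)/\chi(1)|\leq 1-1/2n$ and hence $\chi(1)<9.112\,n$, after which Lemma \ref{ras} leaves only $\lambda\in R_n(2)$ for $n\geq 14$; and for those partitions Lemma \ref{rhoS} shows $|\chi(g)|\leq\chi(\tbf)$ for \emph{every} nontrivial $g$, so $\Delta(g)\geq\Delta(\tbf)\geq 2n-6$, a contradiction. You also leave the spin case somewhat vague ("much larger degree"); the paper makes this quantitative via $\chi(1)\geq 2^{\lfloor n/2\rfloor-1}$, so $\chi(1)\leq 36$ forces $n\leq 13$, after which a direct Atlas check finishes. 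Your enumeration step and the observation that a $4$-cycle gives $\ages(g)>1$ (via Lemma \ref{small2}(iii)) are exactly as in the paper.
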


\begin{proof}
1) First we consider the case $L = 2\AAA_{n}$. Since $\Aut(\AAA_{n}) = \SSS_{n}$ and $C_{G}(L/Z(L)) = Z(G)$, we may 
replace $G$ by $H \in \{2\AAA_{n},2\SSS_{n}\}$. By Lemma \ref{ratio2} and (\ref{forg}) we have 
$4.556 > \Delta(g) \geq \chi(1)/8$ and so $\chi(1) \leq 36$. It is well known (cf. e.g. \cite{KT}) that 
$\chi(1) \geq 2^{\lfloor n/2 \rfloor -1}$, hence $n \leq 13$. Now we can go through the irreducible spin characters
of $H$ for $8 \leq n \leq 13$ as listed in \cite{Atlas} and check that $\Delta(g)$ can be less than $4.556$ only
when $\chi(1) = 8$, $n = 8$ or $9$, and $\Delta(g) = 4$. However, in this exceptional case, $\ages(g) > 1$.  

2) Next we assume that $L = \AAA_{n}$ and moreover $\chi|_{L}$ is not the character of the deleted natural
permutation module. Again as above we may replace $G$ by $H \in \{\AAA_{n},\SSS_{n}\}$. By Proposition \ref{ratio1}(ii),
$|\chi(g)/\chi(1)| \leq 1-1/2n$, whence $4.556 > \Delta(g) \geq \chi(1)/2n$ and $\chi(1) < (9.112)n$. 
Also we choose $\lam \vdash n$ such that $\chi|_{L}$ is an irreducible constituent of 
$\rho^{\lam}|_{L}$. By our assumption, $\lam \notin R_{n}(1)$. 

Consider the case $n \geq 14$. Then by Lemma \ref{ras}(i) (and by \cite{GAP} for $n = 14$), either
$\rho^{\lam}(1) \geq n(n-1)(n-5)/6$, or $\lam \in R_{n}(2)$. Since $\chi(1) \geq \rho^{\lam}(1)/2$, in the former case
we would have $\chi(1) \geq (9.75)n$, a contradiction. Hence $\lam \in R_{n}(2)$; in particular,
$\chi|_{L} = \rho^{\lam}|_{L}$. But in this case, Lemma \ref{rhoS} and its proof imply that 
$\Delta(g) \geq \Delta(\tbf) \geq 2n-6 \geq 22$, again a contradiction. 

Finally, let $8 \leq n \leq 13$. An inspection of irreducible characters of $H$ \cite{Atlas} reveals that 
$\Delta(g) > 4.556$ in all cases. 

3) We have shown that $\chi(1) = n-1$ and $\chi|_{L}$ is the character of the deleted natural permutation module.
We may write $g = \al h$, where $h \in \SSS_{n}$ and $\al \in \CC^{\times}$. Then 
$|\chi(g)| = |\chi(h)| = |\mu(h)-1|$, where $\mu(h)$ is the number of points fixed by the permutation $h$. 
Since $\Delta(g) < 4.556$ and $n \geq 8$, we see that $n -2 \geq \mu(h) \geq n-4$. If $\mu(h) = n-2$, then
$h$ is a $2$-cycle, $\Delta(g) = 2$ and $\ages(g) = \age(h) = 1/2$.  If $\mu(h) = n-3$, then
$h$ is a $3$-cycle, $\Delta(g) = 3$ and $\ages(g) = \age(h) = 1$. If $\mu(h) = n-4$, then
$\Delta(g) = 4$ and $h$ is either a double transposition, or a $4$-cycle. In the former case 
$\ages(g) = \age(h) = 1$. In the latter case $\Delta(g) = 4$ and $\ages(g) > 1$ by Lemma \ref{small2}(iii).    
\end{proof}

From now on we may assume that $S \not\cong \AAA_{n}$ for any $n \geq 8$. By Lemma \ref{red2}(i) and 
(\ref{forg}), there is some $h \in L \setminus Z(L)$ with 
\begin{equation}\label{dim1}
  \Delta(h) \leq 4\Delta(g) < 18.224,
\end{equation} 
which implies by Proposition \ref{ratio1}(i) that $\chi(1)/20 < 18.224$ and so 
\begin{equation}\label{dim2}
  \chi(1) \leq 364.
\end{equation}

Let $\dl(S)$ denote the smallest degree of a projective complex irreducible representation of $S$. We will 
freely use the precise value of $\dl(S)$ as recorded in \cite{T}. 

\subsection{Classical groups}
To handle the finite classical groups, we will also need to estimate character ratios for their 
{\it Weil representations} (cf. \cite{TZ2}, \cite{GMST} and references therein, for definitions and detailed 
information on Weil representations). 

\begin{lemma}\label{r-slu}
{\sl Let $\chi$ be an irreducible complex Weil character of $L = SL_{n}(q)$ or $SU_{n}(q)$, $n \geq 3$, 
$(n,q) \neq (3,2)$, $(3,3)$, $(4,2)$, and let $g \in L \setminus Z(L)$. Then 
$$\frac{|\chi(g)|}{\chi(1)} < \frac{q^{n-1}+q^{2}}{q^{n}-q} \leq \frac{2}{3}.$$}
\end{lemma}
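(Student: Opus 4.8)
The plan is to reduce everything to an estimate on character values of a single element and use the explicit description of the Weil characters of $SL_n(q)$ and $SU_n(q)$. Recall that for $G=SL_n(q)$ (the case of $SU_n(q)$ is parallel, with $-q$ playing the role of $q$) the $q+1$ irreducible Weil characters $\chi$ satisfy, for $g$ acting on the natural module $W=\FF_q^n$,
\[
\chi(g) = \pm\, \zeta \cdot q^{\dim_{\FF_q}\Ker(g-1)}
\]
up to a root of unity $\zeta$ and a sign depending on the particular Weil character; more precisely the reducible Weil character $\omega$ of $GL_n(q)$ has $|\omega(g)| = q^{\dim\Ker(g-1)}$, and the $n$-irreducible Weil characters of $SL_n(q)$ are obtained from $\omega$ by subtracting the principal and other linear characters and then restricting, so that $|\chi(g)|\le q^{\dim\Ker(g-1)} + (\text{small error from the linear pieces})$. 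The first step is therefore to record, for each irreducible Weil character $\chi$ of $L$, a clean bound of the shape $|\chi(g)| \le q^{k(g)} + c$ where $k(g):=\dim_{\FF_q}\Ker(g-1)$ on the natural module and $c$ is a bounded constant (at most $1$ or $2$ depending on which Weil character and on $GL$ versus $GU$), and similarly $\chi(1) \ge (q^n-q)/(q-1)$ or the slightly cleaner $\chi(1)\ge (q^n - q)/(q+1)$ that appears in the statement's denominator. These are standard and can be extracted from \cite{TZ2} and \cite{GMST}.

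The second step is the key point: since $g\in L\setminus Z(L)$ and $L$ acts irreducibly on $W$, $g$ is not a scalar on $W$, so $\Ker(g-1)$ is a proper subspace, i.e. $k(g)\le n-1$. Hence $|\chi(g)| \le q^{n-1} + c$ with $c$ a bounded constant, and we need $c\le q^2$ to get the stated numerator $q^{n-1}+q^2$; this holds comfortably since the relevant $c$ is at most $2$ (or at most a small power of $q$ coming from the Weil character of $GU$), and $q^2\ge 4$ always, with the excluded small cases $(n,q)\in\{(3,2),(3,3),(4,2)\}$ precisely removing the configurations where the crude bound $q^{n-1}+c$ could fail to beat $q^{n-1}+q^2$ or where $\chi(1)$ is too small. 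Combining,
\[
\frac{|\chi(g)|}{\chi(1)} \le \frac{q^{n-1}+q^2}{(q^n-q)/(q+1)} = \frac{(q+1)(q^{n-1}+q^2)}{q^n-q},
\]
and one checks the (clean, one-variable-in-$q$-for-fixed-$n$, then monotone-in-$n$) inequality that this is $\le q^{n-1}/(q-1)$ or directly $<(q^{n-1}+q^2)/(q^n-q)$ as written — I would just verify the displayed chain $\dfrac{q^{n-1}+q^2}{q^n-q}\le \dfrac23$ by clearing denominators: $3q^{n-1}+3q^2 \le 2q^n-2q$, i.e. $2q^n - 3q^{n-1} - 3q^2 - 2q \ge 0$, which for $n\ge 5$ and $q\ge 2$ is immediate, for $n=4$ needs $q\ge 3$, and for $n=3$ needs $q\ge 4$ — exactly matching the excluded triples (noting $SU_3(2)$, $SU_3(3)$, $SU_4(2)$ and $SL$ analogues, where one either has a smaller group or $q$ too small).

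The main obstacle I expect is bookkeeping around the several non-isomorphic irreducible Weil characters and the difference between $SL$ and $SU$: the precise constant $c$ in $|\chi(g)|\le q^{k(g)}+c$ differs (the "larger" Weil characters of the unitary groups, of degree $(q^n+q(-1)^n)/(q+1)$ vs $(q^n - q(-1)^n \cdot(-1))/(q+1)$, behave slightly differently on elements $g$ with nontrivial fixed space), and one must be careful that $\dim\Ker(g-1)$ is computed over $\FF_q$ for $SL$ but over $\FF_{q^2}$ (with the appropriate Hermitian convention) for $SU$, so that the "$q^{n-1}$" really is $q^{n-1}$ in both cases. Handling $g$ unipotent versus $g$ semisimple-but-not-central uniformly — in the semisimple case $\Ker(g-1)$ can be as large as $n-1$ but then the eigenvalue structure forces cancellation in $\chi(g)$, while in the unipotent case one uses the Gauss-sum formula for $\omega(g)$ directly — is where a careful reader should slow down, but in all cases the crude bound $|\chi(g)|\le q^{n-1}+q^2$ survives. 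Once that crude bound is in hand, the rest is the elementary inequality above plus a finite check against \cite{Atlas} for the handful of genuinely small $(n,q)$ not excluded by hypothesis but near the boundary.
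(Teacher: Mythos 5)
Your final algebraic verification of $\frac{q^{n-1}+q^2}{q^n-q}\le\frac23$ (clearing denominators, checking $n\ge5$, and matching the excluded triples $(3,2),(3,3),(4,2)$) is correct and agrees with the paper. The gap is in the character estimate itself.

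Your proposed bound $|\chi(g)|\le q^{\dim\Ker(g-1)}+c$ with $c$ a \emph{bounded constant} is false, and this is not a technicality. The irreducible Weil characters of $SL_n(q)$ are not obtained from the reducible Weil character $\omega$ by ``subtracting linear pieces''; rather, $\omega(g)=q^{\dim\Ker(g-1)}$ decomposes over the center of $GL_n(q)$ by a Fourier transform, so that (up to a correction of size $O(q)$) one has
\[
(q-1)\,\chi_\theta(g)\;=\;\sum_{b\in\FF_q^\times}\theta(b)\,q^{\dim\Ker(g-b\cdot\Id)}+O(q),
\]
and similarly with $q+1$ and $\mu_{q+1}\subset\FF_{q^2}^\times$ for $SU_n(q)$. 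The bound on $|\chi(g)|$ therefore depends on the dimensions of the eigenspaces of $g$ at \emph{all} eigenvalues in $\FF_q^\times$ (resp.\ $\mu_{q+1}$), not just on $\dim\Ker(g-1)$. Concretely, for $g=\diag(a,\dots,a,a^{-(n-1)})\in SL_n(q)$ with $a\ne 1$ and $a^n\ne1$, one has $\dim\Ker(g-1)=0$ yet $|\chi(g)|\approx q^{n-2}$, so your $c$ would have to grow like $q^{n-2}$; the claim ``$c$ is at most $2$'' fails outright.

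There is a second, independent problem: even granting a numerator bound of $q^{n-1}+q^2$, you combine it with $\chi(1)\ge(q^n-q)/(q+1)$ to get
\[
\frac{|\chi(g)|}{\chi(1)}\le\frac{(q+1)(q^{n-1}+q^2)}{q^n-q},
\]
which is a factor of $q+1$ weaker than what the lemma asserts, and the inequality you then need does \emph{not} hold. The paper avoids this by bounding $(q\pm1)|\chi(g)|$, not $|\chi(g)|$, by the eigenspace sum $S=\sum_k q^{d_k}$ (with a small correction for $SL$), and then splitting into cases according to $d_1:=\max_k d_k\in\{n-1,\,n-2,\,\le n-3\}$, using $\sum_k d_k\le n$ to control the remaining terms. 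This yields $(q\pm1)|\chi(g)|<q^{n-1}+q^2$ directly, which divides against $\chi(1)\ge(q^n-q)/(q\pm1)$ to give exactly the stated ratio. Your plan would need to be rebuilt around this $(q\pm1)$-weighted sum over \emph{all} eigenvalues before the concluding algebra can be applied.
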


\begin{proof}
First we consider the case $L = SU_{n}(q)$ and let $\NC = \FF_{q^{2}}^{n}$ denote the 
natural module for $L$. Fix a primitive $(q+1)^{\mathrm {th}}$-root $\delta$
of unity in $\FF_{q^{2}}$, and let $d_{k}$ denote the dimension of the subspace 
$\Ker(g-\delta^{k}\cdot \Id)$ of $\NC$, for $0 \leq k \leq q$. Then the explicit formula for $\chi$
as given in \cite{TZ2} implies that 
$\chi(1) \geq (q^{n}-q)/(q+1)$ and $(q+1)|\chi(g)| \leq S := \sum^{q}_{k=0}q^{d_{k}}$. Clearly, 
$\sum^{q}_{k=0}d_{k} \leq n$ and $0 \leq d_{k} \leq n-1$. Without loss we may assume that 
$d_{1} = \max_{0 \leq k \leq q}d_{k}$. Now $S \leq (q+1)q^{n-3} < q^{n-1}$ if $d_{1} \leq n-3$,
$S \leq q^{n-2} +q^{2} +q-1 < q^{n-1}+q^{2}$ if $d_{1} = n-2$, and 
$S \leq q^{n-1}+2q-1 < q^{n-1}+q^{2}$ if $d_{1} = n-1$, and so we are done.   

Next, let $L = SL_{n}(q)$ and let $\NC = \FF_{q}^{n}$ denote the 
natural module for $L$. Fix a primitive $(q-1)^{\mathrm {th}}$-root $\eps$
of unity in $\FF_{q}$, and let $e_{k}$ denote the dimension of the subspace 
$\Ker(g-\eps^{k}\cdot \Id)$ of $\NC$, for $0 \leq k \leq q-2$. Then the explicit formula for $\chi$
as given in \cite{TZ2} implies that 
$\chi(1) \geq (q^{n}-q)/(q-1)$ and $(q-1)|\chi(g)| \leq R := \sum^{q-2}_{k=0}q^{d_{k}} +2q-2$. Clearly, 
$\sum^{q-2}_{k=0}e_{k} \leq n$ and $0 \leq e_{k} \leq n-1$. Without loss we may assume that 
$e_{1} = \max_{0 \leq k \leq q}e_{k}$. Now $R \leq (q-1)(q^{n-3}+2) < q^{n-1}+q^{2}$ if $e_{1} \leq n-3$,
$R \leq q^{n-2} +q^{2} +3q-5 < q^{n-1}+q^{2}$ if $e_{1} = n-2$, and 
$R \leq q^{n-1}+4q-5 < q^{n-1}+q^{2}$ if $e_{1} = n-1$, and so we are again done.   
\end{proof}  

\begin{lemma}\label{r-sp}
{\sl Let $\chi$ be an irreducible complex Weil character of $L = Sp_{2n}(q)$, $q$ odd, $n \geq 2$, 
and let $g \in L \setminus Z(L)$. Then 
$$\frac{|\chi(g)|}{\chi(1)} \leq \left\{ \begin{array}{ll}
  \dfrac{q^{n-1/2}+1}{q^{n}+1}, & q \equiv 1 (\mod 4)\mbox{ and }\pm g \mbox{ is a transvection},\\
  \dfrac{(q^{2n-1}+1)^{1/2}}{q^{n}-1}, & q \equiv 3 (\mod 4)\mbox{ and }\pm g \mbox{ is a transvection},\\
  \dfrac{2q^{n-1}}{q^{n}-1}, & \pm g \mbox{ is not a transvection}. \end{array} \right.$$
In particular, $|\chi(g)/\chi(1)| \leq 0.675$ unless $(n,q) = (3,3)$, $(2,3)$.}
\end{lemma}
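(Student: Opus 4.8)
The approach is to use the known explicit character formula for the Weil representations of $Sp_{2n}(q)$ (with $q$ odd) in terms of fixed spaces of elements on the natural module $\NC = \FF_{q}^{2n}$, exactly as in the proofs of Lemmas \ref{r-slu} and \ref{ratio2}, and then bound $|\chi(g)|$ by a careful case analysis according to how large the largest eigenspace of $g$ on $\NC$ is. Recall that the total Weil character of $Sp_{2n}(q)$ has degree $q^{n}$ and splits into two irreducibles of degrees $(q^{n}+1)/2$ and $(q^{n}-1)/2$; the value of either on $g$ has absolute value roughly $q^{(\dim\Ker(g\mp 1))/2}$ up to bounded error (see \cite{TZ2}, \cite{GMST}). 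So first I would write down, for each of the two irreducible Weil characters $\chi$, the bound $\chi(1) \ge (q^{n}-1)/2$ and $|\chi(g)| \le \tfrac12(q^{\,\dim_{\FF_q}\Ker(g-1)/?} + c)$ with the appropriate constant, keeping track of whether one works with $g-1$ or $g+1$ (this is why the statement is phrased up to sign: replacing $g$ by $-g$ swaps the two fixed spaces, and $\chi(\pm g)$ differ only by a controlled factor since $-1 \in Z(L)$).

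The main case split is: (a) $\pm g$ is a transvection, so $\dim\Ker(g-1) = 2n-1$ (or $\dim\Ker(g+1)=2n-1$), which is the extremal case and forces the two listed bounds — here one must distinguish $q\equiv 1\pmod 4$ from $q\equiv 3\pmod 4$ because the two Weil components are swapped by the outer diagonal automorphism in one case but not the other, changing which component attains the transvection value $q^{n-1/2}$ versus $(q^{2n-1}+1)^{1/2}$; and (b) $\pm g$ is not a transvection, so since $g \notin Z(L)$ we have $\dim\Ker(g\mp 1) \le 2n-2$ for both signs, giving $|\chi(g)| \le \tfrac12(2q^{n-1}) = q^{n-1}$ after summing the contributions, whence the uniform bound $2q^{n-1}/(q^{n}-1)$. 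Then I would verify the final numerical claim: $\max\{(q^{n-1/2}+1)/(q^{n}+1),\ (q^{2n-1}+1)^{1/2}/(q^{n}-1),\ 2q^{n-1}/(q^{n}-1)\}$ is monotone decreasing in both $n$ and $q$, so it suffices to check small values; one finds it is $\le 0.675$ once $q^n \ge 27$ apart from the genuine exceptions $(n,q)=(3,3)$ and $(2,3)$ (where $q^{n}$ is $27$ and $9$ and the transvection ratio is slightly larger), which are recorded as exceptions.

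The step I expect to be the main obstacle is the bookkeeping in case (a): getting the constants in $|\chi(g)|$ exactly right for a transvection, and correctly matching which of the two irreducible Weil characters of degrees $(q^{n}\pm 1)/2$ receives which transvection eigenvalue sum, as this depends on $q \bmod 4$ and on a sign choice inside the Gauss-sum normalization of the Weil representation. One has to be careful that the bound must hold for \emph{every} faithful irreducible Weil $\chi$ and every non-central $g$, so the worst case over the two components and over $\{g,-g\}$ is what governs the inequality; I would handle this by invoking the explicit formulas of \cite{TZ2} for $\chi(g)$ on a transvection directly rather than re-deriving them, and then simply reading off the stated fractions. The non-transvection case (b) and the final monotonicity check are routine by comparison.
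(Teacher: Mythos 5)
Your proposal matches the paper's proof in structure and in the key mechanisms: you split by whether $\pm g$ is a transvection, read the transvection values directly from \cite{TZ2} (which is exactly what the paper does, yielding $(q^{n-1/2}-\eps)/2$ for $q\equiv 1\pmod 4$ and $\sqrt{q^{2n-1}+1}/2$ for $q\equiv 3\pmod 4$), and in the non-transvection case obtain $|\chi(g)|\le q^{n-1}$ by passing to the total (reducible) Weil character $\omega=\chi+\eta$ of degree $q^{n}$, using $|\omega(\pm g)|\le q^{n-1}$ when both $\dim\Ker(g-1)$ and $\dim\Ker(g+1)$ are $\le 2n-2$ and the relation $|\omega(-g)|=|\chi(g)-\eta(g)|$ --- your ``$\tfrac12(2q^{n-1})$ after summing the contributions'' is precisely this bookkeeping, which the paper sources from \cite[Proposition 2.8]{GT3} via the extraspecial-group realization of $\omega$. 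One small slip in your final sanity check: at $(n,q)=(2,3)$ and $(3,3)$ it is the \emph{non}-transvection bound $2q^{n-1}/(q^{n}-1)$ (equal to $3/4$ and $18/26$ respectively) that exceeds $0.675$, not the transvection ratio as you suggest; this does not affect the validity of the argument.
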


\begin{proof}
Note that $\chi(1) = (q^{n}-\eps)/2$ for some $\eps = \pm 1$. If $\pm g$ is a 
transvection in $L$ then by \cite{TZ2}, 
$|\chi(g)| = (q^{n-1/2}-\eps)/2$ when $q \equiv 1 (\mod 4)$, and
$|\chi(g)| = \sqrt{q^{2n-1}+1}/2$ when $q \equiv 3 (\mod 4)$. Assume $\pm g$ is not a transvection, i.e.
the subspace $\Ker(g \pm \Id)$ on the natural module $\FF_{q}^{2n}$ of $L$ has dimension at most $2n-2$.
Also consider the {\it reducible Weil character} $\omega$ of $L$ (that has $\chi$ as one of its irreducible 
constituents), cf. \cite{GMST}. This character arises from the action of $L$ as an outer automorphism 
subgroup of the extraspecial $p$-group of order $p^{1+2nf}$ and exponent $p$, where $q = p^{f}$ and 
$p$ is prime. By \cite[Proposition 2.8]{GT3}, $|\omega(g)|, |\omega(-g)| \leq q^{n-1}$. One can write 
$\omega = \chi +\eta$ for another irreducible Weil character $\eta$ of $L$, and moreover, 
$|\omega(-g)| = |\chi(g) -\eta(g)|$. It follows that $|\chi(g)| \leq q^{n-1}$.  
\end{proof}

\subsubsection{$\boldsymbol{S = PSL_{n}(q)}$, $\boldsymbol{n \geq 3}$, 
$\boldsymbol{(n,q) \neq (3, q \leq 7)}$, $\boldsymbol{(4,3)}$, $\boldsymbol{(5,2)}$}\label{sln}

Under these assumptions, $\dl(S) = (q^{n}-q)/(q-1)$. Hence (\ref{dim2}) implies that 
$3 \leq n \leq 8$; moreover, $q = 2$ if $n = 7,8$, $q \leq 3$ if $n = 6$, $q \leq 4$ if $n = 5$, 
$q \leq 5$ if $n = 4$, and $q \leq 17$ if $n = 3$. In fact, if in addition $\chi|_{L}$ is a Weil representation,
then $18.224 > \Delta(h) \geq \chi(1)/3$ by Lemma \ref{r-slu}, 
and so instead of (\ref{dim2}) we have the much stronger upper
bound $\chi(1) \leq 54$. Now in the cases $(n,q) = (8,2)$, $(7,2)$, $(6,3)$, $(5,4)$, $(5,3)$, and
$(3, q \geq 8)$, the upper bound (\ref{dim2}) and \cite[Theorem 3.1]{TZ1} imply that 
$\chi|_{L}$ is indeed a Weil representation, of degree at least $72$, giving a contradiction.  
Also, the case $(n,q) = (4,2)$ has already been considered in Proposition \ref{an}.   
 
Assume $(n,q) = (6,2)$ or $(4,4)$. Then $L = SL_{n}(q)$, and its character table is available in \cite{GAP}. It is 
straightforward to check that there is no nontrivial $\chi \in \Irr(L)$ and $h \in L \setminus Z(L)$ with
$\Delta(h) < 18.224$ (notice that we need to check only the non-Weil characters of degree at most $364$). 

It remains to analyze the case $S = PSL_{4}(5)$. The character degrees of $SL_{4}(5)$ are listed by F. L\"ubeck
\cite{Lu2}. In particular, we see that all the non-trivial irreducible characters of $R := SL_{4}(5)$ have degree
$155$, $156$ (and they are Weil characters in these two cases), $248$ (and there are exactly two characters of
this degree), or at least $403$. Hence we may assume that $\chi(1) = 248$. 
An inspection of character degrees as listed in \cite{St} 
shows that $GL_{4}(5)$ has no irreducible characters of degree $248$. Thus
$\chi|_{R}$ is not stable under $GL_{4}(5)$. Since $\Out(R)$ is a dihedral group of order $8$ and $GL_{4}(5)$ 
induces the unique cyclic subgroup of order $4$ of $\Out(R)$, it follows that the inertia group of $\chi|_{R}$ in
$\Out(R)$ is an elementary abelian $2$-group. But $\chi|_{R}$ extends to $G$. Thus $G$ can induce only an 
elementary abelian $2$-subgroup of $\Out(R)$. We conclude that $g^{2} \in Z(G)L$. Notice that 
$\chi(1)-|\chi(v)| \geq \chi(1)/20 = 12.4$ for any $v \in L \setminus Z(L)$ by Proposition \ref{ratio1}(i). 
Together with (\ref{dim1}), this implies that $4\Delta(g) \geq \Delta(h) \geq 12.4$ and so $\Delta(g) \geq 3.1$. 
We will complete the case $S = PSL_{4}(5)$ by showing that $\ages(g) > 1$.

First we suppose that $g^{2} \notin Z(G)$. Then $\Delta(g^{2}) \geq 12.4$ as above, and so
$\ages(g^{2}) \geq 24.8/(2.9\pi) > 2.72$ by Proposition \ref{arc}(iii). It now follows by 
Lemma \ref{trivial}(v) that 
$\ages(g) > \ages(g^{2})/2 > 1.36$. Finally, assume that $g^{2} \in Z(G)$. Then $g$ acts on a suitable basis of $V$ 
via the matrix 
$\al \cdot \diag\left(\underbrace{1, \ldots ,1}_{k},\underbrace{-1, \ldots ,-1}_{l}\right)$ 
for some $\al \in \CC^{\times}$ and $1 \leq k,l < k+l = 248$. It is shown in 
\cite{GS} that $\an(gZ(G)) \leq 6$, whence $k,l \geq 42$ by Lemma \ref{fix}. It follows that 
$|\chi(g)| = |248-2l| \leq 164$, $\Delta(g) \geq 84$, and $\ages(g) \geq 168/(2.9\pi) > 18$ again
by Proposition \ref{arc}(iii).

\subsubsection{$\boldsymbol{S = PSU_{n}(q)}$, $\boldsymbol{n \geq 3}$, 
$\boldsymbol{(n,q) \neq (3, q \leq 8)}$, 
$\boldsymbol{(4,2)}$, $\boldsymbol{(4,3)}$, $\boldsymbol{(5,2)}$, $\boldsymbol{(6,2)}$}

Under these assumptions, $\dl(S) = (q^{n}-q)/(q+1)$ if $n$ is odd and $(q^{n}-1)/(q+1)$ if $2|n$. Hence 
(\ref{dim2}) implies that $3 \leq n \leq 10$; moreover, $q = 2$ if $7 \leq n \leq 10$, $q \leq 3$ if $n = 6$, 
$q \leq 4$ if $n = 5$, $q \leq 7$ if $n = 4$, and $q \leq 19$ if $n = 3$. As in \S\ref{sln}, if in addition 
$\chi|_{L}$ is a Weil representation, then instead of (\ref{dim2}) we have the much stronger upper
bound $\chi(1) \leq 54$ (in fact $\chi(1) \leq 39$ if $(n,q) = (7,2)$ or $(4,4)$). 
Now in the cases $(n,q) = (10,2)$, $(9,2)$, $(8,2)$, $(6,3)$, $(5,4)$, $(5,3)$, 
$(4,7)$, and $(3, q \geq 9)$, the upper bound (\ref{dim2}) and \cite[Theorem 4.1]{TZ1} imply that 
$\chi|_{L}$ is indeed a Weil representation, of degree at least $60$, giving a contradiction. The same argument 
applies to $(n,q) = (7,2)$ as the Weil representations of $SU_{7}(2)$ have degree at least $42$ and the non-Weil
representations have degree at least $860$.    
 
Assume $(n,q) = (4,4)$. Then $L = SU_{4}(4)$, and its character table is available in \cite{GAP}. It is 
straightforward to check that there is no nontrivial $\chi \in \Irr(L)$ and $h \in L \setminus Z(L)$ with
$\Delta(h) < 18.224$ (notice that we need to check only the non-Weil characters of degree at most $364$). 

It remains to analyze the case $S = PSU_{4}(5)$. The character degrees of $SU_{4}(5)$ are listed by F. L\"ubeck
\cite{Lu2}. In particular, we see that all the non-trivial irreducible characters of $SU_{4}(5)$ have degree
$104$, $105$ (and they are Weil characters in these two cases), $273$ (and there are exactly two characters of
this degree), or at least $378$. Hence we may assume that $\chi(1) = 273$. Checking the character table of 
$PSU_{4}(5)$ (available in \cite{GAP}), we see that it also has exactly two irreducible characters of degree
$273$. It follows that $L = S = PSU_{4}(5)$. Direct inspection of these two characters of $S$ reveals that 
$\Delta(h) \geq 250$, a contradiction.

\subsubsection{$\boldsymbol{S = PSp_{2n}(q)}$, $\boldsymbol{n \geq 2}$, $\boldsymbol{(n,q) \neq (2,q \leq 5)}$, 
$\boldsymbol{(3,2)}$, $\boldsymbol{(3,3)}$, $\boldsymbol{(4,2)}$}

Under these assumptions, $\dl(S) = (q^{n}-1)/2$ if $q$ is odd and $(q^{n}-1)(q^{n}-q)/2(q+1)$ if $2|q$. Hence 
(\ref{dim2}) implies that $2 \leq n \leq 6$; moreover, $q = 3$ if $n = 6$, $q \leq 3$ if $n = 5$,  
$q = 3,5$ if $n = 4$, $q = 5,7,9$ if $n = 3$; if $n = 2$ then either $q \leq 27$ and $q$ odd
or $q = 8$. Moreover, if in addition $q$ is odd and  
$\chi|_{L}$ is a Weil representation, then, by Lemma \ref{r-sp}, instead of (\ref{dim2}) we have the much 
stronger upper bound $\chi(1) \leq 56$ (in fact $\chi(1) \leq 29$ if $(n,q) = (2,7)$ or $(2,9)$). 
Now in the cases $(n,q) = (6,3)$, $(5,3)$, $(4,5)$, $(3,5)$, $(3,7)$, $(3,9)$, 
and $(2, q \geq 11)$, the upper bound (\ref{dim2}) and \cite[Theorem 5.2]{TZ1} imply that 
$\chi|_{L}$ is indeed a Weil representation, of degree at least $60$, giving a contradiction. 

Assume $(n,q) = (2,7)$ or $(2,9)$. The character table of $Sp_{2n}(q)$ is determined in \cite{Sr}. It is 
now straightforward to check that $\Delta(h) \geq 100$ if $\chi|_{L}$ is a non-Weil character of degree at most 
$364$. Moreover, the Weil characters of $Sp_{4}(9)$ have degree $40$ or $41$, larger than the bound $29$ 
mentioned above. On the other hand, when $(n,q) = (2,7)$, none of the Weil characters (of degree $24$ or
$25$) is fixed by an outer automorphism of $Sp_{4}(7)$. This implies that $G = Z(G)L$ and so we may assume
$g \in L$ in this case. Hence, if $\chi|_{L}$ is a Weil character, then 
$4.556 > \Delta(g) \geq (1-0.675)\chi(1)$ by Lemma \ref{r-sp}, and so $\chi(1) \leq 14$, a contradiction. 
The same argument excludes the Weil characters of $Sp_{8}(3)$; all other nontrivial irreducible characters of
$Sp_{8}(3)$ have degree at least $780$ by \cite[Theorem 5.2]{TZ1}, hence we are done in the case $(n,q) = (4,3)$.
If $(n,q) = (5,2)$, then $\Out(L) = 1$ and so we may assume that $g \in L$, whence 
$\Delta(g) \geq \chi(1)/20 \geq 7.75$ as $\dl(S) = 155$.
Finally, inspecting the character table of $Sp_{4}(8)$ (available in \cite{GAP}), we see that 
$\Delta(h) \geq 168$, again a contradiction when $(n,q) = (2,8)$.

\subsubsection{$\boldsymbol{S = P\Omega^{\eps}_{n}(q)}$, $\boldsymbol{n \geq 7}$, 
$\boldsymbol{(n,q) \neq (7,3)}$, $\boldsymbol{(8,2)}$, $\boldsymbol{(10,2)}$}
If $(n,q) \neq (8,3)$ in addition, then $\dl(S) \geq 620$ by \cite{TZ1}, and so we are done. Consider the
case $S = P\Omega^{\pm}_{8}(3)$. Notice that $Spin_{7}(3)$ embeds in $Spin^{\pm}_{8}(3)$ and any faithful 
irreducible character of $Spin_{7}(3)$ has degree at least $520$. Hence the bound (\ref{dim2}) implies that 
$L = S$ (this can also be deduced using the list of character degrees of $Spin^{\pm}_{8}(q)$ as given 
in \cite{Lu2}). Inspecting the character table of $P\Omega^{\pm}_{8}(3)$ (available in \cite{Atlas}), we see that 
$\Delta(h) \geq 189$, a contradiction. 

\subsubsection{$\boldsymbol{S = PSL_{2}(q)}$, $\boldsymbol{q \geq 37}$}
In these cases, $\chi(1) \geq (q-1)/\gcd(2,q-1)$ and $|\chi(h)| \leq (\sqrt{q}+1)/2$, cf. \cite{D}. In particular,
$|\chi(h)/\chi(1)| \leq 1/(\sqrt{q}-1) < 0.2$, and so (\ref{dim1}) implies that $\chi(1) \leq 22$. Since we 
are assuming $q \geq 37$, this in turns forces that $q = 37$, $41$, or $43$, and $\chi|_{L}$ is in fact 
a Weil character. But for these values of $q$, none of the Weil characters of $L$ is fixed by an outer automorphism
of $L$. Hence $G = Z(G)L$, and so we may assume that $g \in L$. Thus 
$|\chi(g)/\chi(1)| < 0.2$ as above, and $\Delta(g) > (0.8)\chi(1) \geq 14.4$, a contradiction.     

\subsection{Exceptional groups of Lie type}
Let $S$ be a simple exceptional group of Lie type. If $S$ is not isomorphic to 
$\ta B_{2}(q)$ with $q \leq 32$, $G_{2}(q)$ with $q \leq 7$, $\tb D_{4}(q)$ with $q \leq 3$, $\ta F_{4}(2)'$,
or $F_{4}(2)$, then $\dl(S) \geq 504$, see e.g. \cite{Lu1}. Consider the case $S = G_{2}(7)$. Then $L$ has a
unique nontrivial irreducible character of degree at most $364$ (namely $344$), and this character is labeled
as $\chi_{32}$ in the generic character table of $G_{2}(q)$ \cite{H}. One can now check that 
$\Delta(h) \geq 332$ for $\chi|_{L} = \chi_{32}$. Similarly, if $S = \tb D_{4}(3)$ then $L$ has a
unique nontrivial irreducible character of degree at most $364$ (namely $219$). This character is 
unipotent, and its values are computed in \cite{Sp}. In particular, one can check that 
$\Delta(h) \geq 195$ in this case.
   
\subsection{Small groups}\label{small}
The list of our ``small'' groups consists of all the finite simple groups not considered in the above subsections,
that is: $\AAA_{n}$ with $5 \leq n \leq 7$, $PSL_{2}(q)$ with $7 \leq q \leq 32$, $PSL_{3}(q)$ with 
$3 \leq q \leq 7$, $PSL_{4}(3)$, $SL_{5}(2)$, $PSU_{3}(q)$ with $3 \leq q \leq 8$, $SU_{4}(2)$, $PSU_{4}(3)$,
$SU_{5}(2)$, $PSU_{6}(2)$, $Sp_{4}(4)$, $PSp_{4}(5)$, $Sp_{6}(2)$, $PSp_{6}(3)$, $Sp_{8}(2)$,   
$\Omega_{7}(3)$, $\Omega^{\pm}_{8}(2)$,  $\Omega^{\pm}_{10}(2)$,
$\ta B_{2}(q)$ with $8 \leq q \leq 32$, $G_{2}(q)$ with $3 \leq q \leq 5$, $\tb D_{4}(2)$, $\tb F_{4}(2)'$,
$F_{4}(2)$, and $26$ sporadic simple groups. Notice that the character table of the universal cover of $S$
is known (see \cite{GAP}) in all these cases. 

Recall we are assuming that $L = G^{(\infty)}$ is quasi-simple, and $\chi \in \Irr(G)$ is irreducible over $L$; 
moreover, and $1 < \chi(1) \leq 364$ by (\ref{dim2}). The last condition excludes the cases 
$S \in \{J_{4}, Fi_{23},Fi_{24}',Ly,BM = F_{2},M = F_{1}\}$. We will use the character tables of the universal 
cover of $S$ as given in \cite{Atlas}, as well as the notation therein for the conjugacy classes in $G/Z(G)$.

\subsubsection{Sporadic groups}  One can check that

$\bullet$ $\Delta(g) \geq 6$ if $S = M_{22}$, $Suz$; 
 
$\bullet$ $\Delta(g) \geq 8$ if $S = M_{11}$, $M_{12}$, or if $S = J_{2}$ but $\chi(1) > 6$; and 

$\bullet$ $\Delta(g) \geq 12$ if $S = M_{23}$, $M_{24}$, $J_{1}$, $J_{3}$, $HS$, $McL$, $He$, $Ru$, $HN$,
$Fi_{22}$, $Co_{3}$, $Co_{2}$, $Co_{1}$, $O'N$, $Th$\\ 
for all $\chi$ satisfying the above hypotheses. 

\smallskip
Assume that $S = J_{2}$ and $\chi(1) = 6$; in particular, 
$L = 2 \cdot J_{2}$ and $G = Z(G)L$.  Then one can check that $\Delta(g) \geq 5-\sqrt{5} > 8-4\sqrt{2}$. Next, 
suppose that $0 < \ages(g) \leq 1$; in particular, $|\chi(g)| > 1.444$. Then, in the notation of \cite{Atlas}, 
we may assume that $\chi|_{L} = \chi_{22}$, and  
the class of $gZ(G)$ in $S$ is one of the following: $2A$, $3A$, $4A$, $5B$, $5C$, $10D$, and $15B$.
The first two cases lead to the row of $2 \cdot J_{2}$ in Table I. In the last two cases, in the notation of
Corollary \ref{bound2} we have $\delta \geq 6\pi/5$, but $\Delta(g) > 4.38$, whence $\ages(g) > 1$ by 
Corollary \ref{bound2}. In the case of class $4A$, a multiple of $g$ has spectrum 
$1,1,i,i,-i,-i$, and so $\ages(g) > 1$ by Lemma \ref{small2} (with $i = \sqrt{-1}$). 
Finally, in the case of classes
$5B$ and $5C$, none of the eigenvalue of $g$ occurs with multiplicity $\geq 3$, and so 
$\ages(g) > 1$ by Lemma \ref{small2}. 

\subsubsection{Small alternating groups: $S = \AAA_{n}$ with $5 \leq n \leq 7$} 
Arguing as in the proof of 
Proposition \ref{an} (and using Lemmas \ref{small1} and \ref{small2}), 
we may assume that $\chi|_{L}$ is not the character of the deleted natural permutation 
module. First we consider the case 
$S = \AAA_{5}$. Direct check using \cite{Atlas} shows that $\Delta(g) \geq (3-\sqrt{5})/2$ if $d = 2$ and 
$\Delta(g) \geq (5-\sqrt{5})/2 > 3-\sqrt{3}$ if $d = 3$. Assume $d = 4$ (and so $L = 2 \cdot \AAA_{5}$ by our 
assumptions). If $gZ(G)$ belongs to the class $5A$ or $5B$ of $G/Z(G)$, then $\Delta(g) = 3$ and 
$\ages(g) > 1$ by Lemma \ref{small2}(iv). If $gZ(G)$ belongs to the class $4A$, then $\Delta(g) = 4$ and 
$\ages(g) > 1$ by Lemma \ref{small1} (with $m = 2$). Similar arguments apply to the case $d = 5$. If $d = 6$,
then $\Delta(g) \geq 6-\sqrt{2} > 4.556$. 

\smallskip
Assume $n = 6$. Then the assumptions on $\chi$, $L$, and $\Delta(g)$ lead to one of the 
following three possibilities.

$\bullet$ $d = 3$, $L = 3\AAA_{6}$, and $\Delta(g) \geq (5-\sqrt{5})/2 > 3-\sqrt{3}$.

$\bullet$ $d = 4$, $L = 2 \cdot \AAA_{6}$, and $\Delta(g) \geq 2$. 
The classes $2A$, $2B$, $2C$, $3A$, $3B$, and $6B$ lead to three rows of Table I.
The other classes are excluded by Lemmas \ref{small1} and \ref{small2}. 

$\bullet$ $d = 6$, $L = 3 \cdot \AAA_{6}$, $\Delta(g) = 4$, and $gZ(G)$ belongs to the class 
$2A$, which leads to a row in Table I.  
 
\smallskip
Assume $S = \AAA_{7}$. Then the assumptions on $\chi$, $L$, and $\Delta(g)$ lead to one of the following 
two possibilities.

$\bullet$ $d = 4$, $L = 2 \cdot \AAA_{7}$, $G = Z(G)L$, and $\Delta(g) \geq 2$. 
The classes $2A$, $3A$, $3B$, and $7A$ lead to two rows of Table I. The other classes are excluded by Lemmas
\ref{small1} and \ref{small2}.

$\bullet$ $d = 6$, $L = 3 \cdot \AAA_{7}$, $G = Z(G)L$, $\Delta(g) = 4$, and $gZ(G)$ belongs to the classes 
$2A$ or $6A$. The former case leads to a row in Table I, and $\ages(g) > 1$ in the latter case by Lemma
\ref{small1} (with $m = 2$).    

\begin{lemma}\label{newRT1}
{\sl There are subgroups $G = C_{3} \times 2\AAA_{m} < GL_{4}(\CC)$ with $m = 6,7$ which give a basic non-RT pair
not of reflection type. This pair is of $AV$-type if $m = 6$.}
\end{lemma}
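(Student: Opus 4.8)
The plan is to take $V=\CC^{4}$ affording one of the two (complex-conjugate) faithful irreducible $4$-dimensional representations of $2\AAA_{m}$; since $2\AAA_{m}$ is perfect its image lies in $SL_{4}(\CC)$, and one puts $G=\la \omega I\ra\times 2\AAA_{m}<GL_{4}(\CC)$ with $\omega=e^{2\pi i/3}$ (so $\la\omega I\ra\cong C_{3}$ meets $2\AAA_{m}$ trivially). As $2\AAA_{m}$ already acts irreducibly on $V$, so does $G$, and $Z(G)=\la\omega I,-I\ra$ is cyclic of order $6$ with $G/Z(G)\cong\AAA_{m}$ acting on $\PP(V)$.

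First I would isolate the group-theoretic reason for the non-RT property. Write $g\in G$ as $g=\omega^{k}h$ with $k\in\{0,1,2\}$, $h\in 2\AAA_{m}$; since $\omega I$ is central, $g^{G}=\{\omega^{k}h^{x}:x\in 2\AAA_{m}\}$, so $\la g^{G}\ra$ contains all $(\omega^{k}h^{x})(\omega^{k}h^{y})^{-1}=h^{x}(h^{y})^{-1}$. If $g$ is non-central then $h\notin\{\pm I\}=Z(2\AAA_{m})$, and since $2\AAA_{m}$ is quasi-simple with no subgroup of index $2$ (being perfect) these elements generate a nontrivial normal subgroup of $2\AAA_{m}$, hence all of $2\AAA_{m}$; thus $\la g^{G}\ra\supseteq 2\AAA_{m}$, and then $\omega^{k}=(\omega^{k}h^{x})(h^{x})^{-1}\in\la g^{G}\ra$, so $\la g^{G}\ra=G$ if and only if $k\neq 0$. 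Consequently $(G,V)$ is a basic non-RT pair as soon as \emph{no non-central element of $2\AAA_{m}$ has age $<1$ on $V$}: any non-central $g=\omega^{k}h$ with $\age(g)<1$ must then have $k\neq 0$ (otherwise $g=h\in 2\AAA_{m}$ is non-central with $\age_{V}(g)\geq 1$, a contradiction), whence $\la g^{G}\ra=G$.

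The heart of the matter is therefore the italicized claim, which I would verify by running through the non-central conjugacy classes of $2\AAA_{m}$ and computing their eigenvalue multisets on $V$ from the ATLAS. For elements of order at most $5$ one may invoke Lemma \ref{small2}: each possibility there is either a scalar multiple of a pseudoreflection or lies on a short explicit list, and the age-$<1$ members are ruled out using that (a) the only involution of $2\AAA_{m}$ is $-I$, so $2\AAA_{m}$ has no element with eigenvalues $(-1,-1,1,1)$ and none with an eigenvalue of multiplicity $3$; and (b) the $4$-dimensional character of $2\AAA_{m}$ is real, with values in $\{1,-2\}$, on classes of order $3$, and has character field inside $\QQ(\sqrt5)$ on classes of order $5$, which forces the conjugation-symmetric multisets $(\omega,\omega,\omega^{2},\omega^{2})$, $(\zeta_{5},\zeta_{5}^{2},\zeta_{5}^{3},\zeta_{5}^{4})$, etc., all of age $1$ or $2$. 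The remaining classes — orders $6,8,10$ for $m=6$, and orders $6,7,8,12,14$ for $m=7$ — are treated by direct ATLAS inspection; e.g.\ an order-$7$ element has eigenvalues $(1,\zeta_{7},\zeta_{7}^{2},\zeta_{7}^{4})$ (the only Galois-stable size-$4$ multiset over $\QQ(\sqrt{-7})$), of age exactly $1$, while the order-$6,8,10$ (resp.\ $12,14$) classes have age $2$. Granting this, $(G,V)$ is a basic non-RT pair, and a genuine one — in particular an obstruction to Theorem \ref{main4} in dimension $4$ — because, choosing $h_{0}\in 2\AAA_{m}$ of order $3$ with $\chi_{V}(h_{0})=-2$, i.e.\ with eigenvalues $(\omega,\omega,\omega^{2},\omega^{2})$ (such $h_{0}$ exists, being a lift of a product of two $3$-cycles, already present in $2\AAA_{6}\leq 2\AAA_{m}$), the element $\omega^{2}h_{0}\in G\setminus Z(G)$ has eigenvalues $(1,1,\omega,\omega)$ and age $2/3<1$.

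It remains to see that $(G,V)$ is not of reflection type. Suppose $H<GL_{4}(\CC)$ were a complex reflection group with $Z(GL_{4})H=Z(GL_{4})G$, so $\bar H:=HZ(GL_{4})/Z(GL_{4})\cong\AAA_{m}$. Then $[H,H]$ is a central extension of the perfect group $\AAA_{m}$ (its intersection with the scalars being central), hence a quotient of the Schur cover $6\AAA_{m}$; by Clifford's theorem $V|_{[H,H]}$ is semisimple and, not being trivial, must be irreducible of dimension $4$ — but among $\AAA_{m},2\AAA_{m},3\AAA_{m},6\AAA_{m}$ only $2\AAA_{m}$ has a faithful irreducible $4$-dimensional representation. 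So $[H,H]=2\AAA_{m}$ (in the above representation) and $H=C\cdot 2\AAA_{m}$ with $C=H\cap Z(GL_{4})$ scalar; but any pseudoreflection in $C\cdot 2\AAA_{m}$ would be a scalar times an element of $2\AAA_{m}$ with an eigenvalue of multiplicity $3$, and there are none (as noted above), so $C\cdot 2\AAA_{m}$ is not generated by pseudoreflections — a contradiction. Finally, for $m=6$ the assertion that the pair is of $AV$-type is a further, smaller check against the definition in \cite{KL}, using the explicit form of the $4$-dimensional representation of $2\AAA_{6}$. The main obstacle throughout is the eigenvalue bookkeeping of the third paragraph, namely verifying $\age_{V}\geq 1$ (and the absence of pseudoreflections up to scalars) on every non-central class of $2\AAA_{6}$ and $2\AAA_{7}$.
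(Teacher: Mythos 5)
Your group-theoretic reduction (that $\la g^G\ra\supseteq 2\AAA_m$ for any non-central $g$, and that $\la g^G\ra=G$ precisely when the $C_3$-component of $g$ is nontrivial) is correct and is essentially the same mechanism as the paper's — the paper phrases it as: $K:=\la g^G\ra$ satisfies $Z(G)K=G$ by simplicity of $G/Z(G)$, and $K>[G,G]=2\AAA_m$ because $g\notin[G,G]$. The main difference is where the key arithmetic input comes from: the paper simply cites the already-established Table I (which records that the only non-central classes of $G$ with $\ages\leq 1$ are scalar translates of class $3A$), whereas you reconstruct everything ab initio.

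Two remarks on your reconstruction. First, the ``heart of the matter'' you identify — that no non-central $h\in 2\AAA_m$ has $\age(h)<1$ on $V$ — needs no ATLAS bookkeeping at all: $2\AAA_m$ is perfect, hence lies in $SL_4(\CC)$, so every $h\in 2\AAA_m$ has $\age(h)\in\ZZ_{\geq 0}$, and $\age(h)=0$ forces $h=1$. Your paragraph of eigenvalue enumeration is correct but superfluous for this step. Second, and conversely, the step you treat more casually — that $2\AAA_m$ contains no element with a multiplicity-$3$ eigenvalue on $V$ (needed for ``not of reflection type'') — is the step that genuinely requires input. You assert it in item ``(a)'' with only a partial reason (the uniqueness of the involution $-I$ rules out spectrum $(-1,-1,1,1)$ but does not by itself rule out a multiplicity-$3$ eigenvalue for elements of other orders). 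One clean way to close this is to note that a multiplicity-$3$ eigenvalue $\mu$ would make $\mu^{-1}h$ a pseudoreflection and hence $\ages(h)<1$, and then appeal to the Table I/ATLAS classification of the $\ages\leq 1$ classes in $2\AAA_6,2\AAA_7$ (which only exhibits spectrum $(\omega,\omega,\omega^2,\omega^2)$). Your Clifford-theory detour through $[H,H]$ is also unnecessary: any c.r.g. $H$ with $Z(GL_4)H=Z(GL_4)G$ has $H\subseteq Z(GL_4)G=Z(GL_4)\cdot 2\AAA_m$, so it suffices to show the latter has no pseudoreflections, which reduces directly to the multiplicity-$3$ question for $2\AAA_m$.

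Finally, you defer the $AV$-type claim for $m=6$ to ``a further, smaller check.'' The paper's point here is concrete and worth recording: the action of $C_3\times SL_2(9)$ on $\CC^4$ factors through $C_3\times Sp_4(3)$ (Weil representation), which is realizable over $\QQ(\sqrt{-3})$, giving the $AV$-type property. Without saying this, the $m=6$ clause of the Lemma is not actually proved.
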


\begin{proof}
The faithful representation of $G$ on $V = \CC^{4}$ gives rise to a unique conjugacy 
class $g^{G}$ of non-central elements of age $< 1$, namely class $3A$ in $G/Z(G) \simeq \AAA_{m}$. Let 
$K := \langle g^{G} \rangle$. Then $Z(G)K = G$ by simplicity of $G/Z(G)$, but $g \notin [G,G] = 2\AAA_{m}$. 
It follows that $K = G$ and so $G$ gives a basic non-RT pair. Furthermore, $Z(GL(V))G$ does not contain any 
complex reflection, hence this pair is not of reflection type.
Finally, if $m = 6$ then the representation of $G = C_{3} \times SL_{2}(9) < C_{3} \times Sp_{4}(3)$ on $V$ can 
be written over $\QQ(\sqrt{-3})$ and so the corresponding basic non-RT pair is of AV-type. 
\end{proof}

\subsubsection{Small finite groups of Lie type} Let $S$ be any of the small simple finite 
groups of Lie type listed at the beginning of \S\ref{small}. Using \cite{Atlas}, it is 
straightforward to check that $\Delta(g) \geq 5$ for all characters $\chi$ satisfying the 
above hypotheses, except possibly for one of the following cases. (Note that, it suffices to 
consider only subgroups of $G$ that induce {\it cyclic extensions} of $S$ in $\Aut(S)$, since
$\langle g,L \rangle$ is such a subgroup.)

\smallskip
$\bullet$ $d = 8$, $L = 2 \cdot \Omega^{+}_{8}(2)$. Here, either $\Delta(g) = 2$, $gZ(G)$ belongs
to class $2F$, and $g$ acts as a reflection, or $\Delta(g) \geq 3$. In the latter case, either
we get the row of $\Omega^{+}_{8}(2)$ in Table I with complex bireflections of order $2$ and $3$,
or $\ages(g) > 1$ by Lemma \ref{small2} (when $gZ(G)$ has order $\leq 5$) and Corollary 
\ref{bound2} (with $\delta \geq 4\pi/3$). 

\smallskip
$\bullet$ $d = 7$ or $8$, and $S = Sp_{6}(2)$. If $d = 8$, then $\Delta(g) \geq 4$ and 
$\ages(g) > 1$. Assume $d = 7$. Then either $\Delta(g) = 2$, $gZ(G)$ belongs
to class $2F$, and $-g$ acts as a reflection, or $\Delta(g) \geq 3$. In the latter case, either
we get the row of $Sp_{6}(2)$ in Table I with complex bireflections of order $2$ and $3$,
or $\ages(g) > 1$ by Lemma \ref{small2}.  

\smallskip
$\bullet$ $d = 10$, $L = SU_{5}(2)$, $\Delta(g) = 4$, $gZ(G)$ belongs
to class $2A$, and $g$ acts as a bireflection.
   
\smallskip
$\bullet$ $d = 6$, $20$, or $21$, and $S = PSU_{4}(3)$. Assume $d = 6$. Then either 
$\Delta(g) = 2$, $gZ(G)$ belongs to class $2D$, and $g$ acts as a reflection, or 
$\Delta(g) \geq 3$. In the latter case, either
we get the row of $PSU_{4}(3)$ in Table I with complex bireflections of order $2$ and $3$
and an element with spectrum $(1,1,1,e^{2\pi/3},e^{2\pi/3},e^{2\pi/3})$,
or $\ages(g) > 1$ by Lemma \ref{small2} and Corollary \ref{bound2} (with $\delta \geq 4\pi/3$)
(and a direct check for some elements of order $6$). If $d = 21$ and $L = S$, then 
$\Delta(g) \geq 12$. In all the remaining cases, $\Delta(h) \geq 13$ for all 
$h \in L \setminus Z(G)$, and so $\Delta(g) \geq 13/4$ by Lemma \ref{red2}. We claim that we
also have $\ages(g) > 1$. Assume the contrary: $\ages(g) \leq 1$. Let $K$ be any subgroup of 
$G$ that contains $L$ and induces a subgroup $C_{2}$ of $\Out(S) = D_{8}$ while acting on
$L$. It is straighforward to check that, for any $h \in K \setminus Z(K)$, 
$\Delta(h) \geq 10$ and so $\ages(h) > 2.19$ by Corollary \ref{bound2}. Notice that 
$\Out(S) = D_{8}$ has exponent $4$. Hence, if $g^{2} \notin Z(G)$, we have 
$g^{2} \in K \setminus Z(K)$ for a subgroup $K$ of the aforementioned type, and so 
$\ages(g^{2}) > 2.19$ and $\ages(g) > 1.095$ by Lemma \ref{trivial}(v). 
Thus $g^{2} \in Z(G)$, and so modulo scalars we may assume that $g$ has two eigenvalues $1$, resp. 
$-1$, with multiplicity $m$, resp. $d-m$. By \cite{GS}, $\an(g) \leq 6$ and so $m,d-m \leq d-4$ by 
Lemma \ref{fix}. It follows that $|\chi(g)| = |2m-d| \leq d-8$, whence $\Delta(g) \geq 8$ and 
$\ages(g) > 1$. 

\smallskip
$\bullet$ $d = 4$, $5$, or $6$, and $S = SU_{4}(2) \simeq PSp_{4}(3)$. Assume $d = 6$. Then either 
$\Delta(g) = 2$, $gZ(G)$ belongs to class $2C$, and $g$ acts as a reflection, or 
$\Delta(g) \geq 3$. In the latter case, either we get a row with $(d,L) = (6,SU_{4}(2))$ in Table I,
or $\ages(g) > 1$ by Lemma \ref{small2} and Corollary \ref{bound2} (with $\delta \geq 4\pi/3$).
Assume $d = 5$. Then either 
$\Delta(g) = 2$, $gZ(G)$ belongs to class $2A$, and $-g$ acts as a reflection, or 
$\Delta(g) \geq 5 -\sqrt{7} > 8-4\sqrt{2}$. In the latter case, either
we get two rows with $(d,L) = (5,SU_{4}(2))$ in Table I,
or $\ages(g) > 1$ by Lemma \ref{small2} and Corollary \ref{bound2} (with $\delta \geq 4\pi/3$)
(and a direct check for some elements of order $6$). Finally, assume $d = 4$, and so
$L = Sp_{4}(3)$. This case by far has the most (twelve) classes of elements $g$ with 
$0 < \ages(g) \leq 1$ (leading to two rows in Table I), and is handled by a direct case-by-case 
argument. In this case we always have $\Delta(g) \geq 4-\sqrt{7}$. 

\smallskip
$\bullet$ $d = 6$ or $7$, and $S = SU_{3}(3)$. Here we have $\Delta(g) \geq 3$, and, aside from 
the entries with $(d,L) = (6,SU_{3}(3))$ and $(7,SU_{3}(3))$ in Table I, $\ages(g) > 1$ by 
Lemmas \ref{small1}, \ref{small2}, and Corollary \ref{bound2} (with $\delta \geq 4\pi/3$).

\smallskip
$\bullet$ $d = 6$ and $L = 6 \cdot PSL_{3}(4)$. Here, $\Delta(g) \geq 4$, and either 
we are in the row of $(d,L) = (6,6 \cdot PSL_{3}(4))$ in Table I, or $\ages(g) > 1$.

\smallskip
$\bullet$ $d = 6$ or $7$, and $S = PSL_{2}(13)$. If $d = 7$, then $\Delta(g) > 4.69$. If $d = 6$,
then either $\Delta(g) \geq 5$, or $\Delta(g) > 3.69$ and $\ages(g) > 1$ by Corollary \ref{bound2} 
(with $\delta = 16\pi/13$).  

\smallskip
$\bullet$ $d = 5$ or $6$, and $S = PSL_{2}(11)$. If $d = 6$,
then either $\Delta(g) \geq 5$, or $\Delta(g) > 4.26$ and $\ages(g) > 1$ by Corollary \ref{bound2} 
(with $\delta = 14\pi/11$). If $d = 5$, then $\Delta(g) \geq 5-\sqrt{3}$, and either we are in the 
row of $(d,L) = (5, PSL_{2}(11))$ in Table I, or $\ages(g) > 1$ (by direct calculation).

\smallskip
$\bullet$ $d = 3$, $4$, or $6$, and $S = PSL_{2}(7)$. If $d = 3$, then 
$\Delta(g) \geq 3-\sqrt{2}$. If $d = 6$, then either we are in the row
$(d,L) = (6,PSL_{2}(7))$ of Table I, or $\Delta(g) > 4$ and $\ages(g) > 1$
by Corollary \ref{bound2} (with $\delta \geq 5\pi/4$). Finally, if $d = 4$, then 
$\Delta(g) \geq 4-\sqrt{2}$, and either we arrive at the row $(d,L) = (4,SL_{2}(7))$ of Table I, or
$\ages(g) > 1$ (by a direct check). 

We have completed the proof of Theorem \ref{main-b}, as well as of the following result
which we will need later.

\begin{theor}\label{main-d}
{\sl Let $G < GL(V)$ be an almost quasi-simple subgroup such that the $G$-module $V$ is 
irreducible, primitive, and tensor indecomposable. Assume that $0 < \ages(g) \leq 1$ for some
$g \in G$, and that $d := \dim(V) \geq 4$. Then $(d, G^{(\infty)}, g, \Delta(g),\ages(g))$ 
is as listed in Table I.
$\hfill \Box$}
\end{theor}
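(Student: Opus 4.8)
The plan is to observe that Theorem \ref{main-d} records exactly what has been established in the course of proving Theorem \ref{main-b}. Under the hypotheses of Theorem \ref{main-d} the triple $(G,V,g)$ satisfies the set-up $\stc$: since $\ages(g)>0$, Lemma \ref{trivial}(ii) gives $g\notin Z(G)$, and $0<\ages(g)\le 1$ is one of the two alternatives in $\stc$; moreover Corollary \ref{bound2} yields $\Delta(g)<4.556$, so inequality (\ref{forg}) holds, as do the sharper bounds on $\Delta(g)$ in terms of the length $\delta$ of the shortest arc of $\SA$ containing the eigenvalues of $g$. Thus it suffices to run through the classification of triples satisfying $\stc$ carried out above and, in every surviving configuration, read off the extra information forced by $\ages(g)\le 1$ (as opposed to merely $\Delta(g)\le 8-4\sqrt2$); this is precisely the content of the rows of Table I.

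Concretely, I would organize the argument by the isomorphism type of $S=L/Z(L)$, where $L=G^{(\infty)}$. If $S=\AAA_n$ with $n\ge 8$, Proposition \ref{an} already gives the conclusion: $L=\AAA_n$ acts on $V$ as on its deleted natural permutation module, and up to a scalar $g$ is a transposition (so $\ages(g)=1/2$, $\Delta(g)=2$), a $3$-cycle, or a double transposition (so $\ages(g)=1$, $\Delta(g)=3$ or $4$); these produce the $\AAA_n$ rows. For every other $S$, Lemma \ref{red2}(i) applied with $L=G^{(\infty)}$ together with (\ref{forg}) yields $h\in L\setminus Z(L)$ with $\Delta(h)<18.224$, whence $\chi(1)\le 364$ by Proposition \ref{ratio1}(i), i.e. (\ref{dim2}) holds. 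This confines attention to the finitely many quasi-simple $L$ with $\dl(S)\le 364$: classical groups of small rank over small fields, a short list of exceptional groups, and the ``small'' groups listed at the start of \S\ref{small}.

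For the classical groups I would split according to whether $\chi|_L$ is a Weil character. In the Weil case the ratio bounds of Lemmas \ref{r-slu} and \ref{r-sp} give the much stronger bound $\chi(1)\le 56$, and in several instances the fact that no Weil character is stable under an outer automorphism forces $g\in L$; in the non-Weil case the next-smallest character degree from \cite{TZ1} already exceeds the bound permitted by (\ref{dim2}), or a direct check in \cite{Atlas}, \cite{GAP} shows $\Delta(h)\ge 18.224$. In each of the finitely many genuinely surviving pairs $(S,\chi)$ one then determines, conjugacy class by conjugacy class, which non-central $g$ satisfy $\ages(g)\le 1$, using Corollary \ref{bound2} (the arc-length refinements), Lemmas \ref{small1} and \ref{small2} (elements of small order, and scalar multiples of involutions), and, when $g^2\in Z(G)$ so that $g$ is a scalar multiple of $\diag(1^{k},(-1)^{l})$, the eigenspace bound of Lemma \ref{fix} together with the estimate on $\an(gZ(G))$ from \cite{GS}; each such $g$ becomes one row of Table I. The exceptional groups are treated the same way: only $\ta B_{2}(q)$ ($q\le 32$), $G_{2}(q)$ ($q\le 7$), $\tb D_{4}(q)$ ($q\le 3$), $\ta F_{4}(2)'$ and $F_{4}(2)$ have $\dl(S)<504$, and for $G_{2}(7)$ and $\tb D_{4}(3)$ the unique character of degree $\le 364$ has $\Delta(h)$ far above the threshold, so the survivors fall under the ``small'' case. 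Finally the ``small'' groups (small alternating and Lie-type groups, and the $26$ sporadic groups) are disposed of by direct inspection of the \cite{Atlas} and \cite{GAP} character tables of their universal covers, again combined with the age and deviation estimates above.

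The main obstacle is the last step: the sheer bulk and delicacy of the small-group verification. For each of the many small quasi-simple $L$ and each faithful $\chi\in\Irr(L)$ of degree $\le 364$ one must pin down the exact set of non-central classes with $\ages(g)\le 1$, and here the difficulty is that $\Delta(g)<4.556$ is necessary but far from sufficient: one must also control the eigenvalue multiset of $g$ (via its spectrum and Lemmas \ref{small1}, \ref{small2}), and for the ubiquitous ``scalar times involution'' elements one must rule out small eigenspace dimensions by invoking Lemma \ref{fix} and the bound on $\an$. This bookkeeping is exactly what fills Table I.
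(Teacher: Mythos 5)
Your proposal is correct and follows essentially the same route as the paper: Theorem \ref{main-d} is established alongside Theorem \ref{main-b} by the case-by-case analysis of \S 3, and you have accurately reconstructed the key steps — the reduction to the set-up $\stc$, the passage via Lemma \ref{red2}(i) and Proposition \ref{ratio1}(i) to the bound (\ref{dim2}), the Weil/non-Weil dichotomy for classical groups, the treatment of exceptional groups via $\dl(S)$, and the exhaustive \cite{Atlas}/\cite{GAP} verification for the small groups (with Lemmas \ref{small1}, \ref{small2}, \ref{fix} and Corollary \ref{bound2} used to pin down exactly which classes have $\ages \leq 1$).
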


In Table I, in the cases where $0 < \ages(g) < 1$, we indicate a minimal group $G$ 
containing $h$ with $\age(h) = \ages(g)$. We also list the conjugacy class of $gZ(G)$ in 
$G/Z(G)$ using the notation of \cite{Atlas}, for one representative of the $\Aut(L)$-conjugacy 
class of $\chi|_{L}$.
 
\newpage
\centerline
{{\sc Table} I. Non-central elements of small age in almost quasi-simple groups.}
\vspace{0.3cm}
\begin{tabular}{|c||c|c|c|c|c|} \hline \skipa
 $d$ & $L := G^{(\infty)}$ & $G$ & $g$ & $\Delta(g) = \DB(g)^{2}/2$ & $\ages(g)$ \\ 
       \skipa \hline \hline \skipa
 $4$ & $Sp_{4}(3)$ & $L \times 3$ & $\left\{ \begin{array}{c}3B \\3A, ~3C, ~6A\end{array}\right.$
       & $\left\{ \begin{array}{c}4-\sqrt{7}\\ 4-\sqrt{7}, ~2, ~4-\sqrt{3}\end{array}\right.$ & 
       $\left\{\begin{array}{c}1/3\\ 2/3\end{array}
        \right.$\\ \skipa
 $4$ & $2 \cdot \AAA_{6}$ & $L \times 3$ & $3A$ & $2$ & $2/3$\\
 $4$ & $2 \cdot \AAA_{7}$ & $L \times 3$ & $3A$ & $2$ & $2/3$\\
 $4$ & $2 \cdot \AAA_{5}$ & $L$ & $2A$, resp. $3A$ & $4$, resp. $3$ & $1$\\ 
 $4$ & $2 \cdot \AAA_{5}$ & $L \cdot 2$ & $2B$, resp. $6A$ & $4$, resp. $4-\sqrt{3}$ & $1$\\ 
 $4$ & $SL_{2}(7)$ & $L$ & $2A$, resp. $3A$, $7B$ & $4$, resp. $3$, $4-\sqrt{2}$ & $1$\\
 $4$ & $2 \cdot \AAA_{6}$ & $L$ & $2A$, resp. $3B$ & $4$, resp. $3$ & $1$\\ 
 $4$ & $2 \cdot \AAA_{6}$ & $L \cdot 2$ & $2B$, resp. $2C$, $6B$ & 
       $4$, resp. $4$, $4-\sqrt{3}$ & $1$\\
 $4$ & $2 \cdot \AAA_{7}$ & $L$ & $2A$, resp. $3B$, $7A$ & $4$, resp. $3$, $2$ & $1$\\ 
 $4$ & $Sp_{4}(3)$ & $L$ & 
       $\left\{ \begin{array}{c} 2AB,~3A,~3D\\ 4A,~6BE,~12A \end{array}\right.$ & 
       $\left\{ \begin{array}{c} 4,~4-\sqrt{7},~3\\2,~4-\sqrt{3},~3\end{array}\right.$ 
     & $1$\\ 
\hline 
 $5$ & $SU_{4}(2)$ & $L \times 2$ & $2A$ & $2$ & $1/2$\\
 $5$ & $SU_{4}(2)$ & $L \times 3$ & $3A$ & $5 - \sqrt{7}$ & $2/3$\\
 $5$ & $\AAA_{5}$ & $L$ & $2A$ & $4$ & $1$\\
 $5$ & $\AAA_{6}$ & $\SSS_{6}$ & $(12)(34)(56)$ & $4$ & $1$\\
 $5$ & $PSL_{2}(11)$ & $L$ & $2A$ & $4$ & $1$\\
 $5$ & $SU_{4}(2)$ & $L$ & $2B$, resp. $3D$ & $4$, resp. $3$ & $1$\\ 
\hline
 $6$ & $SU_{4}(2)$ & $L \cdot 2$ & $2C$ & $2$ & $1/2$\\
 $6$ & $6_{1} \cdot PSU_{4}(3)$ & $L \cdot 2_{2}$ & $2D$ & $2$ & $1/2$\\
 $6$ & $PSL_{2}(7)$ & $L$ & $2A$ & $4$ & $1$\\
 $6$ & $3 \cdot \AAA_{6}$ & $L$ & $2A$ & $4$ & $1$\\
 $6$ & $3 \cdot \AAA_{7}$ & $L$ & $2A$ & $4$ & $1$\\
 $6$ & $6 \cdot PSL_{3}(4)$ & $L$ & $2A$ & $4$ & $1$\\
 $6$ & $SU_{3}(3)$ & $L$ & $2A$, resp. $3A$ & $4$, resp. $3$ & $1$\\
 $6$ & $SU_{4}(2)$ & $L$ & $2AB$, resp. $3AB$, $3C$ & $4$, resp. $3$, $3$ & $1$\\
 $6$ & $6_{1} \cdot PSU_{4}(3)$ & $L$ & $2A$, resp. $3A$, $3B$ & $4$, resp. $3$, $3$ & $1$\\
 $6$ & $2 \cdot J_{2}$ & $L$ & $2A$, resp. $3B$ & $4$, resp. $3$ & $1$\\
\hline
 $7$ & $Sp_{6}(2)$ & $L \times 2$ & $2A$ & $2$ & $1/2$\\
 $7$ & $SU_{3}(3)$ & $L$ & $2A$ & $4$ & $1$\\
 $7$ & $Sp_{6}(2)$ & $L$ & $2C$, resp. $3A$ & $4$, resp. $3$ & $1$\\
\hline
 $8$ & $2 \cdot \Omega^{+}_{8}(2)$ & $L \cdot 2$ & $2F$ & $2$ & $1/2$\\
 $8$ & $2 \cdot \Omega^{+}_{8}(2)$ & $L$ & $2B$, resp. $3A$ & $4$, resp. $3$ & $1$\\
\hline
 $10$ & $SU_{5}(2)$ & $L$ & $2A$ & $4$ & $1$\\
\hline
 $n-1$ & $\AAA_{n}$ & $\SSS_{n}$ & $2$-cycle & $2$ & $1/2$\\ 
 $n-1$ & $\AAA_{n}$ & $L$ &  $(123)$, resp. $(12)(34)$ & $3$, resp. $4$ & $1$ \\
\hline 
\end{tabular}

\begin{remar}\label{types}
{\sl The spectra of elements $g$ with $0 < \ages(g) \leq 1$ that occur in Table I are listed as
follows:

\smallskip
{\rm (a1)} $(-1, 1, \ldots ,1)$ (a reflection, age $=1/2$);

{\rm (a2)} $(e^{2\pi i/3}, 1, 1, 1)$ (a complex reflection, age $=1/3$);

{\rm (a3)} $(e^{4\pi i/3}, 1, 1, 1)$ (a complex reflection, age $=2/3$);

{\rm (a4)} $(e^{2\pi i/3}, e^{2\pi i/3}, 1, \ldots ,1)$ (age $=2/3$);

{\rm (a5)} $(e^{\pi i/3}, -1, 1, 1)$ (age $=2/3$);

\smallskip
{\rm (b1)} $(-1, -1,1, \ldots ,1)$ (a bireflection, age $=1$);

{\rm (b2)} $(e^{2\pi i/3}, e^{4\pi i/3}, 1, \ldots ,1)$ (a complex bireflection, age $=1$);

{\rm (b3)} $(e^{2\pi i/3}, e^{2\pi i/3}, e^{2\pi i/3},1, \ldots ,1)$ (age $=1$);

{\rm (b4)} $(e^{\pi i/2}, e^{3\pi i/2}, 1, 1)$ (a complex bireflection, age $=1$);

{\rm (b5)} $(e^{\pi i/3}, e^{2\pi i/3},-1, 1)$ (age $=1$);

{\rm (b6)} $(e^{\pi i/3}, e^{\pi i/3},e^{4\pi i/3}, 1)$ (age $=1$);

{\rm (b7)} $(e^{2\pi i/7}, e^{4\pi i/7}, e^{8\pi i/7},1)$ (age $=1$).

{\rm (b8)} $(e^{\pi i/6}, e^{2\pi i/3}, e^{7\pi i/6},1)$ (age $=1$).\\
Additionally, the following spectra also occur for the groups of extraspecial type:

\smallskip
{\rm (c1)} $(e^{\pi i/2}, e^{\pi i/2},1,1)$ (age $=1/2$).

{\rm (c2)} $(e^{\pi i/4}, e^{\pi i/2},e^{5\pi i/4},1)$ (age $=1$).

{\rm (c3)} $(e^{\pi i/4}, e^{3\pi i/4},-1,1)$ (age $=1$).

{\rm (c4)} $(e^{2\pi i/5}, e^{4\pi i/5}, e^{4\pi i/5},1,1)$ (age $=1$).

{\rm (c5)} $(e^{2\pi i/5}, e^{2\pi i/5}, e^{6\pi i/5},1,1)$ (age $=1$).

{\rm (c6)} $(e^{\pi i/2}, e^{\pi i/2}, e^{\pi i/2},e^{\pi i/2},1,1,1,1)$ (age $=1$).
}
\end{remar}

\section{Linear groups generated by elements of bounded deviation}\label{Sdev}
In this section we will prove Theorem \ref{main1}. The standing hypothesis throughout the section is
that

\vspace{3mm} 
$\start~:~$\begin{tabular}{l}
$V = \CC^{d}$, $d > 1$, $G$ is a finite irreducible subgroup of $\GC = GL(V)$, $\XC \subset \GC$,\\ 

$C \geq 4$ a given constant, $Z(\GC)G = \langle \XC \rangle$, and $\DB(g)^{2} \leq C$ for all $g \in \XC$.
\end{tabular}

\vspace{3mm}
Let $\chi$ denote the character of $Z(\GC)G$ afforded by $V$. 

\subsection{The imprimitive case} 
Here we consider the case where $G$ (transitively) permutes the $m$ summands of a decomposition
$V = V_{1} \oplus \ldots \oplus V_{m}$, $m > 1$. For any $g \in \XC$, let $\mu(g)$ denote the number of
subspaces $V_{i}$ that are moved by $g$. Then $g$ fixes (setwise) precisely $m-\mu(g)$ 
subspaces $V_{i}$, whence $|\chi(g)| \leq (m-\mu(g))\dim(V_{1})$. It follows that 
\begin{equation}\label{4mu}
  C \geq \DB(g)^{2} = 2(\chi(1)-|\chi(g)|) \geq 2\mu(g)\dim(V_{1}).
\end{equation} 
Since $Z(\GC)G = \langle \XC \rangle$, there must be some $g \in \XC$ which acts nontrivially on 
the set $\{V_{1}, \ldots ,V_{m}\}$, for which $\mu(g) \geq 2$. Thus $\dim(V_{1}) \leq C/4$ (also see
Example \ref{impr1} for a partial converse).

Now we choose a $G$-invariant decomposition $V = V_{1} \oplus \ldots \oplus V_{m}$ with $m > 1$ 
smallest possible. This means that the induced action $\pi$ of $G$ on the set $\{V_{1}, \ldots ,V_{m}\}$
is primitive. Assume in addition that $(G,V)$ does not satisfy the conclusion (iii) of Theorem 
\ref{main1}, i.e. $\pi(G) \not\geq \AAA_{m}$. By \cite[Corollary 3]{LS}, $\mu(g) > 2(\sqrt{m}-1)$ 
(for some $g \in \XC$). On the other hand, as shown above, $1 \leq k := \dim(V_{1}) \leq C/4$. Now 
(\ref{4mu}) implies that $m < (1+C/4k)^{2}$ and so $\dim(V) < k(1+C/4k)^{2} =: h(k)$. Since the function
$h(k)$ is decreasing on $[1,C/4]$, we see that $\dim(V) < h(1) = (1+C/4)^{2}$. Thus we have shown that
if $G$ is as in $\start$ and $G$ is imprimitive, then either $G$ satisfies Theorem \ref{main1}(iii), or 
$\dim(V) < (1+C/4)^{2}$. Observe that $(1+C/4)^{2} < \fl(C)$ as $C \geq 4$. Hence we have proved 
Theorem \ref{main1} in the case $G$ is imprimitive.      

\begin{examp}\label{impr1}
{\sl Let $C \geq 4$, $V_{1} = \CC^{k}$ with $1 \leq k \leq C/4$, and let $H \leq GL(V_{1})$ be any subgroup generated by 
$\{ [x^{-1}hx,y] \mid x,y \in H\}$ for a fixed element $h \in H$ {\em (for instance, one can consider 
any quasisimple subgroup $H$ and any $h \in H \setminus Z(H)$)}. Then for any $m \geq 2$, the subgroup 
$G = H \wr \SSS_{m} < GL_{mk}(\CC)$ is generated by $g^{G}$ for some element $g \in G$ satisfying 
$\DB(g)^{2} \leq C$.} {\em Indeed, we may write $V = \CC^{mk}$ as the set of $m$-tuples 
$(v_{1}, \ldots ,v_{m})$, $v_{i} \in V_{1}$, and define $g$ via 
$g(v_{1}, v_{2}, v_{3}, \ldots ,v_{m}) = (v_{2}, h(v_{1}), v_{3}, \ldots ,v_{m})$; in particular,
$\Tr(g) = (m-2)k$ and so $\DB(g)^{2} = 2(\dim(V) - |\Tr(g)|) = 4k \leq C$. 
For any $x, y \in H$, observe that $G$ contains the elements 
$$\tilde{x}~:~(v_{1}, v_{2}, \ldots ,v_{m}) \mapsto (x(v_{1}), v_{2}, \ldots ,v_{m}),~~~
  \tilde{y}~:~(v_{1}, v_{2}, \ldots ,v_{m}) \mapsto (y(v_{1}), v_{2}, \ldots ,v_{m}).$$
Then $K := \langle g^{G} \rangle$ contains the element 
$[\tilde{x}^{-1}g^{2}\tilde{x},\tilde{y}] = (\tilde{x}^{-1}g\tilde{x})^{2} \cdot 
 (\tilde{y}\tilde{x}^{-1} \cdot g \cdot \tilde{x}\tilde{y}^{-1})^{-2}$ which acts on $V$ via 
$(v_{1}, v_{2}, \ldots ,v_{m}) \mapsto ([x^{-1}hx,y](v_{1}), v_{2}, \ldots ,v_{m})$. The assumption on $H$ now implies
that $K > H \times 1 \times \ldots \times 1$. But $g$ induces the transposition $(1,2)$ while acting on 
the $m$-tuples $(v_{1}, \ldots ,v_{m})$, $v_{i} \in V_{1}$. Hence $K > H^{m}$ and $K/H^{m} \cong \SSS_{m}$,
and so $K = G$ as stated.}     
\end{examp}

\subsection{Tensor decomposable case} 
Here we assume that $G$ is primitive but tensor decomposable on $V$: 
$\chi = \al_{1} \otimes \ldots \otimes \al_{n}$, where $n \geq 2$, $\al_{i} \in \Irr(G)$ is 
primitive and tensor indecomposable for each $i$, and 
$\al_{1}(1) \geq \ldots \geq \al_{n}(1) \geq 2$. Then we can find $g \in \XC$ 
such that $\DB(g)^{2} \leq C$ and $|\al_{n}(g)| < \al_{n}(1)$. In the case $\beta := \al_{n}$ is 
tensor induced, among such elements $g$ we can find one that acts nontrivially on the set of tensor
factors of $\beta$ (as otherwise $\beta|_{G}$ would be tensor decomposable). By Theorem \ref{main-b} 
applied to $(G,\beta,g)$, $\beta(1) - |\beta(g)| \geq \delta:= (3-\sqrt{5})/2$; moreover, 
if $\beta(1) \geq 13$ then $\beta(1) - |\beta(g)| \geq 2$. In the latter case,
$$C \geq 2(\al(1)\beta(1) - |\al(g)\beta(g)|) \geq 2\al(1)(\beta(1)-|\beta(g)|) \geq 4\al(1),$$
and so $\al(1) \leq C/4$, where $\al := \al_{1} \otimes \ldots \otimes \al_{n-1}$. By the choice of
$\al_{n}$, we get $\beta(1) \leq \al(1)$ and so $\chi(1) \leq C^{2}/16 < \fl(C)$. It remains to consider
the case $2 \leq \beta(1) \leq 12$. Then  
$$C \geq 2(\al(1)\beta(1) - |\al(g)\beta(g)|) \geq 2\al(1)(\beta(1)-|\beta(g)|) \geq 2\delta\al(1),$$
and so $\al(1) \leq C/2\delta$. Therefore, 
$$\chi(1) \leq 12 \cdot C/2\delta = 12C/(3-\sqrt{5}) < 16C < \fl(C),$$
and we are done.

\begin{examp}\label{optm1}
{\em Given any $C \geq 4$, choose $m = 1 + \lfloor C/4 \rfloor$. Let $\SSS_{m}$ act on $\CC^{m-1}$ as on its 
deleted natural permutation module. This induces a natural action of $G = \SSS_{m} \times \SSS_{m}$ on 
$V = \CC^{m-1} \otimes \CC^{m-1}$. Consider the element $g_{1} = (\tau,1)$ and $g_{2} = (1,\tau)$, where 
$\tau$ is the transposition $(1,2) \in \SSS_{m}$. Then $\chi(1) = (m-1)^{2}$, 
$\chi(g_{i}) = (m-1)(m-3)$ and so $\DB(g_{i})^{2} = 4(m-1)$. By the choice of $m$, we see that 
$\DB(g_{i})^{2} \leq C$ is very close to $C$ and $\dim(V)$ is very close to $C^{2}/16$. Clearly, 
$G$ satisfies the set-up $\start$ with $\XC := g_{1}^{G} \cup g_{2}^{G}$. (Adding to $G$ an involution
inverting the two factors $\CC^{m-1}$ of $V$, we then have $G = \langle g_{1}^{G} \rangle$.)}
\end{examp}

\subsection{Tensor induced case}
Consider the case $G$ is tensor induced on $V$: $V = V_{1} \otimes V_{2} \otimes \ldots \otimes V_{m}$, with
$\dim(V_{i}) = a > 1$ and $G$ permutes the $m$ tensor factors $V_{1}, \ldots ,V_{m}$ (transitively). Then we can
find $g \in \XC$ such that $\DB(g)^{2} \leq C$ and $g$ acts nontrivially on the set $\{V_{1}, \ldots ,V_{m}\}$. 
By Lemma \ref{t-ind1}, $|\chi(g)| \leq a^{m-1} \leq \chi(1)/2$. Hence $C \geq 2(\chi(1)-|\chi(g)|) \geq \chi(1)$.
  
\subsection{Extraspecial case}\label{Sdev-ext}
Here we consider the case (iii) of \cite[Proposition 2.8]{GT3}.
In this case, $\dim(V) = p^{m}$ for some prime $p$ and some integer $m \geq 2$. Pick any 
$g \in \XC \setminus Z(\GC)$. By \cite[Lemma 2.4]{GT1}, $|\chi(g)| \leq p^{m-1/2} \leq \chi(1)/\sqrt{2}$. Thus 
$C \geq 2(\chi(1)-|\chi(g)|) \geq \chi(1)(2-\sqrt{2})$ and so $\chi(1) \leq C(1+1/\sqrt{2})$. 

\subsection{Almost quasi-simple groups}
Here we consider the case $G$ is a finite almost quasi-simple group that satisfies $\start$. In particular,
$L := G^{(\infty)}$ is quasi-simple and $L/Z(L)$ is the unique non-abelian composition factor of $G$. By the above, 
we may assume in addition that $G$ is primitive and tensor indecomposable on $V$. Since $d > 1$ and
$Z(\GC)G = \langle \XC \rangle$, there exists $g \in G \setminus Z(G)$ with $\DB(g)^{2} \leq C$. By Lemma 
\ref{red2} and its proof, $L$ acts irreducibly on $V$ and there is $h \in L \setminus Z(L)$ such that 
$\DB(h)^{2} \leq 4C$. 

First assume that $L \not\cong \AAA_{n}$ for any $n \geq 10$. Then by Proposition \ref{ratio1}(i) and 
Lemma \ref{ratio2}, $|\chi(h)/\chi(1)| \leq 19/20$. It follows that 
$4C \geq \DB(h)^{2} = 2(\chi(1)-|\chi(h)|) \geq \chi(1)/10$, and so $\dim(V) \leq 40C \leq \fl(C)$.   

We may now assume that $L = \AAA_{n}$ for some $n \geq 10$, and moreover $V|_{L}$ is not isomorphic to 
the deleted permutation module of $L$ (as otherwise $(G,V)$ satifies Theorem \ref{main1}(ii)). 
Up to scalars we may also assume that $\AAA_{n} \leq G \leq \SSS_{n}$. By Proposition 
\ref{ratio1}(ii), $|\chi(g)/\chi(1)| \leq 1/2 + (n-1)/2n = 1-1/2n$, whence
\begin{equation}\label{forS}
  C \geq \DB(g)^{2} = 2(\chi(1)-|\chi(g)|) \geq \chi(1)/n.
\end{equation} 
In particular, if $n \leq 40$, then $\chi(1) \leq 40C \leq \fl(C)$. Henceforth we may 
assume that $n \geq 41$.     

Now we choose $\lam \vdash n$ such that $\chi|_{L}$ is an irreducible constituent of 
$\rho^{\lam}|_{L}$, where $\rho^{\lam} \in \Irr(\SSS_{n})$ is labeled by $\lam$, and apply 
Lemma \ref{ras} to $\rho^{\lam}$.
Assume we are in the former case of Lemma \ref{ras}(ii). Then 
$\chi(1) \geq \rho^{\lam}(1)/2 \geq n(n-1)(n-2)(n-7)/48 > 26n^{2}$ as $n \geq 41$. Together with
(\ref{forS}), we now have that $26n^{2} < \chi(1) \leq nC$. It follows that $n \leq C/26$ and so 
$\chi(1) < C^{2}/26 < \fl(C)$.    

It therefore remains to consider the case $\lam \in R_{n}(2) \cup R_{n}(3)$; in particular,
$\chi_{L} = \rho^{\lam}|_{L}$. 

Consider the case $\lam \in R_{n}(2)$. By Lemma \ref{rhoS} and its proof, 
$$C \geq 2(\chi(1)-|\chi(g)|) \geq 2(\chi(1) - |\chi(\tbf)|) \geq 4n-12 \geq 152,$$ 
whence $n \leq 3+C/4$. Thus $\chi(1) \leq (n-1)(n-2)/2 \leq (1+C/4)(2+C/4)/2 \leq \fl(C)$.

Finally, assume that $\lam \in R_{n}(3)$. By Lemma \ref{rhoS} and its proof, 
$$C \geq 2(\chi(1)-|\chi(g)|) \geq 2(\chi(1) - |\chi(\tbf)|) \geq 2(n-2)(n-5) > 68n,$$ 
whence $n < C/68$. Hence (\ref{forS}) implies that $\chi(1) \leq nC < C^{2}/68 < \fl(C)$.  
Thus we have proved Theorem \ref{main1} in the case $G$ is almost quasi-simple (and primitive, 
tensor indecomposable on $V$). By \cite[Proposition 2.8]{GT3}, we have therefore completed the proof
of Theorem \ref{main1}.

\section{Linear groups generated by elements of age $\leq 1$}

In this section we will address the following situation

\vspace{3mm} 
$\diam:~$\begin{tabular}{l}
$V = \CC^{d}$, $d > 1$, $G$ is a finite irreducible subgroup of $\GC = GL(V)$,\\ 
$\XC \subset \GC$, $Z(\GC)G = \langle \XC \rangle$, and $0 < \ages(g) \leq 1$ for all $g \in \XC$.
\end{tabular}

By Corollary \ref{bound2}, such groups $G$ satisfy the set-up $\start$ of \S\ref{Sdev} with 
$C := 9.111$. We will denote by $\chi$ the character of $Z(\GC)G$ afforded by
$V$, and frequently refer to intermediate results established in \S\ref{Sdev}.

\subsection{Imprimitive groups} 

First we record the following easy observation:

\begin{lemma}\label{impr2}
{\sl Let a finite subgroup $G < GL(V)$ preserve a decomposition 
$W = V_{1} \oplus \ldots \oplus V_{n}$ of a subspace $W \subseteq V$, with $\dim(V_{i}) = 1$ and 
$G$ inducing either $\SSS_{n}$ or $\AAA_{n}$ while permuting the $n$ subspaces 
$V_{1}, \ldots ,V_{n}$. Then for any element $g \in G$ with $\ages(g) \leq 1$, 
one of the following holds.

{\rm (i)} $g$ acts either trivially, or as a $2$-cycle on $\{V_{1}, \ldots ,V_{n}\}$.

{\rm (ii)} Some scalar multiple $\mu g$ of $g$ is a complex bireflection of order $2$ 
or $3$ on $V$, and $g$ acts as a $3$-cycle, or a double transposition on $\{V_{1}, \ldots ,V_{n}\}$.
Furthermore, $\ages(g) = 1$.}
\end{lemma}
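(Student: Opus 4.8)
The statement is essentially a local computation about a single element $g$ with $\ages(g) \le 1$ acting on the permutation-type configuration $W = V_1 \oplus \dots \oplus V_n$. First I would reduce to the action on $W$: since $\ages(g|_W) \le \ages(g) \le 1$ by Lemma \ref{trivial}(iii) (fixing a $\mu \in \SA$ with $\ages(g) = \age(\mu g)$ and restricting), it suffices to analyze $g|_W$, so I may as well assume $W = V$ and $G \le GL_1(\CC) \wr \SSS_n$ with image $\SSS_n$ or $\AAA_n$ in the symmetric group. Write $h \in \SSS_n$ for the permutation induced by $g$, and let $h$ have cycle type with $k_j$ cycles of length $j$; the number of fixed $V_i$ is $k_1$, and $|\Tr(g|_W)| \le k_1$ because only the fixed coordinate subspaces contribute diagonal entries.

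Next I would run the trace/age inequality from Corollary \ref{bound2}: $\dim(W) - |\Tr(g|_W)| \le 2\pi \cdot \ages(g|_W)/2 = 1.45\pi < 4.556$. Hence $n - k_1 = \sum_{j \ge 2} j k_j < 4.556$, so $\sum_{j \ge 2} j k_j \le 4$. The possible non-fixed parts of $h$ are therefore: nothing (case (i), trivial), a single $2$-cycle (case (i)), a single $3$-cycle, a single $4$-cycle, a double transposition ($2+2$), or — only if this is still $\le 4$ — nothing larger. So the only configurations to examine beyond (i) are: $h$ a $3$-cycle, $h$ a product of two disjoint transpositions, and $h$ a $4$-cycle. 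In each of these cases $g$ is conjugate (after scaling by a suitable $\mu \in \SA$ so that $\mu g$ has finite order) to a monomial matrix whose underlying permutation is that cycle structure, supported on a subspace of dimension $3$ or $4$ and identity elsewhere, with a diagonal twist; in particular $g$ has order dividing $\mathrm{lcm}(j, |\text{twist}|)$ and I can write down its eigenvalues explicitly. For a $3$-cycle twisted by a scalar, $\mu g$ restricted to the $3$-dimensional support has eigenvalues $\zeta, \zeta\omega, \zeta\omega^2$ for a cube root of unity $\omega$ and some root of unity $\zeta$; choosing $\mu$ to kill $\zeta$ gives eigenvalues $1, \omega, \omega^2$, i.e. $\mu g$ is the complex bireflection $\diag(\omega, \omega^2, 1, \dots, 1)$ with $\age = 1$. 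For the double transposition twisted by scalars on the two $2$-dimensional blocks, the eigenvalue pattern on each block is $\pm\alpha$; if some block has both $\pm\alpha$ present with $\alpha \ne \pm 1$ after the best scaling, Lemma \ref{small1} forces $\ages(g) \ge 1$, with equality only when $g$ has two distinct eigenvalues, which pins down $g$ (up to scalar) to the bireflection $\diag(-1,-1,1,\dots,1)$. The $4$-cycle case: $\mu g$ on its $4$-dimensional support has eigenvalues $1, i, -1, -i$ (times a common root of unity), and a short computation (or an appeal to Lemma \ref{small2}(iii), the $m=4$ case with $g$ conjugate to $\diag(i,-1,-i,1,\dots)$-type spectra) shows $\age > 1$ for every scalar multiple — so a $4$-cycle is impossible under the hypothesis $\ages(g) \le 1$, and likewise the degenerate sub-cases of the double transposition where the two blocks are twisted incompatibly.

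So the argument assembles as: (1) reduce to $W$, set up cycle type; (2) apply Corollary \ref{bound2} to bound $\sum_{j\ge 2} j k_j \le 4$; (3) enumerate the finitely many surviving permutation types; (4) for each of the types $3$-cycle and double transposition, identify the scalar multiple $\mu g$ and compute its spectrum to see it is a complex bireflection of order $3$ (resp. order $2$) and that $\age = 1$ exactly; (5) rule out the $4$-cycle (and any remaining bad diagonal twists) via Lemma \ref{small1} or Lemma \ref{small2}. The main obstacle is step (4)–(5): one has to be careful that the diagonal twist on the blocks can, a priori, make $\mu g$ of order $6$, $8$, $12$, etc., so the eigenvalue bookkeeping must genuinely range over all admissible monomial matrices — not just the "untwisted" cycle — and check that only the two advertised spectra survive $\ages \le 1$. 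This is exactly the kind of exhaustive-but-elementary enumeration done in the proof of Lemma \ref{small2}, so I would cite that lemma for the order-$\le 5$ bookkeeping and handle the (very few) genuinely new monomial cases by hand. Everything else is the trace bound plus Lemma \ref{trivial}.
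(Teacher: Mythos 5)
Your proposal is correct and follows the same overall strategy as the paper's proof: bound the number of moved $V_i$ via the trace/age inequality from Corollary~\ref{bound2} and the estimate (\ref{4mu}), enumerate the cycle types that move at most $4$ subspaces, and rule out or classify each by spectral bookkeeping. The internal organization differs slightly, though. The paper's proof sets up once and for all the observation (equation (\ref{4c}) in the text) that a monomial $m$-cycle has eigenvalues $\beta e^{2\pi ij/m}$, $0 \le j < m$, on its support, hence $\ages \geq (m-1)/2$ there; this kills the $4$-cycle immediately ($\ages \geq 3/2$) and, for the $3$-cycle and double transposition, pins the age of the moved block at exactly $1$ and forces $\mu g$ to be trivial on the complement via Lemma~\ref{trivial}(iii). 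You instead route the $4$-cycle and double-transposition exclusions through Lemmas~\ref{small1} and~\ref{small2}(iii); this works, but is a bit more roundabout — in particular the appeal to Lemma~\ref{small2} requires noting that the $4$-cycle spectrum $\{1,i,-1,-i\}$ (after rescaling) isn't on the order-$4$ list there, whereas the evenly-spaced-eigenvalue computation is shorter and avoids the order bookkeeping entirely. One small caution: the phrase ``may as well assume $W = V$'' glosses over the fact that the conclusion (ii) concerns $g$ on all of $V$, not merely $g|_W$; you do need the extra (easy) step that the age budget of $1$ is already exhausted on the moved part of $W$, forcing $\mu g$ to act as the identity on a $g$-invariant complement, which is what the paper makes explicit via (\ref{4c}) together with Lemma~\ref{trivial}(iii)--(iv).
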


\begin{proof}
Observe that, if $h$ acts as an $m$-cycle on $\{V_{1}, \ldots ,V_{m}\}$, then it has minimal 
polynomial $t^{m}-\al$, and $m$ eigenvalues $\beta e^{2\pi i j/m}$, $0 \leq j \leq m-1$, on
$V_{1} \oplus \ldots \oplus V_{m}$, for some $\al,\beta \in \CC^{\times}$. In particular, 
\begin{equation}\label{4c}
  \ages\left(h|_{V_{1} \oplus \ldots \oplus V_{m}}\right) \geq (m-1)/2.
\end{equation}
Now assume $g \in G$ has $\ages(g) \leq 1$ but $g$ does not satisfy (i). By Lemma \ref{trivial}(ii),
there is $\mu \in \SA$ such that $\age(\mu g) \leq 1$. By Corollary \ref{bound2},
$\DB(g)^{2} \leq (2.9)\pi$, and so $g$ cannot move more than $4$ subspaces $V_{i}$ by (\ref{4mu}). Thus
$g$ acts as a $3$-cycle, a double transposition, or a $4$-cycle on $\{V_{1}, \ldots ,V_{n}\}$.
In the third case, $\ages(g) > 1$ by (\ref{4c}). 
In the first two cases, (\ref{4c}), Lemma 
\ref{trivial}(iii), and the condition $\age(\mu g) \leq 1$ force $\mu g$ to act as a complex 
bireflection of order $3$, resp. $2$, on $W$, and trivially on a complement $U$ to $W$ in $V$.
The last claim in (ii) now follows from (\ref{4c}) and the assumption that $\ages(g) \leq 1$.   
\end{proof}

\begin{lemma}\label{impr3}
{\sl Let $G < GL(V)$ be as in $\diam$. Assume that $G$ preserves a decomposition 
$V = V_{1} \oplus \ldots \oplus V_{n}$, with $n > 1$ smallest possible. Let $\pi$ denote 
the induced permutation action of $Z(\GC)G$ on $\{V_{1}, \ldots ,V_{n}\}$. Then one of the following
holds:

{\rm (i)} $\dim(V_{i}) = 1$, and $(\pi(G),n) = (\SSS_{n},n)$, $(\AAA_{n},n)$,
$(ASL_{3}(2),8)$, $(SL_{3}(2),7)$, $(\AAA_{5},6)$, $(D_{10},5)$. If $g \in \XC$ and $\pi(g)$ is not
$1$ nor a $2$-cycle, then $g$ is a complex bireflection of order $2$ or $3$ and 
$\ages(g) = 1$.

{\rm (ii)} $\dim(V_{i}) = 2$, $\ages(g) = 1$ for any $g \in \XC$ with $\pi(g) \neq 1$, 
and the conclusion {\rm (iii)} of Theorem {\rm \ref{main2}} holds.}
\end{lemma}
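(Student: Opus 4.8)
The plan is to combine the minimality of $n$ with the uniform deviation bound $\DB(g)^{2}\le (2.9)\pi$ valid for all $g\in\XC$ by Corollary \ref{bound2}. First I would record the standard consequences of minimality: since $G$ is irreducible, $\pi(G)=\pi(Z(\GC)G)$ is transitive on $\{V_{1},\ldots,V_{n}\}$, hence all $V_{i}$ have a common dimension $k$, and minimality of $n$ forces $\pi(G)$ to be primitive. As $Z(\GC)G=\langle\XC\rangle$ acts transitively and nontrivially on $n>1$ blocks, some $g\in\XC$ has $\pi(g)\ne 1$, so $\mu(g)\ge 2$; then inequality (\ref{4mu}) gives $(2.9)\pi\ge\DB(g)^{2}\ge 2\mu(g)k\ge 4k$, so $k\in\{1,2\}$. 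Moreover (\ref{4mu}) gives $\mu(g)\le (2.9)\pi/(2k)$ for \emph{every} $g\in\XC$, i.e.\ $\mu(g)\le 4$ when $k=1$ and $\mu(g)\le 2$ when $k=2$.

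In the case $k=1$, if $\pi(G)\ge\AAA_{n}$ we land in the first part of (i), and the description of those $g\in\XC$ with $\pi(g)$ neither trivial nor a transposition is exactly Lemma \ref{impr2} (whose proof uses only (\ref{4mu}), (\ref{4c}) and Lemma \ref{trivial}(iii)). Otherwise, \cite[Corollary 3]{LS} supplies $g\in\XC$ with $\mu(g)>2(\sqrt{n}-1)$; combined with $\mu(g)\le 4$ this forces $n\le 8$, and then $n\ge 5$ since every primitive group of degree $\le 4$ contains $\AAA_{n}$. Running the argument of Lemma \ref{impr2} verbatim, every nontrivial $\pi(g)$ ($g\in\XC$) is a transposition, a $3$-cycle, or a double transposition; a transposition would give $\pi(G)=\SSS_{n}$ and, for $n\ge 5$, a $3$-cycle would give $\pi(G)\ge\AAA_{n}$ by Jordan's theorem, both contrary to assumption. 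Hence $\pi(G)\le\AAA_{n}$ is a primitive group of degree $5\le n\le 8$ generated by double transpositions. Inspecting the short lists of primitive groups of degrees $5,6,7,8$ and discarding those with no suitable generating set (e.g.\ $C_{5}$, $F_{20}$; $C_{7}$, $C_{7}{:}C_{3}$, $PSL_{2}(7)$ on $7$ points; $AGL_{1}(8)$, $A\Gamma L_{1}(8)$, $PSL_{2}(7)$, $PGL_{2}(7)$ on $8$ points, whose nontrivial elements move $\ge 6$ points) leaves precisely $D_{10}$ (degree $5$), $\AAA_{5}$ (degree $6$), $SL_{3}(2)$ (degree $7$, generated by its transvections, which act as double transpositions) and $ASL_{3}(2)$ (degree $8$, generated by affine transvections). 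The statement about $\XC$ is Lemma \ref{impr2} again.

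In the case $k=2$, every nontrivial $\pi(g)$ ($g\in\XC$) is a transposition, so $\pi(G)=\langle\pi(\XC)\rangle$ is transitive and generated by transpositions, hence $\pi(G)=\SSS_{n}$; thus conclusion (iii) of Theorem \ref{main2} will hold with $d/2=n$ once the splitting is in place. If $g\in\XC$ has $\pi(g)=(i\,j)$, then on $V_{i}\oplus V_{j}$ the eigenvalues of $g$ come in pairs $\{\nu,-\nu\}$, so $\ages(g|_{V_{i}\oplus V_{j}})\ge 1$ by Lemma \ref{small1}; together with $\ages(g)\le 1$ and Lemma \ref{trivial}(iii) this forces $g$ to act trivially on every other block and $\ages(g)=1$, and the equality case of Lemma \ref{small1} forces $g|_{V_{i}\oplus V_{j}}$ to have exactly two distinct eigenvalues $\pm\lambda$, each of multiplicity $2$. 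Therefore $\lambda^{-1}g$ is the involution swapping $V_{i}$ and $V_{j}$ via a unitary isomorphism and fixing all other blocks pointwise, in particular a bireflection; the same argument applies to any $g\in G\setminus D$ with $\age(g)\le 1$, giving the asserted properties.

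It remains to establish the splitting $G=D{:}\SSS_{n}$ with $D=G\cap GL_{2}(\CC)^{n}$, and this is the step I expect to need the most care, since a priori there is an $H^{2}(\SSS_{n},D)$ obstruction. The point will be that it vanishes automatically. Since all transpositions are conjugate in $\SSS_{n}=\pi(G)$ and at least one occurs as some $\pi(g)$, every transposition of $\SSS_{n}$ lifts (after conjugating in $G$ and rescaling) to an involution in $Z(\GC)G$ of the same ``block-swap'' shape as above. Pick such lifts $h_{1},\ldots,h_{n-1}$ of the Coxeter transpositions $(i,i+1)$. A block-by-block computation shows $(h_{i}h_{j})^{2}=1$ for $|i-j|\ge 2$ (disjoint swaps commute) and $(h_{i}h_{i+1})^{3}=1$: the element $h_{i}h_{i+1}$ cyclically permutes $V_{i},V_{i+1},V_{i+2}$, and going once around each defining isomorphism occurs together with its inverse, so the cube is the identity. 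Hence $h_{1},\ldots,h_{n-1}$ satisfy the Coxeter presentation of $\SSS_{n}$, generate a subgroup mapped isomorphically onto $\SSS_{n}$ by $\pi$, and thus give a complement to $D$; a routine adjustment of the scalars (using that these bireflections have determinant $1$) places the complement inside $G$. This yields $G=D{:}\SSS_{n}$, completing (ii).
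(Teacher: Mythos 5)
Your proposal follows essentially the same reduction as the paper: minimality of $n$ gives primitivity of $\pi(G)$, the bound from Corollary \ref{bound2} combined with (\ref{4mu}) forces $\dim(V_{i})\in\{1,2\}$, and the two cases are then handled exactly as you describe — Wielandt/Jordan once a $2$- or $3$-cycle appears, Liebeck--Saxl together with a degree count to isolate $D_{10}$, $\AAA_{5}$, $SL_{3}(2)$, $ASL_{3}(2)$ (your explicit enumeration of the small primitive groups generated by double transpositions fills in what the paper merely asserts), and Lemmas \ref{small1}, \ref{trivial}(iv) plus the block-swap eigenvalue computation in the two-dimensional case. All of that matches the paper in substance.

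The step that does not go through as written is the last one. You build the Coxeter involutions $h_{1},\dots,h_{n-1}$ inside $Z(\GC)G$ and then assert that "a routine adjustment of the scalars (using that these bireflections have determinant $1$) places the complement inside $G$." The determinant observation does not do this: writing $h_{i}=\mu_{i}g_{i}$ with $g_{i}\in G$ and $\mu_{i}\in\SA$, $\det(h_{i})=1$ only gives $\mu_{i}^{\dim V}=\det(g_{i})^{-1}$, which says nothing about whether $\mu_{i}\cdot 1_{V}\in G$; and rescaling $h_{i}$ by any scalar other than $\pm 1$ destroys the relation $h_{i}^{2}=1_{V}$, so you cannot "adjust" the section into $G$ while keeping the Coxeter presentation. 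The paper avoids the issue by working from the outset with an element $g_{1}\in G\setminus D$ satisfying $\age(g_{1})\leq 1$ (note: $\age$, not merely $\ages$), for which the eigenvalue analysis forces $g_{1}^{2}=1_{V}$ directly, and then takes $G$-conjugates $g_{i}$ of $g_{1}$; these already lie in $G$, are honest involutions of block-swap shape, and the Coxeter relations close up inside $G$ with no descent needed. To repair your argument you need either to produce such an element of $G$ with $\age\leq 1$ projecting onto a transposition, or to give an actual cocycle argument for why the complement of $\tilde D:=Z(\GC)G\cap GL_{2}(\CC)^{n}$ in $Z(\GC)G$ may be chosen inside $G$.
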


\begin{proof}
Our assumptions imply that $\pi(G)$ is a primitive subgroup of $\SSS_{n}$, and that there must be
some $h \in \XC$ that moves $\mu(g) \geq 2$ subspaces $V_{i}$. By (\ref{4mu}), 
$2\mu(h)\dim(V_{1}) \leq 9.111$ and so $\mu(h)\dim(V_{1}) \leq 4$; in particular, 
$\dim(V_{1}) = 1$ or $2$.  
Consider the former case: $\dim(V_{1}) = 1$; in particular, $\mu(t) \leq 4$ for all
$t \in \XC$. If $\mu(t) = 2$, resp. $3$, for some $t \in \XC$, then $\pi(G)$ is a 
primitive permutation group containing a $2$-cycle, resp. a $3$-cycle, whence  
$\pi(G) = \SSS_{n}$ or $\AAA_{n}$ by \cite[Theorem 13.3]{W}. Otherwise $\pi(t)$ is a 
double transposition for all $t \in \XC$ with $\pi(t) \neq 1$. Thus $\pi(G)$ is a primitive 
subgroup of $\SSS_{n}$ generated by some double transpositions. Assume in addition that
$\pi(G) \not\geq \AAA_{n}$. Then $4 > 2(\sqrt{n}-1)$ by \cite[Corollary 3]{LS}, i.e. 
$n \leq 8$, and we arrive at the primitive permutation groups listed in (i). 
The second claim in (i) follows from Lemma \ref{impr2}(ii).

In the latter case, $\mu(h) = 2$, i.e. $h$ is a transposition. This conclusion in fact holds for 
any $g \in \XC$ with $\pi(g) \neq 1$. Thus $\pi(G)$ is a primitive permutation group generated by 
transpositions, and so $\pi(G) = \SSS_{n}$. Let $D := \Ker(\pi)$ and consider any 
$g \in G \setminus D$ with $\age(g) \leq 1$. Then we may assume that 
$g~:~V_{1} \leftrightarrow V_{2}$ and $g(V_{j}) = V_{j}$ for all $j \geq 3$. 
It is not difficult to see that $\Spec(g|_{V_{1} \oplus V_{2}})$ is the union of two cosets of 
$C_{2} = \langle -1 \rangle$ in $\SA$. By
Lemma \ref{small1} (with $m = 2$) and Lemma \ref{trivial}(iii), the condition $\age(g) \leq 1$
implies that $g^{2} = 1_{V}$, $\age(g) = 1$, $g$ is trivial on each $V_{j}$ with $j \geq 3$;
in particular, $g$ is a bireflection. This argument also shows that $\ages(t) = 1$ for all 
$t \in \XC$ with $\pi(t) \neq 1$. We will apply this observation to suitable inverse images 
(in $G$) of transpositions $(i,i+1)$, $1 \leq i \leq n-1$, of $\SSS_{n}$ to show that $G$ is a split 
extension of $D$ by $\SSS_{n}$. Indeed, denote the element $g$ we have just analyzed by $g_{1}$. 
For any $1 \leq i \leq n-1$, a $G$-conjugate $g_{i}$ of $g_{1}$ will project onto the transposition
$(i,i+1)$ and have age $=1$. Hence our observation (applied to $g_{i}$) yields that 
$$g_{i}~:~V_{i} \leftrightarrow V_{i+1},~~~g_{i}^{2} = 1_{V},~~~(g_{i})|_{V_{j}} = 1_{V_{j}}
  \mbox{ for }j \neq i,i+1.$$ 
Clearly, $(g_{i}g_{j})^{2} = 1_{V}$ if $|i-j| > 1$. Next, if $v \in V_{i}$ then
$$(g_{i}g_{i+1})^{3}~:~v \stackrel{g_{i+1}}{\longmapsto} v \stackrel{g_{i}}{\longmapsto} g_{i}(v) 
  \stackrel{g_{i+1}}{\longmapsto} g_{i+1}g_{i}(v) \stackrel{g_{i}}{\longmapsto} g_{i+1}g_{i}(v)
  \stackrel{g_{i+1}}{\longmapsto} g_{i}(v) \stackrel{g_{i}}{\longmapsto} v,$$
and similarly for all $v \in V_{j}$ with $j \neq i$, whence $(g_{i}g_{i+1})^{3} = 1_{V}$. Thus
$H = \langle g_{1}, \ldots ,g_{n-1}\rangle$ is a quotient of $\SSS_{n}$, and 
$DH = G = D \cdot \SSS_{n}$. It follows that $H \cong \SSS_{n}$. In fact, one can find a basis
$(u_{i},v_{i})$ of each $V_{i}$ such that $H$ acts via permuting the indices of the $u_{i}$'s, resp.
of the $v_{i}$'s:  
\begin{equation}\label{4sn}
  g_{i}~:~u_{i} \leftrightarrow u_{i+1},~v_{i} \leftrightarrow v_{i+1}, ~u_{j} \mapsto u_{j},~
    ~v_{j} \mapsto v_{j}, \mbox{ for }j \neq i,i+1.
\end{equation}     
\end{proof}

\begin{lemma}\label{impr4}
{\sl Let $G < GL(V)$ be a finite irreducible subgroup. Assume that $G$ preserves a decomposition 
$V = V_{1} \oplus \ldots \oplus V_{n}$, with $\dim(V_{i}) = 1$ and $G$ inducing either $\SSS_{n}$ or 
$\AAA_{n}$ while permuting the $n$ subspaces $V_{1}, \ldots ,V_{n}$, and that $n \geq 10$.
Then $G$ contains a complex bireflection of order $3$.}
\end{lemma}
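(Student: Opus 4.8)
The plan is to study the diagonal normal subgroup $D:=\Ker(\pi)$, where $\pi\colon G\to\SSS_n$ records the permutation action on $\{V_1,\dots,V_n\}$. Fix an eigenbasis $(f_i)$ with $\CC f_i=V_i$, so that $D$ consists of diagonal matrices and every preimage in $G$ of an element of $\AAA_n\le\pi(G)$ conjugates $D$ by the corresponding coordinate permutation; since $n\ge 10$, any prescribed shuffle of a bounded number of coordinates can be realised by an \emph{even} permutation, hence lifted to $G$. The starting observation is that irreducibility forces $D$ \emph{not} to lie in the scalars: otherwise $V$ would be, up to a linear character, the permutation module of $\pi(G)\cong G/D$ on $n\ge 2$ points, which is reducible. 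The argument then splits according to the Sylow $3$-subgroup $D_{(3)}$ of $D$.

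First suppose $D_{(3)}$ contains a non-scalar element $d$; after an even coordinate permutation, $d_1\ne d_2$. Picking $\sigma\in G$ with $\pi(\sigma)=(1,2,3)$, the element $d^{\sigma}d^{-1}\in D$ is supported on $\{1,2,3\}$, has coordinate-product $1$, is a nontrivial $3$-element, and after replacing it by an appropriate power has order exactly $3$; write it as $\diag(\alpha,\beta,\gamma,1,\dots,1)$ with $\alpha,\beta,\gamma\in\mu_3$, $\alpha\beta\gamma=1$, not all $1$. If $\{\alpha,\beta,\gamma\}=\{1,\omega,\omega^2\}$ with $\omega=e^{2\pi i/3}$, this matrix already has spectrum $(\omega,\omega^{-1},1,\dots,1)$, hence is a complex bireflection of order $3$. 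Otherwise $\alpha=\beta=\gamma$ is a primitive cube root of unity; conjugating by a preimage of $(3,4)(5,6)\in\AAA_n$ and multiplying by the inverse of the result produces $\diag(1,1,\alpha,\alpha^{-1},1,\dots,1)\in D$, again a complex bireflection of order $3$.

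In the remaining case every $3$-element of $D$ is scalar, so $D_{(3)}=\mu_{3^{c}}\cdot I$ for some $c\ge 0$. I would first produce an element $g_1\in G$ of order $3$ with $\pi(g_1)=(1,2,3)$. Writing $D=D_{(3')}\times D_{(3)}$ and $G_1:=\pi^{-1}(\AAA_n)$, the quotient $G_1/D_{(3')}$ is a central extension of $\AAA_n$ by the $3$-group $D_{(3)}$, which splits because $H^2(\AAA_n,D_{(3)})=\Hom(\Mult(\AAA_n),D_{(3)})=0$ (the Schur multiplier of $\AAA_n$ is $\ZZ/2$ since $n\ge 10$); pulling back a complement and applying Schur--Zassenhaus to the preimage of $\langle(1,2,3)\rangle$, whose kernel $D_{(3')}$ has order prime to $3$, yields $g_1$. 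Since $g_1^3=1$, on the coordinates fixed by $(1,2,3)$ it acts by scalars $a_l\in\mu_3$, and its cycle part on $\{1,2,3\}$ has coordinate-product $1$, so $\Spec(g_1)=(1,\omega,\omega^{-1})\cup(a_4,\dots,a_n)$. Any two order-$3$ preimages of $(1,2,3)$ differ by some $d\in D$ with $d_l\in\mu_3$ for $l\ge 4$ and $d_1d_2d_3=1$, and in the present case (because $D_{(3)}$ is scalar) such a $d$ restricts to a single $\mu_3$-scalar on the coordinates $\ge 4$. Comparing $g_1$ with a conjugate $g_1^{\tau}$, $\tau$ a preimage of a product of two transpositions supported on four coordinates $\ge 4$, and reading off a fifth coordinate $\ge 4$, that scalar must be $1$; hence $a_4=\dots=a_n=:\alpha\in\mu_3$. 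If $c\ge 1$ then $\alpha^{-1}I\in D$ and $g_1\cdot\alpha^{-1}I$ is a complex bireflection of order $3$. If $c=0$ then all order-$3$ preimages of a fixed $3$-cycle share the same such scalar; applying this to $(1,2,3)^{-1}$ and comparing $g_1^{-1}$ (scalar $\alpha^{-1}$) with an $\AAA_n$-conjugate of $g_1$ (scalar $\alpha$) forces $\alpha=\alpha^{-1}$, so $\alpha=1$ and $g_1$ itself is the desired complex bireflection of order $3$.

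The hardest part is the bookkeeping of the last case: establishing that an order-$3$ lift of a $3$-cycle exists at all (this is exactly where $\Mult(\AAA_n)=\ZZ/2$, valid because $n\ge 10$, enters, via the vanishing of $H^2$ with $3$-group coefficients), and then controlling how two such lifts can differ by a diagonal matrix, with the split into $c\ge 1$ (a central cube root of unity is available to renormalise) and $c=0$ (play a $3$-cycle off against its inverse). Throughout, the hypothesis $n\ge 10$ is what supplies both the Schur-multiplier fact and the even permutations needed to move supports around.
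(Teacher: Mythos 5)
Your proof is correct, and it takes a genuinely different route from the paper's. The paper goes for a single global cohomological splitting: it observes that every $\AAA_{n}$-chief factor of $D$ is a trivial module or the permutation heart $\HC$ in some odd characteristic, invokes $H^{2}(\AAA_{n},\IC)=0$ together with the nontrivial result $H^{2}(\AAA_{n},\HC)=0$ ($n\geq 10$) from \cite{KP} to split $O_{2'}(D)\cdot\AAA_{n}$ over $O_{2'}(D)$, picks an order-$3$ lift $g$ of a $3$-cycle inside $O_{2}(D)\cdot\AAA_{n}$, and then cleans up the diagonal part of $g$ by a short commutator trick (with a pigeonhole step to equalize two $d_{i}$'s). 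You instead split according to whether the Sylow $3$-subgroup $D_{(3)}$ is scalar: if not, you construct the bireflection directly via a commutator with a lifted $3$-cycle and a further conjugation, using no cohomology at all; if so, the central extension $1\to D_{(3)}\to G_{1}/D_{(3')}\to\AAA_{n}\to 1$ has trivial-module coefficients, so the splitting needs only $\Mult(\AAA_{n})=\ZZ/2$ (not the \cite{KP} input), plus Schur--Zassenhaus for the prime-to-$3$ kernel, followed by an invariance argument comparing $(1,2,3)$ with $(1,3,2)$ to pin the residual scalar to $1$. What your approach buys is that the \cite{KP} cohomology vanishing is avoided entirely (your proof would in fact run for $n\geq 8$), at the cost of a longer case analysis; the paper's argument is shorter once \cite{KP} is granted. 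One small imprecision worth flagging: the opening remark that $D\not\leq Z(GL(V))$ because otherwise ``$V$ would be, up to a linear character, the permutation module'' is not quite right as stated, since the central extension $G/D\cong\pi(G)$ may carry a nontrivial projective $2$-cocycle; the correct justification is the degree count for projective representations of $\SSS_{n}$/$\AAA_{n}$ (as the paper does in the proof of Theorem \ref{impr-b}). This does not create a gap, however, because your two cases on $D_{(3)}$ exhaust all possibilities and the opening observation is never actually invoked in the body of the argument.
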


\begin{proof}
We will represent elements of $G$ by their matrices with respect to a basis 
$(e_{1}, \ldots ,e_{n})$ with $V_{i} = \langle e_{i} \rangle_{\CC}$.   
Let $D$ be the normal subgroup of $G$ consisting of diagonal elements, so that 
$\SSS_{n} \geq G/D \rhd A := \AAA_{n}$. Notice that, as an $A$-module, every chief factor of $D$ is
either the trivial module $\IC$, or the heart $\HC$ of the natural permutation module, in 
characteristic $p$ for some prime $p$. It is well known that $H^{2}(A,\IC) = 0$ if $n \geq 8$ and 
$p > 2$. Furthermore, $H^{2}(A,\HC) = 0$ if $n \geq 10$ by the main result of \cite{KP}. It follows
that $O_{2'}(D) \cdot A$ splits over $O_{2'}(D)$. Since $D = O_{2}(D) \times O_{2'}(D)$, we may assume
that $G$ contains a subgroup $H = O_{2}(D)\cdot A$. 

Now inside $H$ we can find an inverse image $g$ of order $3$ of a $3$-cycle in $A$. We may assume
that 
$g = \diag\left(\begin{pmatrix} & a & \\ & & b\\c & &\end{pmatrix},d_{1}, \ldots ,d_{n-3}\right)$, 
where $abc = 1$ and $d_{i}^{3} = 1$. Since
$n-3 \geq 4$, we may also assume that $d_{4} = d_{5}$. Next, in $H$ we can find an element
$h = \diag\left(\begin{pmatrix} & x\\ y &\end{pmatrix},z,\begin{pmatrix} & u\\ v &\end{pmatrix},
 w_{1},\ldots,w_{n-5}\right)$. Then 
$[g,h] = \diag\left(\begin{pmatrix} & & a' \\ b' & & \\& c' &\end{pmatrix},1, \ldots ,1\right)$, 
where $a'b'c' \neq 0$; in particular, it permutes $V_{1}$, $V_{2}$, $V_{3}$ cyclically. Notice that,
$[g,h] \equiv g^{-1} (\mod O_{2}(D))$ and so $[g,h] \in O_{2}(D) \cdot C_{3}$, where 
$C_{3} = \langle g \rangle$. Hence, a suitable 
$2$-power $t$ of $[g,h]$ has order $3$, acts as a $3$-cycle on $\{V_{1},V_{2},V_{3}\}$, and fixes 
every $e_{i}$ with $i \geq 4$. Clearly, such a $t$ is conjugate to 
$\diag(e^{2\pi i/3},e^{4\pi i/3},1, \ldots, 1)$ and so it is a complex bireflection of order $3$.     
\end{proof}

\begin{lemma}\label{d-sn}
{\sl Let $\TC = GL_{1}(\CC)^{n}$ be a maximal torus of $\GC = GL_{n}(\CC)$, so that
$N := N_{\GC}(\TC) = \TC \cdot \SSS_{n}$, and $n \geq 5$. Assume $D < \TC$ is a finite 
subgroup which is normalized by the subgroup $\TC \cdot \AAA_{n}$ of $N$. Then $D \lhd N$.}
\end{lemma}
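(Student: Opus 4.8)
The plan is to strip the statement down to a single module-theoretic question and attack that. Since $\TC$ is abelian it normalizes $D$ automatically, so the hypothesis says exactly that $D$ is stable under the coordinate-permutation action of $\AAA_n$, and the conclusion is equivalent to $D$ being stable under one transposition, say $\tau := (1\,2)$ (then $\langle \AAA_n,\tau\rangle=\SSS_n$ and $\TC$ contributes nothing). Next, writing $D=\prod_p D_{(p)}$ as the product of its primary components, each $D_{(p)}$ is characteristic in $D$, so both the hypothesis and the conclusion are equivalent to the corresponding assertions for every $D_{(p)}$; hence I may assume $D$ has exponent a prime power $p^a$. Then $D\leq \mu_{p^a}^n$, and identifying $\mu_{p^a}\cong\ZZ/p^a\ZZ$ turns $D$ into an $\AAA_n$-stable $\ZZ/p^a\ZZ$-submodule of the $\SSS_n$-permutation module $M:=(\ZZ/p^a\ZZ)^n$, and I must show it is $\SSS_n$-stable.

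The key input is the classical fact that for $n\geq 5$ the fully deleted permutation module of $\AAA_n$ over any field is (absolutely) irreducible; since the composition factors of the $\SSS_n$-permutation module $\FF_p^n$ are only the trivial module and this fully deleted module, every simple subquotient of $M$ restricts irreducibly to $\AAA_n$ and is $\tau$-stable. Now suppose, for contradiction, that $\tau D\tau^{-1}\neq D$. Then $D^-:=D\cap\tau D\tau^{-1}$ and $D^+:=D\cdot\tau D\tau^{-1}$ are both $\langle\AAA_n,\tau\rangle=\SSS_n$-stable, and $W:=D^+/D^-$ decomposes as an $\AAA_n$-module as $W=U\oplus\tau U$ with $U:=D/D^-\neq 0$ and $\tau$ interchanging the two summands. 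Choosing a minimal nonzero $\AAA_n$-submodule $U_1\subseteq U$, the subgroup $U_1\oplus\tau U_1$ is $\SSS_n$-stable; by the previous sentence $U_1$ is the restriction of a simple $\FF_p\SSS_n$-module, so $\tau U_1\cong U_1$ as $\AAA_n$-modules. For $p$ odd the index $[\SSS_n:\AAA_n]=2$ is invertible, so $U_1\oplus\tau U_1$ is $\SSS_n$-semisimple, hence isomorphic to $\widetilde T\oplus(\widetilde T\otimes\mathrm{sgn})$ for the two extensions of $U_1$; but then $\tau\in\SSS_n$ preserves each of the two $\AAA_n$-isotypic pieces $U_1$, contradicting that it swaps $U_1$ and $\tau U_1$. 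This gives $\tau D\tau^{-1}=D$, i.e. $D\lhd N$.

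The point where I expect the real work is the excluded case $p=2$, where the characteristic divides $[\SSS_n:\AAA_n]$ and the sign character is trivial, so the Clifford-theoretic dichotomy degenerates: here $U_1$ extends to a \emph{unique} simple $\FF_2\SSS_n$-module $\widetilde T$, $W'=U_1\oplus\tau U_1$ has both composition factors $\cong\widetilde T$, and a priori $W'$ could be a non-split self-extension of $\widetilde T$ splitting over $\AAA_n$. I would handle this by using that $U_1\in\{\,\FF_2,\ \text{fully deleted }\FF_2\AAA_n\text{-module}\,\}$ is absolutely irreducible for $n\geq 5$, so $\mathrm{End}_{\AAA_n}(U_1)=\FF_2=\mathrm{End}_{\SSS_n}(\widetilde T)$; consequently the three $\AAA_n$-submodules of $U_1\oplus\tau U_1$ isomorphic to $U_1$ coincide with the three $\SSS_n$-submodules isomorphic to $\widetilde T$, all of which $\tau$ fixes, again contradicting the swap — unless $W'$ is the uniserial self-extension, which I would then rule out by checking it is not realized as a subquotient of $M=(\ZZ/2^a\ZZ)^n$ (for the trivial factor this uses $H^1(\AAA_n,\FF_2)=0$ since $\AAA_n$ is perfect for $n\geq 5$, and a short direct inspection showing $\FF_2^n$ contains no $\SSS_n$-submodule that is a non-trivial self-extension of the trivial or the fully deleted module; the $p$-power layer case follows by filtering $M$ by $2M\supset 4M\supset\cdots$). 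An alternative, purely computational route for the $p=2$ part — producing elements of $D$ of the form $(d_2d_1^{-1},d_1d_2^{-1},1,\dots,1)$ up to a residue supported on two other coordinates, using $(1\,2)(i\,j)$- and $(i\,j)(k\,l)$-conjugates, and cancelling the residue against a second such element since $n\geq 5$ leaves enough coordinates — is also worth pursuing and would make the whole argument self-contained.
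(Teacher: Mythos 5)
Your reduction to the $p$-primary case and to showing that every $\AAA_n$-submodule of $M=(\ZZ/p^a\ZZ)^n$ is $\SSS_n$-stable matches the paper, but the Clifford-theory step that is supposed to close the argument has a genuine gap. Having built $W'=U_1\oplus\tau U_1$ with $U_1\neq\tau U_1$, you argue (for $p$ odd) that $W'\cong\widetilde T\oplus(\widetilde T\otimes\mathrm{sgn})$ as $\SSS_n$-modules and then conclude ``$\tau$ preserves each of the two $\AAA_n$-isotypic pieces $U_1$, contradicting that it swaps $U_1$ and $\tau U_1$.'' This does not follow. Restricted to $\AAA_n$, $W'$ is a \emph{single} isotypic piece, $U_1^{\oplus 2}$, whose $\AAA_n$-submodules isomorphic to $U_1$ form a projective line over $\End_{\AAA_n}(U_1)$; the two $\SSS_n$-invariant subspaces $\widetilde T$ and $\widetilde T\otimes\mathrm{sgn}$ are just two points of that line, and there is no reason for $U_1$ and $\tau U_1$ to coincide with them. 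Indeed, on the abstract $\SSS_n$-module $\widetilde T\oplus(\widetilde T\otimes\mathrm{sgn})$ (with $\widetilde T\not\cong\widetilde T\otimes\mathrm{sgn}$, which is the generic situation since $D^{(n-1,1)}\otimes\mathrm{sgn}\cong D^{(2,1^{n-2})}\neq D^{(n-1,1)}$ for odd $p$) there are many $\AAA_n$-submodules that are not $\SSS_n$-stable, so a purely Clifford-theoretic argument using only the shape of the composition factors cannot produce the contradiction. What is missing is information forcing such a ``split'' $W'$ not to occur as a subquotient of $M$. The $p=2$ case you explicitly leave open, so there the proof is incomplete by your own account.

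The paper's proof supplies exactly the missing structural input: when $p\nmid n$, the sum-zero submodule $L=\{\sum a_ie_i:\sum a_i=0\}$ of $M$ is \emph{uniserial} as an $\AAA_n$-module, with the heart $\HC$ as the unique composition factor on every layer of the $p$-adic filtration $L\supset pL\supset p^2L\supset\cdots$. Uniseriality makes the $\AAA_n$-submodule lattice of $L$ a chain, so every submodule is determined by its composition length and is therefore automatically $\SSS_n$-invariant (since $t\in\SSS_n$ permutes submodules and preserves length). After splitting $M=A\oplus B$ with $A$ the trivial line and $B=L$, the fact that $A$ and $B$ share no composition factor over $\AAA_n$ forces any $\AAA_n$-submodule of $M$ to decompose compatibly, and the case $p\mid n$ is handled by passing to the point-stabilizers $\AAA_{n-1}\leq\SSS_{n-1}$. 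I would suggest you replace the Clifford-theoretic contradiction by this uniseriality argument; alternatively, your ``purely computational'' fallback idea (producing enough elementary diagonal elements to cancel residues, using $n\geq5$) has more of a chance, but as written it is only a sketch and would need to be carried out.
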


\begin{proof}
1) Without loss we may assume that $D$ is a $p$-group for some prime $p$. If 
$\exp(D) = q = p^{c}$, then $D$ is contained in $T := \{x \in \TC \mid x^{p^{c}} = 1\}$. Using
the additive notation, we may identify $T$ with the natural permutation 
$R\SSS_{n}$-module $M := \langle e_{1}, \ldots ,e_{n} \rangle_{R}$, where 
$R := \ZZ/q\ZZ$. It suffices
now to prove that any $\AAA_{n}$-submodule $N$ of $M$ is $\SSS_{n}$-invariant. 

2) Assume that ${p\not{|}n}$, with $n \geq 5$ or $(n,p) = (4,5)$, and consider the module  
$L := \{ \sum^{n}_{i=1}a_{i}e_{i} \mid a_{i} \in R,~\sum^{n}_{i=1}a_{i} = 0\}$. Let 
$\HC$ denote the heart of the natural permutation $\FF_{p}\SSS_{n}$-module. The 
condition on $(n,p)$ implies that $\HC$ is irreducible over $\AAA_{n}$. Now observe that 
the $\AAA_{n}$-module $L$ is uniserial, with $\HC$ as the 
unique composition factor. Hence any $\AAA_{n}$-submodule $Y$ of $L$ is 
$\SSS_{n}$-invariant. (Indeed, if $t \in \SSS_{n}$ then the 
$\AAA_{n}$-modules $Y$ and $tY$ have same composition length and so 
$Y = tY$ as $L$ is uniserial.)

3) Consider the case ${p\not{|}n}$. Then 
$M = A \oplus B$ as $\SSS_{n}$-modules, where 
$A = \langle \sum^{n}_{i=1}e_{i}\rangle_{R}$, and 
$B = \{ \sum^{n}_{i=1}a_{i}e_{i} \mid a_{i} \in R,~\sum^{n}_{i=1}a_{i} = 0\}$. By the result of
2) applied to the $\SSS_{n}$-module $L := B$, any $\AAA_{n}$-submodule $Y$ of $B$ is 
$\SSS_{n}$-invariant. On the other hand, $\SSS_{n}$ acts trivially on $A$, whence any 
$\AAA_{n}$-submodule $X$ of $A$ is obviously $\SSS_{n}$-invariant. 
Now set $X:= N \cap A$ and $Y := N \cap B$. Observe that any composition factor of the 
$\AAA_{n}$-module $N/(X \oplus Y)$ is a common composition factor of $M/A \cong B$ and 
$M/B \cong A$. Hence $N = X \oplus Y$ and so it is $\SSS_{n}$-invariant.    

4) Finally, we assume $p|n$ and consider the natural subgroups $\AAA_{n-1}$ and 
$\SSS_{n-1}$ in $\SSS_{n}$, which fix $e_{1}$. Then 
$M = A \oplus B$ as $\SSS_{n-1}$-modules, where 
$A = \langle e_{1},\sum^{n}_{i=2}e_{i}\rangle_{R}$, and 
$B = \{ \sum^{n}_{i=2}a_{i}e_{i} \mid a_{i} \in R,~\sum^{n}_{i=2}a_{i} = 0\}$. 
Then the conclusion of 2) applied to the $\SSS_{n-1}$-module $L := B$ implies that 
any $\AAA_{n-1}$-submodule $Y$ of $B$ is $\SSS_{n-1}$-invariant.
Also, $\SSS_{n-1}$ acts trivially on $A$, whence any $\AAA_{n-1}$-submodule $X$ of 
$A$ is obviously $\SSS_{n-1}$-invariant. 
Now set $X:= N \cap A$ and $Y := N \cap B$. As in 3), we see that $N = X \oplus Y$, and 
so it is $\SSS_{n-1}$-invariant. Thus 
$N$ is invariant under $\langle \AAA_{n},\SSS_{n-1} \rangle = \SSS_{n}$.     
\end{proof}

One of the main results of this subsection is the following

\begin{theor}\label{impr-a}
{\sl Let $G < \GC := GL(V)$ be a finite irreducible subgroup that preserves a decomposition 
$V = V_{1} \oplus \ldots \oplus V_{n}$, with $n > 1$ smallest possible. Assume in 
addition that $n \geq 3$ and $\dim(V_{i}) = 1$.

{\rm (i)} Assume $G$ satisfies $\diam$ and contains a non-central element $g$ with 
$0 < \ages(g) < 1$. Assume in addition that $\pi(G) \geq \AAA_{n}$, 
where $\pi$ denotes the permutation action of $G$ on $\{V_{1}, \ldots, V_{n}\}$. 
Then there is a finite subgroup $Z < Z(\GC)$ such that $ZG$ contains a complex reflection. 

{\rm (ii)} If $(G,V)$ is a basic non-RT pair, then there is a finite subgroup $Z < Z(\GC)$ and 
a complex reflection group $H = G(d,1,n)$ with $d > 1$ (in the notation of \cite{ST}) such that $ZG = ZH$.
Conversely, any $G(d,1,n)$ with $d > 1$ yields a basic non-RT pair.}
\end{theor}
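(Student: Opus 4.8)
The plan is to run the analysis through the reduction machinery already in place: part (i) of the theorem is essentially extracted from Lemmas \ref{impr2}, \ref{impr3}, \ref{impr4} and \ref{d-sn}, and part (ii) is a refinement of the same circle of ideas specialized to basic non-RT pairs. I would organize the proof around the subgroup $D = \Ker(\pi) < \TC := GL_1(\CC)^n$ of diagonal elements of $G$, so that $\pi(G) \geq \AAA_n$ acts on $D$; by Lemma \ref{d-sn} (applicable since $n \geq 5$, with the small cases $n = 3, 4$ handled directly) any $\AAA_n$-stable finite subgroup of $\TC$ is normalized by $\SSS_n$, which lets me pass freely between $G$, $Z(\GC)G$, and a possibly larger monomial group $\TC$-extension without losing irreducibility or the permutation structure.

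For part (i): take a non-central $g \in G$ with $0 < \ages(g) < 1$. By Lemma \ref{impr2}, either $\pi(g)$ is trivial or a $2$-cycle, or $g$ is (a scalar multiple of) a complex bireflection with $\ages(g) = 1$ — the latter is excluded since $\ages(g) < 1$. If $\pi(g)$ is a $2$-cycle, then the analysis in the proof of Lemma \ref{impr3}(ii), applied with $\dim(V_i) = 1$, shows $\Spec(g|_{V_i \oplus V_j})$ is a union of two cosets of $\langle -1 \rangle$ and, using Lemma \ref{small1} with $m = 1$ (since now $\dim = 2$, there is a single $\pm\al_1$ pair), forces some $\mu g$ to be a reflection, or else $\ages(g) \geq 1$; either way we either directly obtain a reflection in $ZG$ for a suitable finite $Z < Z(\GC)$, or are reduced to $\pi(g) = 1$, i.e. $g \in D$ non-central. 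In that remaining case $g = \diag(\lam_1, \dots, \lam_n)$ with the $\lam_i$ not all equal and $\sum_j \min\{r_j, 1-r_j\}$ small; since $D$ is $\SSS_n$-invariant by Lemma \ref{d-sn}, conjugating $g$ by transpositions and multiplying shows $D$ contains diagonal elements differing in exactly two entries, and a short computation on the eigenvalue multiset (using $\ages(g) < 1$ and Proposition \ref{arc}) produces an element of $D$ of the shape $\diag(\zeta, \zeta^{-1}, 1, \dots, 1)$ or, after rescaling by $Z$, a genuine complex reflection $\diag(\eta, 1, \dots, 1)$. The key point is that $\ages(g) < 1$ strictly is what rules out the bireflection alternative and corners us into the reflection case.

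For part (ii): if $(G,V)$ is a basic non-RT pair then $G = \langle g^G\rangle$ for every non-central $g$ with $\age(g) < 1$; in particular such $g$ exist, so part (i) applies and $ZG$ contains a complex reflection $s$ for some finite $Z < Z(\GC)$. Let $d$ be the order of $s$ (we get $d > 1$; $d = 1$ would make $s$ trivial). Conjugating $s$ by $\pi$-preimages of transpositions — legitimate since $D$ is $\SSS_n$-invariant — generates inside $ZG$ the full group $\mu_d^n \rtimes \SSS_n = G(d,1,n)$ of monomial matrices with $d$-th-root-of-unity entries; here $n$ smallest possible plus $\pi(G) \geq \AAA_n$ forces $\pi(ZG) = \SSS_n$ (a $3$-cycle or double transposition would have to be a bireflection of age exactly $1$, which is consistent, so I should argue instead that $\langle s^{ZG}\rangle$ surjects onto $\pi(ZG) \supseteq \AAA_n$ and contains the diagonal part, hence equals $G(d,1,n)$ or $G(d,1,n) \cap (\TC \rtimes \AAA_n)$; irreducibility then pins it to $G(d,1,n)$). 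Since $s \in G(d,1,n) \leq ZG$ and $s$ is non-central of age $1/d < 1$, the basic non-RT condition gives $ZG = Z\langle s^G\rangle \leq ZG(d,1,n) \leq ZG$, whence $ZG = ZG(d,1,n)$. For the converse, for $G = G(d,1,n)$ with $d > 1$ one checks directly that every non-central $g$ with $\age(g) < 1$ is, up to scalar, a complex reflection $\diag(\zeta, 1, \dots, 1)$ (any non-scalar diagonal element moving $\geq 2$ coordinates or any element with $\pi(g) \neq 1$ has $\ages \geq$ values computed via Lemma \ref{impr2} and \eqref{4c} that are $\geq 1$), and the reflections of a given order generate $G(d,1,n)$ — indeed they generate the whole complex reflection group — so $G = \langle g^G \rangle$ for each such $g$, giving a basic non-RT pair.

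The main obstacle I anticipate is the bookkeeping in part (ii): precisely controlling which monomial subgroup $\langle s^{ZG}\rangle$ is — i.e. ruling out that it lands in a proper $G(d,e,n)$ with $e > 1$ or in the index-$2$ subgroup sitting over $\AAA_n$ — and matching it up with the $G(d,1,n)$ in the Shephard–Todd notation. This requires combining the irreducibility of $G$ on $V$, the minimality of $n$, and a careful look at which scalars $Z$ must be adjoined so that $ZG$ contains an honest reflection rather than merely a $\diag(\zeta, \zeta^{-1}, 1, \dots, 1)$; the determinant/scalar juggling is where a clean argument is least automatic and where I would spend the most care.
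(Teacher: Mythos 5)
Your strategic outline is on the right track: reduce to the diagonal subgroup $D = \Ker(\pi)$, use Lemma \ref{d-sn}, and in part (ii) aim for the Shephard--Todd groups $G(d,1,n)$. But there are concrete gaps in part (i) that reflect a missing key idea.

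\textbf{Part (i), $\pi(g)$ a $2$-cycle.} You claim that Lemma \ref{small1} with $m=1$ ``forces some $\mu g$ to be a reflection, or else $\ages(g) \geq 1$.'' This is false. Lemma \ref{small1} only gives $\ages(g)\geq 1/2$, with equality iff $g$ has exactly two distinct eigenvalues; it says nothing about the interval $(1/2,1)$. Concretely, if $\Spec(g)=\{1,-1,e^{2\pi i\cdot 0.1},1,\dots,1\}$ then $\age(g)=0.6<1$, yet $g$ has three eigenvalues and no scalar multiple of $g$ is a reflection. The paper closes this case by passing to $g^2$, which is diagonal with $\age(g^2)\leq 2\cdot\age(g)-1<1$: if $g^2$ is non-scalar one is reduced to the diagonal case, and if $g^2$ is scalar an explicit computation (the entries $c_j$ must all equal $\pm\sqrt{ab}$, and $\age(g)<1$ kills the minus signs) exhibits a reflection. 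Without this step your argument does not reach a reflection in the example above.

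\textbf{Part (i), $g\in D$.} Your sketch --- conjugating by transpositions to get elements differing in two slots, then ``a short computation \ldots\ produces \ldots\ a genuine complex reflection $\diag(\eta,1,\dots,1)$'' --- cannot work as stated, because no rescaling of $\diag(\zeta,\zeta^{-1},1,\dots,1)$ is a reflection when $n\geq 3$. The paper instead runs a proof by contradiction: assume $ZG$ never contains a reflection, introduce $A$ (first entries), $B$ (quotients $x_1/x_2$), and $C$ (diagonal matrices with entries in $B$ and determinant $1$), show $C\leq D$ and $DZ_1=CZ_1$ for $Z_1=A\cdot 1_V$, derive $|B|\mid n$, and finally use the observation that elements of $C$ lie in $SL(V)$, so their age is a nonnegative integer --- hence any element of $D$ with $\ages<1$ is a scalar multiple of the identity, contradicting non-centrality. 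This arithmetic input (integrality of $\age$ on $SL(V)$) is the crux and is absent from your plan. In part (ii) you correctly anticipate the $e=1$ issue but your claim that conjugating the reflection $s$ by transposition preimages ``generates inside $ZG$ the full group $\mu_d^n\rtimes\SSS_n$'' is not justified (preimages of transpositions are monomial, not plain permutation matrices); the paper instead identifies $H=\langle (\lam g)^G\rangle$ as an imprimitive complex reflection group, invokes the Shephard--Todd classification to get $H=G(de,e,n)$, and then rules out $e>1$ by exhibiting a diagonal non-central element of small age, contradicting the basic non-RT hypothesis.
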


\begin{proof}
Fix a basis vector $e_{i}$ for each $V_{i}$, and let $D := \Ker(\pi) \lhd G$ be consisting of 
all the elements of $G$ that act diagonally on the basis $(e_{1}, \ldots ,e_{n})$. 

\smallskip
1) Here we show that {\it if there is an element $g \in G \setminus D$ with $\age(g) < 1$, then
either 

{\rm (a)} $D$ contains a non-scalar element $h$ with $\age(h) < 1$, or 

{\rm (b)} $\lam g$ is a reflection for some $\lam = e^{-2\pi it}$, with 
$0 \leq t < 1/2n$ and $\lam^{2} \cdot 1_{V} \in G$.}
 
For, by Lemma \ref{impr2}, $g$ has the matrix 
$\diag\left( \begin{pmatrix}0 & a\\b & 0\end{pmatrix},c_{3}, \ldots ,c_{n}\right)$ in the given
basis, for some $a,b,c_{i} \in \CC^{\times}$. Then 
$\Spec(g) = \{\sqrt{ab},-\sqrt{ab},c_{3}, \ldots ,c_{n}\}$. Since $g$ has finite order, we may 
write $c_{j} = e^{2\pi ir_{j}}$ with $0 \leq r_{j} < 1$ for $j > 2$ and 
$\{\sqrt{ab},-\sqrt{ab}\} = \{e^{2\pi ir_{1}},e^{2\pi i(r_{1}+1/2)}\}$ with 
$0 \leq r_{1} < 1/2$. By our assumptions, $1 > \age(g) = 1/2 + 2r_{1} + \sum^{n}_{i=3}r_{i}$, and so
$2r_{1}+ \sum^{n}_{i=3}r_{i} < 1/2$. Observe that 
$g^{2} = \diag(ab,ab,c_{3}^{2}, \ldots ,c_{n}^{2})$ and 
$\age(g^{2}) \leq 4r_{1} + 2\sum^{n}_{i=3}r_{i} < 1$. Now if $g^{2}$ is non-scalar, then we can
set $h = g^{2}$. Assume $g^{2}$ is scalar; in particular, $ab = c_{i}^{2}$ for all $i > 2$ and
$c_{3}^{2} \cdot 1_{V} = g^{2} \in G$. 
Notice that $c_{3}$ has finite order in $\SA$ as $|g|$ is finite. Then 
$\Spec(c_{3}^{-1}g) =  \{1,-1,1, \pm 1, \ldots ,\pm 1\}$. By Lemma \ref{small2} (with $m = 2$),
the condition $\age(g) < 1$ now implies that 
$\Spec(c_{3}^{-1}g) =  \{1,-1,1, \ldots ,1\}$, and so $c_{3}^{-1}g$ is a reflection. Finally,
$\Spec(g) = \{ -c_{3},c_{3}, \ldots ,c_{3} \}$ and $\age(g) < 1$, so $c_{3} = e^{2\pi it}$ with
$0 \leq t < 1/2n$. Thus $\lam := c_{3}^{-1}$ has the properties specified in (b). 

\smallskip
2) Now we consider the situation of (ii). Then $G$ contains some non-central element $g$ 
with $\age(g) < 1$ such that $G = \langle g^{G} \rangle$. Since $G$ is irreducible and 
$n \geq 3$, $G \neq D$, and so $g \notin D$. Now we can apply the result of 1) to the element 
$g$. In the case $D \ni h$ with $h$ non-scalar and $\age(h) < 1$, we would have 
$G = \langle h^{G} \rangle \leq D$ (as $(G,V)$ is a basic non-RT pair), a contradiction. Hence
$\lam g$ is a reflection for some $\lam$ as specified in (b). Since 
$G = \langle g^{G} \rangle$, we see that 
$ZG = ZH$ for $Z = \langle \lam \cdot 1_{V} \rangle < Z(\GC)$ and 
$H = \langle (\lam g)^{G} \rangle$ is a finite group generated by reflections. Since the c.r.g. $H$
acts imprimitively on $V$ (inducing $\SSS_{n}$ on $\{V_{1}, \ldots ,V_{n}\}$), by \cite{ST} we must 
have that $H = G(de,e,n)$ for some positive integers $d,e$. Assume $e > 1$. Then $ZG$ contains
non-central elements $r := \diag(1, \ldots ,1,e^{2\pi i/e})$ and $\lam^{-1}r$, with 
$\age(r) = 1/e \leq 1/2$ and $\age(\lam^{-1}r) = 1/e + n/t < 1$. If $\lam \cdot 1_{V} \in G$,
then $G = ZG$ contains $r$. Otherwise, $G$ has index $2$ in $ZG$ (since $\lam^{2} \cdot 1_{V} \in G$)
and so either $r$ or $\lam^{-1}r$ belongs to $G$. In either case, we see that $D$ contains 
a non-central element $s \in \{r,\lam^{-1}r\}$ with $\age(s) < 1$ and   
$\langle s^{G} \rangle \leq D < G$, a contradiction. So $e = 1$. Also $d > 1$ as otherwise 
$H = G(1,1,n) = \SSS_{n}$ is reducible on $V$.

Conversely, we show that any c.r.g. $H = G(d,1,n) = D:\SSS_{n}$ with $d > 1$ yields a basic non-RT pair. 
Indeed, since $D < SL(V)$, for any non-central $x \in D$ we have $0 < \age(x) \in \ZZ$ and so 
$\age(x) \geq 1$. Now consider any non-central $y \in H$ with $\age(y) < 1$ (such elements exist, for 
instance, one can take any transposition in $\SSS_{n}$). By our observation and by Lemma \ref{impr2}, 
$y$ induces a transposition, say $(12)$, on $\{V_{1}, \ldots ,V_{n}\}$. We need to show that 
$K := \langle y^{H} \rangle$ coincides with $H$. It is clear that $KD = H$. Next, for  
$\delta := e^{2\pi i/d}$ we have $z := \diag(\delta,1,\delta^{-1},1, \ldots ,1) \in D$ and 
$K \ni yzy^{-1}z^{-1} = \diag(\delta^{-1},\delta,1, \ldots ,1)$. It is now easy to see that the set
of all $K$-conjugates of $yzy^{-1}z^{-1}$ generates $D$, and so $K \geq KD = H$.                 

\smallskip
3) From now on we will assume that {\it we are in the situation of {\rm (i)} but there is no
finite subgroup $Z < Z(\GC)$ such that $ZG$ contains a complex reflection.}  By Lemma 
\ref{trivial}(ii), there is $\mu \in \SA$ of finite order such that 
$\age(\mu g) = \ages(g) < 1$. Replacing $G$ by $\langle \mu \cdot 1_{V} \rangle \cdot G$ and 
$g$ by $\mu g$, we may (and will) assume that $\age(g) < 1$. By the conclusion of 1), we 
see that $D$ contains non-central elements $h$ with $\age(h) < 1$. By Lemma \ref{d-sn},
$D$ is normalized by the monomial subgroup $S \cong \SSS_{n}$ of $GL(V)$ 
(that act via permuting the basis vectors $e_{1}, \ldots ,e_{n}$). In what follows we will 
freely conjugate elements of $D$ by elements of $S$.   
  
Let $A = \{ x_{1} \mid \exists ~\diag(x_{1}, \ldots ) \in D\}$ be the finite subgroup of $\SA$ 
consisting of all the first diagonal entries of all the elements in $D$. Also, let 
$$\begin{array}{c}B = \{ x_{1}/x_{2} \mid \exists ~\diag(x_{1},x_{2}, \ldots) \in D\},\\ 
  \vspace{-3mm}\\
  C = \{ \diag(z_{1}, \ldots ,z_{n}) \mid z_{i} \in B,\prod^{n}_{i=1}z_{i} = 1\}.\end{array}$$ 
Observe that $C \leq D$. Indeed, if $x = \diag(x_{1},x_{2},x_{3}, \ldots ,x_{n}) \in D$, then 
some $S$-conjugate of $x$ equals $y = \diag(x_{2},x_{1},x_{3}, \ldots ,x_{n}) \in D$, and so
$D \ni xy^{-1} = \diag(\al,\al^{-1},1, \ldots,1)$ with $\al = x_{1}/x_{2} \in B$. Conjugating
$xy^{-1}$ suitably, we see that any diagonal matrix with spectrum 
$\{\al,\al^{-1},1, \ldots ,1\}$ (with counting multiplicities and 
$\al \in B$) belongs to $D$. Now any matrix 
$\diag(z_{1}, \ldots ,z_{n})$ with $z_{i} \in B$ and $\prod^{n}_{i=1}z_{i} = 1$, is the product of
$n-1$ diagonal matrices $\diag(z_{1},z_{1}^{-1},1, \ldots ,1)$, 
$\diag(1,z_{1}z_{2},(z_{1}z_{2})^{-1},1, \ldots ,1)$, 
$\ldots$, $\diag(1, \ldots ,1,z_{1} \ldots z_{n-1},(z_{1} \ldots z_{n-1})^{-1})$, all having
spectrum of indicated shape, and so belongs to $D$.  

\smallskip
4) Set $Z_{1} = \{z \cdot 1_{V} \mid z \in A\} < Z(\GC)$. Claim that $DZ_{1} = CZ_{1}$. Indeed,  
consider any $x = \diag(x_{1}, \ldots ,x_{n}) \in D$. 
Conjugating $x$ suitably, we see that $y_{i} := x_{i}/x_{n} \in B$ for $1 \leq i \leq n-1$ and 
$x_{n} \in A$. Now express $y := \diag(y_{1},y_{2}, \ldots ,y_{n-1},1) \in DZ_{1}$ as 
$y = t_{1}t_{2} \ldots t_{n-1}$, 
where 
$$\begin{array}{l}t_{1} = \diag(y_{1},y_{1}^{-1},1, \ldots ,1),~~~ 
  t_{2} = \diag(1,y_{1}y_{2},(y_{1}y_{2})^{-1},1, \ldots ,1), \ldots ,\\  
  t_{n-2} =  \diag(1, \ldots ,1,y_{1} \ldots y_{n-2},(y_{1} \ldots y_{n-2})^{-1},1),~~~
  t_{n-1} =  \diag(1, \ldots ,1,y_{1} \ldots y_{n-1},1).\end{array}$$ 
Notice that $t_{1}, \ldots ,t_{n-2} \in C$. If 
$y_{1} \ldots y_{n-1} \neq 1$, then obviously $t_{n-2} \in DZ_{1} < Z_{1}G$ is a complex reflection, a 
contradiction. It follows that $y_{1} \ldots y_{n-1} = 1$. Now
$y = t_{1} \ldots t_{n-2} \in C$ and $x = x_{n}y \in CZ_{1}$ for all 
$x \in D$, and so $DZ_{1} = CZ_{1}$, as stated. 

Let $|B| = b$. Denoting $\eps := e^{2\pi i/b}$, we have 
$v := \diag(\eps,\eps, \ldots ,\eps,\eps^{1-n}) \in C$ and 
$CZ_{1} \ni \eps^{-1}v  = \diag(1, \ldots ,1,\eps^{-n})$. In particular, if $\eps^{n} \neq 1$, then  
$\eps^{-1}v$ is a complex reflection, again a contradiction. Hence $\eps^{n}=1$ and so $b|n$. 
Now we turn our attention to the non-central element $h \in D$ with $\age(h) < 1$. Since 
$DZ_{1} = CZ_{1}$, we may write $h = \lam c$ for some $\lam \in \SA$ and $c \in C$. Clearly, 
$\ages(c) \leq \age(h) < 1$. By Lemma \ref{trivial}(ii) and its proof, there is some $\mu \in \SA$, 
where $\mu^{-1}$ is either $1$ or one of eigenvalues of $c$, 
such that $\age(\mu c) = \ages(c) < 1$. By the construction
of $C$, $\mu \in B$. Also, $\det(\mu c) = \mu^{n}\det(c) = 1$ as $b|n$. Now observe that for 
any element $u \in SL(V)$, $0 \leq \age(u) \in \ZZ$. Applying this observation to $\mu c$, we see that
$\age(\mu c) \in \ZZ$. Since $0 \leq \age(\mu c) < 1$, we must have that $\age(\mu c) = 0$, and so
$\mu c = 1_{V}$. Thus $h = \lam c$ is central, a contradiction.     
\end{proof}

\begin{examp}\label{impr5}
Let $2|n \geq 6$. We exhibit an example of a finite irreducible (imprimitive) subgroup
of $GL_{n}(\CC)$ which is generated by elements of $\age = 2/3$, but cannot be generated by 
complex reflections (up to scalars).
{\em First consider any $n \geq 5$. Pick a basis $(e_{1}, \ldots ,e_{n})$ of 
$V = \CC^{n}$ and consider 
$G = \langle y_{1}, x_{2},x_{3}, \ldots ,x_{n-2},z_{n-1} \rangle$, where
$$\begin{array}{lll}y_{1}& : & e_{1} \leftrightarrow e_{2},
    ~e_{j} \mapsto e_{j}\mbox{ for }3 \leq j \leq n-1,~e_{n} \mapsto e^{\pi i/3}e_{n},\\
  x_{i} & : & e_{i} \leftrightarrow e_{i+1},
    ~e_{j} \mapsto e_{j}\mbox{ for }j \neq i,i+1,n,~e_{n} \mapsto -e_{n},~~~\mbox{ for }
    1 \leq i \leq n-2,\\
  z_{n-1} & : &e_{n-1} \leftrightarrow e_{n},
    ~e_{j} \mapsto e_{j}\mbox{ for }1 \leq j \leq n-3,~e_{n-2} \mapsto -e_{n-2}. \end{array}$$ 
Also, consider the subgroup 
$G_{n} = \langle y_{1}^{3} = x_{1},x_{2}, \ldots ,x_{n-2},z_{n-1}\rangle$ of $G$. 
Clearly, both $G$ and $G_{n}$ induce $\SSS_{n}$ while permuting the $1$-spaces 
$\langle e_{1} \rangle, \ldots ,\langle e_{n} \rangle$. Next, 
$y_{1}^{2} = \diag(1, \ldots, 1, e^{2\pi i/3})$, and so $(y_{1}^{2})^{G}$ generates a normal
subgroup $E$ of order $3^{n}$ of $G$; furthermore, $G = E:G_{n}$. We claim that 
$G_{n}$ is an extension of 
$F = \{ \diag(a_{1}, \ldots ,a_{n}) \mid a_{i} = \pm 1, \prod^{n}_{i=1}a_{i} = 1\}$ 
by $\SSS_{n}$. Indeed,
it is clear that $G_{n} < SL(V)$, the normal subgroup $F_{1}$ of all diagonal elements
of $G_{n}$ is contained in $F$, and $G_{n}/F_{1} \simeq \SSS_{n}$. We will obtain the
claim, showing by induction on $n \geq 5$ 
that $F_{1} = F$. When $n = 5$, a direct check using \cite{GAP} shows that 
$|G_{5}| = 2^{4} \cdot |\SSS_{5}|$ and so $F_{1} = F$. For the induction step, 
$\langle x_{2}, \ldots ,x_{n-2},z_{n-1} \rangle$ fixes $e_{1}$ and plays the role of 
$G_{n-1}$ while acting on $\langle e_{2}, \ldots ,e_{n}\rangle$. By the induction hypothesis,
$G_{n} \ni f := \diag(1,1, \ldots ,1,-1,-1)$, whence $F_{1} = F$.   

Next we show that $K := \langle (y_{1})^{G} \rangle$ equals $G$, and so $G$ is generated 
by elements of $\age = 2/3$. Clearly, $K$ induce $\SSS_{n}$ while permuting the $1$-spaces 
$\langle e_{1} \rangle, \ldots ,\langle e_{n} \rangle$. Also, $K \ni y_{1}^{2}$, and so 
$K > E$. Observe that $f = y_{1}^{3} \cdot z_{n-1}y_{1}^{3}z_{n-1}^{-1} \in K$, whence $K > F$
and so $K = EG_{n} = G$. 

Finally, assuming $2|n$, we show that any complex reflection in $Z(GL(V))G$ is diagonal,
and so $G$ cannot be generated by complex reflections (up to scalars). Assume the contrary:
there is some $t \in G$ such that $\Spec(t) = \{\gam,\delta, \ldots ,\delta\}$ with 
$\gam \neq \delta$ and $t$ is not diagonal. Since $\ages(t) < 1$, by Lemma \ref{impr2} we
may assume that $t \equiv x_{1} (\mod EF)$, i.e. $t = x_{1}u$, with 
$u = \diag(u_{1}, \ldots ,u_{n})$, $u_{j} = \eps^{m_{j}}$ for some $m_{j} \in \ZZ$ and 
$\eps := e^{\pi i/3}$, and $\sum^{n}_{j=1}m_{j} \in 2\ZZ$. Since 
$\Spec(t) = \{\sqrt{u_{1}u_{2}},-\sqrt{u_{1}u_{2}},u_{3}, \ldots ,u_{n-1},-u_{n}\}$, we must 
have $-\gam = \delta = \eps^{k}$ for some $k \in \ZZ$. Now
$$-1 = (-\delta^{n})^{3} = (\det(t))^{3} = (\det(u))^{3} = 
  (\prod^{n}_{j=1}u_{j})^{3} = \eps^{3\sum^{n}_{j=1}m_{j}} = 1$$
(since $2|n$), a contradiction.}
\end{examp}

Finally, we prove an analogue of Theorem \ref{impr-a}(ii) for $\age \leq 1$:

\begin{theor}\label{impr-b}
{\sl Let $G < GL(V)$ be a finite imprimitive, irreducible subgroup. Assume $G$ contains 
non-central elements $g$ with $\age(g) \leq 1$, and  
$ZG = Z \cdot \langle g^{G}\rangle$ for any such element $g$, where  
$Z := Z(GL(V))$. Then $\dim(V) \leq 8$.}
\end{theor}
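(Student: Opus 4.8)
The plan is to reduce to the structural dichotomy of Lemma~\ref{impr3} and then, in each case, extract the required bound from the kernel $D$ of the permutation action on a minimal $G$-invariant decomposition. First one checks that $G$ satisfies the set-up $\diam$: the hypothesis gives a non-central $g$ with $\age(g)\le 1$, whence $0<\ages(g)\le\age(g)\le 1$, every $G$-conjugate of $g$ lies in $\XC$, and $Z(\GC)G=Z(\GC)\langle g^G\rangle\subseteq Z(\GC)\langle\XC\rangle\subseteq Z(\GC)G$. Fix a minimal $G$-invariant decomposition $V=V_1\oplus\dots\oplus V_n$, $n>1$, let $\pi$ be the permutation action on $\{V_1,\dots,V_n\}$ and $D:=\Ker\pi\lhd G$. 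By Lemma~\ref{impr3} we are in case (a): $\dim V_i=1$ and $\pi(G)$ is one of $\SSS_n,\AAA_n, ASL_3(2), SL_3(2), \AAA_5, D_{10}$ (on $n,n,8,7,6,5$ points); or case (b): $\dim V_i=2$ and $G=D:\SSS_n$ inside $GL_2(\CC)\wr\SSS_n$. The single observation used in both cases is that \emph{$D$ contains no non-central element of age $\le 1$}: if $h\in D$ were one, then $h^G\subseteq D$ (conjugating a block-diagonal matrix by a monomial one keeps it block-diagonal), so $Z\langle h^G\rangle\le ZD$ consists of block-diagonal matrices, whereas $ZG\supsetneq ZD$ since $\pi(G)\ne 1$; this contradicts $ZG=Z\langle h^G\rangle$.

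In case (a), by Lemma~\ref{impr3}(i) we have $n\le 8$ unless $\pi(G)\in\{\SSS_n,\AAA_n\}$ with $n\ge 9$, so assume the latter. By Lemma~\ref{d-sn} the finite subgroup $D<\TC$ is then normalized by the full monomial $\SSS_n$ (it is automatically normalized by $\TC$, hence by $\TC\AAA_n$, so $D\lhd\TC\SSS_n$). If $D$ is non-scalar, take a non-scalar $x\in D$ and, conjugating by a suitable element of $\SSS_n$, arrange that its first two diagonal entries $\zeta_1\ne\zeta_2$; then $D$ contains $x\,({}^{(1\,2)}x)^{-1}=\diag(\alpha,\alpha^{-1},1,\dots,1)$ with $\alpha=\zeta_1\zeta_2^{-1}\ne 1$, which is non-central (as $n\ge 3$) with spectrum $\{\alpha,\alpha^{-1},1,\dots,1\}$, hence of age $1$ --- contradiction. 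Hence $D$ is scalar, and so $V$ is a monomial irreducible representation of $G$ induced from a linear character $\lambda$ of a point stabilizer $H$ of the primitive group $\pi(G)$. But a point stabilizer of $\SSS_n$ (resp.\ $\AAA_n$) is $\SSS_{n-1}$ (resp.\ $\AAA_{n-1}$); $\lambda$ is trivial on the derived subgroup of $H$, which surjects onto $\AAA_{n-1}$, and for $n-1\ge 5$ one checks that $\lambda$ extends to a linear character of $G$, so by Frobenius reciprocity $\mathrm{Ind}_H^G\lambda$ is reducible --- contradiction. Thus $n\le 8$ and $\dim V=n\le 8$.

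In case (b) I would argue analogously. By the proof of Lemma~\ref{impr3}(ii) there is a basis in which a subgroup $H\cong\SSS_n$ of $G$ acts by block permutations, $G=DH$ with $D<GL_2(\CC)^n$, and every non-central element of $G$ of age $\le 1$ lies in $\XC$, is a bireflection, projects to a transposition, and has age $1$; in particular $g:=g_1\in H$, the block transposition $\tau_{12}$, is such an element, so $ZG=Z\langle g^G\rangle$. For any $d\in D$ the element $d\tau_{12}d^{-1}\cdot\tau_{12}=\diag(d_1d_2^{-1},\,d_2d_1^{-1},\,I,\dots,I)$ lies in $\langle g^G\rangle\le G$, hence in $D$; and if $d_1d_2^{-1}\ne I$ had an eigenvalue $1$, then $d_2d_1^{-1}=(d_1d_2^{-1})^{-1}$ also has eigenvalue $1$, so this element has spectrum $\{1,\mu,1,\mu^{-1},1,\dots,1\}$ with $\mu\ne 1$, a non-central complex bireflection of age $1$ in $D$ --- contradiction. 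Thus for every $d\in D$ and all $i\ne j$, $d_i-d_j$ is zero or invertible. Now I would combine this with the facts forced by irreducibility for a minimal decomposition --- namely that $D_1:=\mathrm{proj}_1(D)$ acts irreducibly (hence non-abelianly) on $\CC^2$, and that $\mathrm{proj}_1(D)\not\cong\mathrm{proj}_2(D)$ as $D$-representations --- together with the relation $D=(D\cap Z)\cdot(\langle g^G\rangle\cap D)$ coming from $ZG=Z\langle g^G\rangle$, and analyze $D$ as an $\SSS_n$-invariant subdirect subgroup of $GL_2(\CC)^n$ (via a Goursat--Remak reduction and the classification of the finite subgroups of $GL_2(\CC)$ all of whose non-trivial elements are fixed-point-free) to conclude $n\le 4$, whence $\dim V=2n\le 8$.

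The main obstacle is this last step of case (b): deducing $n\le 4$ from the condition that every difference $d_i-d_j$ ($d\in D$) be non-singular or zero, together with the irreducibility constraints on the $\SSS_n$-invariant base group $D<GL_2(\CC)^n$. The bookkeeping --- distinguishing the possible images of $D_1$ in $PGL_2(\CC)$ (dihedral, $\AAA_4$, $\SSS_4$, $\AAA_5$), tracking which elements of $D$ have an eigenvalue $1$, and controlling the subdirect structure under the $\SSS_n$-action --- is where the real work lies; case (a), by contrast, is short once $D$ is shown to be scalar.
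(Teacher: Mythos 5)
Your reduction to the two cases of Lemma~\ref{impr3} and your ``key observation'' (that $D=\Ker\pi$ has no non-central element of age $\le 1$) match the paper. But case (b) --- the case you yourself flag as the obstacle --- has a genuine gap, and it is precisely where the paper's extra idea lives. Applying the key observation to $[d,\tau_{12}]=\diag(d_1d_2^{-1},\,d_2d_1^{-1},\,I,\dots,I)$ only gives that the set $B=\{d_1d_2^{-1}\mid d\in D\}$ consists of $I$ together with fixed-point-free elements of $GL_2(\CC)$; that invariant is too weak, and the proposed Goursat--Remak/classification path from there is neither executed nor plausibly short. The paper's proof first checks that $B$ is a (finite) group and then applies the hypothesis to a \emph{different} element of $D$, namely $\delta=\diag(I,\,[a,b],\,I,\dots,I)$ with $a,b\in B$: since $[a,b]\in SL_2(\CC)$ has finite order, either $[a,b]=I$ or $\age(\delta)=1$ with $\delta$ non-central, and the latter is forbidden. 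Hence $B$ is \emph{abelian}. This finishes case (b) at once: with $C=\{\diag(z_1,\dots,z_n)\mid z_i\in B,\ \prod_i z_i=I\}\le D$ one shows $ZG=ZC\SSS_n$, so the stabilizer $G_1$ of $V_1$ acts on the two-dimensional $V_1$ through the abelian group $\CC^\times B$, forcing $V_1|_{G_1}$ and hence $V=\Ind_{G_1}^{ZG}V_1$ to be reducible, a contradiction. The missing idea is to apply the hypothesis to commutators formed inside $B$ rather than only to $[d,\tau_{12}]$.

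Your case (a) is closer to the paper's argument (both show $D$ is scalar), but the final Frobenius reciprocity step rests on the unjustified claim that $\lambda$ extends to a linear character of $G$. This can fail: if $[G,G]\cap D\ne 1$ (for instance if $G$ contains a copy of $2\AAA_n$), then $\lambda|_D$ is faithful and nontrivial on $[G,G]\cap D$, so $\lambda$ does not extend. The paper's route is simpler and gap-free: once $D$ is scalar, $V$ is an irreducible projective representation of $\SSS_n$ or $\AAA_n$ of degree $n\ge 9$, and no such representation exists by inspection of the minimal degrees.
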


\begin{proof}
Assume the contrary: $\dim(V) \geq 9$ for such a group $G$. Clearly, $G$ satisfies the 
set-up $\diam$. Hence $G$ satisfies one of the conclusions (i) and (ii) of Lemma 
\ref{impr3}.

\smallskip
1) Suppose the conclusion (i) of Lemma \ref{impr3} holds. Since $n = \dim(V) \geq 9$, 
$\pi(G) \geq \AAA_{n}$. Let $D = \Ker(\pi)$ be the subgroup of all diagonal elements of
$G$, in a basis $(e_{1}, \ldots ,e_{n})$ such that $V_{i} = \langle e_{i}\rangle$. 

First we consider the case $D \not\leq Z$. Then we may assume that 
$D \ni x = \diag(x_{1}, \ldots ,x_{n})$ with $x_{1} \neq x_{2}$. Choosing $s \in G$ with 
$\pi(s) = (123)$, we get 
$$G \ni y = [s,x] = \diag(x_{3}/x_{1},x_{1}/x_{2},x_{2}/x_{3},1, \ldots ,1) = 
  \diag(e^{2\pi i a},e^{2\pi i b},e^{2\pi i c},1, \ldots 1),$$
where $0 \leq a,c < 1$, $0 < b < 1$, and $a+b+c \in \ZZ$. It follows that 
either $a+b+c = 1$, in which case $\age(y) = 1$, or $a+b+c = 2$, in which case 
$\age(y^{-1}) = 1$. In either case, we have found a diagonal non-central element 
$z$ with $\age(z) = 1$. It is clear that $Z \langle z^{G} \rangle$ is diagonal and so
cannot contain $G$, a contradiction.   

We have shown that $D \leq Z$ and so $V$ yields an irreducible projective representation
of degree $n \geq 9$ of $\SSS_{n}$ or $\AAA_{n}$, which is impossible by degree consideration.

\smallskip
2) Now we assume that the conclusion (ii) of Lemma \ref{impr3} holds: $G = D:\SSS_{n}$, with 
$n \geq 3$, $D < GL_{2}(\CC)^{n}$ and the action of $\SSS_{n}$ described in (\ref{4sn}) 
for a fixed basis $(u_{i},v_{i})$ of each $V_{i}$. Let 
$A = \{ x_{1} \mid \exists ~\diag(x_{1}, \ldots ) \in D\}$ be 
the finite subgroup of $GL_{2}(\CC)$ afforded by the action of $D$ on $V_{1}$, with respect to
the basis $(u_{1},v_{1})$. Also, let 
$$\begin{array}{c}B = \{ x_{1}x_{2}^{-1} \mid \exists ~\diag(x_{1},x_{2}, \ldots) \in D\},\\ 
  \vspace{-3mm}\\
  C = \{ \diag(z_{1}, \ldots ,z_{n}) \mid z_{i} \in B,\prod^{n}_{i=1}z_{i} = I\},\end{array}$$
where $I$ denotes the identity $2 \times 2$-matrix.
Note that, by their definition, $B$ and $C$ are {\it finite sets}. 
Consider any $a \in B$. Then we can find $x = \diag(x_{1},x_{2},x_{3}, \ldots ,x_{n}) \in D$
with $a = x_{1}x_{2}^{-1}$, and 
some conjugate $y = \diag(x_{2},x_{1},x_{3}, \ldots ,x_{n}) \in D$ of $x$, whence
$D \ni xy^{-1} = \al := \diag(a,a^{-1},I, \ldots,I)$. 
Conjugating $\al$ suitably, we see that any matrix 
$\diag(I, \ldots, I,a,a^{-1},I, \ldots ,I)$ belongs to $D$.  
Similarly,
if $b \in B$, then $\beta := \diag(b,b^{-1},I, \ldots ,I) \in D$, and  
$D \ni \al\beta = \diag(ab,a^{-1}b^{-1},I, \ldots,I)$. Conjugating the latter element
suitably, we see that $D \ni \diag(ab,I,a^{-1}b^{-1},I, \ldots,I)$ and so $ab \in B$. Thus  
$B$ is closed under multiplication and so it is a group by finiteness.
By the above observation applied to $ab$, $\gam = \diag(ab,(ab)^{-1},I, \ldots,I)$ belongs to
$D$, and so does $\delta := \gam^{-1}\al\beta = \diag(I,[a,b],I, \ldots,I)$.
Note that, since $[a,b] \in SL_{2}(\CC)$ (and has finite order $|\delta|$), 
we have either $\age([a,b]) = 1$, or $[a,b] = I$.
In the former case, $\age(\delta) = 1$ and $Z\langle \delta^{G} \rangle \leq ZD \not\geq G$, 
a contradiction.
Hence, $[a,b] = I$ for all $a,b \in B$, i.e. $B$ is an abelian group. This in turn implies
that $C$ is a subgroup of $D$. 

Observe that $A$ normalizes $B$. (Indeed, for any $x_{1} \in A$ and $b \in B$, there is some 
$x = \diag(x_{1},x_{2}, \ldots ,x_{n}) \in D$ and $u = \diag(b,I,b^{-1},I, \ldots ,I) \in D$. 
Hence $D \ni xu = \diag(x_{1}b,x_{2}, \ldots )$ and so 
$B \ni x_{1}b(x_{2})^{-1} = x_{1}bx_{1}^{-1} \cdot x_{1}x_{2}^{-1}$. But 
$x_{1}x_{2}^{-1} \in B$, hence $x_{1}bx_{1}^{-1} \in B$ as stated.) Since 
$D \leq A \times A \times \ldots \times A$ and $G = D:\SSS_{n}$, we see that $C \lhd G$. 
In fact, we claim that $[\SSS_{n},D] \leq C$. To prove this, let us identify the action of 
$\sigma \in \SSS_{n}$ with its action on $\{u_{1}, \ldots ,u_{n}\}$ and on 
$\{v_{1}, \ldots ,v_{n}\}$, cf. (\ref{4sn}). Then for any 
$x = \diag(x_{1},x_{2}, \ldots ,x_{n}) \in D$ we have 
$\sigma^{-1}x\sigma x^{-1} = \diag(b_{1}, \ldots ,b_{n})$, where 
$b_{i} = x_{\sigma(i)}x_{i}^{-1} \in B$. Now choose $\sigma = (j,j+1)$ for $1 \leq j \leq n-1$.
Then we get $b_{i} = I$ for $i \neq j,j+1$, and
$b_{j}b_{j+1} = x_{j+1}x_{j}^{-1} \cdot x_{j}x_{j+1}^{-1} = I$. It follows 
that $\sigma^{-1}x\sigma x^{-1} \in C$, and so $\sigma^{-1} (xC) \sigma = xC$ in $D/C$ for 
all $\sigma = (j,j+1)$. Consequently, $\sigma^{-1} (xC) \sigma = xC$ in $D/C$ for all
$\sigma \in \SSS_{n}$, as stated. 

We have shown that $C \lhd G = D\SSS_{n}$ and $[D,\SSS_{n}] \leq C$. Hence 
the subgroup $C\SSS_{n}$ is normal in $G$. Recall that $g_{1} = (12)$ has 
$\age = 1$ and $g_{1} \in \SSS_{n}$. By our assumptions, 
$ZG = Z\langle (g_{1})^{G} \rangle \leq K := ZC\SSS_{n}$. It follows that 
$ZG = K$. Let 
$\Phi$, resp. $\Phi_{1}$, denote the representation of $ZG$ on $V$, resp. of 
$G_{1} := Stab_{ZG}(V_{1})$ on $V_{1}$. Then $G_{1} = ZC\SSS_{n-1}$, where $\SSS_{n-1}$ is
acting trivially on $V_{1}$. Hence   
$\Phi_{1}(G_{1}) = \Phi_{1}(ZC) = \CC^{\times}B$ is abelian. But $\dim(V_{1}) = 2$, so 
$\Phi_{1}$ is reducible. Since $\Phi = \Ind^{ZG}_{G_{1}}(\Phi_{1})$, we conclude that
$ZG$ is reducible on $V$, a contradiction.          
\end{proof}

\subsection{Extraspecial case}\label{SS-ext2} 
Here, $G \leq N := N_{\GC}(E)$ for 
some $p$-group $E$ of extraspecial type. By \cite[Lemma 2.4]{GT1},
either $|\chi(g)| = 0$, or $|\chi(g)|^{2} = |C_{E/Z(E)}(g)|$. It follows that 
$\Delta(g) \geq p^{m}(1-1/\sqrt{p})$. Recall that 
$C = 9.111$ in the set-up $\diam$. Hence
$\dim(V) = p^{m} \leq 9.111(1+1/\sqrt{2})$ and so $p^{m} \leq 13$. 
Since we are assuming
$\dim(V) \geq 4$, we need to consider the following cases.

\smallskip
$\bullet$ $\dim(V) = p \geq 11$. Then $\Delta(g) \geq 11-\sqrt{11} > 7.68$, and so
$\ages(g) > 1$.

\smallskip
$\bullet$ $\dim(V) = p^{m} = 9$. Here, $\Delta(g) \geq 9-3\sqrt{3}$;
moreover, if $|\chi(g)| \leq 3$ then $\Delta(g) \geq 6$. 
Thus we may assume that $|\chi(g)| = 3\sqrt{3}$. Next, $E = 3_{+}^{1+4}$, and 
the character table of 
$N = Z(\GC)E:Sp_{4}(3)$ has been constructed explicitly by T. Breuer.
Now one can verify directly that $N/Z(N)$ contains two classes of 
elements with $|\chi(g)| = 3\sqrt{3}$; any such an element
acts on $E/Z(E) = \FF_{3}^{4}$ as a symplectic transvection. One of these classes
has $\ages = 1$; the other class and all remaining 
non-central elements in $G$ have $\ages > 1$.

\smallskip
$\bullet$ $\dim(V) = p^{m} = 8$. Here, $\Delta(g) \geq 8-4\sqrt{2}$
and $E = C_{4} * 2_{+}^{1+6}$. The character table of 
$N = Z(\GC)E \cdot Sp_{6}(2)$ has been constructed explicitly by Breuer. In 
particular, $\Irr(N)$ contains two, complex-conjugate, characters of degree 
$8$. Hence it suffices to consider one of these two characters and the 
classes of $g$ with $|\chi(g)| \geq 4$. Now 
one can verify directly that $N/Z(N)$ contains three conjugacy classes 
of elements $g$ with $\ages(g) = 1$; their spectra are listed in items (b1) 
and (c6) of Remark \ref{types}. In all other cases, $\ages(g) > 1$ 
by Lemmas \ref{small1} and \ref{small2}.

\smallskip
$\bullet$ $\dim(V) = p^{m} = 7$. Here, $\Delta(g) \geq 7-\sqrt{7}$;
moreover, if $|\chi(g)| \leq 1$ then $\Delta(g) \geq 6$. 
Thus we may assume that $|\chi(g)| = \sqrt{7}$. Next, $E = 7_{+}^{1+2}$, and 
the character table of 
$N = Z(\GC)E:Sp_{2}(7)$ has been constructed explicitly by Breuer. In 
particular, $\Irr(N)$ contains seven characters of degree $7$, with exactly 
six being faithful on $E$, each of which is uniquely determined by its central
character. Hence it suffices to consider one of these six characters. Now 
one can verify directly that in the cases where $|\chi(g)| = \sqrt{7}$, 
the smallest arc of $\SA$ that contains all eigenvalues of $g$ has length
$\delta \geq \pi$, and so $\ages(g) > 1$ by Corollary \ref{bound2}.

\smallskip
$\bullet$ $\dim(V) = p^{m} = 5$. Here, $\Delta(g) \geq 5-\sqrt{5}$
and $E = 5_{+}^{1+2}$. The character table of 
$N = Z(\GC)E:Sp_{2}(5)$ has been constructed explicitly by Breuer. In 
particular, $\Irr(N)$ contains five characters of degree $5$, with exactly 
four being faithful on $E$, each of which is uniquely determined by its central
character. Hence it suffices to consider one of these four characters. Now 
one can verify directly that $N/Z(N)$ contains three conjugacy classes 
of elements $g$ with $\ages(g) = 1$; their spectra are listed in items (b1), 
(c4), and (c5) of Remark \ref{types}. In all other cases, $\ages(g) > 1$ 
by Corollary \ref{bound2} (with $\delta \geq 6\pi/5$).

\smallskip
$\bullet$ $\dim(V) = p^{m} = 4$. Here, $\Delta(g) \geq 4-2\sqrt{2}$
and $E = C_{4} * 2_{+}^{1+4}$. The character table of 
$N = Z(\GC)E \cdot Sp_{4}(2)$ has been constructed explicitly by Breuer. In 
particular, $\Irr(N)$ contains two pairs $(\al,\overline{\al})$ and 
$(\beta,\overline{\beta})$ of complex-conjugate characters of 
degree $4$; furthermore, $\beta$ can be obtained from $\al$ by tensoring 
with the sign character of $Sp_{4}(2) \simeq \SSS_{6}$. Hence we may assume
that $\chi = \al$. Now one can verify directly that $N/Z(N)$ contains three 
conjugacy classes of elements $g$ with $\ages(g) = 1/2$ and spectra as listed in
items (a1), (a4), and (c1) of Remark \ref{types}. Fixing an isomorphism 
between $Sp_{4}(2)$ and $\SSS_{6}$, we may assume that these three
classes project onto the classes of $(12)$, resp. $(123)$, $(12)(34)(56)$, in 
$\SSS_{6}$. $N/Z(N)$ also contains several classes of elements $g$ 
with $\ages(g) = 1$ and spectra as listed in items (b1), (b2), (b4), (c2),  
and (c3) of Remark \ref{types}. In all other cases, $\ages(g) > 1$. Now we
show that $N$ contains a subgroup $M$ leading to a basic non-RT pair not
of reflection type.

\begin{lemma}\label{newRT2}
{\sl There is a subgroup 
$M = C_{3} \times (C_{4} * 2^{1+4}_{+}) \cdot \AAA_{6} < GL(\CC^{4})$ which gives
rise to a basic non-RT pair not of reflection type.}
\end{lemma}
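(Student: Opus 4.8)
The plan is to follow the proof of Lemma~\ref{newRT1}, now using the explicit description of $N := Z(\GC)E \cdot Sp_{4}(2)$ (with $E = C_{4} * 2^{1+4}_{+}$ and $Sp_{4}(2) \cong \SSS_{6}$) obtained above from Breuer's character table. Write $M = C_{3} \times (E \cdot \AAA_{6})$, where $\AAA_{6} = Sp_{4}(2)'$ and $C_{3} = \langle \omega \cdot 1_{V} \rangle$, $\omega := e^{2\pi i/3}$; thus $M \leq N$. Since $E$ already acts irreducibly on $V = \CC^{4}$ (it has a unique faithful irreducible character, of degree $4$), so does $M$. A first reduction is that $E \cdot \AAA_{6} \leq SL(V)$: every element of $E$ has determinant $1$ in the $4$-dimensional representation, so $\det$ is trivial on $E$ and factors through $(E \cdot \AAA_{6})/E \cong \AAA_{6}$, which is perfect; hence $\det \equiv 1$ on $E \cdot \AAA_{6}$. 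As a non-scalar element of $SL_{4}(\CC)$ has integral $\age$, hence $\age \geq 1$, every non-central $g \in M$ with $\age(g) < 1$ must lie off $E \cdot \AAA_{6}$, so $g = \omega^{\varepsilon} h$ for some $\varepsilon \in \{1,2\}$ and some $h \in E \cdot \AAA_{6}$ (and then $g^{3} = h^{3} \in E\cdot\AAA_{6}$, since $\omega^{3\varepsilon} = 1$).

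Next I would read off from the analysis of $N$ the facts needed about the junior elements of $M$ (the non-central $g \in M$ with $\age(g) < 1$). That analysis lists the non-central elements of $N$ with $\ages \leq 1$ -- their spectra being those in items (a1), (a4), (c1), (b1), (b2), (b4), (c2), (c3) of Remark~\ref{types} -- and shows that the unique class of $N$ consisting of complex reflections up to scalars projects onto the class of transpositions of $\SSS_{6} = N/Z(\GC)E$, which is disjoint from $\AAA_{6}$. From this: (1) $M$ does contain junior elements -- taking for $h$ an order-$3$ lift in $E\cdot\AAA_{6}$ of a $3$-cycle of $\AAA_{6}$ with eigenvalue multiset $\{\omega,\omega,\omega^{2},\omega^{2}\}$ (supplied by the (a4)-type class of $N$), the element $g := \omega^{2}h \in M$ has eigenvalue multiset $\{1,1,\omega,\omega\}$, so $\age(g) = 2/3 < 1$; and (2) every junior $g = \omega^{\varepsilon}h$ of $M$ has \emph{nontrivial} image $\bar{g} = \bar{h} \neq 1$ in $M/(C_{3}\times E) \cong \AAA_{6}$. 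For (2): if $\bar{h} = 1$ then $g = \omega^{a}e$ with $e \in E$ non-central (as $g$ is non-central), but a non-central element of $E$ has eigenvalue multiset $\{1,1,-1,-1\}$ or $\{i,i,-i,-i\}$, and for these one checks directly that $\age(\omega^{a}e) \geq 1$ for $a = 0,1,2$. (Equivalently, by the reflection statement for $N$, no junior element of $M$ is, up to a twelfth root of unity, a complex reflection.)

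The heart of the argument is then that $\langle g^{M} \rangle = M$ for \emph{every} junior $g$, obtained exactly as in Lemma~\ref{newRT1}. Fix $g = \omega^{\varepsilon} h$ ($h \in E\cdot\AAA_{6}$) and put $K := \langle g^{M} \rangle$; since $C_{3}$ is central, $g^{M} = \omega^{\varepsilon} h^{E\cdot\AAA_{6}}$, so $K$ contains the subgroup $L$ of $E\cdot\AAA_{6}$ generated by $\{h_{1}h_{2}^{-1} : h_{i} \in h^{E\cdot\AAA_{6}}\}$ together with $g^{3} = h^{3}$ -- a subgroup normal in $E\cdot\AAA_{6}$, all of whose stated generators lie in $E\cdot\AAA_{6}$. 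The image $\bar{L}$ of $L$ in $\AAA_{6}$ is nontrivial (it contains $\bar{h}\,(\bar{h}^{\bar{\sigma}})^{-1} \neq 1$ for a suitable $\sigma$, using $\bar{h} \neq 1$ and $Z(\AAA_{6}) = 1$) and normal, hence $\bar{L} = \AAA_{6}$ by simplicity; so $LE = E\cdot\AAA_{6}$. On the other hand $L \cap E$ contains all $h\,(h^{x})^{-1}$ with $x \in E$, whose images in $E/Z(E) \cong \FF_{2}^{4}$ fill the nonzero subspace $\mathrm{Im}(\bar{h}-1)$; since $L \cap E$ is an $\AAA_{6}$-invariant normal subgroup of $E$ and the natural $Sp_{4}(2)$-module $E/Z(E)$ stays irreducible on restriction to $\AAA_{6}$, one gets $(L\cap E)Z(E) = E$, and then a short argument using the structure of $E$ (of extraspecial type, with no complement to $Z(E)$) yields $L \cap E = E$. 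Hence $L = E\cdot\AAA_{6} \leq K$, so $h \in K$ and $\omega^{\varepsilon} = g h^{-1} \in K$, giving $C_{3} \leq K$ and $K = M$. Thus $(M,V)$ is a basic non-RT pair. Finally it is not of reflection type: were $Z(\GC)M = Z(\GC)H$ for a complex reflection group $H$, then $Z(\GC)M$ would contain a complex reflection $r$; but $r \in N$, so $r$ lies in the unique reflection class of $N$ up to scalars and hence projects onto a transposition of $\SSS_{6} = N/Z(\GC)E$, whereas $Z(\GC)M$ projects into $\AAA_{6}$ -- a contradiction.

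The main obstacle is step (1)--(2): the bookkeeping with Breuer's character table of $N$ needed to confirm that at least one junior element of $M$ exists and that every junior element of $M$ projects nontrivially onto $\AAA_{6}$ (equivalently, that none of them is, up to a twelfth root of unity, a complex reflection). Granting that, the remaining group theory -- showing $\langle g^{M}\rangle = M$ and ruling out reflection type -- is routine and parallels Lemma~\ref{newRT1}, the only external ingredients being the simplicity of $\AAA_{6}$ and the irreducibility of the natural $\FF_{2}$-module for $Sp_{4}(2)$ upon restriction to $\AAA_{6}$.
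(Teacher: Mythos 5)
Your overall plan is the same as the paper's: identify the junior elements of $M$ (there is, up to $M$-conjugacy, a single class of non-central $g\in M$ with $\age(g)<1$, namely a scalar times an order-$3$ element of $E\cdot\AAA_6$ projecting onto a $3$-cycle), show $\langle g^M\rangle=M$, and rule out reflection type by noting the only reflection class of $N$ sits over transpositions of $\SSS_6$. Where you diverge from the paper is the generation step. The paper argues ``top-down'': $K:=\langle g^M\rangle$ is normal in $M$ and not contained in $C:=C_3\times C_4$, so $KC\geq O=EC$ (unique minimal normal subgroup of $M/C$), hence $KC=KO=M$; then $[M,M]=M_1:=E\cdot\AAA_6$ is perfect (checked via the linear characters of $E\cdot\SSS_6$), so $K\geq[K,K]=[KC,KC]=M_1$, and $\det(g)\neq 1$ finishes. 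You instead build a normal subgroup $L\leq M_1$ of $K$ ``bottom-up'' from the ratios $h_1h_2^{-1}$ and show $L=M_1$.

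There is a real gap in your bottom-up argument at the step ``$(L\cap E)Z(E)=E$, and then a short argument using ... no complement to $Z(E)$ ... yields $L\cap E=E$.'' The no-complement observation only rules out $L\cap E\cap Z(E)=1$; it does not rule out $L\cap E\cap Z(E)=C_2$, in which case $L\cap E$ is an index-$2$ subgroup of $E$ isomorphic to $2^{1+4}_{\pm}$ (both of which do occur inside $E=C_4*2^{1+4}_+$ and do cover $E/Z(E)$). To close the gap you need one more fact: $L\cap E$ is $\AAA_6$-invariant (being normal in $M_1$), but an index-$2$ extraspecial subgroup of $E$ covering $E/Z(E)$ determines a quadratic form on $E/Z(E)\cong\FF_2^4$ refining the symplectic form, and $\AAA_6$ (order $360$) cannot preserve such a form since $|O^{\pm}_4(2)|\leq 120$. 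Hence $L\cap E\cap Z(E)=C_4$ and $L\cap E=E$. With this added, your argument is correct. The paper's route via $[M,M]=M_1$ and the unique minimal normal subgroup of $M/C$ sidesteps this delicate point about the interior structure of $E$, which is probably why it was chosen; your version has the advantage of being self-contained once the quadratic-form observation is supplied.
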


\begin{proof}
Since $M \rhd E$, $M$ acts irreducibly on $V = \CC^{4}$. 
Notice that $Z(\GC)M = Z(\GC)[N,N]$ has index $2$ in $Z(\GC)N$.
By the above analysis, all non-central elements $g \in M$ with 
$\ages(g) < 1$ in $M$ are $[N,N]$-conjugate to an element $g$ with spectrum
$(e^{2\pi i/3},e^{2\pi i/3},1,1)$ which corresponds to the class of $(123)$ in
$\AAA_{6}$.  In fact one can choose such an element $g$ in   
$C_{3} \times 2\AAA_{6} < M$ with $\age(g) < 1$. We have shown that 
$g^{M} = \{ h \in M \setminus Z(M) \mid \age(h) < 1\}$ and that $M$ contains 
no complex reflection. It remains to show that $\langle g^{M} \rangle = M$.

Denote $C := C_{3} \times C_{4} < Z(\GC)$, $E := C_{4} * 2^{1+4}_{+}$, 
$O = EC$, $M_{1} := E \cdot \AAA_{6} < M = C_{3} \times M_{1}$, and
$K := \langle g^{M} \rangle$. Since $(123)^{\AAA_{6}}$ generates 
$\AAA_{6}$, we must have that $KO = M$. Next, since 
$O/C$ is the unique minimal normal subgroup of $M/C$, we see that $KC \geq O$ 
and so $KC = KO = M$.
Observe that $[M,M] = [M_{1},M_{1}] = M_{1}$. (Indeed, it is easy to check 
that $[M_{1},M_{1}]$ contains $2^{1+4}_{+} \cdot \AAA_{6}$ and so it has  
index at most $2$ in $M_{1}$. But $M_{1}$ is a normal subgroup of
index $2$ in $E \cdot \SSS_{6}$, and one can check that $E \cdot \SSS_{6}$ 
has only two linear characters. It follows that $[M_{1},M_{1}] = M_{1}$.)
Now we have $K \geq [K,K] = [KC,KC] = [M,M] = M_{1}$. Also, $M_{1}$ is a 
perfect subgroup of $GL(V)$, whence $M_{1} < SL(V)$. But 
$\det(g) = e^{4\pi i/3} \neq 1$, so $M \geq K > M_{1}$. Since 
$M/M_{1} \cong C_{3}$, we conclude that $K = M$.    
\end{proof}

We will need the following complement to Theorem \ref{main-b}:

\begin{theor}\label{min-age}
{\sl Let $G < GL(V)$ be a finite, irreducible, primitive, tensor indecomposable
subgroup and let $g \in G \setminus Z(G)$. If the $G$-module $V$ is tensor
induced, assume in addition that $g$ acts nontrivially on the set of 
tensor factors of $V$. Then the following statements holds.

{\rm (i)} If $\dim(V) = 2$, then $\ages(g) \geq 1/5$.

{\rm (ii)} If $\dim(V) = 3$ or $4$, then $\ages(g) \geq 1/3$.

{\rm (iii)} If $\dim(V) > 4$, then $\ages(g) \geq 1/2$.}
\end{theor}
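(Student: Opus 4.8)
The plan is to deduce this result from Theorem \ref{main-b} combined with Proposition \ref{arc}. Recall that for $g \in G \setminus Z(G)$ satisfying the hypotheses, Theorem \ref{main-b} gives a lower bound on $\Delta(g) := \dim(V) - |\Tr(g)|$. The idea is that a lower bound on $\Delta(g)$, together with an upper bound relating $\Delta(g)$ to $\ages(g)$ (via Proposition \ref{arc}(iii), namely $\DB(g)^2 \leq (2.9)\pi \cdot \age(g)$, equivalently $\dim(V) - |\Tr(g)| \leq (1.45)\pi \cdot \age(g)$), yields a lower bound on $\ages(g)$. Since $\ages(g) = \inf_{\lambda}\age(\lambda g)$ and $\Delta(\lambda g)$ depends on $\lambda$, one must be slightly careful: I would apply Theorem \ref{main-b} to $\lambda g$ for the $\lambda$ realizing the infimum (noting $\lambda g \in Z(G)G \setminus Z(G)$ still satisfies all the structural hypotheses, and acts nontrivially on tensor factors iff $g$ does), getting $\Delta(\lambda g) \geq $ the bound from Theorem \ref{main-b}, and then $\age(\lambda g) \geq \Delta(\lambda g)/((1.45)\pi)$.

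First I would handle $\dim(V) > 4$. By Theorem \ref{main-b}(iv) and (v), either $\Delta(\lambda g) \geq 8 - 4\sqrt{2} \approx 2.34$, or $\Delta(\lambda g) = 2$ with $\lambda g$ a scalar multiple of a reflection, or (case (v)) $V = A \otimes B$ with $\dim A = 2$, $2 \leq \dim B \leq 6$, and $\Delta(\lambda g) \geq \dim(B)(3-\sqrt{5})/2 \geq 3-\sqrt{5} \approx 0.76$. In the reflection case, $\ages(g) = \age(\text{reflection}) = 1/2$ directly. In the first case, $\ages(g) = \age(\lambda g) \geq (8-4\sqrt{2})/((1.45)\pi) \approx 0.514 > 1/2$. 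For case (v) the crude bound $3-\sqrt{5}$ over $(1.45)\pi$ is too weak, so I would instead argue directly: by Lemma \ref{trivial}(iv), $\ages(g) = \ages(g_A \otimes g_B) \geq \dim(B)\cdot \ages(g_A) \geq 2\ages(g_A)$, and since $g|_B$ is scalar and $g \notin Z(G)$, $g_A$ is non-central in $GL(A)$ with $\dim A = 2$, so by Theorem \ref{main-b}(i) applied to $g_A$, $\Delta(g_A) \geq (3-\sqrt{5})/2$, giving $\ages(g_A) \geq (3-\sqrt{5})/2 / ((1.45)\pi) \cdot$ — still too weak. Better: for $\dim = 2$, a non-scalar finite-order element has two distinct eigenvalues, so $\ages(g_A) \geq \min\{r_2 - r_1, 1-(r_2-r_1)\}$ by the parenthetical remark in Lemma \ref{trivial}(ii); bounding this requires knowing the eigenvalue gap, which is exactly what part (i) will supply. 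So I would prove (i) first and feed it in: once (i) gives $\ages(g_A) \geq 1/5$, then $\ages(g) \geq 2/5$ — still short of $1/2$. This gap is the main obstacle (see below), and I expect it to require a finer analysis in the tensor-decomposable subcase (v), perhaps tightening the constant in Proposition \ref{arc} for the specific small dimensions involved, or using that $g$ acts trivially on $B$ so its eigenvalues on $V = A \otimes B$ each occur with multiplicity $\geq \dim(B) \geq 2$, invoking the multiplicity-$m$ refinement in Proposition \ref{arc}(ii)/(iii).

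For parts (i) and (ii): by Theorem \ref{main-b}(i), if $\dim V = 2$ then $\Delta(\lambda g) \geq (3-\sqrt{5})/2 \approx 0.382$, and since $\lambda g$ has exactly two distinct eigenvalues $e^{2\pi i a_1}, e^{2\pi i a_2}$ on a $2$-space, one computes $\Delta(\lambda g) = 2 - |1 + e^{2\pi i(a_2-a_1)}| = 2 - 2|\cos(\pi(a_2-a_1))|$; setting $\theta = a_2 - a_1 \pmod 1$ this forces $|\cos(\pi\theta)|$ bounded away from $1$, hence $\theta$ bounded away from $0$ and $1$, and a direct calculation shows the minimum of $\ages$ over choices of scalar is $\geq 1/5$ (the extremal case being eigenvalue ratio a primitive fifth root of unity, consistent with Proposition \ref{arc}(iii)'s constant). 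Similarly for $\dim V = 3$: Theorem \ref{main-b}(ii) gives $\Delta(\lambda g) \geq 3 - \sqrt{3} \approx 1.27$, and for $\dim V = 4$, Theorem \ref{main-b}(iii) gives $\Delta(\lambda g) \geq 4 - 2\sqrt{2} \approx 1.17$; in both cases $\ages(g) \geq \Delta(\lambda g)/((1.45)\pi)$ gives roughly $0.28$ resp. $0.26$, just short of $1/3$, so again one needs the sharper arc analysis or a direct enumeration of the possible eigenvalue configurations of small age. I would close these by noting that in dimensions $3,4$ an element with $\ages(g) < 1/3$ would have all eigenvalues (after optimal scaling) within an arc of length $< 2\pi/3$ with total age $< 1/3$, forcing $\Delta(\lambda g)$ small enough to contradict Theorem \ref{main-b}; making the arithmetic precise, using the multiplicity refinement of Proposition \ref{arc} where eigenvalues repeat, is the routine-but-delicate endgame. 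The main obstacle throughout is bridging the small numerical gap between the bound $(1.45)\pi \approx 4.56$ delivered by the general Proposition \ref{arc} and the sharper constants $5, 3, 2$ implicit in the statement — this is handled case-by-case by exploiting the extra structure (few distinct eigenvalues in low dimension, repeated eigenvalues in the tensor-decomposable case) rather than by the blanket inequality.
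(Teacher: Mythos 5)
Your overall strategy -- reduce to the lower bounds on $\Delta(g)$ from Theorem \ref{main-b} and then convert to a lower bound on $\ages(g)$ -- is sound for some dimensions but has a genuine, unbridgeable gap at $\dim(V)=4$, and you also miss a simplification that would close $\dim(V)>4$ cleanly.

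First, the gap you should \emph{not} have been worried about: Theorem \ref{main-b}(v) (the tensor decomposable case $V = A\otimes B$) simply cannot arise here, because the hypothesis of Theorem \ref{min-age} already requires $V$ to be \emph{tensor indecomposable}. Once you notice this, your computation for $d>4$ is actually a clean shortcut compared with the paper: by Theorem \ref{main-b}(iv), either $\Delta(g)\geq 8-4\sqrt 2$, in which case Lemma \ref{basic2}(iii) and Proposition \ref{arc}(iii) give $\ages(g)\geq 2\Delta(g)/(2.9\pi)\geq (16-8\sqrt 2)/(2.9\pi)\approx 0.514>1/2$, or $g$ is a scalar multiple of a reflection and $\ages(g)=1/2$ directly. (The paper instead routes $d\geq 4$ through the extraspecial analysis of \S\ref{SS-ext2} and the full classification Theorem \ref{main-d}.) Your treatment of $d=2,3$ by direct eigenvalue computation, forcing a contradiction with Theorem \ref{main-b}(i),(ii), is exactly what the paper does in steps 2) of its proof, though you never carry the arithmetic to completion; your arc-bound fallback $\Delta/(1.45\pi)$ is indeed too weak there, as you note.

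The real problem is $d=4$. Your plan -- ``a sharper arc analysis or a direct enumeration of eigenvalue configurations'' feeding off Theorem \ref{main-b}(iii) -- cannot succeed, because the implication you want is false as a pure inequality. Take a hypothetical $g\in GL_4(\CC)$ with $\Spec(g)=\{1,1,1,e^{2\pi i t}\}$ for $t$ slightly below $1/3$; then $\ages(g)=t<1/3$, yet $\Delta(g)=4-|3+e^{2\pi i t}|$ tends to $4-\sqrt 7\approx 1.35$, which is strictly \emph{larger} than the bound $4-2\sqrt 2\approx 1.17$ of Theorem \ref{main-b}(iii). So Theorem \ref{main-b}(iii), even combined with any refinement of Proposition \ref{arc} (the multiplicity-$m$ variant does not help here: the repeated eigenvalue $1$ and the singleton $e^{2\pi i t}$ can only be used with $m=1$), cannot exclude $\ages(g)<1/3$. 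What is actually true is that such an element does not occur in a finite primitive tensor-indecomposable subgroup, and this is a structural fact proved by the classification. The paper handles $d=4$ by invoking the explicit analysis of the extraspecial case in \S\ref{SS-ext2} (all classes there have $\ages\geq 1/2$) and Theorem \ref{main-d}/Table~I for the almost quasi-simple case, where the minimum is $\ages=1/3$, attained by a complex reflection with eigenvalue $e^{2\pi i/3}$ in $Sp_4(3)\cdot 3$ and $\Delta(g)=4-\sqrt 7$. Without this classification input your $d=4$ argument cannot close; the sharpness of the bound at $1/3$ is precisely the reason.

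One last omission: you nowhere separate out the tensor-induced case (which the paper handles first via Lemma \ref{t-ind1}); folding it into the Theorem \ref{main-b} argument is fine, but you should say so, and you should also note that the remaining primitive tensor-indecomposable non-tensor-induced groups fall, by \cite[Proposition 2.8]{GT3}, into the extraspecial or almost quasi-simple cases, which is where the classification enters.
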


\begin{proof}
1) First we consider the case $V$ is tensor induced; in particular,
$d := \dim(V) = a^{m}$ for some integers $a,m \geq 2$, and 
$\Delta(g) \geq d(1-1/a)$ by Lemma \ref{t-ind1}. Now if $d = 4$, then
$\Delta(g) \geq 2$ and so $\ages(g) \geq 4/(2.9\pi) > 0.43$ by 
Proposition \ref{arc}(iii). If $d \geq 5$, then $\Delta(g) \geq 4$ and so 
$\ages(g) \geq 8/(2.9\pi) > 0.86$ again by Proposition \ref{arc}(iii)
(in fact, $\ages(g) > 1$ unless $d=8$). From now on we may assume that
$V$ is not tensor induced.

2) Consider the case $d = 2$ and assume that $\ages(g) < 1/5$. 
Then we may write $\Spec(g) = \{1,e^{i\al}\}$ with $0 < \al < 2\pi/5$. 
It follows that $|\Tr(g)| = 2\cos(\al/2) > 2\cos(\pi/5) = (1+\sqrt{5})/2$ and so 
$\Delta(g) < (3-\sqrt{5})/2$, contradicting Theorem \ref{main-b}(i).

Next assume that $d = 3$ and $\ages(g) < 1/3$. 
Then we may write $\Spec(g) = \{1,e^{i\al},e^{i\beta}\}$ with 
$0 \leq \al \leq \beta \leq \al+\beta < 2\pi/3$; in particular,
$$|\Tr(g)|^{2} = 3 +2\cos(\beta) + 4\cos(\beta/2)\cos(\beta/2-\al).$$
Now $\cos(\beta) > -1/2$ and $\cos(\beta/2) > 1/2$ as $0 \leq \beta < 2\pi/3$.
Also, $\cos(\beta/2-\al) > 1/2$, since 
$-\pi/3 < -\al/2 \leq \beta/2-\al \leq \beta/2 < \pi/3$. It follows that 
$|\Tr(g)|^{2} > 3-1+1 = 3$ and so 
$\Delta(g) < 3-\sqrt{3}$, contradicting Theorem \ref{main-b}(ii). 
 
3) Now we may assume that $d \geq 4$. If we are in the extraspecial case, then
$\ages(g) \geq 1/2$ by the results of \S\S\ref{SS-ext2}. Otherwise, 
by \cite[Proposition 2.8]{GT3} we may apply Theorem \ref{main-d}. 
\end{proof}
 
Note that the lower bounds given in Theorem \ref{min-age} are best possible,
cf. the examples of $SL_{2}(5) < GL_{2}(\CC)$, 
$3^{1+2}_{+}:SL_{2}(3) < GL_{3}(\CC)$, and Table I for examples in dimensions
$\geq 4$.

\subsection{Tensor decomposable case}
  
\begin{lemma}\label{tensor1}
{\sl In the set-up $\diam$, assume that the $G$-module $V$ is primitive
and tensor decomposable. Then $d:= \dim(V) \leq 10$.}
\end{lemma}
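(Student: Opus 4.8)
The plan is to exploit Lemma~\ref{tensor0}, which lets us write $V = A \otimes B$ as a tensor product of modules $A$, $B$ for a finite central extension $\tilde G$ of $G$, with $\dim(A) = a \geq \dim(B) = b \geq 2$, and with $\tilde G$ acting irreducibly and primitively on each factor (after refining the tensor decomposition into tensor-indecomposable pieces as in \cite{GT3}, we may assume each factor is tensor indecomposable, so that Theorem~\ref{main-d} and Theorem~\ref{main-b} apply to the actions on $A$ and $B$). First I would recall, from the analysis of the tensor decomposable case in \S\ref{Sdev} (applied with $C = 9.111$, which is legitimate since $G$ in $\diam$ satisfies $\start$ with this $C$ by Corollary~\ref{bound2}), that there must be some $g \in \XC$ non-scalar on $V$ with $\ages(g) \leq 1$ and $g|_B$ non-scalar; by Lemma~\ref{trivial}(iv), $\ages(g) \geq b \cdot \ages(g|_B)$, so $\ages(g|_B) \leq 1/b \leq 1/2$. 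But by Theorem~\ref{min-age} applied to the primitive tensor indecomposable action on $B$, if $\dim(B) > 4$ then $\ages(g|_B) \geq 1/2$, forcing $b \leq 4$ and in fact $\ages(g|_B) = 1/2$ with $b \in \{2,3,4\}$; if $b = 2$ we additionally get $\ages(g|_B) \geq 1/5$ only, which is the weak case. More importantly, I would similarly bound $a$: pick $g' \in \XC$ non-scalar on $A$ with $\ages(g') \leq 1$; then $\ages(g'|_A) \leq \ages(g')/b \leq 1/b \leq 1/2$, so again $\dim(A) \leq 4$ by Theorem~\ref{min-age}, giving $d = ab \leq 16$. To push $d$ down to $\leq 10$ I would rule out the cases $(a,b) = (4,4), (4,3)$, and reconsider $(3,3)$ etc.

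For the remaining finitely many shapes $(a,b) \in \{(4,4),(4,3),(3,3),(4,2),(3,2),(2,2)\}$ with $ab > 10$, i.e. $(4,4)$, $(4,3)$, $(4,2)$ (giving $d = 16, 12, 8$ — wait, $8 \leq 10$), so really only $(4,4)$ and $(4,3)$ need to be excluded. In each such case both factors have dimension $\geq 3$, and I would use Theorem~\ref{main-d}: the action of $\tilde G$ on a factor $F$ of dimension $3$ or $4$ that admits a non-scalar element of $\age^* \leq 1$ is constrained to the short list in Table~I, whose entries in dimensions $3$ and $4$ are $SL_2(7)$, $2\cdot\AAA_5$, $2\cdot\AAA_6$, $2\cdot\AAA_7$, $Sp_4(3)$ (dim $4$), and $3^{1+2}_+{:}SL_2(3)$ (dim $3$, via \cite[Prop.\ 2.8]{GT3}). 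One then needs the stronger input that for the $\XC$-generation to work, \emph{many} non-central elements must have small $\age^*$; concretely, for the tensor product to be generated up to scalars by elements of $\age^* \leq 1$, enough conjugacy classes in both factors must carry small age. I expect that a counting/incompatibility argument — comparing the list of small-age classes available on $A$ (from Table~I) against the requirement $\ages(g|_A) \leq 1/b$ forced by $\ages(g) \leq 1$ — eliminates $b = 4$ outright (since $\ages(g|_A) \leq 1/4$ would be needed, but the minimal positive $\ages$ value in dimensions $3,4$ from Table~I is $1/3$), and eliminates $b = 3$ unless $\ages(g|_A) = 1/3$ exactly, which by Table~I forces the $A$-factor into very specific groups ($Sp_4(3)$ with class $3B$, or $SU_4(2)$ in dim $5$ — not dim $\leq 4$ with $\ages = 1/3$, actually only $Sp_4(3)$, $3A/3B$) and similarly pins down the $B$-factor; then I would check directly that these rigid choices do not assemble into a group satisfying $\diam$, or that the resulting $d = 12$ or $16$ group fails to be generated up to scalars by age-$\leq 1$ elements.

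The main obstacle, I expect, is the case analysis excluding $d = 16$ (shape $(4,4)$) and $d = 12$ (shape $(4,3)$): here one cannot simply invoke a numerical bound, because both tensor factors individually \emph{do} admit elements of $\age^* \leq 1$ (e.g.\ $Sp_4(3) \otimes Sp_4(3)$ has elements which are $3B \otimes 1$ with $\age = 1/3$). The delicate point is to show that the \emph{subgroup generated} by all such elements (up to scalars) cannot be all of an irreducible $G$ — that is, the elements of small age on $V$ all lie in a proper subgroup, typically $GL(A)\otimes 1$ or $1\otimes GL(B)$ type pieces, contradicting irreducibility of $G$ on $V$. I would handle this by showing: if $g \in \XC$ has $\ages(g) \leq 1$ and $g$ is non-scalar on both $A$ and $B$, then $\ages(g) \geq \ages(g|_A) + \ages(g|_B)$ is impossible to keep $\leq 1$ given the minimal values $\geq 1/3$ on each $\geq 3$-dimensional factor would only give $\geq 2/3$ — so that case survives and must be examined via Table~I pairs directly — whereas if every $g \in \XC$ is scalar on one of the two factors, then $\langle \XC\rangle$ (up to scalars) stabilizes the tensor decomposition factor-wise and is reducible or imprimitive, contradiction. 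Carrying out the finitely many Table~I cross-checks for $(4,4)$ and $(4,3)$ — verifying no genuine $\diam$-group arises — is the computational heart of the argument, but it is a bounded finite check.
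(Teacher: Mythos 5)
Your approach uses exactly the paper's ingredients --- Theorem~\ref{min-age} for lower bounds on $\ages$ per tensor-indecomposable factor, and Lemma~\ref{trivial}(iv) to scale those up to $V$ --- but a coefficient error in Lemma~\ref{trivial}(iv) sends the argument off into an unnecessary (and ultimately unsuccessful) Table~I excursion. Recall that if $g$ acts on $A\otimes B$, then $\ages(g) \geq \dim(A)\cdot\ages(g|_B)$: the multiplier is the dimension of the \emph{other} tensor factor. You write ``$\ages(g) \geq b\cdot\ages(g|_B)$'' with $b = \dim(B)$, which multiplies by the wrong dimension. Moreover, your second inequality $\ages(g'|_A) \leq 1/b$ is the correct one, but the inference ``$\dim(A)\leq 4$ by Theorem~\ref{min-age}'' fails when $b = 2$, since $\ages(g'|_A)\leq 1/2$ is perfectly compatible with any $a > 4$ by Theorem~\ref{min-age}(iii). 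So the intermediate claim $d \leq 16$ is not established for $b=2$, and the Table~I analysis of shapes $(4,4)$ and $(4,3)$ in your last two paragraphs rests on a non-fact.

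With the corrected coefficient everything closes in one pass, and this is essentially the paper's proof. Take the smallest factor $B$ (with $a = \dim A \geq b = \dim B \geq 2$) and pick $g \in \XC$ non-scalar on $B$. Then $1 \geq \ages(g) \geq a\cdot\ages(g|_B)$, so $\ages(g|_B) \leq 1/a$. Now Theorem~\ref{min-age} gives: if $b = 2$ then $\ages(g|_B)\geq 1/5$, hence $a\leq 5$ and $d\leq 10$; if $b = 3$ then $\ages(g|_B)\geq 1/3$, hence $a\leq 3$ and $d\leq 9$; and if $b\geq 4$ then $\ages(g|_B)\geq 1/3 > 1/a$ is impossible since $a\geq b \geq 4$. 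Thus $d\leq 10$ with no reference to Table~I whatsoever; the cases $(a,b) = (4,4)$ and $(4,3)$ that you labor to exclude simply never arise. (One further detail you pass over: for general $m$-fold tensor decompositions the paper first forces $m=2$ by the same numeric input --- if some $V_i$ has dimension $\geq 3$, apply Theorem~\ref{min-age} on $V_i$ and Lemma~\ref{trivial}(iv) to bound the dimension of the complementary tensor product by $3$; and when every factor is $2$-dimensional the factor of $5$ gives $d/\dim V_1\leq 5$, again $d\leq 10$.)
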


\begin{proof}
Write $V = V_{1} \otimes \ldots \otimes V_{m}$, where $V_{i}$ are 
irreducible, primitive, tensor indecomposable $G$-modules of dimension 
$\geq 2$, and $m \geq 2$.

1) Consider the case where $\dim(V_{i}) \geq 3$, say for $i = 1$, and set
$W := V_{2} \otimes \ldots \otimes V_{m}$. Then we can find $g \in \XC$ 
such that $g|_{V_{1}}$ is non-scalar. In the case the $G$-module $V_{1}$ is 
tensor induced, among such elements $g$ we can find one that acts 
nontrivially on the set of tensor factors of $V_{1}$ (as otherwise 
the $G$-module $V_{1}$ would be tensor decomposable). By Theorem 
\ref{min-age}, $\ages(g|_{V_{1}}) \geq 1/3$. By Lemma \ref{trivial}(iv),
$1 \geq \ages(g) \geq \dim(W)\cdot \ages(g|_{V_{1}})$. It follows that 
$\dim(W) \leq 3$ and so $m = 2$. Again, we can find $h \in \XC$ such that
$h|_{V_{2}}$ is non-scalar. Notice that the $G$-module $V_{2}$ is not 
tensor induced. 

Assume $\dim(V_{2}) = 3$. Then $\ages(h|_{V_{2}}) \geq 1/3$ by Theorem 
\ref{min-age}(ii). Now by Lemma \ref{trivial}(iv),
$1 \geq \ages(h) \geq \dim(V_{1})\cdot \ages(h|_{V_{2}})$. It follows that 
$\dim(V_{1}) \leq 3$ and so $d \leq 9$. 

Assume $\dim(V_{2}) = 2$. Then $\ages(h|_{V_{2}}) \geq 1/5$ by Theorem 
\ref{min-age}(i). Now by Lemma \ref{trivial}(iv),
$1 \geq \ages(h) \geq \dim(V_{1})\cdot \ages(h|_{V_{2}})$. It follows that 
$\dim(V_{1}) \leq 5$ and so $d \leq 10$.

2) Now assume that $\dim(V_{i}) = 2$ for all $i$; in particular, 
the $G$-module $V_{i}$ is not tensor induced. Then we can find $g \in \XC$ 
such that $g|_{V_{1}}$ is non-scalar. By Theorem \ref{min-age}(i), 
$\ages(g|_{V_{1}}) \geq 1/5$. It now follows from Lemma \ref{trivial}(iv)
that $1 \geq \ages(g) \geq (d/\dim(V_{1})) \cdot \ages(g|_{V_{1}})$,
$d/\dim(V_{1}) \leq 5$, and so $d \leq 10$. In fact, $d = 4$ or $8$ in
this case.     
\end{proof}

The example of $(C_{5} \times SL_{2}(5)) * (C_{2} \times SU_{4}(2))$ acting on
$\CC^{2} \otimes \CC^{5}$ shows that the bound $10$ in Lemma \ref{tensor1} is 
best possible.
 
\begin{lemma}\label{tensor2}
{\sl In the set-up $\diam$, assume that the $G$-module $V$ is primitive
and tensor decomposable and that $G$ contains a non-central element
$g$ with $\ages(g) < 1$. Then $\dim(V) \leq 8$.}
\end{lemma}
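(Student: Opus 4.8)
The plan is to combine the dimension bound already proved with a direct age estimate on the tensor factors. First I would invoke Lemma \ref{tensor1}, which in the present set-up gives $d := \dim(V) \leq 10$, so that only the two values $d = 9$ and $d = 10$ need be excluded. For this I would revisit the proof of Lemma \ref{tensor1} to extract, not merely the inequality $d \leq 10$, but the exact tensor shape that forces these extremal dimensions: writing $V = V_1 \otimes \cdots \otimes V_m$ with each $V_i$ irreducible, primitive, tensor indecomposable of dimension $\geq 2$ and $m \geq 2$, the value $d = 9$ can occur only with $m = 2$ and $\dim V_1 = \dim V_2 = 3$, while $d = 10$ can occur only with $m = 2$ and $\{\dim V_1, \dim V_2\} = \{2, 5\}$. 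A short remark then records that none of these factors is tensor induced, since a tensor induced module has dimension $a^m$ with $a, m \geq 2$ and none of $2, 3, 5$ is of that form; this is precisely what allows Theorem \ref{min-age} to be applied to each factor without its extra proviso on the action on the set of tensor factors.

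Next I would exploit the given non-central element $g$ with $\ages(g) < 1$. Using Lemma \ref{tensor0} I pass to a finite central extension $\tilde{G}$ of $G$ and regard $V$ as a genuine tensor product $V_1 \otimes \cdots \otimes V_m$ of $\tilde{G}$-modules, so that $g$ lifts to $\tilde{g}$ acting as $\Phi_1(\tilde{g}) \otimes \cdots \otimes \Phi_m(\tilde{g})$ with $\Phi_i(\tilde{g}) =: a_i \in GL(V_i)$; the numbers $\ages(a_i)$ are well-defined in spite of the scalar ambiguity of the decomposition, and since $g$ is not scalar at least one $a_i$ is non-scalar, hence non-central in the irreducible group $\Phi_i(\tilde{G}) < GL(V_i)$. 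Fixing such an $i$, Theorem \ref{min-age} gives $\ages(a_i) \geq 1/3$ when $\dim V_i = 3$, $\ages(a_i) \geq 1/2$ when $\dim V_i = 5$, and $\ages(a_i) \geq 1/5$ when $\dim V_i = 2$. Then Lemma \ref{trivial}(iv), applied to the decomposition $V = V_i \otimes W$ with $W := \bigotimes_{j \neq i} V_j$, yields $\ages(g) \geq \dim(W) \cdot \ages(a_i) = (d/\dim V_i) \cdot \ages(a_i)$. For $d = 9$ this gives $\ages(g) \geq 3 \cdot (1/3) = 1$; for $d = 10$ it gives $\ages(g) \geq 2 \cdot (1/2) = 1$ if $\dim V_i = 5$ and $\ages(g) \geq 5 \cdot (1/5) = 1$ if $\dim V_i = 2$. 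In every case this contradicts $\ages(g) < 1$, so $d \notin \{9, 10\}$ and therefore $d \leq 8$ (in fact $d \in \{4, 6, 8\}$).

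I do not anticipate a genuine obstacle here: once Lemmas \ref{tensor1}, \ref{tensor0}, \ref{trivial} and Theorem \ref{min-age} are in hand, the argument is short bookkeeping. The only steps requiring care are extracting the precise factor dimensions in the cases $d = 9, 10$ from the proof of Lemma \ref{tensor1} (rather than just the bound $d \leq 10$), and checking that the tensor-induced proviso in Theorem \ref{min-age} is vacuous on the relevant factor — which, as noted, reduces to the elementary observation that $2$, $3$ and $5$ are not proper prime powers.
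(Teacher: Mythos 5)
Your proposal is correct and follows essentially the same route as the paper: apply Lemma \ref{tensor1} to reduce to $d \in \{9,10\}$, note $V = A \otimes B$ with factor dimensions $(3,3)$ or $(2,5)$ (hence neither factor tensor induced), pick the factor on which $g$ acts non-scalarly, and combine Theorem \ref{min-age} with Lemma \ref{trivial}(iv) to force $\ages(g) \geq 1$, a contradiction. The only difference is presentational: you make explicit the passage to a central extension via Lemma \ref{tensor0} and the observation that $2,3,5$ are not proper powers, both of which the paper treats implicitly.
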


\begin{proof}
Assume the contrary: $d:=\dim(V) \geq 9$. Notice that $d \leq 10$ by Lemma 
\ref{tensor1}. It follows that $d = 9$ or $10$, and
$V = A \otimes B$, where $A$ and $B$ are 
irreducible, primitive, tensor indecomposable, not tensor induced, 
$G$-modules of dimension $> 1$. Since $g \notin Z(G)$, we may assume 
that $g|_{A}$ is not scalar.

Assume $\dim(A) = 3$ (and so $\dim(B) = 3$). Then 
$\ages(g|_{A}) \geq 1/3$ by Theorem 
\ref{min-age}(ii), and so $\ages(g) \geq \dim(B)\cdot \ages(g|_{A}) \geq 1$ 
by Lemma \ref{trivial}(iv), a contradiction. 
Thus $\dim(A) = 2$ or $5$. Assume $\dim(A) = 2$ (and so $\dim(B) = 5$). Then 
$\ages(g|_{A}) \geq 1/5$ by Theorem 
\ref{min-age}(i). Again by Lemma \ref{trivial}(iv),
$\ages(g) \geq \dim(B)\cdot \ages(g|_{A}) \geq 1$, a contradiction.
Finally, let $\dim(A) = 5$ (and so $\dim(B) = 2$). Then 
$\ages(g|_{A}) \geq 1/2$ by Theorem 
\ref{min-age}(iii), and so $\ages(g) \geq \dim(B)\cdot \ages(g|_{A}) \geq 1$, 
again a contradiction.   
\end{proof}

The example of $(C_{5} \times SL_{2}(5)) * (C_{3} \times Sp_{4}(3))$ acting on
$\CC^{2} \otimes \CC^{4}$ shows that the bound $8$ in Lemma \ref{tensor2} is 
best possible.

\begin{lemma}\label{tensor3}
{\sl Let $g = A \otimes B$, where $A \in G < GL_{m}(\CC)$, 
$B \in H < GL_{n}(\CC)$, with $m \geq 3$ and $n \geq 2$. Assume that $G$ and $H$ are finite
primitive irreducible subgroups, and that $A,B$ are non-scalar. Then $\ages(g) > 1$.}
\end{lemma}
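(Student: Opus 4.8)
The plan is to combine the general lower bound on $\ages$ of a non-scalar element of a finite primitive irreducible linear group (Theorem \ref{min-age}, or equivalently Theorem \ref{main-b} via Corollary \ref{bound2} and Proposition \ref{arc}) with the subadditivity/submultiplicativity of $\ages$ under tensor product (Lemma \ref{trivial}(iv)). Since $A$ is non-scalar and $G < GL_m(\CC)$ is finite primitive irreducible, either $G$ is tensor induced with $A$ acting nontrivially on the set of tensor factors, in which case $\ages(A) \geq 1/2$ for $m \geq 5$ and $\ages(A) \geq 1/3$ for $m \in \{3,4\}$ by Theorem \ref{min-age} (the tensor-induced subcase is handled in part 1) of its proof); or $G$ acts primitively and tensor indecomposably. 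The subtlety is that Theorem \ref{min-age} is stated for tensor indecomposable modules, so I would first want to reduce to that case — but in fact $A$ ranges over an \emph{irreducible primitive} module, and if it is tensor decomposable I can pass to a tensor factor of $A$, at the cost of possibly lowering the dimension; this needs a short bookkeeping argument.

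The main step: by Theorem \ref{min-age} applied to $A$ (in whichever primitive/tensor-indecomposable factor it effectively lives), $\ages(g|_A) = \ages(A) \geq 1/3$ whenever $m \geq 3$, and $\geq 1/2$ when $m \geq 5$. Then Lemma \ref{trivial}(iv) gives
$$\ages(g) = \ages(A \otimes B) \geq n \cdot \ages(A) \geq n/3 \geq 2/3$$
for $m = 3$ or $4$, which is not yet $> 1$; so I must squeeze harder when $(m,n)$ is small, exactly as in Lemmas \ref{tensor1}--\ref{tensor2}. For $m \geq 6$, $\ages(A) \geq 1/2$ so $\ages(g) \geq n/2 \geq 1 $, and I get $> 1$ as long as either $n \geq 3$ or I can show the inequality in Lemma \ref{trivial}(iv) is strict when both $A$ and $B$ are non-scalar. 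For $m = 3,4,5$ and $n = 2$ I would instead use the sharper lower bounds on $\Delta(g|_A)$ from Theorem \ref{main-b} / Table I together with the refined estimates in Corollary \ref{bound2} keyed to the arc length $\delta$: since $A$ is non-scalar with small $\ages$ its spectrum is confined to a short arc, and tensoring with the non-scalar $B$ (whose eigenvalues are not all equal) spreads the spectrum of $g = A \otimes B$ out over a long arc, forcing $\delta \geq \pi$ (indeed larger), whence Corollary \ref{bound2} upgrades $\Delta(g)/$age bounds to give $\ages(g) > 1$.

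Concretely, the key steps in order: (1) reduce to the case $G$ is primitive and tensor indecomposable on $A$ (pass to a factor if not, tracking that $\dim$ stays $\geq 3$ or handle the dropped cases directly); (2) if $A$-module is tensor induced with $g|_A$ nontrivial on tensor factors, invoke Lemma \ref{t-ind1} to get $\Delta(g|_A) \geq m(1-1/a) \geq 2$, hence $\ages(g|_A) \geq 1/3$ and push through Lemma \ref{trivial}(iv); (3) otherwise apply Theorem \ref{min-age}(ii),(iii) to get $\ages(g|_A) \geq 1/3$ ($m \in \{3,4\}$) or $\geq 1/2$ ($m \geq 5$); (4) for $n \geq 3$, or for $m \geq 6$ with $n = 2$, conclude $\ages(g) \geq n\cdot\ages(g|_A) > 1$ directly, using strictness of Lemma \ref{trivial}(iv) when $B$ is non-scalar (this strictness point is the technical heart: I would prove it by noting that if $\ages(A\otimes B) = n\,\ages(A)$ then every eigenvalue-coset of $B$ must realize the optimal shift for $A$ simultaneously, forcing $B$ scalar); (5) for the finitely many residual pairs $(m,n) \in \{(3,2),(4,2),(5,2)\}$, use Theorem \ref{main-b} / Table I to pin down $\Delta(g|_A)$ and the arc length of $\Spec(g|_A)$, observe that tensoring by the non-scalar $B$ forces the arc length $\delta$ of $\Spec(g)$ to exceed $\pi$ (in fact $4\pi/3$ or more), and apply the refined bounds of Corollary \ref{bound2} to get $\dim(V) - |\Tr(g)|$ large enough that $\ages(g) > 1$. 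The main obstacle I anticipate is step (4)'s strictness claim and the careful arc-length estimate in step (5): $\ages$ is only an infimum over scalars, so ruling out equality requires controlling the relative positions of the eigenvalue tuples of $A$ and $B$, and the low-dimensional residual cases may need a short direct inspection of the possibilities in Table I rather than a uniform argument.
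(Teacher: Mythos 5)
Your plan is genuinely different from the paper's proof and, as written, has two real gaps. The paper proceeds by direct spectral estimates: after normalizing scalars so that $\age(g)\le 1$ and arranging $\Spec(B)$ to lie in a short arc starting at $1$ (of length $2\pi\beta$), it writes
$$\age(A\otimes B)=\sum_{j=1}^{m}\age\bigl(e^{-2\pi i\gamma_{j}}B\bigr),\qquad \gamma_{j}=1-\alpha_{j},$$
and bounds each summand from below using the configuration of the two or three extremal eigenvalues of $B$. This yields the clean inequalities $1\ge m\beta+3\age(A)$ (for $n\ge 3$) and $1\ge m\beta+2\age(A)$ (for $n=2$), which, combined with Blichfeldt's $\pi/3$-arc bound ($\beta,\age(A)\ge 1/6$) and Theorem \ref{min-age}(i),(ii), force a contradiction. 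This route only ever applies Theorem \ref{min-age} to $A$ when $m=3$, and to $B$ when $n=2$, dimensions in which tensor decomposability and tensor induction are automatically excluded—so the tensor-indecomposability hypothesis of Theorem \ref{min-age} is never an issue.

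Your route instead wants to chain $\ages(A\otimes B)\ge n\cdot\ages(A)$ (Lemma \ref{trivial}(iv)) with the lower bounds of Theorem \ref{min-age}. The first gap is your strictness claim in step (4): it is not true in general that $\ages(A\otimes B)=n\,\ages(A)$ forces $B$ scalar. The inequality in Lemma \ref{trivial}(iv) becomes an equality precisely when every $\lambda s_{j}$ is an \emph{optimizing} scalar for $A$, and the set of optimizing scalars can have several elements (already $\diag(1,-1)$ has two). Ruling this out requires detailed information about the spectrum of $A$ (which scalar shifts are optimal) together with control on $\Spec(B)$; this is essentially as hard as the lemma itself and is exactly what the paper's explicit $\gamma_{j}$-bookkeeping accomplishes. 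The second gap is step (5): the residual pairs $(m,n)\in\{(3,2),(4,2),(5,2)\}$ are waved at with an unspecified ``arc length $\ge\pi$'' argument. But in those cases $\ages(A)$ can be as small as $1/3$ and $\ages(B)$ as small as $1/5$, so $n\cdot\ages(A)$ gives only $2/3$ and the refined estimates of Corollary \ref{bound2} do not obviously close the gap without the paper's two-variable inequality $1\ge m\beta+2\age(A)$; you would need to actually prove a quantitative statement about how $\Spec(A\otimes B)$ spreads, which is what the paper's summation does. Finally, the reduction ``pass to a tensor factor of $A$ if it is tensor decomposable'' is acknowledged but not carried out; the paper avoids this entirely by only invoking Theorem \ref{min-age} in dimensions $2$ and $3$.
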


\begin{proof}
1) Assume the contrary: $\ages(g) \leq 1$. By a well-known result of 
Blichfeldt, cf. \cite{D}, the smallest arc that contains all eigenvalues of 
any non-central element in any finite, primitive, irreducible linear group has 
length $\geq \pi/3$. Thus $\al,\beta \geq 1/6$, where $2\pi\al$, resp. $2\pi\beta$, is the
length of such smallest arc for $A$, resp. for $B$. In particular, 
$\ages(A),\ages(B) \geq 1/6$. On the other hand, $\ages(A) \leq 1/n \leq 1/2$ and 
$\ages(B) \leq 1/m \leq 1/3$ by Lemma \ref{trivial}(iv), whence $\al \leq 1/2$ and
$\beta \leq 1/3$. By Lemma \ref{trivial}(ii), we can multiply $g$ by
a suitable scalar and assume that $\age(g) \leq 1$. 
Multiplying $B$ by a suitable $\mu \in \SA$
and $A$ by $\mu^{-1}$, we may assume that 
$\Spec(B) \ni 1,e^{2\pi i\beta}$, and all other eigenvalues of $B$ belong to the arc
$[1,e^{2\pi i\beta}]$ of $\SA$. Write
$A = \diag(e^{2\pi i\al_{1}}, \ldots ,e^{2\pi i\al_{m}})$ with $0 \leq \al_{j} < 1$.

\smallskip
2) Here we consider the case $n \geq 3$. Then
$B$ has a third eigenvalue $e^{2\pi i \delta}$ with $0 \leq \delta \leq \beta$. 
For $0 < \gam \leq 1$, observe that
$$\age(e^{-2\pi i\gam}B) \geq \left\{ \begin{array}{ll}
  1+\beta+\delta-3\gam \geq 1+\beta-2\delta, & 0 < \gam \leq \delta,\\
  2+\beta+\delta-3\gam \geq 2(1-\beta) > 1, & \delta < \gam \leq \beta,\\
  \beta+\delta + 3(1-\gam) \geq \beta+\delta, & \beta < \gam \leq 1. \end{array} \right.$$ 
We will apply this observation to $\gam = \gam_{j} := 1-\al_{j}$. Since 
$1 \geq \age(A \otimes B) = \sum^{m}_{j=1}\age(e^{-2\pi i\gam_{j}}B)$, we see that 
all $\gam_{j}$ must belong to $(\beta,1]$, and 
$$1 \geq \age(A \otimes B) \geq \sum^{m}_{j=1}(\beta + \delta + 3(1-\gam_{j})) \geq 
  m\beta+3\sum^{m}_{j=1}\al_{j} = m\beta +3\age(A).$$  
Recall $m \geq 3$, $\beta \geq 1/6$, and $\age(A) \geq 1/6$. It follows that 
$m = 3$ and $\age(A) = 1/6$. The last equality however contradicts Theorem \ref{min-age}(ii)
applied to the element $A$ of $G$.

\smallskip
3) Now we let $n = 2$. For $0 < \gam \leq 1$, we have
$$\age(e^{-2\pi i\gam}B) = \left\{ \begin{array}{ll}
  1+\beta-2\gam \geq 1-\beta, & 0 < \gam \leq \beta,\\
  2+\beta-2\gam \geq \beta, & \beta < \gam \leq 1. \end{array} \right.$$ 
We will again apply this observation to $\gam = \gam_{j} := 1-\al_{j}$ to estimate 
$\age(A \otimes B) = \sum^{m}_{j=1}\age(e^{-2\pi i\gam_{j}}B)$. If at least one $\gam_{j}$ 
belongs to $(0,\beta]$, then
$\age(g) \geq 1-\beta + (m-1)\beta \geq 1+(m-2)\beta > 1$, a contradiction.
Hence, all $\gam_{j}$ belong to $(\beta,1]$, and so  
$$1 \geq \age(A \otimes B) = \sum^{m}_{j=1}(\beta + 2(1-\gam_{j})) = 
  m\beta+2\sum^{m}_{j=1}\al_{j} = m\beta +2\age(A).$$  
Recall that $m \geq 3$, $\age(A) \geq 1/6$, and $\beta = \age(B) \geq 1/5$ by 
Theorem \ref{min-age}(i) applied to the element $B$ of $H$. It follows that 
$m \leq (1-2/6)/(1/5)$ and so $m = 3$. But in this case, Theorem \ref{min-age}(ii)
applied to the element $A$ of $G$ yields that $\age(A) \geq 1/3$ and 
so $\age(A \otimes B) \geq 3/5 + 2/3 > 1$.   
\end{proof}

\begin{corol}\label{tensor4}
{\sl Let $G < GL(V)$ be a finite primitive irreducible subgroup. Assume that $\dim(V) \geq 5$ and 
that the $G$-module $V$ is tensor decomposable. Then, for any $g \in G$ with
$\ages(g) \leq 1$, $ZG \neq Z\langle g^{G} \rangle$, where $Z := Z(GL(V))$.}
\end{corol}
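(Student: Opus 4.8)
The plan is to reduce to a two–factor tensor decomposition $V = A \otimes B$ and then play a scalar/non-scalar dichotomy for $g$ on the two factors against Lemma~\ref{tensor3} and the irreducibility of $V$.

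First I would dispose of the case $g \in Z(G)$: by Schur's Lemma $g$ is then scalar, so $\langle g^{G}\rangle \leq Z$, whence $Z\langle g^{G}\rangle = Z$, a proper subgroup of $ZG$ (indeed $G$ is not contained in $Z$, as $\dim(V) \geq 5 > 1$). So assume $g$ is non-scalar. Grouping the tensor factors of $V$ supplied by the hypothesis (there are at least two of them, each of dimension $\geq 2$, with product $\dim(V) \geq 5$), I would write $V = A \otimes B$ as a $G$-module tensor decomposition with $\dim(A) \geq 3$ and $\dim(B) \geq 2$; this is always possible. By Lemma~\ref{tensor0} there exist a finite central extension $\tilde{G}$ of $G$ and irreducible representations $\Phi : \tilde{G} \to GL(A)$, $\Psi : \tilde{G} \to GL(B)$ with $g = \Phi(\tilde{g}) \otimes \Psi(\tilde{g})$ for the image $g$ of a lift $\tilde{g} \in \tilde{G}$; set $G_{A} := \Phi(\tilde{G})$ and $G_{B} := \Psi(\tilde{G})$. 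The next step is to record that $G_{A}$ acts irreducibly and primitively on $A$, and $G_{B}$ likewise on $B$: a proper nonzero $G_{A}$-submodule $A'$ of $A$ would give the proper nonzero $G$-submodule $A' \otimes B$ of $V$, and a nontrivial block system $A = A_{1} \oplus \dots \oplus A_{k}$ for $G_{A}$ would give the block system $\{A_{i} \otimes B\}_{i=1}^{k}$ for $G$ on $V$, each contradicting the hypotheses on $G$.

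Then I would suppose, for contradiction, that $ZG = Z\langle g^{G}\rangle$, equivalently $G \subseteq Z\langle g^{G}\rangle$, and split into cases according to which of $\Phi(\tilde{g})$, $\Psi(\tilde{g})$ is scalar (at least one is not, since $g$ is non-scalar). If both are non-scalar, Lemma~\ref{tensor3}, applied with $m = \dim(A) \geq 3$, $n = \dim(B) \geq 2$ and the finite primitive irreducible groups $G_{A}$, $G_{B}$, yields $\ages(g) > 1$, contradicting $\ages(g) \leq 1$. If $\Psi(\tilde{g})$ is scalar, then $\Psi(\tilde{x}^{-1}\tilde{g}\tilde{x}) = \Psi(\tilde{g})$ is scalar for every $\tilde{x} \in \tilde{G}$, so every $G$-conjugate of $g$, hence every element of $\langle g^{G}\rangle$, and hence every element of $Z\langle g^{G}\rangle$, has the form $\mu(Y \otimes I_{B})$ with $\mu \in \CC^{\times}$ and $Y \in GL(A)$; since $G \subseteq Z\langle g^{G}\rangle$, every element of $G$ then preserves the proper nonzero subspace $A \otimes \langle v\rangle_{\CC}$ for any fixed $0 \neq v \in B$ (here $\dim(B) \geq 2$), contradicting the irreducibility of $V$ as a $G$-module. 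The case in which $\Phi(\tilde{g})$ is scalar is symmetric, using $\dim(A) \geq 3 \geq 2$.

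The one delicate point is the reduction step: choosing the two-factor grouping and transferring irreducibility and primitivity from $(G,V)$ to $(G_{A},A)$ and $(G_{B},B)$ via Lemma~\ref{tensor0}. Once that is in place, the genuinely analytic input is entirely contained in Lemma~\ref{tensor3}, which handles the hardest case, while the two ``scalar'' cases follow at once from the irreducibility of $V$.
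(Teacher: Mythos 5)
Your proof is correct and follows essentially the same route as the paper's: reduce to a two-factor decomposition $V = A \otimes B$ with $\dim(A) \geq 3$, invoke Lemma~\ref{tensor0} to pass to a central extension acting irreducibly and primitively on each factor, and use Lemma~\ref{tensor3} to force $g$ to act scalarly on one factor, whence $\langle g^{G}\rangle$ cannot generate $G$ modulo scalars. Your version is merely more explicit about the grouping step that ensures $\dim(A) \geq 3$ and about why irreducibility and primitivity descend to the factors, and finishes by exhibiting an invariant subspace of $V$ rather than (as the paper does) noting directly that $H$ acts scalarly on $A$ while $G$ does not — equivalent endings.
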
      

\begin{proof}
Write $V = A \otimes B$ for some $G$-modules $A$, $B$ of dimension $> 1$. Then $G$ is 
irreducible and primitive on both $A$ and $B$. Now consider any $g \in G$ with
$\ages(g) \leq 1$. By Lemma \ref{tensor3}, $g$ must act scalarly on $A$ or on $B$,
say on $A$. In this case, $H := \langle g^{G} \rangle$ also acts scalarly on $A$ and so
$ZG \neq ZH$ by irreducibility of $G$ on $A$.  
\end{proof}

\subsection{Tensor induced case}

\begin{propo}\label{t-ind2}
{\sl In the set-up $\diam$, assume that the $G$-module $V$ is primitive, tensor
indecomposable, but tensor induced. Then 
$\dim(V) = 4$ or $8$. Moreover, if $\dim(V) = 8$, then $G$ cannot be generated by its elements 
$h$ with $\ages(h) < 1$ (modulo scalars).}
\end{propo}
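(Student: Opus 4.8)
The tensor-induced hypothesis means $V = V_1^{\otimes m}$ with $\dim(V_1) = a > 1$ and $G \leq GL(V_1)^{\otimes m}:\SSS_m$, with $\SSS_m$ permuting the $m$ factors transitively. The plan is to squeeze $m$ and $a$ using the multiplicativity of $\ages$ under tensor products (Lemma~\ref{trivial}(iv)) together with the trace bound of Lemma~\ref{t-ind1} and the $\age$--deviation inequality of Proposition~\ref{arc}(iii). First I would exploit the fact that some element $g \in \XC$ must act nontrivially on the set $\{V_1,\dots,V_m\}$ of tensor factors: otherwise $\langle \XC \rangle$ would stabilize each $V_i$ and $V$ would not be tensor induced (one also needs to invoke Lemma~\ref{tensor0}/the reduction to view $V$ genuinely as a tensor power for a central extension, so that "acts trivially on the factors" forces a tensor decomposition of the module). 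For such a $g$, say $g$ projects to a permutation of the factors with $s < m$ disjoint cycles, Lemma~\ref{t-ind1} gives $|\Tr(g)| \leq a^{\,s} \leq a^{\,m-1} = \dim(V)/a \leq \dim(V)/2$, whence $\Delta(g) = \dim(V) - |\Tr(g)| \geq \dim(V)/2$. Since $\ages(g) \leq 1$, Corollary~\ref{bound2} (equivalently Proposition~\ref{arc}(iii)) forces $\dim(V) - |\Tr(g)| < 4.556$, hence $\dim(V) < 9.112$, so $\dim(V) = a^m \in \{4, 8\}$ (the value $4$ from $a=2,m=2$, the value $8$ from $a=2,m=3$; $a\geq 3$ already gives $a^m \geq 9$).

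For the second assertion, suppose $\dim(V) = 8$, so $a = 2$ and $m = 3$, and suppose for contradiction that $G$ is generated (modulo scalars) by its elements $h$ with $\ages(h) < 1$. I would argue that such a generating set must contain an element $h$ acting nontrivially on the three tensor factors $\{V_1,V_2,V_3\}$ (again by the tensor-indecomposability/no-decomposition argument above: the elements fixing all three factors generate a subgroup stabilizing each $V_i$, and if $G$ were generated by those then $V$ would be tensor decomposable). Such an $h$ projects either to a transposition or to a $3$-cycle in $\SSS_3$. If $h$ projects to a $3$-cycle, then $s = 1$ and Lemma~\ref{t-ind1} gives $|\Tr(h)| \leq 2$, so $\Delta(h) \geq 6$, and then Proposition~\ref{arc}(iii) gives $\ages(h) \geq \Delta(h)\cdot 2/(2.9\pi) = 12/(2.9\pi) > 1$, a contradiction. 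If $h$ projects to a transposition, then $s = 2$ and $|\Tr(h)| \leq 4$, so $\Delta(h) \geq 4$; this alone only yields $\ages(h) \geq 8/(2.9\pi) > 0.86$, not quite $> 1$, so the transposition case is the main obstacle and needs a finer analysis.

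To finish the transposition case: if $h$ projects to the transposition $(1\,2)$ in $\SSS_3$, then after conjugating we may assume $h$ preserves the tensor decomposition $V = (V_1 \otimes V_2) \otimes V_3$, acting on $V_1 \otimes V_2$ (a $4$-dimensional space) by swapping the two factors composed with diagonal matrices, and on $V_3$ by some $2\times 2$ matrix $B$. By the observation at the start of the proof of Lemma~\ref{t-ind1}, $h = h' \otimes B'$ where $h'$ acts on $V_1\otimes V_2$ (dimension $4$) swapping the factors and $B'$ acts on $V_3$ (dimension $2$), with $h', B'$ of finite order up to the ambiguity absorbed into scalars. The spectrum of $h'$ on $V_1 \otimes V_2$ is then a union of cosets of $\langle -1 \rangle$ in $\SA$ (the swap-with-diagonals structure on a $2\otimes 2$ space forces eigenvalues paired as $\pm\sqrt{\cdot}$), so Lemma~\ref{small1} applies to $h'$ with $m = 2$, giving $\ages(h'|_{V_1\otimes V_2}) \geq 1$; combined with $B'$ non-scalar (if $B'$ were scalar one could fold it into $h'$ and also check the tensor structure) and Lemma~\ref{trivial}(iv), $\ages(h) \geq \dim(V_3)\cdot \ages(h') - (\text{small correction})$, forcing $\ages(h) > 1$ unless $h$ is essentially a bireflection, in which case $\ages(h) = 1$ exactly — contradicting $\ages(h) < 1$. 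Alternatively, and more cleanly, one invokes Lemma~\ref{small2} and Lemma~\ref{small1} directly: writing $\lam h$ of order $\leq$ some small bound, the only way to have $\ages(h) < 1$ with $h$ moving a factor is excluded by the coset-of-$\langle -1\rangle$ structure of the spectrum. Either way one concludes no element moving a tensor factor can have $\ages < 1$, so the putative generators all fix each $V_i$, making $V$ tensor decomposable — the desired contradiction. The delicate point throughout is verifying that "acts trivially on the tensor factors" genuinely implies the module is tensor decomposable, which is where the setup of Lemma~\ref{tensor0} and the primitivity hypothesis must be carefully used.
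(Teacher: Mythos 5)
Your first part follows the paper's strategy (Lemma \ref{t-ind1} plus the deviation/age bound) but uses a weaker inequality that leaves a gap. You bound $\Delta(g) \geq \dim(V)/2$ and conclude $\dim(V) < 9.112$; this does \emph{not} exclude $\dim(V) = 9$ (i.e.\ $a = 3$, $m = 2$), since $9 < 9.112$. Your parenthetical remark that ``$a \geq 3$ already gives $a^m \geq 9$'' does not help, precisely because $9$ still lies below your threshold. The fix is to keep the sharper estimate available from Lemma \ref{t-ind1}, namely $\Delta(g) \geq a^m - a^{m-1} = a^{m-1}(a-1)$: for $a = 3$, $m = 2$ this gives $\Delta(g) \geq 6 > 4.556$, and more generally one reads off $a = 2$ and $m \in \{2,3\}$ directly.

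Your second part has a genuine error in the transposition case. You assert that the spectrum of the $4 \times 4$ block $A$ (the part of $h$ that swaps $V_1$ and $V_2$) is ``a union of cosets of $\langle -1 \rangle$'', i.e.\ two $\pm$-pairs, so that Lemma \ref{small1} with $m = 2$ gives $\ages(A) \geq 1$. This is false: computing the characteristic polynomial of $A = \mathbf{j}(X \otimes Y)$, where $\mathbf{j}$ is the factor swap, shows $\Spec(A) = \{x, xu^{2}, xu, -xu\}$ with $\{x, xu^{2}\} = \Spec(XY)$. Only $\{xu, -xu\}$ is a $\pm$-pair; $x$ and $xu^{2}$ are in general not negatives of each other, so Lemma \ref{small1} applies only with $m = 1$ and yields $\ages(A) \geq 1/2$, not $\geq 1$. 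Fortunately the weaker bound suffices: by Lemma \ref{trivial}(iv), $\ages(h) = \ages(A \otimes B) \geq \dim(V_{3}) \cdot \ages(A) \geq 2 \cdot (1/2) = 1$, giving the contradiction. (Your ``small correction'' term has no basis — Lemma \ref{trivial}(iv) is a clean inequality with no error term — and should be dropped.) The rest of your plan — dispatching $3$-cycles via the trace bound, reducing to elements that move the tensor factors, and invoking the reduction of Lemma \ref{tensor0} — is sound and matches the paper's route.
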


\begin{proof}
1) By the assumptions, there is a tensor decomposition $V = V_{1}^{\otimes m}$ with
$a := \dim(V_{1})> 1$ and $m > 1$ such that $G < GL(V_{1})^{\otimes m}:\SSS_{m}$, and 
there is some $g \in \XC$ such that $0 < \ages(g) \leq 1$ and $g$ acts nontrivially on
the $m$ tensor factors of $V$. By Lemma \ref{t-ind1}, 
$4.556 > \Delta(g) = \dim(V) -|\Tr(g)| \geq a^{m-1}(a-1)$. It follows that $a = 2$ and 
$m = 2$ or $3$. 

From now on we assume that $(a,m) = (2,3)$, i.e.
$V = V_{1} \otimes V_{2} \otimes V_{3}$ and $\dim(V_{i}) = 2$. Then $g$ must project onto a $2$-cycle of
$\SSS_{3}$, as otherwise by Lemma \ref{t-ind1}, $\Delta(g) \geq 8-2 = 6$ and so $\ages(g) > 1$. 
Without loss we may assume that $g = A \otimes B$, with $A < GL(V_{1})^{\otimes 2}:\SSS_{2}$
permuting the two tensor factors $V_{1}$ and $V_{2}$, and $B \in GL(V_{3})$, and that $g$ has finite 
order: $g^{N} = I_{8}$ for some integer $N > 1$. (Here we let $I_{n}$ denote the identity 
$n \times n$-matrix.)  It follows that 
$A^{N} \otimes B^{N} = I_{8} = I_{4} \otimes I_{2}$. By (the first sentence of) the proof of 
Lemma \ref{tensor0}, we can multiply $A$ by a suitable $\lam \in \CC^{\times}$ (and $B$ 
by $\lam^{-1}$) such that $A^{N} = I_{4}$ and $B^{N} = I_{2}$.   

\smallskip
2) Here we show that $\ages(A) \geq 1/2$ and $\ages(g) \geq 1$. 
By our assumptions, there are some bases 
$(e_{1},e_{2})$ of $V_{1}$ and $(f_{1},f_{2})$ of $V_{2}$, and matrices $X, Y \in GL_{2}(\CC)$ such
that, in the basis $(e_{1} \otimes f_{1},e_{2} \otimes f_{1},e_{1} \otimes f_{2},e_{2} \otimes f_{2})$
of $V_{1} \otimes V_{2}$, $A = {\bf j}(X \otimes Y)$, where 
${\bf j}~:~e_{i} \otimes f_{j} \mapsto e_{j} \otimes f_{i}$. Now direct computation shows that 
$$\det(A-tI) = t^{4} - \Tr(XY)\cdot t^{3} + \Tr(XY)\cdot \det(XY) \cdot t - \det(XY)^{2}.$$
In particular, writing $\Spec(XY) = \{x,xu^{2}\}$ for some $x,u \in \CC^{\times}$, we get
$\Spec(A) = \{x,xu^{2},xu,-xu\}$. Hence $\ages(A) \geq 1/2$ by Lemma \ref{small1}.           
Now by Lemma \ref{trivial}(iv) we have $\ages(g) \geq 2\ages(A) \geq 1$. This lower bound is
best possible as $\age({\bf j} \otimes I_{2}) = 1$ (in fact, ${\bf j} \otimes I_{2}$ acts as 
a bireflection on $V$). 

\smallskip
3) We have shown that any element of $G$ that acts nontrivially on the set of
$3$ tensor factors of $V$ has $\ages \geq 1$. In particular, if $\ages(h) < 1$ for some 
$h \in G$, then $h$ belongs to the base subgroup 
$G \cap GL(V_{1}) \otimes GL(V_{2}) \otimes GL(V_{3})$. Thus 
$Z(GL(V)) \cdot \langle h \in G \mid \ages(h) < 1 \rangle < Z(GL(V))G$. 
\end{proof}

\subsection{Proof of Theorem \ref{main4}}
Let $d := \dim(V) > 4$ and let $G < GL(V)$ satisfy the hypotheses of the Theorem. 
If $G$ is imprimitive, then the statement follows from Lemma \ref{impr3} and Theorem \ref{impr-a}(ii).
So we may assume that the $G$-module $V$ is primitive. Now by Corollary \ref{tensor4}, 
$V$ is tensor indecomposable, and so it cannot be tensor induced by Proposition \ref{t-ind2}.
The extraspecial case cannot occur either, by the results of \S\S\ref{SS-ext2}. Thus $G$ is 
almost quasi-simple by \cite[Proposition 2.8]{GT3}, i.e. $S \lhd G/Z(G) \leq \Aut(S)$ for 
some simple non-abelian group $S$. 

We can now apply Theorem \ref{main-d};
in particular, either $(d,S) = (n-1,\AAA_{n})$ or $d \leq 8$. In the former case, up to
scalars, $G = \SSS_{n}$ (in its action on the deleted natural permutation module) and so a c.r.g.
Consider the latter case. If $d = 8$, then $S = \Omega^{+}_{8}(2)$, $G/Z(G) = S \cdot 2$, and
up to scalars, $G$ is the Weyl group of type $E_{8}$. If $d = 7$, then $G/Z(G) = S = Sp_{6}(2)$, 
and up to scalars, $G$ is the Weyl group of type $E_{7}$. Assume $d = 6$. If $S = SU_{4}(2)$, then 
$G/Z(G) = S \cdot 2$, and up to scalars, $G$ is the Weyl group of type $E_{6}$. If 
$S = PSU_{4}(3)$, then $G/Z(G) = S \cdot 2_{2}$ in the notation of \cite{Atlas} (the other two 
involutions in $\Out(S)$ does not preserve the $6$-dimensional representation in question of 
$G^{(\infty)}$), and so $G$ is a c.r.g. modulo scalars. In all these cases, there is only one 
conjugacy class in $G/Z(G)$ that contain non-central elements $g$ with $\ages(g) < 1$, and these
elements $g$ are scalar multiples of reflections. Finally, if 
$d = 5$, then $G = SU_{4}(2) \cdot Z(G)$ and so it is also a c.r.g. modulo scalars.   
\hfill $\Box$ 

\subsection{Proof of Theorem \ref{main2}}
Let $(G,V)$ satisfy the hypotheses of the Theorem. 
If the $G$-module $V$ is imprimitive, then the statement follows from Lemmas \ref{impr3} and
\ref{impr4}. (Notice that in case (iii) the 
transpositions in the subgroup $\SSS_{n}$ act on $V$ as bireflections.) 
So we may assume that $V$ is primitive, of dimension $\geq 11$. 
Hence the extraspecial case cannot occur by the analysis in 
\S\S\ref{SS-ext2}. Next, the $G$-module $V$ cannot be tensor decomposable or tensor 
induced by Lemma \ref{tensor1} and Proposition \ref{t-ind2}. Thus we are in the 
almost quasi-simple case and can apply Theorem \ref{main-d}. Since $\dim(V) \geq 11$,
we arrive at the conclusion (i).
\hfill $\Box$ 

\subsection{Proof of Theorem \ref{main3}}
Let $(G,V)$ satisfy the hypotheses of the Theorem. First we consider the case where the $G$-module
$V$ is imprimitive and apply Lemma \ref{impr3}. In the case (ii) of Lemma \ref{impr3},
we arrive at conclusion (ii) of the Theorem (notice that all elements of $G$ with $\ages < 1$
are contained in $D$ and so cannot generate $G$ modulo scalars). Suppose we are in the case (i)
of Lemma \ref{impr3}. Then $G$ satisfies the hypotheses of Theorem  
\ref{impr-a}(i), and so we are done. So we may assume that $V$ is primitive and 
$\dim(V) \geq 9$. Hence the extraspecial case cannot occur by the analysis in 
\S\S\ref{SS-ext2}. Next, the $G$-module $V$ cannot be tensor decomposable or tensor 
induced by Lemma \ref{tensor2} and Proposition \ref{t-ind2}. Thus we are in the 
almost quasi-simple case and can apply Theorem \ref{main-d}. Since $\dim(V) \geq 9$,
we must now have that $G = \SSS_{d+1}$ modulo scalars, as stated in (i). 
\hfill $\Box$    

\begin{remar}\label{r14}
{\em (a) The group $(C_{5} \times SL_{2}(5))*(C_{3} \times Sp_{4}(3)) < GL_{8}(\CC)$ is generated 
by its elements of $\age <1$, but yet does not contain any complex reflection by Lemma 
\ref{tensor1}. Thus the bound $d \geq 9$ in Theorem \ref{main3} is best possible.

(b) The case (ii) of Theorem \ref{main3} indeed occurs, as shown in the following example. 
Consider the subgroup $A = C_{7} \times SL_{2}(5)$ of $GL_{2}(\CC)$ and let $G$ be 
the wreath product $A \wr \SSS_{n}$ acting on $V = \CC^{2n}$ for any $n \geq 2$. It is easy to check
that $G$ is generated by its (non-central) elements with $\age \leq 1$, and $G$ contains non-central 
elements with $\age = 2/7$. However, $G$ does not contain any complex reflection. For, suppose
$g \in G$ is conjugate to $\diag(\al, \al, \ldots, \al,\beta)$ for some $\al \neq \beta \in \SA$. 
By Lemma \ref{impr3}, $g = \diag(a_{1}, \ldots ,a_{n}) \in A^{n}$ since $0 < \ages(g) < 1$. It follows 
that $\al^{14} = 1$ and that some element $x \in SL_{2}(5)$ has eigenvalues 
$\mu\al$, $\mu\beta$ for some $\mu \in \SA$ with $\mu^{7} = 1$. Now $(\mu\al)^{14} = 1$, and so
$x$ must be scalar, whence $\al = \beta$, a contradiction.}
\end{remar}    

\subsection{Proof of Corollary \ref{main1g}}
By the assumption, $2\pi\cdot ||g|| \leq L$ for some $1 \neq g \in G$. Let $N$ be the (normal) subgroup
generated by all elements in $G$ with this property. By Corollary \ref{basic2m}, $N$ is generated 
by a set of elements $g$ with $\DB(g)^{2} \leq C$, where $C = \max\{4,L^{2}\}$. Also by 
\cite[Lemma 2.5]{GT3}, either $N \leq Z(G)$, or $N$ is irreducible on $V$. In the former case,
the cyclic group $Z(G)$ contains an element $g = e^{2\pi i j/s} \cdot 1_{V}$ with $s := |Z(G)|$, 
$1 \leq j \leq s-1$, and $L \geq 2\pi\cdot ||g|| \geq 2\pi\sqrt{\dim(V)}/s$, whence 
$\dim(V) \leq (Ls/2\pi)^{2}$. In the latter case, we may apply Theorem \ref{main1} to $N$.
Assume that the conclusion (i) of Theorem \ref{main1} does not hold; in particular,
$d := \dim(V) > 40C \geq 160$. In the case the conclusion (ii) of Theorem \ref{main1} holds for
$N$, we have that $G \rhd M := N^{(\infty)} \cong \AAA_{d+1}$ and $M$ acts irreducibly on $V$. 
By Schur's Lemma, $C_{G}(M) = Z(G)$, and $G/C_{G}(M) \leq \Aut(M) = M \cdot 2$, whence the conclusion
(ii) of Theorem \ref{main1} holds for $G$. 

Finally, assume that the conclusion (iii) of Theorem
\ref{main1} holds for $N$, and let $D$ be the normal subgroup of $N$ that fixes each $V_{i}$ 
(setwise); in particular, $\AAA_{m} \leq N/D \leq \SSS_{m}$. In this case, 
$40C < d = m\dim(V_{1}) \leq mC/4$, whence $m > 160$. If, in addition, $m \leq C/4 + 1$, then 
$C \geq 640$ and so $d \leq mC/4 \leq (C/4+1)\cdot C/4 < 4C^{2}/63 \leq \fl(C)$. Hence we may 
assume $m > \max\{160,C/4+1\}$. 
Set $e := \dim(V_{1})$ and let $\CL$ be the collection of 
all finite simple groups $S$ with the property that either $S$ is cyclic, or 
$S \cong X/Y$ for some finite subgroups $Y \lhd X < PGL_{e}(\CC)$. Observe that every composition 
factor of $D$ belongs to $\CL$. (Indeed, consider the chain 
$D = D_{0} \rhd D_{1} \rhd D_{2} \rhd \ldots \rhd D_{m} = 1$, where $D_{i}$ is the kernel of the 
action of $D_{i-1}$ on $V_{i}$ for $1 \leq i \leq m$. Now let $S$ be any non-abelian composition 
factor of $D_{i}/D_{i-1}$. Thus $S \cong A/B$ for some $B \lhd A < GL(V_{i})$ since 
$D_{i}/D_{i-1} \hookrightarrow GL(V_{i})$. Since $S$ is non-abelian, $S$ is also a composition 
factor of $AZ/BZ$ for $Z := Z(GL(V_{i}))$. It follows that $S$ is a composition factor of 
$AZ/Z < PGL(V_{i}) \cong PGL_{e}(\CC)$, i.e. $S \in \CL$.) Let $R$ be the largest normal 
subgroup of $N$ with every composition factor belonging to $\CL$, cf. Lemma \ref{maxc}. Then 
$D \lhd R \lhd N$. Assume that $R > D$. Then $\AAA_{m}$ is a composition factor of $R$ and so
$\AAA_{m} \in \CL$. The latter inclusion means that $\AAA_{m} \cong X/Y$ for some finite subgroups 
$Y \lhd X < PGL_{e}(\CC)$, with $e \leq C/4 < m-1$ and $m > 160$. This however contradicts 
the Feit-Tits Theorem, cf. \cite[Theorem 3]{KlL}. Thus $R = D$. By Lemma \ref{maxc},
$R \lhd G$. We have shown that $D \lhd G$. Since $D$ is reducible on $V$, by \cite[Lemma 2.5]{GT3}
we must have $D \leq Z(G) \cap N = Z(N)$. It follows that $V$ yields an irreducible projective 
representation of degree $me \leq mC/4 < m^{2}$ of $N/D \in \{\AAA_{m},\SSS_{m}\}$. Recall that 
$m > 160$. Using the information on the small degrees of irreducible projective representations of 
$N/D$ as given in \cite{Ra} and \cite{KT}, we see that $me = m(m-3)/2$, 
$M = N^{(\infty)} \cong \AAA_{m}$, and $V|_{M}$ equals the restriction of the Specht module 
$S^{(m-2,2)}$ (labeled by the partition $(m-2,2)$) to $\AAA_{m}$. But the latter restriction is 
primitive, whereas $V$ is imprimitive, a contradiction.    
\hfill $\Box$  

\subsection{Proof of Corollary \ref{main2cr}}
By the assumption and the Reid-Tai criterion \cite{R1}, $\age(g) \leq 1$ for some $1 \neq g \in G$. 
Let $N$ be the (normal) subgroup generated by all elements in $G$ with this property. By 
\cite[Lemma 2.5]{GT3}, either $N \leq Z(G)$, or $N$ is irreducible on $V = \CC^{d}$. In the former 
case, the cyclic group $Z(G)$ contains an element $g = e^{2\pi i j/s} \cdot 1_{V}$ with $s := |Z(G)|$, 
$1 \leq j \leq s-1$, and $1 \geq \age(g) \geq \dim(V)/s$, whence 
$\dim(V) \leq s$. In the latter case, we may apply Theorem \ref{main2} to $N$.
Assume that the conclusion (i) of Theorem \ref{main1} holds for
$N$. Then $G \rhd M := N^{(\infty)} \cong \AAA_{d+1}$ and $M$ acts irreducibly on $V$. 
By Schur's Lemma, $C_{G}(M) = Z(G)$, and $G/C_{G}(M) \leq \Aut(M) = M \cdot 2$, whence the conclusion
(i) of Corollary \ref{main2cr} holds for $G$. 

Next, assume that either the conclusion (ii) or (iii) of Theorem
\ref{main2} holds for $N$. In the case of (ii), define $D$ to be the normal subgroup of $N$ that 
fixes each $V_{i}$ (setwise); in the case of (iii), consider the normal subgroup $D$ defined therein.
In particular, $\AAA_{n} \leq N/D \leq \SSS_{n}$. Also, let $\CL$ be the collection of 
all finite simple groups $S$ with the property that either $S$ is cyclic, or 
$S \cong X/Y$ for some finite subgroups $Y \lhd X < PGL_{2}(\CC)$ (and so $S \cong \AAA_{5}$, as 
easily seen). Notice $n \geq 6$ since $n \geq d/2$. Arguing as in the last part of the proof of 
Corollary \ref{main1g}, we see that $D$ is the largest normal subgroup of $N$ with all composition 
factors belonging to $\CL$, and so $D \lhd G$ by Lemma \ref{maxc}. Since $D$ is reducible on 
$V$, by \cite[Lemma 2.5]{GT3} we must have that $D \leq Z(G) \cap N = Z(N)$. Now in the case of 
the conclusion (iii) of Theorem \ref{main2} for $N$, we have $N = \SSS_{n}Z(N)$, with 
$\SSS_{n}$ acting reducibly on $V$, a contradiction. Thus we are in the case of the conclusion
(ii) of Theorem \ref{main2}, in particular $n \geq 11$, and $V$ yields an irreducible 
projective representation of degree $n$ of $N/D \in \{\AAA_{n},\SSS_{n}\}$, again a contradiction.
\hfill $\Box$
 
\subsection{Small dimension case}

\begin{propo}\label{main-s}
{\sl Let $G$ satisfy the set-up $\diam$, with $4 \leq d:= \dim(V) \leq 10$. 
Then one of the following
statements holds.

{\rm (i)} $G$ preserves a decomposition $V = V_{1} \oplus \ldots \oplus V_{d}$ of $V$ into 
$1$-spaces, and $(\pi(G),d) = (\SSS_{d},d)$, $(\AAA_{d},d)$,
$(ASL_{3}(2),8)$, $(SL_{3}(2),7)$, $(\AAA_{5},6)$, $(D_{10},5)$, if $\pi$ denotes the induced
permutation action of $G$ on $\{V_{1}, \ldots ,V_{d}\}$. 

{\rm (ii)} $2|d$, and $G = D:\SSS_{d/2} < GL_{2}(\CC) \wr \SSS_{d/2}$, a split extension of 
$D < GL_{2}(\CC)^{d/2}$ by $\SSS_{d/2}$.

{\rm (iii)} $G$ preserves a decomposition $V = A \otimes B$, with $\dim(A), \dim(B) > 1$. 

{\rm (iv)} $G$ preserves a tensor structure $V = A^{\otimes m}$, with $\dim(A)= 2$ and 
$m = 2,3$.

{\rm (v)} $4 \leq \dim(V) = p^{a} \leq 9$ for some prime $p$, and $G$ normalizes a $p$-group $E$ of 
extraspecial type, with $|E/Z(E)| = p^{2a}$.

{\rm (vi)} $G$ is almost quasi-simple, and $G$ satisfies the conclusions of Theorem 
\ref{main-d}.}
\end{propo}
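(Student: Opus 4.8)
The proof of Proposition \ref{main-s} will follow the standard Aschbacher-type dichotomy that has already been set up and exploited throughout the paper. The plan is to invoke \cite[Proposition 2.8]{GT3}, which partitions the possibilities for a finite irreducible subgroup $G < GL(V)$: either $G$ is imprimitive (permuting a direct sum decomposition $V = V_1 \oplus \ldots \oplus V_k$ with $k > 1$), or $G$ is primitive and tensor decomposable, or primitive and tensor induced, or primitive of extraspecial type, or almost quasi-simple (primitive, tensor indecomposable, not tensor induced, not of extraspecial type). In each of these cases we have already done essentially all the work in the earlier subsections, so the proof here amounts to collating those results under the extra hypothesis $4 \leq \dim(V) \leq 10$.

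Concretely, first I would dispose of the imprimitive case. Choose a $G$-invariant decomposition $V = V_1 \oplus \ldots \oplus V_n$ with $n > 1$ minimal, so that $\pi(G)$ is primitive on $\{V_1, \ldots, V_n\}$. By Lemma \ref{impr3}, either $\dim(V_i) = 1$ and $\pi(G)$ is one of $\SSS_n, \AAA_n, ASL_3(2)\ (n=8), SL_3(2)\ (n=7), \AAA_5\ (n=6), D_{10}\ (n=5)$ — giving conclusion (i), using $d = n$ when $\dim(V_i) = 1$ — or $\dim(V_i) = 2$ and $G = D{:}\SSS_{n} < GL_2(\CC)\wr\SSS_n$ with $d = 2n$, giving conclusion (ii). (One must note that the cases $n \geq 11$ in Lemma \ref{impr3} are automatically excluded here since $d \leq 10$ forces $n \leq 10$ in case (i) and $n \leq 5$ in case (ii).) Next, if $G$ is primitive and tensor decomposable, we land in conclusion (iii) by definition; note $d \geq 5$ is consistent with Lemma \ref{tensor1}, and $d = 4$ tensor decomposable means $V = A \otimes B$ with $\dim A = \dim B = 2$, still of the form (iii). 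If $G$ is primitive, tensor indecomposable, but tensor induced, then Proposition \ref{t-ind2} gives $\dim(V) = 4$ or $8$ with the tensor structure $V = A^{\otimes m}$, $\dim A = 2$, $m = 2$ or $3$ — conclusion (iv). If $G$ is of extraspecial type, then $\dim(V) = p^a$ and $G$ normalizes a $p$-group $E$ of extraspecial type with $|E/Z(E)| = p^{2a}$; the constraint $4 \leq \dim V \leq 10$ combined with the bound from \S\ref{SS-ext2} ($p^a \leq 13$, and here $\leq 10$) forces $p^a \in \{4,5,7,8,9\}$ — conclusion (v). Finally, if $G$ is almost quasi-simple, primitive and tensor indecomposable, then since $\ages(g) \leq 1$ for the generators and $d \geq 4$, Theorem \ref{main-d} applies and $(d, G^{(\infty)}, g, \Delta(g), \ages(g))$ is as in Table I — conclusion (vi).

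There is essentially no obstacle here beyond bookkeeping: the real content is in the earlier results (Lemma \ref{impr3}, Lemma \ref{tensor1}, Proposition \ref{t-ind2}, the extraspecial analysis of \S\ref{SS-ext2}, and Theorem \ref{main-d}), and the only thing to check is that the Aschbacher dichotomy of \cite[Proposition 2.8]{GT3} covers all cases and that the dimension bound $4 \leq d \leq 10$ correctly restricts each branch to what is asserted. The one point requiring a moment of care is that in the imprimitive case with $\dim(V_i) = 1$ we have $d = n$ directly, so the list of possible $(\pi(G), n)$ from Lemma \ref{impr3}(i) becomes the list of possible $(\pi(G), d)$ in conclusion (i); and that nothing in the hypotheses of Lemma \ref{impr3}, \ref{tensor1}, \ref{t-ind2} is violated — they all assume the set-up $\diam$, which is exactly our hypothesis. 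The mild subtlety worth flagging in the write-up is merely that these cases are not mutually exclusive (a group can be both imprimitive and satisfy some tensor description), but since the proposition asserts only that \emph{one} of (i)--(vi) holds, this causes no difficulty.

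\begin{proof}
By \cite[Proposition 2.8]{GT3}, one of the following holds for the finite irreducible subgroup $G < GL(V)$: $G$ is imprimitive; or $G$ is primitive and tensor decomposable; or $G$ is primitive, tensor indecomposable, and tensor induced; or $G$ is primitive of extraspecial type; or $G$ is almost quasi-simple, primitive, and tensor indecomposable on $V$.

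Suppose first that $G$ is imprimitive, and choose a $G$-invariant decomposition $V = V_1 \oplus \ldots \oplus V_n$ with $n > 1$ smallest possible. Since $d \leq 10$, Lemma \ref{impr3} applies (the case $n \geq 11$ there is vacuous here), so either $\dim(V_i) = 1$ and, writing $\pi$ for the induced permutation action of $G$ on $\{V_1, \ldots, V_n\}$, we have $(\pi(G),n) \in \{(\SSS_n,n), (\AAA_n,n), (ASL_3(2),8), (SL_3(2),7), (\AAA_5,6), (D_{10},5)\}$; as $d = n$ in this case, this is conclusion (i). Or $\dim(V_i) = 2$ and the conclusion (iii) of Theorem \ref{main2} holds, i.e. $2 \mid d$ and $G = D{:}\SSS_{d/2} < GL_2(\CC)\wr\SSS_{d/2}$ is a split extension of $D < GL_2(\CC)^{d/2}$ by $\SSS_{d/2}$, which is conclusion (ii).

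Now suppose $G$ is primitive. If the $G$-module $V$ is tensor decomposable, then by definition $V = A \otimes B$ with $\dim(A), \dim(B) > 1$, which is conclusion (iii). If $V$ is tensor indecomposable but tensor induced, then by Proposition \ref{t-ind2}, $\dim(V) = 4$ or $8$, and there is a tensor structure $V = A^{\otimes m}$ with $\dim(A) = 2$ and $m = 2$ or $3$; this is conclusion (iv). If $G$ is of extraspecial type, then $\dim(V) = p^a$ for a prime $p$ and an integer $a \geq 2$, with $|E/Z(E)| = p^{2a}$ for the normalized $p$-group $E$ of extraspecial type; by \S\S\ref{SS-ext2} we have $p^a \leq 13$, and combined with $4 \leq \dim(V) \leq 10$ this gives $4 \leq \dim(V) = p^a \leq 9$, which is conclusion (v). Finally, if $G$ is almost quasi-simple, primitive, and tensor indecomposable on $V$, then, since $G$ satisfies $\diam$ and $\dim(V) \geq 4$, there is a non-central $g \in G$ with $0 < \ages(g) \leq 1$, so Theorem \ref{main-d} applies and the conclusion (vi) holds.
\end{proof}
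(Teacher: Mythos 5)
Your proof is correct and follows exactly the same route as the paper's: invoke the Aschbacher-type partition of \cite[Proposition 2.8]{GT3}, then dispatch the imprimitive case via Lemma \ref{impr3}, the tensor decomposable/induced/extraspecial cases via Lemma \ref{tensor1}, Proposition \ref{t-ind2}, and \S\ref{SS-ext2}, and the almost quasi-simple case via Theorem \ref{main-d}. The only small slip is your parenthetical ``$a \geq 2$'' in the extraspecial branch, which would exclude the genuine cases $\dim(V)=5,7$ treated in \S\ref{SS-ext2}; the proposition as stated allows $a=1$, and you should drop that restriction.
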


\begin{proof}
If the $G$-module $V$ is imprimitive, then the statement follows from Lemma \ref{impr3}. Assume 
$V$ is primitive. Next, (iii), resp. (iv), (v), corresponds to the case when the $G$-module $V$  
is tensor decomposable, resp. tensor induced, or $G$ is in the extraspecial case. Otherwise,
by \cite[Proposition 2.8]{GT3} $G$ satisfies the hypothesis, and so the conclusions, of
Theorem \ref{main-d}.
\end{proof}

\end{document}